\definecolor{orcidlogocol}{HTML}{A6CE39}
      \name{author}{2}{}{%
        {{hash=d76caff876e17420f72da32cf908b812}{%
           family={Atserias},
           familyi={A\bibinitperiod},
           given={Albert},
           giveni={A\bibinitperiod}}}%
        {{hash=db8f1235dfc683575da1bf46c74a625c}{%
           family={Maneva},
           familyi={M\bibinitperiod},
           given={Elitza},
           giveni={E\bibinitperiod}}}%
      }
      \name{author}{1}{}{%
        {{hash=1510894b195c898f61d6cbd12bfaf118}{%
           family={Billingsley},
           familyi={B\bibinitperiod},
           given={Patrick},
           giveni={P\bibinitperiod}}}%
      }
      \name{author}{1}{}{%
        {{hash=973e40e8aa2f8e358b3c7a60ef1fdf4b}{%
           family={Böker},
           familyi={B\bibinitperiod},
           given={Jan},
           giveni={J\bibinitperiod}}}%
      }
      \name{editor}{3}{}{%
        {{hash=e5933a3896c092918084907295ad0307}{%
           family={Bansal},
           familyi={B\bibinitperiod},
           given={Nikhil},
           giveni={N\bibinitperiod}}}%
        {{hash=a2ba638e640c38630ea955744eb4400e}{%
           family={Merelli},
           familyi={M\bibinitperiod},
           given={Emanuela},
           giveni={E\bibinitperiod}}}%
        {{hash=4225ab35a9e84541465345977a7f9c42}{%
           family={Worrell},
           familyi={W\bibinitperiod},
           given={James},
           giveni={J\bibinitperiod}}}%
      }
      \name{author}{5}{}{%
        {{hash=fa9d3416cafb8f3d691253fb4d1cfddc}{%
           family={Borgs},
           familyi={B\bibinitperiod},
           given={C.},
           giveni={C\bibinitperiod}}}%
        {{hash=6017564aca35cd0d1ab02f25f8f5739c}{%
           family={Chayes},
           familyi={C\bibinitperiod},
           given={J.\bibnamedelimi T.},
           giveni={J\bibinitperiod\bibinitdelim T\bibinitperiod}}}%
        {{hash=794d31d5a209009e0f7aa7e546ca4bc6}{%
           family={Lovász},
           familyi={L\bibinitperiod},
           given={L.},
           giveni={L\bibinitperiod}}}%
        {{hash=e227e6043049b435c8df61e69b2c4be6}{%
           family={Sós},
           familyi={S\bibinitperiod},
           given={V.\bibnamedelimi T.},
           giveni={V\bibinitperiod\bibinitdelim T\bibinitperiod}}}%
        {{hash=be4c10c0b1420c3b68b5cad49a01f265}{%
           family={Vesztergombi},
           familyi={V\bibinitperiod},
           given={K.},
           giveni={K\bibinitperiod}}}%
      }
      \name{author}{5}{}{%
        {{hash=fa9d3416cafb8f3d691253fb4d1cfddc}{%
           family={Borgs},
           familyi={B\bibinitperiod},
           given={C.},
           giveni={C\bibinitperiod}}}%
        {{hash=6017564aca35cd0d1ab02f25f8f5739c}{%
           family={Chayes},
           familyi={C\bibinitperiod},
           given={J.\bibnamedelimi T.},
           giveni={J\bibinitperiod\bibinitdelim T\bibinitperiod}}}%
        {{hash=794d31d5a209009e0f7aa7e546ca4bc6}{%
           family={Lovász},
           familyi={L\bibinitperiod},
           given={L.},
           giveni={L\bibinitperiod}}}%
        {{hash=e227e6043049b435c8df61e69b2c4be6}{%
           family={Sós},
           familyi={S\bibinitperiod},
           given={V.\bibnamedelimi T.},
           giveni={V\bibinitperiod\bibinitdelim T\bibinitperiod}}}%
        {{hash=be4c10c0b1420c3b68b5cad49a01f265}{%
           family={Vesztergombi},
           familyi={V\bibinitperiod},
           given={K.},
           giveni={K\bibinitperiod}}}%
      }
      \name{author}{3}{}{%
        {{hash=88f89afcc3de70d4ee2a33c07101b6d9}{%
           family={Cai},
           familyi={C\bibinitperiod},
           given={Jin\bibnamedelima Yi},
           giveni={J\bibinitperiod\bibinitdelim Y\bibinitperiod}}}%
        {{hash=874b8a33b4334e74c94598fefeae5cb0}{%
           family={Fürer},
           familyi={F\bibinitperiod},
           given={Martin},
           giveni={M\bibinitperiod}}}%
        {{hash=3c332bcb565e4c67d3d4bb506c8ea7a9}{%
           family={Immerman},
           familyi={I\bibinitperiod},
           given={Neil},
           giveni={N\bibinitperiod}}}%
      }
      \name{author}{3}{}{%
        {{hash=436902e40aa3939fe77770b323c53564}{%
           family={Dell},
           familyi={D\bibinitperiod},
           given={Holger},
           giveni={H\bibinitperiod}}}%
        {{hash=09c9a63ce22980640e97ed9ed81ea815}{%
           family={Grohe},
           familyi={G\bibinitperiod},
           given={Martin},
           giveni={M\bibinitperiod}}}%
        {{hash=5946bff9b80e45dd4a37477605ec45e1}{%
           family={Rattan},
           familyi={R\bibinitperiod},
           given={Gaurav},
           giveni={G\bibinitperiod}}}%
      }
      \name{editor}{4}{}{%
        {{hash=c973eee1c87594fdb0d9772b9070f256}{%
           family={Chatzigiannakis},
           familyi={C\bibinitperiod},
           given={Ioannis},
           giveni={I\bibinitperiod}}}%
        {{hash=ea403fb7df75474b98c5b08956cf2b99}{%
           family={Kaklamanis},
           familyi={K\bibinitperiod},
           given={Christos},
           giveni={C\bibinitperiod}}}%
        {{hash=e31719b087ab896a11d4b30ad542eab8}{%
           family={Marx},
           familyi={M\bibinitperiod},
           given={Dániel},
           giveni={D\bibinitperiod}}}%
        {{hash=2119003cbb018d08a15b78df8eaa7c57}{%
           family={Sannella},
           familyi={S\bibinitperiod},
           given={Donald},
           giveni={D\bibinitperiod}}}%
      }
      \name{author}{1}{}{%
        {{hash=ab8132cf9fa5b697439ebc7d07cc7214}{%
           family={Dudley},
           familyi={D\bibinitperiod},
           given={Richard\bibnamedelima Mansfield},
           giveni={R\bibinitperiod\bibinitdelim M\bibinitperiod}}}%
      }
      \name{author}{1}{}{%
        {{hash=67971cf70262cbe218a2596692e7a59f}{%
           family={Dvořák},
           familyi={D\bibinitperiod},
           given={Zdeněk},
           giveni={Z\bibinitperiod}}}%
      }
      \name{author}{4}{}{%
        {{hash=c07e01db9d598e97454b643b280e6d7c}{%
           family={Eisner},
           familyi={E\bibinitperiod},
           given={Tanja},
           giveni={T\bibinitperiod}}}%
        {{hash=21c6c67e5873f9057f6f88917db3ea56}{%
           family={Farkas},
           familyi={F\bibinitperiod},
           given={Bálint},
           giveni={B\bibinitperiod}}}%
        {{hash=1022075f75460bda5985038eb89a62b5}{%
           family={Haase},
           familyi={H\bibinitperiod},
           given={Markus},
           giveni={M\bibinitperiod}}}%
        {{hash=a3677a98a02dd239c00af22363bd492b}{%
           family={Nagel},
           familyi={N\bibinitperiod},
           given={Rainer},
           giveni={R\bibinitperiod}}}%
      }
      \name{author}{2}{}{%
        {{hash=a67d2f19d881fdaa3f3714cfa698c06c}{%
           family={Grebík},
           familyi={G\bibinitperiod},
           given={Jan},
           giveni={J\bibinitperiod}}}%
        {{hash=6561406bb328e825cbf230531705a290}{%
           family={Rocha},
           familyi={R\bibinitperiod},
           given={Israel},
           giveni={I\bibinitperiod}}}%
      }
      \name{author}{1}{}{%
        {{hash=09c9a63ce22980640e97ed9ed81ea815}{%
           family={Grohe},
           familyi={G\bibinitperiod},
           given={Martin},
           giveni={M\bibinitperiod}}}%
      }
      \name{author}{4}{}{%
        {{hash=09c9a63ce22980640e97ed9ed81ea815}{%
           family={Grohe},
           familyi={G\bibinitperiod},
           given={Martin},
           giveni={M\bibinitperiod}}}%
        {{hash=231ca1992b128d48dbc73100fd196d87}{%
           family={Kersting},
           familyi={K\bibinitperiod},
           given={Kristian},
           giveni={K\bibinitperiod}}}%
        {{hash=d71ec214e383cb4314486e7b92c69cc4}{%
           family={Mladenov},
           familyi={M\bibinitperiod},
           given={Martin},
           giveni={M\bibinitperiod}}}%
        {{hash=7716daa9abe2da8fb7fcf652e434a9e9}{%
           family={Schweitzer},
           familyi={S\bibinitperiod},
           given={Pascal},
           giveni={P\bibinitperiod}}}%
      }
      \name{author}{2}{}{%
        {{hash=09c9a63ce22980640e97ed9ed81ea815}{%
           family={Grohe},
           familyi={G\bibinitperiod},
           given={Martin},
           giveni={M\bibinitperiod}}}%
        {{hash=05502e865c2cd99503ebb572a930cd93}{%
           family={Otto},
           familyi={O\bibinitperiod},
           given={Martin},
           giveni={M\bibinitperiod}}}%
      }
      \name{author}{3}{}{%
        {{hash=09c9a63ce22980640e97ed9ed81ea815}{%
           family={Grohe},
           familyi={G\bibinitperiod},
           given={Martin},
           giveni={M\bibinitperiod}}}%
        {{hash=5946bff9b80e45dd4a37477605ec45e1}{%
           family={Rattan},
           familyi={R\bibinitperiod},
           given={Gaurav},
           giveni={G\bibinitperiod}}}%
        {{hash=9e8819a6b30f14d244201901dd2c98f3}{%
           family={Seppelt},
           familyi={S\bibinitperiod},
           given={Tim},
           giveni={T\bibinitperiod}}}%
      }
      \name{editor}{3}{}{%
        {{hash=9f8847fb27593f0d3406890dc0403841}{%
           family={Bojańczyk},
           familyi={B\bibinitperiod},
           given={Miko{\textbackslash}laj},
           giveni={M\bibinitperiod}}}%
        {{hash=a2ba638e640c38630ea955744eb4400e}{%
           family={Merelli},
           familyi={M\bibinitperiod},
           given={Emanuela},
           giveni={E\bibinitperiod}}}%
        {{hash=ba42119f00fe8387df5041f9845eb753}{%
           family={Woodruff},
           familyi={W\bibinitperiod},
           given={David\bibnamedelima P.},
           giveni={D\bibinitperiod\bibinitdelim P\bibinitperiod}}}%
      }
      \name{author}{2}{}{%
        {{hash=3c332bcb565e4c67d3d4bb506c8ea7a9}{%
           family={Immerman},
           familyi={I\bibinitperiod},
           given={Neil},
           giveni={N\bibinitperiod}}}%
        {{hash=3c79cb601743e277985f7462a098d4aa}{%
           family={Lander},
           familyi={L\bibinitperiod},
           given={Eric},
           giveni={E\bibinitperiod}}}%
      }
      \name{editor}{1}{}{%
        {{hash=c644ee539447762a8de0bd75154c0cab}{%
           family={Selman},
           familyi={S\bibinitperiod},
           given={Alan\bibnamedelima L.},
           giveni={A\bibinitperiod\bibinitdelim L\bibinitperiod}}}%
      }
      \name{author}{1}{}{%
        {{hash=f636aecaf9ea5bf3a641d2c20e59a907}{%
           family={Janson},
           familyi={J\bibinitperiod},
           given={Svante},
           giveni={S\bibinitperiod}}}%
      }
      \name{author}{1}{}{%
        {{hash=c20ec70302197b9127746c09db04c836}{%
           family={Kechris},
           familyi={K\bibinitperiod},
           given={Alexander\bibnamedelima S.},
           giveni={A\bibinitperiod\bibinitdelim S\bibinitperiod}}}%
      }
      \name{author}{1}{}{%
        {{hash=b5d02debb74d639c0166107608b473fa}{%
           family={Kloks},
           familyi={K\bibinitperiod},
           given={Ton},
           giveni={T\bibinitperiod}}}%
      }
      \name{author}{1}{}{%
        {{hash=0d1784a2f9a807b1da8d15d5b1029c08}{%
           family={Lovász},
           familyi={L\bibinitperiod},
           given={László},
           giveni={L\bibinitperiod}}}%
      }
      \name{author}{2}{}{%
        {{hash=0d1784a2f9a807b1da8d15d5b1029c08}{%
           family={Lovász},
           familyi={L\bibinitperiod},
           given={László},
           giveni={L\bibinitperiod}}}%
        {{hash=067dab294b3126701daab12e05fc2e58}{%
           family={Szegedy},
           familyi={S\bibinitperiod},
           given={Balázs},
           giveni={B\bibinitperiod}}}%
      }
      \name{author}{2}{}{%
        {{hash=da3ba4d89c6630093e2951dedb53b5c2}{%
           family={Mančinska},
           familyi={M\bibinitperiod},
           given={Laura},
           giveni={L\bibinitperiod}}}%
        {{hash=269a09e114433113515edd303665e2d8}{%
           family={Roberson},
           familyi={R\bibinitperiod},
           given={David\bibnamedelima E.},
           giveni={D\bibinitperiod\bibinitdelim E\bibinitperiod}}}%
      }
      \name{author}{3}{}{%
        {{hash=5bfdb6246551058a7204b33119d79066}{%
           family={Ramana},
           familyi={R\bibinitperiod},
           given={Motakuri\bibnamedelima V.},
           giveni={M\bibinitperiod\bibinitdelim V\bibinitperiod}}}%
        {{hash=1d59e0c42f01305faedebd810799f1fc}{%
           family={Scheinerman},
           familyi={S\bibinitperiod},
           given={Edward\bibnamedelima R.},
           giveni={E\bibinitperiod\bibinitdelim R\bibinitperiod}}}%
        {{hash=9eb925344661691af324fb96b8b94978}{%
           family={Ullman},
           familyi={U\bibinitperiod},
           given={Daniel},
           giveni={D\bibinitperiod}}}%
      }
      \name{author}{1}{}{%
        {{hash=70677c28e4c49f5b1d24fecd5b232b41}{%
           family={Tinhofer},
           familyi={T\bibinitperiod},
           given={Gottfried},
           giveni={G\bibinitperiod}}}%
      }
      \name{author}{1}{}{%
        {{hash=70677c28e4c49f5b1d24fecd5b232b41}{%
           family={Tinhofer},
           familyi={T\bibinitperiod},
           given={Gottfried},
           giveni={G\bibinitperiod}}}%
      }
\patchcmd\blx@bblinput{\blx@blxinit}
                      {\blx@blxinit
                      }{}{\fail}
\theoremstyle{plain}
\newtheorem{theorem}{Theorem}
\newtheorem{lemma}[theorem]{Lemma}
\newtheorem{proposition}[theorem]{Proposition}
\newtheorem{corollary}[theorem]{Corollary}
\theoremstyle{definition}
\newtheorem{definition}[theorem]{Definition}
\newtheorem{example}[theorem]{Example}
\newtheorem{remark}[theorem]{Remark}
\tikzset{vertex/.style={circle, fill=black, inner sep=2pt}
}
\newcommand{\multiGraphs}{\mathcal{M}}
\newcommand{\multiGraphsOf}[1]{\mathcal{M}^{#1}}
\newcommand{\graphs}{\mathcal{G}}
\newcommand{\graphsOf}[1]{\mathcal{G}^{#1}}
\newcommand{\adjGraphs}{\mathcal{A}}
\newcommand{\jNeighbors}{\mathcal{N}}
\newcommand{\adjNeiGraphs}{\mathcal{F}}
\newcommand{\OperatorFrom}[1]{T_{#1}}
\newcommand{\OperatorFromTo}[2]{\OperatorFrom{#1 \rightarrow #2}}
\newcommand{\OperatorN}{\OperatorFrom{\bm{N}}}
\newcommand{\OperatorAW}{\OperatorFromTo{\bm{A}}{W}}
\newcommand{\OperatorFW}{\OperatorFromTo{\bm{F}}{W}}
\newcommand{\OperatorFamilyFrom}[1]{\TT_{#1}}
\newcommand{\OperatorFamilyFromTo}[2]{\OperatorFamilyFrom{#1 \to #2}}
\newcommand{\notsovast}{\bBigg@{2.5}}
\newcommand{\vast}{\bBigg@{3.5}}
\newcommand{\Vast}{\bBigg@{5}}
\newcommand{\Lk}[1]{\mathsf{L}^{#1}_{\operatorname{iso}}}
\newcommand{\Lkk}{\Lk{k}}
\newcommand{\cutDist}{\delta_\square}
\newcommand{\OperatorANuW}{\OperatorFromTo{\bm{A}}{\nu^k_W}}
\newcommand{\OperatorNNuW}{\OperatorFromTo{\bm{N}}{\nu^k_W}}
\newcommand{\OperatorPNuW}{\OperatorFromTo{\pi}{\nu^k_W}}
\newcommand{\kFamilyOperatorsShort}{\TT}
\newcommand{\TT}{\mathbb{T}}
\newcommand{\naturalKFamilyOperatorsShortU}{\mathbb{T}_U^k}
\newcommand{\naturalKFamilyOperatorsShortW}{\mathbb{T}_W^k}
\newcommand{\term}{\mathbb{F}}
\newcommand{\graphOfTerm}{[\![\term]\!]}
\newcommand{\graphOf}[1]{[\![#1]\!]}
\newcommand{\homFunOf}[1]{f_{#1}}
\newcommand{\homFunction}[2]{\homFunOf{#1 \to #2}}
\newcommand{\homFunctionFW}{\homFunction{\bm{F}}{W}}
\newcommand{\homFunctionT}[1]{\homFunction{\term}{#1}}
\newcommand{\homFunctionOp}[1]{\homFunction{#1}{\kFamilyOperatorsShort}}
\newcommand{\homFunctionTOp}{\homFunctionT{\kFamilyOperatorsShort}}
\newcommand{\jsection}{{\bar{x}[\cdot/j]}}
\newcommand{\jnsection}{{\jsection \circ p_j}}
\newcommand{\normI}[1]{\lVert #1 \rVert_\infty}\newcommand{\normT}[1]{\lVert #1 \rVert_2}
\newcommand{\identityGraph}{{\bm{1}^k}}
\newcommand{\identityGraphOf}[1]{{\bm{1}^{#1}}}
\newcommand{\permutationGraph}{\bm{P}_\pi}
\newcommand{\jNeighbor}[1]{\bm{N}^k_{#1}}
\newcommand{\jNeighborj}{\jNeighbor{j}}
\newcommand{\jIntroduce}[1]{\bm{I}^k_{#1}}
\newcommand{\jIntroducej}{\jIntroduce{j}}
\newcommand{\jForget}[1]{\bm{F}^k_{#1}}
\newcommand{\jForgetj}{\jForget{j}}
\newcommand{\ijAdjacencyij}{\bm{A}^k_{ij}}
\newcommand{\adjacencyGraphOf}[2]{\bm{A}^{#1}_{#2}}
\newcommand{\introduceGraphOf}[2]{\bm{I}^{#1}_{#2}}
\newcommand{\forgetGraphOf}[2]{\bm{F}^{#1}_{#2}}
\newcommand{\neighborGraphOf}[2]{\bm{N}^{#1}_{#2}}
\newcommand{\blProd}{\cdot}
\newcommand{\treeClosure}[1]{\langle #1 \rangle_{\circ, \blProd}}
\newcommand{\pathClosure}[1]{\langle #1 \rangle_{\circ}}
\newcommand{\treeClosureK}{\treeClosure{\adjNeiGraphs^k}}
\newcommand{\iPk}{P^k}
\newcommand{\Mk}{\mathbb{M}^k}
\newcommand{\MM}{\mathbb{M}}
\newcommand{\MOf}[1]{\mathbb{M}^{#1}}
\newcommand{\MInfty}{\mathbb{M}^\infty}
\newcommand{\Pk}{\mathbb{P}^k}
\newcommand{\OWLk}{\mathbb{WL}^k}
\newcommand{\WLk}{\mathbb{WL}^k}
\newcommand{\OWLOf}[1]{\mathbb{WL}^{#1}}
\newcommand{\Tk}{\mathcal{T}^k}
\newcommand{\Borel}{\mathcal{B}}
\newcommand{\Topology}{\mathcal{T}}
\newcommand{\subAlg}{\mathcal{C}}
\newcommand{\subAlgC}{\mathcal{C}}
\newcommand{\subAlgD}{\mathcal{D}}
\newcommand{\subAlgs}{\Theta(\Borel, \mu)}
\newcommand{\subAlgsk}{\Theta({\Borel^{\otimes k}},{\mu^{\otimes k}})}
\newcommand{\kSubAlg}{\subAlg^k}
\newcommand{\kInftySubAlg}{\subAlg^k}
\newcommand{\LL}{\mathcal{L}}
\newcommand{\LTwo}{L^2(X, \mu)}
\newcommand{\LTwoLong}{L^2(X, \Borel, \mu)}
\newcommand{\LTwoY}{L^2(Y, \nu)}
\newcommand{\LTwoOne}{L^2(X_1, \mu_1)}
\newcommand{\LTwoTwo}{L^2(X_2, \mu_2)}
\newcommand{\LTwoSub}{L^2(X, \subAlg, \mu)}
\newcommand{\LTwoSubD}{L^2(X, \subAlgD, \mu)}
\newcommand{\LTwoQuo}{L^2(X/\subAlg, \mu/\subAlg)}
\newcommand{\LTwoQuoD}{L^2(X/\subAlgD, \mu/\subAlgD)}
\newcommand{\LTwoProdK}{L^2(X^k, \mu^{\otimes k})}
\newcommand{\LTwoProdKLong}{L^2(X^k, \Borel^{\otimes k}, \mu^{\otimes k})}
\newcommand{\LTwoProdKOf}[1]{L^2(X^k, #1, \mu^{\otimes k})}
\newcommand{\LInftyProdK}{L^\infty(X^k, \mu^{\otimes k})}
\newcommand{\LTwoProdKM}{L^2(X^{k-1}, \mu^{\otimes k-1})}
\newcommand{\LTwoProdI}{L^2(X^i, \mu^{\otimes i})}
\newcommand{\LTwoProdIM}{L^2(X^{i-1}, \mu^{\otimes i - 1})}
\newcommand{\LTwoProdKQuo}{L^2(X^k/\subAlg, \mu^{\otimes k}/\subAlg)}
\newcommand{\LTwoProdKQuoOf}[1]{L^2(X^k/#1, \mu^{\otimes k}/#1)}
\newcommand{\LTwoProdKQuoD}{L^2(X^k/\subAlgD, \mu^{\otimes k}/\subAlgD)}
\newcommand{\LTwoProdL}{L^2(X^\ell, \mu^{\otimes \ell})}
\newcommand{\LLTwoProdL}{\LL^2(X^\ell, \mu^{\otimes \ell})}
\newcommand{\LLInftyProdL}{\LL^\infty(X^\ell, \mu^{\otimes \ell})}
\newcommand{\LTwoMkNu}{L^2(\Mk, \nu)}
\newcommand{\LInftyMkNu}{L^\infty(\Mk, \nu)}
\newcommand{\LLInftyMkNu}{\LL^\infty(\Mk, \nu)}
\newcommand{\LTwoMkNuKW}{L^2(\Mk, \nu^k_W)}
\newcommand{\LInftyMkNuKW}{L^\infty(\Mk, \nu^k_W)}
\newcommand{\dmu}{\,d\mu}
\newcommand{\dmuk}{\,d\mu^{\otimes k}}
\newcommand{\stepDownOf}[1]{#1\!\!\downarrow\!{}}
\newcommand{\stepDown}{\stepDownOf{S}}
\newcommand{\LInftyTwo}{L^\infty(X_2, \mu_2)}
\newcommand{\LInftyY}{L^\infty(Y, \nu)}
\newcommand{\E}{\mathbb{E}}
\newcommand{\ExpVal}[1]{\mathbb{E}(-\mid #1)}
\newcommand{\ExpValSub}{\ExpVal{\subAlg}}
\newcommand{\measOne}{\mathscr{M}_{\le 1}}
\newcommand{\probMeas}{\mathscr{P}}
\newcommand{\allOne}{\boldsymbol{1}}
\renewcommand{\hom}{\mathsf{hom}}
\newcommand{\N}{\mathbb{N}}
\newcommand{\NInfty}{\mathbb{N} \cup \Set{\infty}}
\newcommand{\R}{\mathbb{R}}
\newcommand{\es}{\mathsf{e}}
\newcommand{\Set}[1]{\left\{#1\right\}}
\newcommand{\MSet}[1]{\{\!\!\{#1\}\!\!\}}
\renewcommand{\epsilon}{\varepsilon}
\renewcommand{\emptyset}{\varnothing}
\newcommand\restr[2]{{\left.\kern-\nulldelimiterspace #1 \vphantom{\big|} \right|_{#2} }}
\DeclareMathOperator{\colref}{\mathsf{cr}}
\DeclareMathOperator{\atp}{\mathsf{atp}}
\DeclareMathOperator{\wl}{\mathsf{wl}}
\DeclareMathOperator{\owl}{\mathsf{owl}}
\newcommand{\owlk}{\owl^k}
\newcommand{\invSpaceSmall}{\hspace{-4pt}}
\newcommand{\invSpace}{\hspace{-7pt}}
\newcommand{\simpleS}{\mathsf{s}}
\newcommand{\adjNeiGraphOf}[2]{\bm{S}^{#1}_{#2}}
\newcommand{\jAdjNei}{\adjNeiGraphOf{k}{j, V}}
\newcommand{\simpleAdjNeisK}{\mathcal{F}^{\simpleS k}}
\newcommand{\simpleAdjNeisKP}{\mathcal{F}^{\simpleS k + 1}}
\newcommand{\simpleTreeClosureK}{\treeClosure{\mathcal{F}^{\simpleS k}}}
\newcommand{\naturalSKFam}{\mathbb{T}_W^{\simpleS k}}
\newcommand{\naturalSKFamU}{\mathbb{T}_U^{\simpleS k}}
\newcommand{\skSubAlg}{\subAlg^{\simpleS k}}
\newcommand{\Msk}{\mathbb{M}^{\simpleS k}}
\newcommand{\Psk}{\mathbb{P}^{\simpleS k}}
\newcommand{\iPsk}{P^{\simpleS k}}
\newcommand{\isk}{\owl^{\simpleS k}}
\newcommand{\nusk}{\nu^{\simpleS k}}
\newcommand{\nusInfty}{\nu^{\simpleS  \infty}}
\newcommand{\acle}{\preccurlyeq}
\newcommand{\LInftyMskNu}{L^\infty(\Msk, \nu)}
\newcommand{\OperatorSNuW}{\OperatorFromTo{\bm{S}}{\nusk_W}}
\newcommand{\RskW}{R^{\simpleS  k}_W}
\newcommand{\Rsk}{R^{\simpleS  k}}
\newcommand{\LInftyMskNuskW}{L^\infty(\Msk, \nusk_W)}
\newcommand{\LTwoMskNuskW}{L^2(\Msk, \nusk_W)}
\newcommand{\TTsk}{\TT}
\newcommand{\LTwoMskNu}{L^2(\Msk, \nu)}
\newcommand{\Tsk}{\mathcal{T}^{\simpleS k}}
\newcommand{\MsInfty}{\mathbb{M}^{\simpleS \infty}}
\newcommand{\nonoSimpleS}{\mathsf{ns}}
\newcommand{\nonoSimplesK}{\mathcal{F}^{\nonoSimpleS k}}
\newcommand{\nonoSimplesKOf}[1]{\mathcal{F}^{\nonoSimpleS {#1}}}
\newcommand{\nonoSimpleTreeClosureK}{\treeClosure{\nonoSimplesK}}
\begin{document}

\title{Weisfeiler-Leman Indistinguishability of Graphons}
\author{Jan Böker\\RWTH Aachen University, Aachen, Germany\\\href{mailto:boeker@informatik.rwth-aachen.de}{boeker@informatik.rwth-aachen.de}\\ORCID: \href{https://orcid.org/0000-0003-4584-121X}{0000-0003-4584-121X}}
\maketitle

\begin{abstract}
    The color refinement algorithm is mainly known as a
    heuristic method for graph isomorphism testing.
    It has surprising but natural characterizations
    in terms of, for example,
    homomorphism counts from trees and
    solutions to a system of linear equations.
    Greb\'ik and Rocha (2022)
    have recently shown how color refinement and
    notions that characterize it
    generalize to graphons,
    which emerged as limit objects in the theory of dense graph limits.
    In particular, they show that these characterizations are still equivalent
    in the graphon case.
    The $k$-dimensional Weisfeiler-Leman algorithm ($k$-WL) is a more powerful
    variant of color refinement that colors $k$-tuples instead of single
    vertices, where the terms
    $1$-WL and color refinement are often used
    interchangeably since they compute equivalent colorings.
    We show how to adapt the result of Greb\'ik and Rocha to
    $k$-WL or, in other words,
    how $k$-WL and its characterizations
    generalize to graphons.
    In particular, we obtain characterizations in terms of
    homomorphism densities from \textit{multi}graphs of bounded treewidth
    and linear equations.
    We give a simple example that
    parallel edges make a difference
    in the more general case of graphons,
    which means that, there,
    the equivalence between $1$-WL and color refinement
    does not hold anymore.
    We also show how this equivalence can be recovered
    by defining a variant of $k$-WL that
    corresponds to
    homomorphism densities from simple graphs of bounded treewidth.
\end{abstract}

\section{Introduction}

\textit{Color refinement} is a polynomial-time algorithm
best known as an efficient heuristic for graph isomorphism testing
even though it has more applications, e.g.,
as \text{graph kernels} in machine learning \cite{GroheEtAl2021}.
It iteratively computes a coloring of the vertices of a simple graph,
and two graphs are called \textit{indistinguishable by color refinement}
if the resulting multisets of colors match.
For isomorphic graphs, the resulting multisets of colors are necessarily
the same,
but there are non-isomorphic graphs that still produce the same
multisets of colors.
The \textit{$k$-dimensional Weisfeiler-Leman algorithm ($k$-WL)} is a generalization
of color refinement that colors $k$-dimensional tuples of vertices
instead of single vertices.
This again yields a heuristic for graph isomorphism testing
by comparing the resulting multisets of colors, where
two graphs are called \textit{indistinguishable by $k$-WL}
if the resulting multisets of colors match.
Starting from $1$-WL, which is equivalent to color refinement,
it yields a hierarchy of ever-more-powerful polynomial-time algorithms,
none of which actually decides graph isomorphism \cite{cai_optimal_1992}.

There are various seemingly unrelated characterizations
of indistinguishability by color refinement, for example,
by homomorphism counts from trees \cite{Dvorak2010, Dell2018}
or by rational solutions to a certain system of linear equations
encoding graph isomorphism called \textit{fractional isomorphisms} \cite{Tinhofer1986, Tinhofer1991}.
Similar characterizations exist for the $k$-dimensional
Weisfeiler-Leman algorithm
in terms of homomorphism counts from graphs of bounded treewidth
\cite{Dvorak2010, Dell2018}
and Sherali-Adams relaxations
of the system of linear equations encoding graph isomorphism
\cite{Dell2018, ImmermanLander1990, AtseriasManeva2013, GroheOtto2015}.
\citeauthor{GrebikRocha2021} recently
investigated the graphon counterpart of fractional isomorphism
by first providing graphon counterparts of
the most important notions used as characterizations for fractional isomorphism of graphs
and then proving that they are all equivalent \cite{GrebikRocha2021}.
\textit{Graphons} emerged as limit objects
for sequences of graphs in the theory of dense graph limits
developed by
Borgs, Chayes, Lov{\'a}sz, S{\'o}s, Szegedy, and Vesztergombi
\cite{lovasz_limits_2006, borgs_convergent_2008, borgs_convergent_2012}.
In this theory, \textit{homomorphism densities} play a crucial role
as a starting point for a notion of convergence of a sequence of graphs
and lead to the \textit{cut distance}, a (pseudo-)metric on graphons.
The book of \citeauthor{Lovasz2012} \cite{Lovasz2012}
provides a comprehensive overview.

In this paper, we provide graphon counterparts
of the most important notions used as characterizations for
$k$-WL indistinguishability
and also prove their equivalence.
As \citeauthor{GrebikRocha2021} stress,
for fractional isomorphism of graphons,
both defining the corresponding notions
and proving their equivalence
turned out to be surprisingly difficult.
This is no different in the case of $k$-WL indistinguishability,
where somewhat unsurprisingly, even more difficulties
and technical hurdles arise.
In particular, it turns out that there is no clear single generalization
of $k$-WL indistinguishability to graphons
but multiple non-equivalent variants.
The arguably most interesting characterization we obtain is in terms
of homomorphism densities:
The starting point of \citeauthor{GrebikRocha2021}
was to call two graphons $U$ and $W$
\textit{fractionally isomorphic} if
the homomorphism density $t(T, U)$ of $T$ in $U$ equals
the homomorphism density of $T$ in $W$
for every (finite simple) tree $T$.
Based on this and the characterizations of $k$-WL for graphs,
it is only natural to ask what kind of similarity
$U$ and $W$ have to satisfy for $t(F, U) = t(F, W)$
to hold for every (finite simple) graph $F$ of treewidth at most $k$.
While we give an answer to this,
we also show that a much more elegant characterization
and direct correspondence to the usual definition of $k$-WL is obtained
if we require $t(F, U) = t(F, W)$
to hold for every \emph{multi}graph $F$ of treewidth at most $k$ instead.

\subsection{Finite Graphs}
\label{sec:finiteGraphs}

In this section, we give a brief description
of color refinement and $k$-WL for finite graphs.
Moreover, we briefly present the notions characterizing them
that are relevant to us.
A \textit{(finite simple) graph} is a pair $G = (V, E)$,
where $V$ is a set of \textit{vertices}
and $E \subseteq \binom{V}{2}$ a set of \textit{edges}.
We usually write $V(G) \coloneqq V$ and $E(G) \coloneqq E$.
The initial coloring of color refinement for the vertices of a graph $G$
is defined by simply letting $\colref_{G,0}(v) \coloneqq 1$
for every vertex $v \in V(G)$.
Then, for every $n \ge 0$, let
\begin{equation*}
    \colref_{G,n+1}(v) \coloneqq (\colref_{G,n}(v), \MSet{\colref_{G,n}(w) \mid wv \in E(G)})
\end{equation*}
for every $v \in V(G)$, where
$\MSet{\cdot}$ is used to denote a \textit{multiset}.
Hence, the new color of a vertex $v$ is determined by aggregating
the colors of all neighbors of $v$, and in particular,
two vertices $u$ and $v$ get different colors if they have a different number
of neighbors of some color $c$.
Every coloring $\colref_{G,n}$ induces a partition of $V(G)$, and
after a finite number of steps, the coloring we obtain is \textit{stable},
i.e., the next  and all further colorings induce the same partition.
For graphs $G$ and $H$, we say that
\textit{$G$ and $H$ are indistinguishable by color refinement}
if
$\MSet{\colref_{G,n}(v) \mid v \in V(G)}
= \MSet{\colref_{H,n}(v) \mid v \in V(H)}$
holds for every $n \ge 0$.

The notions characterizing indistinguishability by color refinement
that are important for us are
tree homomorphisms, fractional isomorphisms, and stable partitions.
A \textit{homomorphism} from a graph $F$ to a graph $G$
is a mapping $h \colon V(F) \to V(G)$
such that $uv \in E(F)$ implies $h(u)h(v) \in E(G)$.
The number of homomorphisms from $F$ to $G$ is denoted
by $\hom(F, G)$, and
$t(F, G) \coloneqq \hom(F, G) / \lvert V(G) \rvert^{\lvert V(F) \rvert}$
is the \textit{homomorphism density of $F$ in $G$}.
Then, a result of \citeauthor{Dvorak2010} states
two graphs $G$ and $H$ are not distinguished by color refinement
if and only if the number of homomorphisms $\hom(T, G)$ from $T$ to $G$
equals the corresponding number $\hom(T, H)$ from $T$ to $H$
for every tree $T$ \cite{Dvorak2010}, see also \cite{Dell2018}.
An older result due to Tinhofer \cite{Tinhofer1986, Tinhofer1991}
states that $G$ and $H$ are not distinguished by color refinement
if and only if they are \textit{fractionally isomorphic}, i.e.,
there is a doubly stochastic matrix $X$ such that $AX = XB$,
where $A$ and $B$ are the adjacency matrices of $G$ and $H$, respectively.
A characterization that is more closely related
to the color refinement algorithm itself
is given by \textit{stable} partitions of the vertex set $V(G)$
of a graph $G$,
which are partitions where
all vertices in the same class have the same number of neighbors
in every other class.
One can show that the partition induced by the colors of color refinement
is the \textit{coarsest stable partition}
and that graphs $G$ and $H$ are fractionally isomorphic
if and only if their
coarsest stable partitions have the same parameters, i.e.,
there is a bijection between the partitions
that preserves the size of every class $C$
and the numbers of neighbors a vertex in $C$
has in some other class $D$ \cite{Tinhofer1986}.
This, in turn, is equivalent to there being
\textit{some} stable partitions of $G$ and $H$
with the same parameters \cite{RamanaEtAl1994}.

The \textit{$k$-dimensional Weisfeiler-Leman algorithm ($k$-WL)} is a variant of
color refinement that colors $k$-tuples of vertices instead of single vertices;
here and also throughout the paper, $k$ is an integer with $k \ge 1$.
See \cite{cai_optimal_1992} for an overview of the history of $k$-WL.
Usually, no distinction is made between $1$-WL and color refinement
as they, in some sense, compute equivalent colorings.
However, when
stating the formal definition of $k$-WL, it is important to note
that already for graphs there actually are two non-equivalent definitions
to be found in the literature.
Following Grohe \cite{Grohe2021},
we refer to these distinct definitions as \textit{(non-oblivious) $k$-WL} and \textit{oblivious $k$-WL}.
Both $k$-WL and oblivious $k$-WL
operate on $k$-tuples of vertices, but in terms of expressive power,
$k$-WL is equivalent to oblivious $k+1$-WL
in the sense that they distinguish the same graphs.
In this paper, we nearly always consider oblivious $k$-WL
and only briefly define non-oblivious $k$-WL at the end of the paper
in \Cref{sec:nonObliviousSimpleWL}.
However, to avoid any confusion, we nevertheless continue to explicitly use the
term oblivious $k$-WL from here on and use the term $k$-WL only
for non-oblivious $k$-WL.

Let $G$ be a graph.
To define oblivious $k$-WL, we first have to define
the \textit{atomic type} $\atp_G(\bar{v})$ of a tuple $\bar{v} = (v_1, \dots, v_k) \in V(G)^k$
of vertices of $G$, which is the $k \times k$-matrix $A$ with entries
$A_{ij} = 2$ if $v_i = v_j$, $A_{ij} = 1$ if $v_i v_j \in E(G)$, and
$A_{ij} = 0$ otherwise.
We let $\owl^k_{G,0}(\bar{v}) \coloneqq \atp_G(\bar{v})$ for every $\bar{v} \in V(G)^k$,
and then
for every  $n \ge 0$, we define
\begin{equation}
    \owl^k_{G, n+1}(\bar{v}) \coloneqq \left(\owl^k_{G, n}(\bar{v}), \big(\MSet{\owl^k_{G, n}(\bar{v}[w/j] \mid w \in V(G))}\big)_{j \in [k]}\right) \label{eq:obliviouskWL}
\end{equation}
for every $\bar{v} \in V(G)^k$.
Here, $\bar{v}[w/j]$ denotes the $k$-tuple obtained from $\bar{v}$
by replacing the $j$th component by $w$;
this $k$-tuple is usually called a \textit{$j$-neighbor}
of $\bar{v}$.
Hence, the new color of a tuple $\bar{v}$ is determined by aggregating
the colors of all $j$-neighbors of $\bar{v}$ for every $j \in [k]$, and in particular,
two tuples $\bar{u}$ and $\bar{v}$ get different colors if, for some $j \in [k]$,
they have a different number of $j$-neighbors of some color $c$.
We say that \textit{oblivious $k$-WL does not distinguish graphs $G$ and $H$}
if
$\MSet{\owl^k_{G,n}(\bar{v}) \mid \bar{v} \in V(G)^k}
    = \MSet{\owl^k_{H,n}(\bar{v}) \mid \bar{v} \in V(H)^k}$
for every $n \ge 0$.

The previously described notions
that characterize indistinguishability by color
refinement generalize to oblivious $k$-WL:
First of all,
oblivious $(k+1)$-WL does not distinguish graphs $G$ and $H$
if and only if the number of homomorphisms $\hom(F, G)$
from $F$ to $G$ is equal to the corresponding number $\hom(F, H)$ from $F$ to $H$
for every graph $F$
of treewidth at most $k$ \cite{Dvorak2010, Dell2018}.
A system of linear equations $\Lkk(G, H)$, which
is closely related to the Sherali-Adams relaxations
of the system of linear equations encoding graph isomorphism,
generalizes the concept of fractional isomorphisms:
oblivious $k$-WL does not distinguish $G$ and $H$ if and only if
$\Lkk(G, H)$ has a non-negative real solution,
cf.\  \cite{Dell2018} and also \cite{ImmermanLander1990, AtseriasManeva2013, GroheOtto2015}.
The precise formulation of $\Lkk(G, H)$ is given in \Cref{sec:opHierarchies}.
Stable partitions of the vertex set $V(G)$ of a graph $G$
can be generalized to \emph{stable partitions of $V(G)^k$},
where tuples with different atomic types are in different classes
and, for every $j \in [k]$, all tuples in the same class have the same number of
$j$-neighbors in every other class.
One can again show that the coloring computed by $k$-WL on $G$
induces the \textit{coarsest stable
partition of $V(G)^k$}
and two graphs $G$ and $H$
are not distinguished by $k$-WL if and only if
the coarsest stable partitions of $V(G)^k$ and $V(H)^k$
have the same parameters, which again is equivalent
to there being some stable partitions with the same parameters.
See for example \cite{GroheOtto2015}, where this is implicitly treated.

\begin{remark}
    \citeauthor{ImmermanLander1990} \cite{ImmermanLander1990}
    first showed that
    fractional isomorphism of graphs can also be seen from the perspective of logic;
    it corresponds to equivalence
    in the logic $\mathsf{C}^2$, the $2$-variable fragment
    of first-order logic with counting quantifiers.
    More generally, indistinguishability
    by oblivious $k$-WL corresponds to equivalence in $\mathsf{C}^{k}$,
    the $k$-variable fragment
    of first-order logic with counting quantifiers \cite{cai_optimal_1992}.
    However, this perspective
    does not play a further role in this paper.
\end{remark}

\subsection{Graphons and Homomorphism Densities}
\label{sec:graphonsIntroduction}

\textit{Graphons} emerged in the theory of graph limits as
limit objects of sequences of dense graphs;
we refer to the book of \citeauthor{Lovasz2012} \cite{Lovasz2012}
for a comprehensive treatment of this topic.
Formally, a graphon is a symmetric Borel- or Lebesgue-measurable
(this usually does not make a difference) function
$W \colon [0,1] \times [0,1] \to [0,1]$,
although it can be useful
to consider more general underlying spaces
than the unit interval with the Lebesgue measure.
A graph $G$ can be viewed as a graphon $W_G$
by partitioning $[0,1]$ into intervals $I_1, \dots, I_n$
of the same size---one for each vertex---and
setting $W_G(x, y)$ either to $1$ or $0$ for all $x \in I_i,\, y \in I_j$
depending on whether $ij$ is an edge in $G$ or not.
Similarly, a vertex- and edge-weighted graph $H$
with edge weights in $[0,1]$
can be viewed as graphon $W_H$, cf.\ \cite[Section $7.1$]{Lovasz2012}.
This allows one to restore statements about graphs
and weighted graphs from statements about graphons, e.g., the equivalence
between the notions characterizing fractional isomorphism of graphs
from the results of \citeauthor{GrebikRocha2021}.

We follow Greb\'ik and Rocha, and
throughout the whole paper,
let $(X, \Borel)$ denote
a \textit{standard Borel space}
and $\mu$ a \textit{Borel probability measure} on $X$;
using this as the underlying space for graphons
has the advantage that we later can consider \textit{quotient spaces}.
We think of $(X, \Borel, \mu)$ as \textit{atom free}, i.e., that
there is no singleton set of positive measure,
but do not formally require it.
A \textit{kernel} is a ($\Borel \otimes \Borel$)-measurable
map $W \colon X \times X \to [0,1]$, and
a symmetric kernel is called a \textit{graphon}.
An important way to view a kernel $W \colon X \times X \to [0,1]$
as an \textit{(bounded linear) operator} is by defining the
\textit{kernel operator} $T_W \colon \LTwo \to \LTwo$ by setting
\begin{equation}
    (T_W f) (x) \coloneqq \int_X W(x,y) f(y) \dmu(y)
    \label{eq:kernelOperator}
\end{equation}
for every $f \in \LTwo$ and every $x \in X$.
It is a well-defined Hilbert-Schmidt operator\cite[Section~$7.5$]{Lovasz2012},
and if $W$ is a graphon, then $T_W$ is self-adjoint,
i.e., its \textit{Hilbert adjoint} $T_W^*$ satisfies $T_W^* = T_W$;
in general,
the Hilbert adjoint of an operator
$S \colon \LTwo \to \LTwoY$,
where
$(X, \Borel)$ and $(Y, \mathcal{D})$ are standard Borel spaces
with Borel probability measures $\mu$ and $\nu$ on $X$ and $Y$, respectively,
is the unique operator
$S^* \colon \LTwoY \to \LTwo$
satisfying $\langle S f, g \rangle = \langle f, S^* g \rangle$
for all $f \in \LTwo, g \in \LTwoY$.

The \textit{homomorphism density of a graph $F$ in a graphon $W \colon X \times X \to [0,1]$} is
\begin{equation}
    t(F, W) \coloneqq \int_{X^{V(F)}} \prod_{ij \in E(F)} W(x_i, x_j) \, d\mu^{\otimes V(F)}(\bar{x}),
    \label{eq:homDensity}
\end{equation}
where $\bar{x}$ denotes the vector of all variables $x_i$ for $i \in V(F)$.
This coincides with the previous definition for graphs,
i.e.,
for the graphon $W_G$ obtained from a graph $G$
as  described above,
we have $t(F, G) = t(F, W_G)$ \cite[$(7.2)$]{Lovasz2012}.
Two graphons $U, W \colon X \times X \to [0,1]$
are called \textit{weakly isomorphic}
if $t(F,U) = t(F, W)$ for every simple graph $F$.
This is the usual notion of isomorphism used for graphons
and has various characterizations.
For example, two graphons $U$ and $W$ are weakly isomorphic if and only if
their \textit{cut distance} $\delta_\square(U, W)$ is zero,
cf.\ \cite[Section $10.7$]{Lovasz2012} for this result
and the definition of the cut distance.
The definition of weak isomorphism via homomorphism
densities is robust in the following sense:
A \textit{multigraph} is a tuple $G = (V, E)$
where $V$ is set of vertices and
$E$ is a \textit{multiset} of edges from $\binom{V}{2}$.
The definition
of the homomorphism density $t(F, W)$ of $F$ in
a graphon $W \colon X \times X \to [0,1]$
in Equation (\ref{eq:homDensity})
extends to the case in which $F$ is a multigraph,
where we slightly abuse notation and assume that
each factor $W(x_i, x_j)$ occurs as often in the product
$\prod_{ij \in E(F)} W(x_i, x_j)$ as $ij$ is contained in $E(F)$.
Then, two graphons $U$ and $W$ are weakly isomorphic
if and only if $t(F, U) = t(F, W)$ for every multigraph $F$
\cite[Corollary $10.36$]{Lovasz2012}, i.e.,
the definition of weak isomorphism
remains unchanged if we use multigraphs instead of simple graphs.

Coming from the definition of weak isomorphism,
the first definition in the paper of
\citeauthor{GrebikRocha2021} is to call
two graphons $U$ and $W$ \emph{fractionally isomorphic}
if $t(T, U) = t(T, W)$ holds for every tree $T$.
We follow suit with the following definition,
where the treewidth of a multigraph is defined
analogously to the case of simple graphs,
i.e., the edge multiplicities are not taken into account
and parallel edges count as a single edge.
\begin{definition}
    \label{def:kWLIndistinguishability}
    Two graphons $U, W \colon X \times X \to [0,1]$
    are called \emph{$k$-WL indistinguishable}
    if $t(T, U) = t(T, W)$ holds for every multigraph of treewidth at most $k$,
    and $U$ and $W$ are called \emph{simply $k$-WL indistinguishable}
    if $t(T, U) = t(T, W)$ holds for every simple graph of treewidth
    at most $k$.
\end{definition}

In these terms, two graphons $U$ and $W$ are weakly isomorphic
if and only if they are $k$-WL indistinguishable for every $k$,
which again is equivalent to them being
simply $k$-WL indistinguishable for every $k \ge 1$.
However, while $k$-WL indistinguishability clearly implies
simple $k$-WL indistinguishability, the converse does not hold in general.
To illustrate this,
let $E_\ell$ denote the multigraph that consists of two vertices
connected by $\ell$ parallel edges. Then, the homomorphism densities
of $E_\ell$ in a graphon $W \colon X \times X \to [0,1]$ for $\ell \ge 0$
are precisely the moments of $W$, i.e., we have
\begin{equation*}
    t(E_\ell, W) = \int_{X \times X} W(x,y)^\ell d (\mu \times \mu)(x,y).
\end{equation*}
Hence, for graphons $U$ and $W$ to be $1$-WL indistinguishable,
it is already necessary that they have the same moments,
while this is not required for $U$ and $W$ to be fractionally isomorphic.
For example, the constant graphon $W_c \colon X \times X \to [0,1], (x,y) \mapsto c$
for $c \in [0,1]$
is fractionally isomorphic to any $c$-regular graphon, i.e.,
a graphon $W \colon X \times X \to [0,1]$
satisfying $\int_X W(x,y) d \mu(y) = c$ for every $x \in X$,
but these may not have the same moments.
A concrete example is given by the weighted graphs in
\Cref{fig:parallelEdgesCounterexample}.

\begin{figure}
    \centering
    \begin{tikzpicture}
		\node[vertex] (E1) {};
        \node[vertex, below left = 1cm and 0.55cm of E1] (E2) {};
		\node[vertex, below right = 1cm and 0.55cm of E1] (E3) {};
        \path[draw, thick] (E1) edge node[xshift = -6pt, yshift = 4pt] {$\frac{2}{3}$} (E2);
        \path[draw, thick] (E1) edge node[xshift = 8pt, yshift = 4pt] {$\frac{2}{3}$} (E3);
        \path[draw, thick] (E2) edge node[xshift = 0pt, yshift = -8pt] {$\frac{2}{3}$} (E3);
        \path[every loop/.style={in = 120, out = 60}, draw, thick]
            (E1) edge[loop] node[above] {$\frac{2}{3}$} (E1);
        \path[every loop/.style={in = 240, out = 180}, draw, thick]
            (E2) edge[loop] node[left] {$\frac{2}{3}$} (E2);
        \path[every loop/.style={in = 360, out = 300}, draw, thick]
            (E3) edge[loop] node[right] {$\frac{2}{3}$} (E3);

		\node[vertex, right = 5cm of E1] (v1) {};
        \node[vertex, below left = 1cm and 0.55cm of v1] (v2) {};
		\node[vertex, below right = 1cm and 0.55cm of v1] (v3) {};
        \path[draw, thick] (v1) edge (v2);
        \path[draw, thick] (v1) edge (v3);
        \path[draw, thick] (v2) edge (v3);
    \end{tikzpicture}
    \caption{Two fractionally isomorphic weighted graphs that are $1$-WL distinguishable.}
    \label{fig:parallelEdgesCounterexample}
\end{figure}

Since $1$-WL indistinguishability is a more restrictive notion
than fractional isomorphism,
\Cref{def:kWLIndistinguishability} also
introduces simple $k$-WL indistinguishability.
Simple $1$-WL indistinguishability is just fractional isomorphism
since, for any class of graphs closed under connected components,
the homomorphism densities of the connected components determine
the homomorphism densities of all graphs in that class,
cf.\ \cite[$(7.6)$]{Lovasz2012}.
However counter-intuitive it may seem at this point,
the characterizations we obtain for
$k$-WL indistinguishability are much more natural than
these for simple $k$-WL indistinguishability.
In particular, the adaption of oblivious $(k+1)$-WL
to graphons corresponds to $k$-WL indistinguishability
and not simple $k$-WL indistinguishability.

As a last remark, we note that for $\{0,1\}$-graphons, i.e.,
graphons that only take the values $0$ and $1$,
parallel edges do not make a difference for homomorphism
densities since powers of $1$
are just $1$.
Hence, for $\{0,1\}$-graphons---of which graphs, or more precisely
graphons obtained from graphs, are a special case---,
the two notions of $k$-WL indistinguishability
and simple $k$-WL indistinguishability coincide.
In particular, $\{0,1\}$-graphons are $1$-WL indistinguishable
if and only if they are fractionally isomorphic.
Hence, while the terms color refinement and $1$-WL
are usually used synonymously in the literature,
it is important to not confuse these concepts
as they differ in the more general case of graphons.

\subsection{Fractional Isomorphism of Graphons}
\label{sec:fracIsoGraphons}

In this section, we try to give a more formal but still high-level overview
over the notions introduced by \citeauthor{GrebikRocha2021}
that all characterize fractional isomorphism of graphons.
We deem this essential for understanding our results.

Stating the color-refinement algorithm
for graphons
requires more formalism than in the case of graphs.
Greb\'ik and Rocha first define the standard Borel space $\MM$
of \textit{iterated degree measures},
which can be seen as the space of colors used by color refinement;
its elements are sequences $\alpha = (\alpha_0, \alpha_1, \alpha_2, \dots)$
of colors after $0,1,2, \dots$ refinement rounds.
For a graphon $W \colon X \times X \to [0,1]$,
they define a measurable function
$\colref_{W} \colon X \to \MM$
(denoted $i_W$ in their work)
mapping every $x \in X$ to such a sequence
$(\alpha_0, \alpha_1, \alpha_2, \dots)$.
Then, the \textit{distribution on iterated degree measures (DIDM)} $\nu_W$
is a probability measure on $\MM$ defined as
the \textit{push-forward of $\mu$ via $\colref_W$}, i.e., by
$\nu_W(A) \coloneqq \mu(\colref_W^{-1}(A))$ for every $A \in \Borel(\MM)$.
Intuitively, this is the distribution of all colors
assigned to the points of the graphon $W$
and corresponds to the multiset of all colors
used in the definition of color-refinement indistinguishability of graphs.

\citeauthor{GrebikRocha2021} show that
the graphon analogue to
stable partitions of the vertex set of a graph
are \emph{sub-$\sigma$-algebras} that satisfy certain properties.
To gain some intuition, consider the sub-$\sigma$-algebra $\{\emptyset, X\}$
of $\Borel$: in some way, it corresponds
to the partition of the vertex set of a graph that
consists of a single class containing all vertices.
Formally,
\citeauthor{GrebikRocha2021} consider \textit{$\mu$-relatively complete}
sub-$\sigma$-algebras,
where a sub-$\sigma$-algebra $\subAlg \subseteq \Borel$ of $\Borel$
is called \textit{$\mu$-relatively complete}
if $Z \in \subAlg$
for all $Z \in \Borel,\, Z_0 \in \subAlg$ with $\mu(Z \triangle Z_0) = 0$.
The set of all $\mu$-relatively complete sub-$\sigma$-algebras
of $\Borel$ is denoted by $\subAlgs$.
Then, for our example,
the smallest $\mu$-relatively complete sub-$\sigma$-algebra that includes
$\Set{\emptyset, X}$ corresponds
to the partition of the vertex set of a graph that
consists of a single class containing all vertices.

Let $\subAlg \in \subAlgs$ be a $\mu$-relatively complete
sub-$\sigma$-algebra $\subAlg$ of $\Borel$.
If we let $\LTwo \coloneqq \LTwoLong$ denote the Hilbert space
of all measurable real-valued functions on $X$ with
$\lVert f \rVert_2 < \infty$
modulo equality $\mu$-almost everywhere,
we can then consider the subspace
$L^2(X, \subAlg, \mu)$
of $\LTwo$
consisting of all $\subAlg$-measurable functions
since $\subAlg$ is $\mu$-relatively complete.
Moreover, there is a quotient space corresponding to $\subAlg$, i.e.,
a standard Borel space $(X/\subAlg, \subAlg')$ with
a Borel probability measure $\mu / \subAlg$ on $X/\subAlg$.
Alternatively, the \textit{conditional expectation $\ExpVal{\subAlg}$}, i.e.,
the orthogonal projection onto $\LTwoSub$, yields a different but equivalent
perspective on quotient spaces.

Now, to connect sub-$\sigma$-algebras
to \emph{stable} partitions, for an operator $T \colon \LTwo \to \LTwo$,
a $\mu$-relatively complete sub-$\sigma$-algebra
$\subAlg \in \subAlgs$ is called \textit{$T$-invariant}
if $\LTwoSub$ is $T$-invariant, i.e.,
$T(\LTwoSub) \subseteq \LTwoSub$.
Then, for a graphon $W \colon X \times X \to [0,1]$,
the $\mu$-relatively complete sub-$\sigma$-algebra $\subAlg \in \subAlgs$
is called \textit{$W$-invariant} if it is $T_W$-invariant,
where we recall that $T_W$ is the operator defined by
\Cref{eq:kernelOperator}.
\citeauthor{GrebikRocha2021} show that
there is a minimum $W$-invariant $\mu$-relatively complete sub-$\sigma$-algebra
$\subAlg_W$ of $\Borel$ (denoted $\subAlg(W)$ in their work),
which corresponds to the coarsest stable partition of the vertex set of a graph.

Finally, for a graphon $W \colon X \times X \to [0,1]$
and a $W$-invariant $\mu$-relatively
complete sub-$\sigma$-algebra $\subAlg \in \subAlgs$,
\citeauthor{GrebikRocha2021} define the
\textit{quotient graphon}
$W / \subAlg$ on the space $X / \subAlg \times X / \subAlg$.
Then, for graphons $U, W \colon X \times X \to [0,1]$,
saying that the two quotient graphons $U/\subAlg_U$
and $W / \subAlg_W$
are isomorphic corresponds
to saying that two coarsest stable partitions have the same parameters.
Alternatively to the quotient graphon $W / \subAlg$, one can also
consider $W_\subAlg \coloneqq \E(W \mid \subAlg \times \subAlg)$,
the conditional expectation of $W$ given $\subAlg \times \subAlg$.
Intuitively, the difference is that $W_\subAlg$ is obtained by
simply averaging over the color classes of $\subAlg$,
while $W/\subAlg$ is obtained by first averaging over the color classes of $\subAlg$
and then identifying all elements of a color class.

\begin{remark}
    Greb\'ik and Rocha show that
    every DIDM $\nu$ defines a kernel $\MM \times \MM \to [0,1]$
    and that,
    for a graphon $W \colon X \times X \to [0,1]$ and its DIDM $\nu_W$,
    this kernel on $\MM \times \MM$ is actually isomorphic to $W/\subAlg_W$.
    Intuitively, this can be viewed as a canonical representation of $W$
    on the space of all colors.
\end{remark}

For standard Borel spaces $(X, \Borel)$ and $(Y, \mathcal{D})$
with Borel probability measures $\mu$ and $\nu$ on $X$ and $Y$, respectively,
an operator $S \colon \LTwo \to \LTwoY$ is called a \textit{Markov operator}
if $S f \ge 0$ for every $f \in \LTwo$ with $f \ge 0$, $S \allOne_X = \allOne_Y$,
and $S^* \allOne_Y = \allOne_X$.
Here, $\allOne_X$ and $\allOne_Y$ denote the
all-one functions on $X$ and $Y$, respectively.
The Markov operator $S$ is called a \textit{Markov embedding} if it is an isometry,
i.e., $\normT{S f} = \normT{f}$ for every $f \in \LTwo$,
and a \textit{Markov isomorphism} if it is a surjective Markov embedding.
Markov operators are simply the infinite-dimensional
analogue to doubly stochastic matrices
and yield the graphon analogue to fractional isomorphisms.
The main result of Greb\'ik and Rocha then is the following \Cref{th:colRefGraphons}.

\begin{theorem}[{\cite{GrebikRocha2021}}]
    \label{th:colRefGraphons}
    Let $U, W \colon X \times X \to [0,1]$ be graphons.
    The following are equivalent:
    \begin{enumerate}
        \itemsep0em
        \item
            $t(T, U) = t(T, W)$ for every tree $T$.
            \label{th:colRefGraphons:hom}
        \item $\nu_U = \nu_W$.
            \label{th:colRefGraphons:DIDM}
        \item
            $W / \subAlg_W$ and $U / \subAlg_U$ are isomorphic.
            \label{th:colRefGraphons:minInvariant}
        \item
            There is a Markov operator
            $S \colon \LTwo \to \LTwo$
            such that
            $T_U \circ S = S \circ T_W$.
            \label{th:colRefGraphons:markov}
        \item
            There are $U$- and $W$-invariant $\mu$-relatively complete sub-$\sigma$-algebras
            $\subAlg$ and $\subAlgD$, respectively,
            such that $U_\subAlg$ and $W_\subAlgD$ are weakly isomorphic.
            \label{th:colRefGraphons:invariant}
    \end{enumerate}
\end{theorem}

Recall the various
notions characterizing fractional isomorphism of graphs
presented in \Cref{sec:finiteGraphs}.
Characterization~$(\ref{th:colRefGraphons:hom})$
corresponds to homomorphism numbers of trees and is the definition of
fractional isomorphism of graphons,
Characterization~$(\ref{th:colRefGraphons:DIDM})$
corresponds to color refinement not distinguishing two graphs,
Characterization~$(\ref{th:colRefGraphons:minInvariant})$
corresponds to the coarsest stable partitions of two graphs
having the same parameters,
Characterization~$(\ref{th:colRefGraphons:markov})$
generalizes fractional isomorphisms, and
Characterization~$(\ref{th:colRefGraphons:invariant})$
corresponds to some stable partitions of two graphs
having the same parameters.
We remark that
there is a subtle difference in the way
Greb\'ik and Rocha phrase
Characterization~(\ref{th:colRefGraphons:minInvariant})
and (\ref{th:colRefGraphons:invariant}):
the former uses quotient spaces and the stronger notion of isomorphism
while the latter uses
conditional expectation and weak isomorphism.

\subsection{Weisfeiler-Leman Indistinguishability of Graphons}
\label{sec:kWLGraphons}

We continue in the vein of \Cref{sec:fracIsoGraphons}
and give an overview of the notions characterizing $k$-WL
indistinguishability of graphons before stating our main result,
\Cref{th:WLForGraphons}.
To generalize the oblivious $k$-WL to graphons,
we first define the standard Borel space $\Mk$,
which may not be confused with the product of $k$ copies of $\MM$.
$\Mk$ is the $k$-dimensional analogue to $\MM$
and can again be seen as the space of colors used by oblivious $k$-WL.
Its elements $\alpha = (\alpha_0, \alpha_1, \alpha_2, \dots)$ are
sequences, which can be viewed as the colors
assigned to points of a graphon after
$0,1,2,\dots$ refinement rounds.
Based on the definition of oblivious $k$-WL for graphs,
we define the measurable function $\owlk_W \colon X^k \to \Mk$
mapping a $k$-tuple $\bar{x} \in X^k$ to a sequence
$(\alpha_0, \alpha_1, \alpha_2, \dots)$.
In particular, $\alpha_0$ corresponds to the atomic type of a tuple of vertices
and contains the values $W(x_i, x_j)$ for $ij \in \binom{[k]}{2}$.
This further explains the difference between $1$-WL indistinguishability
and fractional isomorphism:
Already the first step of oblivious $2$-WL,
which corresponds to $1$-WL indistinguishability,
completely determines the distribution of values attained
by a graphon.
In contrast, these distributions do not have to be equal for graphons
to be fractionally isomorphic, cf.\
\Cref{sec:graphonsIntroduction} and \Cref{fig:parallelEdgesCounterexample}.

To get an intuition of how the refinement step of oblivious $k$-WL adapts to
graphons, recall how oblivious $k$-WL computes the new color
$\owl^k_{G, n+1}(\bar{v})$ of a tuple $\bar{v} \in V(G)^k$ for a graph $G$
in Equation~(\ref{eq:obliviouskWL}):
$\owl^k_{G, n+1}(\bar{v})$ is a tuple consisting of the old
color $\owl^k_{G, n}(\bar{v})$ of $\bar{v}$ and, for every $j \in [k]$,
the multiset
\begin{equation*}
    \MSet{\owl^k_{G, n+1}(\bar{v}[w/j] \mid w \in V(G))}
\end{equation*}
of colors of all $j$-neighbors of $\bar{v}$.
For a graphon $W$, this multiset becomes the probability measure
\begin{equation*}
    A \mapsto \mu(\{y \in X \mid \owlk_{W,n}(\bar{x}[y/j]) \in A\}),
\end{equation*}
where $A$ is a set of \enquote{colors} used in the $n$th refinement step,
i.e., we determine the mass of the $j$-neighbors of $\bar{x}$
having a color in $A$.
Compiling these probability measures for every $j \in [k]$
into a single tuple then yields the new color $\owlk_{W, n+1}(\bar{x})$ of $\bar{x}$.
The sequence of all these colors $\owlk_{W, n}(\bar{x})$
for $n = 0, 1, 2, \dots$ then yields the mapping $\owlk_W \colon X^k \to \Mk$,
and we define the
\textit{$k$-WL distribution ($k$-WLD)}
$\nu^k_W$ as the push-forward of the product measure $\mu^{\otimes k}$
via $\owlk_W$.
Then, $\nu^k_W$
is a probability measure on $\Mk$
corresponding to the multiset of colors
computed by oblivious $k$-WL on a graph.

The central idea for getting
from fractional isomorphism to $k$-WL indistinguishability
is to replace the single operator $T_W \colon \LTwo \to \LTwo$
of a graphon $W \colon X \times X \to [0,1]$
by a family $\naturalKFamilyOperatorsShortW$ of operators
on the product space $\LTwoProdK \coloneqq \LTwoProdKLong$.
This family $\naturalKFamilyOperatorsShortW$ is
indexed by a set $\adjNeiGraphs^k$ of \textit{bi-labeled graphs}:
a bi-labeled graph $\bm{G}$ is a triple $(G, \bm{a}, \bm{b})$,
where $G$ is a multigraph and
$\bm{a} \in V(G)^k$, $\bm{b} \in V(G)^\ell$ for $k,\ell \ge 0$ are tuples 
of vertices
such that both the entries of $\bm{a}$ and
the entries of $\bm{b}$ are pairwise distinct;
$\bm{a}$ and $\bm{b}$ may however overlap.
The set~$\adjNeiGraphs^k$ is carefully chosen such that
its bi-labeled graphs, together with specific operations,
serve as building blocks
to construct precisely the graphs of treewidth at most $k-1$.
It contains two types of bi-labeled graphs:
\emph{adjacency graphs} and \emph{$j$-neighbor graphs},
where intuitively, adjacency graphs insert an edge into a bag of a tree decomposition
and $j$-neighbor graphs move from one bag
of a tree decomposition to another by replacing a vertex
by a fresh one.
For a simple example,
consider \Cref{fig:fourcycle},
where $k = 3$ and a tree decomposition of the cycle $C_4$
is dissected into a sequence of bi-labeled graphs.
Here, $\adjacencyGraphOf{3}{12}$ and $\adjacencyGraphOf{3}{23}$
are specific instances of adjacency graphs,
while $\neighborGraphOf{3}{2}$ is a $j$-neighbor graph.
By \enquote{gluing} the output vertices of one bi-labeled
graph to the input vertices of the next bi-labeled graph
in the depicted order, we obtain $\mathbf{C}_4$,
a bi-labeled variant of $C_4$ with input labels
on the vertices from the upper bag and
output labels on the vertices from the lower bag:
going bottom up,
$\adjacencyGraphOf{3}{12}$ and $\adjacencyGraphOf{3}{23}$
first insert the edges $v_1 v_4$ and $v_4 v_3$,
then $\neighborGraphOf{3}{2}$ replaces
$v_4$ by $v_2$, and finally,
$\adjacencyGraphOf{3}{12}$ and $\adjacencyGraphOf{3}{23}$
insert the edges $v_1 v_2$ and $v_2 v_3$.

\begin{figure}
    \centering
    \begin{tikzpicture}
        \node[vertex, label={90:$v_2$}] (V2) {};
        \node[vertex, label={180:$v_1$}, below left = 0.7cm and 0.7cm of V2] (V1) {};
        \node[vertex, label={0:$v_3$}, below right = 0.7cm and 0.7cm of V2] (V3) {};
        \node[vertex, label={270:$v_4$}, below right = 0.7cm and 0.7cm of V1] (V4) {};
        \path[draw, thick]
            (V1) edge (V2)
            (V2) edge (V3)
            (V1) edge (V4)
            (V4) edge (V3);

        \node[below = 3.0cm of V2, yshift = -0.1cm] (BAG1) {$\{v_1,\, v_2,\, v_3\}$};
        \node[below = of BAG1] (BAG2) {$\{v_1,\, v_4,\, v_3\}$};
        \path[draw, thick] (BAG1) edge (BAG2);

        \node[vertex, label={90:$a_1$}, label={270:$b_1$}, right = 5cm of V2] (A1) {};
        \node[vertex, label={90:$a_2$}, label={270:$b_2$}, right = of A1] (A2) {};
        \node[vertex, label={90:$a_3$}, label={270:$b_3$}, right = of A2] (A3) {};
        \node[left = 0.3cm of A1, scale = 1.0] (ALabel){$\adjacencyGraphOf{3}{12}\colon$};
        \path[draw, thick] (A1) edge (A2);

        \node[vertex, label={90:$a_1$}, label={270:$b_1$}, below = 1.1cm of A1] (B1) {};
        \node[vertex, label={90:$a_2$}, label={270:$b_2$}, right = of B1] (B2) {};
        \node[vertex, label={90:$a_3$}, label={270:$b_3$}, right = of B2] (B3) {};
        \node[left = 0.3cm of B1, scale = 1.0] (BLabel){$\adjacencyGraphOf{3}{23}\colon$};
        \path[draw, thick] (B2) edge (B3);

        \node[vertex, label={90:$a_1$}, label={270:$b_1$}, below = 1.4cm of B1] (N1) {};
        \node[vertex, fill=white, draw=white, right = of N1] (N2g) {};
        \node[vertex, label={90:$a_2$}, above = 0.1cm of N2g] (N2a) {};
        \node[vertex, label={270:$b_2$}, below = 0.1cm of N2g] (N2b) {};
        \node[vertex, label={90:$a_3$}, label={270:$b_3$}, right = of N2g] (N3) {};
        \node[left = 0.3cm of N1, scale = 1.0] (NLabel){$\neighborGraphOf{3}{2}\colon$};

        \node[vertex, label={90:$a_1$}, label={270:$b_1$}, below = 1.4cm of N1] (C1) {};
        \node[vertex, label={90:$a_2$}, label={270:$b_2$}, right = of C1] (C2) {};
        \node[vertex, label={90:$a_3$}, label={270:$b_3$}, right = of C2] (C3) {};
        \node[left = 0.3cm of C1, scale = 1.0] (CLabel){$\adjacencyGraphOf{3}{12}\colon$};
        \path[draw, thick] (C1) edge (C2);

        \node[vertex, label={90:$a_1$}, label={270:$b_1$}, below = 1.1cm of C1] (D1) {};
        \node[vertex, label={90:$a_2$}, label={270:$b_2$}, right = of D1] (D2) {};
        \node[vertex, label={90:$a_3$}, label={270:$b_3$}, right = of D2] (D3) {};
        \node[left = 0.3cm of D1, scale = 1.0] (DLabel){$\adjacencyGraphOf{3}{23}\colon$};
        \path[draw, thick] (D2) edge (D3);
    \end{tikzpicture}
    \caption{The cycle $C_4$ on four vertices with a tree decomposition.
             On the right, this tree-decomposed graph is written as
             a sequence of bi-labeled graphs.}
    \label{fig:exampleBiLabeledGraphs}
    \label{fig:fourcycle}
\end{figure}

Every bi-labeled graph $\bm{F} \in \adjNeiGraphs^k$
together with a graphon $W \colon X \times X \to [0,1]$ defines
a \textit{graphon operator} $\OperatorFromTo{\bm{F}}{W}$
on $\LTwoProdK$.
Then, $\naturalKFamilyOperatorsShortW
\coloneqq (\OperatorFromTo{\bm{F}}{W})_{\bm{F} \in \adjNeiGraphs^k}$ denotes
the family of all these operators for bi-labeled graphs in $\adjNeiGraphs^k$.
The graphon operators of adjacency graphs just
multiply a given function $f(\bar{x})$ by the value $W(x_i, x_j)$
for fixed $i,j$.
The graphon operator of a $j$-neighbor graph, on the other hand,
averages in the direction $j$, i.e., integrates the given function $f(\bar{x})$
over the $j$th component.
As an example,
the graphon operators of the bi-labeled graphs from
\Cref{fig:exampleBiLabeledGraphs}
for a graphon $W \colon X \times X \to [0,1]$
are given by
$(\OperatorFromTo{\adjacencyGraphOf{3}{12}}{W} f)(x_1, x_2, x_3)
    \coloneqq W(x_1, x_2) \cdot f(x_1, x_2, x_3)$,
$(\OperatorFromTo{\adjacencyGraphOf{3}{23}}{W} f)(x_1, x_2, x_3)
    \coloneqq W(x_2, x_3) \cdot f(x_1, x_2, x_3)$, and
\begin{align*}
(\OperatorFromTo{\neighborGraphOf{3}{2}}{W} f)(x_1, x_2, x_3)
        &\coloneqq \int_X f(x_1, y, x_3) \,d\mu(y)
\end{align*}
for all $x_1, x_2, x_3 \in X$.
At this point, the reader might already note the connection
of these graphon operators to the description
of oblivious $k$-WL for graphons given above as, intuitively,
the graphon operators of adjacency graphs are used in the initial coloring
and the graphon operators of $j$-neighbors are used in the refinement steps.

We will see that the composition of the graphon operators
corresponding to the sequence of bi-labeled graphs in \Cref{fig:exampleBiLabeledGraphs},
\begin{equation*}
    \OperatorFromTo{\adjacencyGraphOf{3}{12}}{W}
        \circ \OperatorFromTo{\adjacencyGraphOf{3}{23}}{W}
        \circ \OperatorFromTo{\neighborGraphOf{3}{2}}{W}
        \circ\OperatorFromTo{\adjacencyGraphOf{3}{12}}{W}
        \circ \OperatorFromTo{\adjacencyGraphOf{3}{23}}{W},
\end{equation*}
is precisely the graphon operator
$\OperatorFromTo{\mathbf{C}_4}{W}$ of $\mathbf{C}_4$,
i.e., the bi-labeled variant of
$C_4$ obtained by gluing together this sequence
of bi-labeled graphs.
Furthermore, one can verify that
\begin{equation*}
    \int_{X^3}
    \OperatorFromTo{\mathbf{C}_4}{W} \bm{1}_{X^3} \,d\mu^{\otimes 3}
    = t(C_4, W),
\end{equation*}
where $\bm{1}_{X^3}$ is the all-one function on $X^3$, i.e.,
the homomorphism density of $C_4$ in $W$ is determined by
the operator $\OperatorFromTo{\mathbf{C}_4}{W}$.
In greater generality, the homomorphism density of a bi-labeled graph in a graphon
can always be expressed via the graphon operator.

Since we are now working with operators on $\LTwoProdKLong$,
we consider $\mu^{\otimes k}$-relatively complete sub-$\sigma$-algebra
of $\Borel^{\otimes k}$ for an analogue to partitions of $V(G)^k$ for some graph $G$.
For a graphon $W \colon X \times X \to [0,1]$,
a $\mu^{\otimes k}$-relatively complete sub-$\sigma$-algebra
$\subAlg \in \subAlgsk$ of $\Borel^{\otimes k}$
is called \emph{$W$-invariant}
if it is \textit{$\naturalKFamilyOperatorsShortW$-invariant}, i.e.,
$T$-invariant for every operator $T$ in the family
$\naturalKFamilyOperatorsShortW$.
In the case $k = 1$, this conflicts with the earlier
definition of Greb\'ik and Rocha, but
it will always be clear from the context what we mean.
We show that there is a
minimum $W$-invariant $\mu^{\otimes k}$-relatively complete sub-$\sigma$-algebra
$\subAlg^k_W$ of $\Borel^{\otimes k}$.
The partitions of $V(G)^k$ induced by the colors of oblivious $k$-WL
are invariant under permutations, i.e., reordering the vertices of a tuple
yields a tuple in the same class.
Similarly, we show that $\subAlg^k_W$ is \emph{permutation invariant}
and also define the notion of
\textit{permutation-invariant} operators, i.e.,
operators where
a reordering of the $k$ components of $X^k$ yields the same operator.
This reflects the fact that,
in the system $\Lkk$ of linear equations characterizing
oblivious $k$-WL,
variables are indexed by sets and not by tuples.

We cannot give a meaningful definition of the
quotient of a graphon $W \colon X \times X \to [0,1]$ w.r.t.\
a $\mu^{\otimes k}$-relatively complete sub-$\sigma$-algebra $\subAlg \in \subAlgsk$
if $k \ge 2$.
Instead, we consider quotients of operators.
Intuitively,
for an operator $T$ on $\LTwoProdK$,
its \emph{quotient operator w.r.t.\ $\subAlg$}
on $\LTwoProdKQuo$, denoted by $T/\subAlg$,
is defined
by going from $\LTwoProdKQuo$ to $\LTwoProdK$, applying $T$,
and then going back to $\LTwoProdKQuo$.
Again, a different but equivalent definition
can also be given via conditional expectations
by letting $T_\subAlg \coloneqq \ExpValSub \circ T \circ \ExpValSub$.
Then, we can consider the families $\naturalKFamilyOperatorsShortW / \subAlg$
and $(\naturalKFamilyOperatorsShortW)_\subAlg$ of quotient operators
w.r.t.\ $\subAlg$.

We now state our main theorem,
\Cref{th:WLForGraphons}.
As mentioned before, it is based on oblivious $k$-WL,
so there is a mismatch between the $k$ in the treewidth,
i.e., the $k$ in $k$-WL indistinguishability,
and the other characterizations.
We note that, since
there are no quotient graphons involved in \Cref{th:kWLGraphons},
we also do not obtain
a canonical representation of a graphon $W \colon X \times X \to [0,1]$
as a graphon $\Mk \times \Mk \to [0,1]$ (or as multiple such graphons).
Instead, we define canonical representations of the operators in
$\naturalKFamilyOperatorsShortW$ on the space $\LTwoMkNuKW$ by hand.

\begin{restatable}{theorem}{maintheorem}
    \label{th:WLForGraphons}
    \label{th:kWLGraphons}
    Let $k \ge 1$
    and $U, W \colon X \times X \to [0,1]$ be graphons.
    The following are equivalent:
    \begin{enumerate}
        \item
            $t(F, U) = t(F, W)$ for every multigraph of treewidth at most $k-1$.
            \label{th:WLForGraphons:homomorphisms}
        \item $\nu^k_U = \nu^k_W$.
            \label{th:WLForGraphons:DIDM}
        \item
            There is a (permutation-inv.) Markov iso.\
            $R \colon \LTwoProdKQuoOf{\kSubAlg_W} \to \LTwoProdKQuoOf{\kSubAlg_U}$
            such that
            $\naturalKFamilyOperatorsShortU/\kSubAlg_U\circ R= R \circ \naturalKFamilyOperatorsShortW/\kSubAlg_W$.
            \label{th:WLForGraphons:minInvSubalgebra}
        \item
            There is a (permutation-inv.) Markov operator
            $S \colon \LTwoProdK \to \LTwoProdK$
            such that
            $\naturalKFamilyOperatorsShortU \circ S = S \circ \naturalKFamilyOperatorsShortW$.
            \label{th:WLForGraphons:markovOperator}
        \item
            There are
            $\mu^{\otimes k}$-relatively complete sub-$\sigma$-algebras $\subAlgC$ and $\subAlgD$
            of $\Borel^{\otimes k}$ that are $U$-invariant and $W$-invariant, respectively,
            and a Markov iso.\
            $R \colon \LTwoProdKQuoD \to \LTwoProdKQuo$
            such that
            $\naturalKFamilyOperatorsShortU/\subAlg \circ R= R \circ \naturalKFamilyOperatorsShortW/\subAlgD$.
            \label{th:WLForGraphons:markovIsomorphism}
    \end{enumerate}
\end{restatable}

The characterizations of \Cref{th:WLForGraphons}
are listed in the same order as
these in \Cref{th:colRefGraphons}.
Recall the various
notions characterizing $k$-WL indistinguishability of graphs
presented in \Cref{sec:finiteGraphs}.
Characterization~$(\ref{th:WLForGraphons:homomorphisms})$
corresponds to homomorphism numbers of graphs
of treewidth at most $k-1$ and is the definition of
$(k-1)$-WL indistinguishability of graphons.
We note that,
as in the case of simple graphs,
one could always assume the multigraphs in
Characterization (\ref{th:WLForGraphons:homomorphisms})
to be connected, cf.\ \cite[$(7.6)$]{Lovasz2012}.
For example, in the case $k = 2$, it
could equivalently be phrased in terms of
homomorphism densities of trees with
parallel edges.
Characterization~$(\ref{th:WLForGraphons:DIDM})$
states that the $k$-WLDs of the graphons are the same and
corresponds to oblivious $k$-WL not distinguishing two graphs.
Characterization~$(\ref{th:WLForGraphons:minInvSubalgebra})$
--$(\ref{th:WLForGraphons:markovIsomorphism})$
look very similar, but have a different focus:
Characterization~$(\ref{th:WLForGraphons:markovOperator})$
generalizes (non-negative real) solutions to the
system $\Lkk(G, H)$ of linear equations
by stating
that there is a Markov operator on the product space $\LTwoProdK$
that intertwines all operators in the families
$\naturalKFamilyOperatorsShortU$ and
$\naturalKFamilyOperatorsShortW$ simultaneously.
Permutation invariance can be left out without changing the equivalence
to the other characterizations, i.e.,
if there is a (not necessarily permutation-invariant)
Markov operator $S$ satisfying Characterization~(\ref{th:WLForGraphons:markovOperator}),
then there also is a permutation-invariant one.
Characterization~$(\ref{th:WLForGraphons:minInvSubalgebra})$ and
$(\ref{th:WLForGraphons:markovIsomorphism})$ on the other hand,
correspond to the (coarsest) stable partitions of vertex-tuples of two graphs
having the same parameters.
We note that there is a one-to-one correspondence between Markov isomorphisms
and measure-preserving almost bijections,
cf.\ \cite[Theorem E.$3$]{GrebikRocha2021}, but for the ease of presentation,
we stick to Markov isomorphisms.

\subsection{Overview and Further Remarks}
\label{sec:overview}
\label{sec:conclusions}

In \Cref{sec:preliminaries}, the preliminaries, we collect
some basics we need: we briefly visit
product spaces, Markov operators, and quotient spaces
before defining quotient operators.
\Cref{sec:graphons} formally introduces bi-labeled graphs
and graphon operators,
which are the key to both stating and proving \Cref{th:WLForGraphons}.
In particular, we define the set $\adjNeiGraphs^k$
of bi-labeled graphs from which we
are able to construct precisely the multigraphs of treewidth $k-1$,
and then, for a graphon $W$,
the family of graphon operators $\naturalKFamilyOperatorsShortW$.
\Cref{sec:WL} is the main section of this paper containing
the formal definitions of all notions in \Cref{th:kWLGraphons}
and, of course, its proof, for which we follow the
structure of Greb\'ik and Rocha \cite{GrebikRocha2021}.
Let us give a brief overview of the proof and how
the set $\adjNeiGraphs^k$ and the corresponding family of graphon operators
$\naturalKFamilyOperatorsShortW$ are used in it:
\begin{itemize}
    \item (\ref{th:WLForGraphons:homomorphisms}) $\implies$ (\ref{th:WLForGraphons:DIDM}):
        We use the set $\adjNeiGraphs^k$ of bi-labeled graphs
        to construct expressions that correspond to precisely the tree-decomposed graphs
        of treewidth at most $k-1$ (\Cref{sec:bilabeledgraphs}).
        This allows us to define a
        set $\Tk \subseteq C(\Mk, \R)$ of functions on $\Mk$ that corresponds
        to homomorphism densities of these graphs (\Cref{sec:WLIndistinguishability}).
        The Stone-Weierstrass Theorem yields that $\Tk$ is dense in $C(\Mk, \R)$,
        which implies that $k$-WLDs are determined by homomorphism densities.
    \item (\ref{th:WLForGraphons:DIDM}) $\implies$ (\ref{th:WLForGraphons:minInvSubalgebra}):
        We show that a $k$-WLD $\nu$ defines a family $\TT_\nu$
        of operators on $\LTwoMkNu$ (\Cref{sec:operatorsOnMeasures}).
        Then, we proceed to show that, for every graphon $W$, the family $\TT_{\nu^k_W}$
        is in some sense isomorphic to the family
        $\naturalKFamilyOperatorsShortW/\kSubAlg_W$
        of quotient operators of $\naturalKFamilyOperatorsShortW$.
    \item (\ref{th:WLForGraphons:minInvSubalgebra}) $\implies$ (\ref{th:WLForGraphons:markovOperator}):
        We combine facts we establish on quotient operators (\Cref{sec:quotientOperators})
        together with the fact that $\kSubAlg_U$ and $\kSubAlg_W$
        are $\naturalKFamilyOperatorsShortU$-
        and $\naturalKFamilyOperatorsShortW$-invariant, respectively.
    \item (\ref{th:WLForGraphons:markovOperator}) $\implies$ (\ref{th:WLForGraphons:markovIsomorphism}):
        This is a refined variant of the argument by
        \citeauthor{GrebikRocha2021} in \cite{GrebikRocha2021},
        which uses the Mean Ergodic Theorem
        for Hilbert spaces to Markov operators
        \cite[Theorem $8.6$, Example $13.24$]{EisnerEtAl2015}.
        We have condensed this argument into a standalone lemma, \Cref{th:markovOperatorToIsomorphism}.
    \item (\ref{th:WLForGraphons:markovIsomorphism}) $\implies$ (\ref{th:WLForGraphons:homomorphisms}):
        We again use the fact that the set $\adjNeiGraphs^k$ of bi-labeled graphs
        allows to construct expressions that correspond to precisely the tree-decomposed graphs
        of treewidth at most $k-1$ (\Cref{sec:bilabeledgraphs}).
        We use this to show that homomorphism densities of
        graphs of treewidth at most $k-1$ in a graphon
        $W$ can be expressed purely by expressions built from the operators in
        $\naturalKFamilyOperatorsShortW$ (\Cref{sec:graphonOperators}).
        Then, we use that Markov embeddings
        are compatible with point-wise products of functions,
        which for us means that intertwining Markov embeddings
        preserve homomorphism densities (\Cref{le:markovEmbeddingPreservesHomomorphisms}).
\end{itemize}
In \Cref{sec:simpleWL}, we show how
\Cref{th:kWLGraphons} can be modified to obtain
a characterization of simple $k$-WL indistinguishability, i.e.,
indistinguishability w.r.t.\ homomorphism densities
of simple graphs of treewidth at most $k$
instead of multigraphs.
However, the corresponding analogue to \Cref{th:WLForGraphons} obtained this way
is less elegant and has an artificial touch to it.
The reason for this is that the set of bi-labeled graphs one uses
instead of $\adjNeiGraphs^k$
is not closed under \emph{transposition}, which implies that
the corresponding family of operators is not closed under taking
Hilbert adjoints.
Most of the proofs in \Cref{sec:simpleWL}
are left out as they are mostly analogous to the ones
in \Cref{sec:WL}.

The original goal of this work was to define a \textit{$k$-WL distance}
of graphons
and to prove that it yields the same topology as treewidth-$k$
homomorphism densities, cf.\ \cite{Boeker2021},
where the result of Greb\'ik and Rocha is used to
prove such a result for the \textit{tree distance},
which is based on the characterization of fractional isomorphism
via Markov operators.
However, the approach in~\cite{Boeker2021} does not go well together
with \Cref{th:kWLGraphons} as
multigraph homomorphism densities define a non-compact topology
that is different
from the one obtained by the cut distance, cf.\
\cite[Exercise $10.26$]{Lovasz2012} or \cite[Lemma C.$2$]{Janson2013}.
Moreover, the characterization of simple $k$-WL indistinguishability
via Markov operators is also not well-suited for this
as the corresponding family of operators is not closed
under Hilbert adjoints.
Hence, it remains an open problem to define such a distance.

A different open problem is given by
the contemporaneous work of \citeauthor{grohe_homomorphism_2022}
\cite{grohe_homomorphism_2022}:
They do not focus on a specific set of bi-labeled graphs,
but use bi-labeled graphs to give a unified framework
to characterize graphs in terms of homomorphism numbers.
In particular, they obtain a characterization
of homomorphism numbers from graphs of bounded pathwidth.
It would be interesting to see if their framework, or at least
their work on graphs of bounded pathwidth, generalizes to graphons.

\section{Preliminaries}
\label{sec:preliminaries}

In this section, we briefly collect some
facts that we use throughout the paper.
\Cref{sec:standardBorelSpaces} concerns product spaces;
for a more complete reference,
we refer to \cite{Dudley2002, Billingsley1995}.
The definitions and results regarding
Markov operators in \Cref{sec:markovOperators} are taken from
\cite{EisnerEtAl2015}.
The treatment of quotient spaces in \Cref{subsec:algebras} is based on that of
\citeauthor{GrebikRocha2021} \cite{GrebikRocha2021}.
We then use these quotient spaces to define \textit{quotient operators}
in \Cref{sec:quotientOperators}.

\subsection{Product Spaces}
\label{sec:standardBorelSpaces}

Recall that, throughout the whole paper,
$(X, \Borel)$ denotes a \textit{standard Borel space},
i.e., $\Borel$ is the Borel $\sigma$-algebra of a Polish space,
and $\mu$ a \textit{Borel probability measure} on $X$.
We often consider the space $(X^k, \Borel^{\otimes k}, \mu^{\otimes k})$
with the product $\sigma$-algebra $\Borel^{\otimes k}$ of $\Borel$
and the product measure $\mu^{\otimes k}$ of $\mu$
for $k \ge 1$.
The product of a countable family of standard Borel spaces is again
a standard Borel space \cite[Section $12$.B]{Kechris1995}.
Moreover, for a countable family of standard Borel spaces,
its product $\sigma$-algebra is actually equal
to the Borel $\sigma$-algebra of the product topology
of the underlying Polish spaces
as Polish spaces are second countable \cite[Section $11$.A]{Kechris1995}.
Hence, the product space $(X^k, \Borel^{\otimes k})$
is again a standard Borel space and $\Borel^{\otimes k}$
is equal to the Borel $\sigma$-algebra
of the product topology of the Polish space underlying $(X, \Borel)$.
For simplicity, we identify the products $X \times X \times X$
and $(X \times X) \times X$ in the usual way.
Then, also $\Borel \otimes \Borel \otimes \Borel = (\Borel \otimes \Borel) \otimes \Borel$
and $\mu \otimes \mu \otimes \mu = (\mu \otimes \mu) \otimes \mu$
\cite[Section 18]{Billingsley1995}.
We treat higher-order products in the same way.

We often use the Tonelli-Fubini theorem,
cf.\ \cite[Theorem $4.4.5$]{Dudley2002} and also \cite[Theorem $18.3$]{Billingsley1995},
which states that, for $\sigma$-finite measure spaces
$(X, \mathcal{S}, \mu)$ and $(Y, \mathcal{T}, \nu)$
and a non-negative function $f$ on $X \times Y$ that
is measurable for $\mathcal{S} \otimes \mathcal{T}$,
we have
\begin{equation*}
    \int_{X \times Y} f \, d (\mu \times \nu)
    = \int_{X} \int_{Y} f(x, y) \, d \nu(y) \, d \mu(x)
    = \int_{Y} \int_{X} f(x, y) \, d \mu(x) \, d \nu(y).
\end{equation*}
In particular, the functions $x \mapsto \int_Y f(x,y) \, d \nu(y)$
and $y \mapsto \int_X f(x,y) \, d \mu(x)$ are measurable for $\mathcal{S}$
and $\mathcal{T}$, respectively.
If $f$ is not necessarily non-negative but integrable with respect to $\mu \times \nu$,
then the same equations hold and the aforementioned functions
are measurable on sets $X'$ and $Y'$ with $\mu(X \setminus X') = 0$
and $\nu(Y \setminus Y') = 0$, respectively.

\subsection{Markov Operators}
\label{sec:markovOperators}

In general,
for a measure space $(X, \mathcal{S}, \mu)$ and $1 \le p \le \infty$,
the space $\mathcal{L}^p(X, \mu) \coloneqq \mathcal{L}^p(X, \mathcal{S}, \mu)$
consists of all measurable real-valued functions on $X$ with
$\lVert f \rVert_p < \infty$,
and $L^p(X, \mu) \coloneqq L^p(X, \mathcal{S}, \mu)$
is obtained from $\mathcal{L}^p(X, \mu)$
by identifying functions that are equal
$\mu$-almost everywhere.
The space $L^2(X, \mu)$ plays a special role among these spaces
as it is a Hilbert space with the inner product
given by $\langle f, g \rangle \coloneqq \int_X f g \dmu$.
Besides $L^2(X, \mu)$, the space $L^\infty(X, \mu)$ also plays an important
role in this paper.
Note that, if $\mu$ is a probability measure,
then we have
$\lVert f \rVert_2 \le \lVert f \rVert_\infty$
and, in particular,
the inclusion $L^\infty(X, \mu) \subseteq L^2(X, \mu)$ holds.

Given two normed linear spaces $(X, \lVert \cdot \rVert)$ and $(Y, \lvert \cdot \rvert)$,
a function $T \colon X \to Y$
is called
a (bounded linear) \textit{operator} if it is Lipschitz and linear.
If $(X, \lVert \cdot \rVert) = (Y, \lvert \cdot \rvert)$,
then we just say that $T$ is an \textit{operator on $X$.}
The \textit{operator norm of $T$}
is given by $\lVert T \rVert \coloneqq \sup \{\lvert T(x) \rvert \mid \lVert x \lVert \le 1\} < \infty$,
and if $\lVert T \rVert \le 1$, then $T$ is called a \textit{contraction}.
For probability spaces $(X, \mathcal{S}, \mu)$ and $(Y, \mathcal{T}, \nu)$
and an operator $T \colon L^2(X, \mu) \to L^2(Y, \nu)$,
we call $T$ an $L^\infty$-contraction
if its restriction to $L^\infty(X, \mu)$
yields a well-defined contraction $L^\infty(X, \mu) \to L^\infty(Y, \nu)$.
To clearly distinguish this from $T$ being a contraction
$L^2(X, \mu) \to L^2(Y, \nu)$, we sometimes use the term $L^2$-contraction
for this.
Observe that the composition of two contractions yields a contraction,
and in particular, the composition of $L^2$- and $L^\infty$-
contractions yields a $L^2$- and a $L^\infty$-contraction, respectively.

For measure spaces $(X, \mathcal{S}, \mu)$ and $(Y, \mathcal{T}, \nu)$,
the \textit{Hilbert adjoint}
of an operator $T \colon L^2(X, \mu) \to L^2(Y, \nu)$
is the unique operator
$T^* \colon L^2(Y, \nu) \to L^2(X, \mu)$
satisfying $\langle T f, g \rangle = \langle f, T^* g \rangle$
for all $f \in L^2(X, \mu), g \in L^2(Y, \nu)$.
For standard Borel spaces $(X, \Borel)$ and $(Y, \mathcal{D})$
with Borel probability measures $\mu$ and $\nu$ on $X$ and $Y$, respectively,
an operator $S \colon \LTwo \to \LTwoY$ is called a \textit{Markov operator}
if $S f \ge 0$ for every $f \in \LTwo$ with $f \ge 0$, $S \allOne_X = \allOne_Y$,
and $S^* \allOne_Y = \allOne_X$.
Markov operators are both $L^2$- and $L^\infty$-contractions
\cite[Theorem $13.2$ b)]{EisnerEtAl2015}.
A Markov operator is called a \textit{Markov embedding} if it is an isometry.
For example, the \textit{Koopman operator}
$T_\varphi \colon \LTwo \to \LTwo$
of a measure-preserving measurable map $\varphi \colon X \to X$,
defined by
$T_\varphi f \coloneqq f \circ \varphi$
for every $f \in \LTwo$,
is a Markov embedding \cite[Example $13.1$]{EisnerEtAl2015}.
A \textit{Markov isomorphism} is a surjective Markov embedding.
Note that every Markov isomorphism $S$
satisfies $S^{-1} = S^*$ \cite[Corollary $13.14$]{EisnerEtAl2015}.
Moreover, there is a one-to-one correspondence between Markov isomorphisms
and measure-preserving almost bijections,
cf.\ \cite[Theorem E.$3$]{GrebikRocha2021}.
See \cite{EisnerEtAl2015} for a thorough treatment of Markov operators.
There, the results are stated for complex $L^p$-spaces,
but this usually does not make a difference
by the positivity of Markov operators, cf.\ \cite[Lemma $7.5$]{EisnerEtAl2015}.

\subsection{Quotient Spaces}
\label{subsec:algebras}

Recall that
a sub-$\sigma$-algebra $\subAlg \subseteq \Borel$ of $\Borel$
is called \textit{$\mu$-relatively complete}
if $Z \in \subAlg$
for all $Z \in \Borel, Z_0 \in \subAlg$ with $\mu(Z \triangle Z_0) = 0$.
Requiring
$Z \in \subAlg$ for every $Z \in \Borel$ with $\mu(Z) = 0$
instead would yield an equivalent definition.
The set of all $\mu$-relatively complete sub-$\sigma$-algebras
of $\Borel$ is denoted by $\subAlgs$
and clearly includes $\Borel$ itself.
For a non-empty $\Phi \subseteq \subAlgs$, we have
$\bigcap \Phi \coloneqq \bigcap_{\subAlg \in \Phi} \subAlg \in \subAlgs$
\cite[Claim $5.4$]{GrebikRocha2021}.
Hence, for a set $\mathcal{X} \subseteq \Borel$, there is a smallest
$\mu$-relatively complete sub-$\sigma$-algebra including $\mathcal{X}$,
which we denote by $\langle \mathcal{X} \rangle$.
If $\subAlg \subseteq \Borel$ is a sub-$\sigma$-algebra,
then one can show that
$\langle \subAlg \rangle = \Set{A \triangle Z \mid A \in \subAlg, Z \in \Borel \text{ with } \mu(Z) = 0}$.
Given $\subAlg \in \subAlgs$, we let $\LTwoSub \subseteq \LTwo$
denote the subset of all functions that are $\subAlg$-measurable.
It is a standard fact that, for $\subAlg \in \subAlgs$,
the linear hull of $\Set{\allOne_A}_{A \in \mathcal{C}}$
is dense in $\LTwoSub$.
The \emph{conditional expectation}
$\ExpVal{\subAlg}$ is the orthogonal projection onto the closed
linear subspace $\LTwoSub$ of $\LTwo$.

\begin{proposition}[{Conditional Expectation, \cite[Section $34$]{Billingsley1995}}]
    \label{cl:conditionalExpectation}
    Let $\subAlg \in \subAlgs$.
    Then, $\LTwoSub$ is a closed linear subspace of $\LTwo$ and
    there is a self-adjoint operator
    $\ExpVal{\subAlg} \colon \LTwo \to \LTwo$
    such that
    \begin{enumerate}
        \item $\ExpVal{\subAlg}$ is the orthogonal projection onto $\LTwoSub$,
        \item $\int_A f \dmu = \int_A  \E(f \mid \subAlg) \dmu$ for every $A \in \subAlg$ and every $f \in \LTwo$, and
        \item $\int_X f \cdot \E(g \mid \subAlg) \dmu = \int_X \E(f \mid \subAlg) \cdot g \dmu$
            for all $f,g \in \LTwo$.
    \end{enumerate}
\end{proposition}

Given a measure space $(X, \mathcal{S}, \mu)$, a measurable space $(Y, \mathcal{T})$,
and a measurable function $g \colon X \to Y$,
the \textit{push-forward} $g_* \mu$ is the measure on $Y$ defined
by $g_* \mu (A) \coloneqq \mu(g^{-1}(A))$ for every $A \in \mathcal{T}$.
For a measurable function $f \colon Y \to [-\infty,\infty]$, we then have
$\int_Y f \, d (g_* \mu) = \int_X f \circ g \, d \mu$ \cite[Theorem $4.1.11$]{Dudley2002}.
The following proposition then guarantees the existence of a quotient space
of $(X, \Borel, \mu)$ w.r.t.\
a $\mu$-relatively complete sub-$\sigma$-algebra $\subAlg \in \subAlgs$.

\begin{proposition}[{\cite[Theorem E.$1$]{GrebikRocha2021}}]
    \label{th:quotientSpaces}
    Let $\subAlg \in \subAlgs$.
    There is a standard Borel space $(X/\subAlg, \subAlg')$,
    a Borel probability measure $\mu/\subAlg$ on $X/\subAlg$,
    a measurable surjection $q_\subAlg \colon X \to X/\subAlg$,
    and Markov operators
    $S_\subAlg \colon \LTwo \to \LTwoQuo$
    and
    $I_\subAlg \colon \LTwoQuo \to \LTwo$
    such that
    \begin{multicols}{2}
    \begin{enumerate}
        \itemsep0em
        \item $I_\subAlg$ is the Koopman operator of $q_\subAlg$,
        \item $\mu/\subAlg$ is the push-forward of $\mu$ via $q_\subAlg$,
        \item $S_\subAlg^* = I_\subAlg$,\label{th:quotientSpaces:adjoint}
        \item $S_\subAlg \circ \ExpVal{\subAlg} = S_\subAlg$,\label{th:quotientSpaces:SExpVal}
        \item $I_\subAlg$ is an isometry onto $\LTwoSub$, \label{th:quotientSpaces:isometry}
        \item $I_\subAlg \circ S_\subAlg = \ExpVal{\subAlg}$, and\label{th:quotientSpaces:ExpVal}
        \item $S_\subAlg \circ I_\subAlg$ is the identity on $\LTwoQuo$.
    \end{enumerate}
    \end{multicols}
\end{proposition}

\Cref{co:quotientSpaceUnique} is a technical result
that intuitively states that
the quotient space $(X/\subAlg, \subAlg')$ is unique
and the same as $\LTwoSub$
up to sets of measure zero.
\begin{proposition}[{\cite[Corollary E.$2$]{GrebikRocha2021}}]
    \label{co:quotientSpaceUnique}
    Let $(X, \Borel)$ and $(Y, \mathcal{D})$ be standard Borel spaces.
    Let $\mu$ be a Borel probability measure on $X$
    and $f \colon X \to Y$ be a measurable function.
    Let $\subAlg \in \subAlgs$
    be the minimum $\mu$-relatively complete sub-$\sigma$-algebra
    that makes $f$ measurable.
    Then, for every $g_0 \in \LTwoSub$, there is a measurable map
    $g_1 \colon Y \to \R$ such that $g_0(x) = (g_1 \circ f)(x)$
    for $\mu$-almost every $x \in X$.
\end{proposition}

\subsection{Quotient Operators}
\label{sec:quotientOperators}

For $\subAlg \in \subAlgs$
and an operator $T \colon \LTwo \to \LTwo$,
we use the conditional expectation to define
the operators $T_\subAlg \colon \LTwo \to \LTwo$
and
$T/\subAlg \colon \LTwoQuo \to \LTwoQuo$ by
\begin{align*}
    &T_\subAlg \coloneqq \ExpVal{\subAlg} \circ T \circ \ExpVal{\subAlg}&
    &\text{and}&
    &T/\subAlg \coloneqq S_\subAlg \circ T \circ I_\subAlg,&
\end{align*}
respectively.
These definitions reflect the same concept of a quotient operator
via different languages.
The following lemma states some basic properties and shows how
both definitions are related.

\begin{lemma}
    \label{le:expOperator}
    \label{le:quoOperator}
    Let $\subAlg \in \subAlgs$
    and $T \colon \LTwo \to \LTwo$ be an operator.
    Then,
    \begin{enumerate}
        \itemsep0em
        \item
            $(T_\subAlg)^* = (T^*)_\subAlg$ and
            $(T / \subAlg)^* = T^* / \subAlg$,
            \label{le:expOperator:adjoint}
            \label{le:quoOperator:adjoint}
        \item
            if $T$ is self-adjoint, then so are $T_\subAlg$ and $T/\subAlg$,
            \label{le:expOperator:selfAdjoint}
            \label{le:quoOperator:selfAdjoint}
        \item $I_\subAlg \circ T / \subAlg = T_\subAlg \circ I_\subAlg$, \label{le:quoOperator:expOperatorI}
        \item $T/\subAlg \circ S_\subAlg = S_\subAlg \circ T_\subAlg$,\label{le:quoOperator:expOperator}
        \item
            if $\subAlg$ is $T$-invariant,
            then $T_\subAlg = T \circ \ExpValSub$
            and
            $I_\subAlg \circ T / \subAlg = T \circ I_\subAlg$, and
            \label{le:expOperator:invariant}
            \label{le:quoOperator:invariantI}
        \item
            if $T$ is self-adjoint and $\subAlg$ is $T$-invariant,
            then $T/\subAlg \circ S_\subAlg = S_\subAlg \circ T$.\label{le:quoOperator:invariant}
    \end{enumerate}
\end{lemma}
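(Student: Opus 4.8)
The plan is to derive all six items by direct algebraic manipulation from the identities for $S_\subAlg$, $I_\subAlg$, and $\ExpVal{\subAlg}$ supplied by \Cref{th:quotientSpaces} and \Cref{cl:conditionalExpectation}. Two auxiliary facts will be used throughout: first, $I_\subAlg^* = S_\subAlg$, obtained from $S_\subAlg^* = I_\subAlg$ by taking adjoints once more; second, $\ExpVal{\subAlg} \circ I_\subAlg = I_\subAlg$, which holds because the range of $I_\subAlg$ is $\LTwoSub$ (it is an isometry onto $\LTwoSub$) and $\ExpVal{\subAlg}$ restricts to the identity there, or equivalently $\ExpVal{\subAlg} \circ I_\subAlg = (I_\subAlg \circ S_\subAlg) \circ I_\subAlg = I_\subAlg \circ (S_\subAlg \circ I_\subAlg) = I_\subAlg$.

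For the adjoint identities I would use $(ABC)^* = C^* B^* A^*$ together with self-adjointness of $\ExpVal{\subAlg}$ to get $(T_\subAlg)^* = \ExpVal{\subAlg} \circ T^* \circ \ExpVal{\subAlg} = (T^*)_\subAlg$, and the relations $S_\subAlg^* = I_\subAlg$, $I_\subAlg^* = S_\subAlg$ to get $(T/\subAlg)^* = S_\subAlg \circ T^* \circ I_\subAlg = T^*/\subAlg$; self-adjointness of $T_\subAlg$ and $T/\subAlg$ when $T$ is self-adjoint is then immediate. The two intertwining identities $I_\subAlg \circ T/\subAlg = T_\subAlg \circ I_\subAlg$ and $T/\subAlg \circ S_\subAlg = S_\subAlg \circ T_\subAlg$ follow by substituting the definitions and using $I_\subAlg \circ S_\subAlg = \ExpVal{\subAlg}$, $S_\subAlg \circ I_\subAlg = \mathrm{id}$, $S_\subAlg \circ \ExpVal{\subAlg} = S_\subAlg$, and $\ExpVal{\subAlg} \circ I_\subAlg = I_\subAlg$; for example,
\[
    I_\subAlg \circ T/\subAlg
    = I_\subAlg \circ S_\subAlg \circ T \circ I_\subAlg
    = \ExpVal{\subAlg} \circ T \circ I_\subAlg
    = \ExpVal{\subAlg} \circ T \circ \ExpVal{\subAlg} \circ I_\subAlg
    = T_\subAlg \circ I_\subAlg ,
\]
and the other one is analogous, ending with $S_\subAlg \circ \ExpVal{\subAlg} \circ T \circ \ExpVal{\subAlg} = S_\subAlg \circ T_\subAlg$.

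For the two statements under the assumption that $\subAlg$ is $T$-invariant, note that $T$-invariance means $T(\LTwoSub) \subseteq \LTwoSub$, so $T \circ \ExpVal{\subAlg}$ already takes values in $\LTwoSub$ and the leading $\ExpVal{\subAlg}$ in $T_\subAlg = \ExpVal{\subAlg} \circ T \circ \ExpVal{\subAlg}$ acts as the identity, giving $T_\subAlg = T \circ \ExpVal{\subAlg}$; combining this with the intertwining identity above and $\ExpVal{\subAlg} \circ I_\subAlg = I_\subAlg$ yields $I_\subAlg \circ T/\subAlg = T \circ I_\subAlg$. The only step requiring a genuine (if minor) observation is the last item: from $T/\subAlg \circ S_\subAlg = S_\subAlg \circ T_\subAlg$ and $T_\subAlg = T \circ \ExpVal{\subAlg}$ we get $T/\subAlg \circ S_\subAlg = S_\subAlg \circ T \circ \ExpVal{\subAlg}$, and it remains to delete the trailing $\ExpVal{\subAlg}$. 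Here I would use that when $T$ is self-adjoint and $\subAlg$ is $T$-invariant, the orthogonal complement $(\LTwoSub)^{\perp}$ is $T$-invariant as well — for $f \perp \LTwoSub$ and $g \in \LTwoSub$ we have $\langle Tf, g \rangle = \langle f, Tg \rangle = 0$ — so $T$ commutes with the orthogonal projection $\ExpVal{\subAlg}$, whence $S_\subAlg \circ T \circ \ExpVal{\subAlg} = S_\subAlg \circ \ExpVal{\subAlg} \circ T = S_\subAlg \circ T$. (Alternatively, one takes adjoints: the adjoint of $S_\subAlg \circ T \circ \ExpVal{\subAlg}$ is $\ExpVal{\subAlg} \circ T \circ I_\subAlg = T \circ I_\subAlg$, since $T \circ I_\subAlg$ already lands in $\LTwoSub$, and the adjoint of $S_\subAlg \circ T$ is $T \circ I_\subAlg$ too.) No step presents a real obstacle; the only thing to watch is careful bookkeeping of the domains and of composition order.
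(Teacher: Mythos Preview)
Your proof is correct and essentially matches the paper's argument for items \ref{le:expOperator:adjoint}--\ref{le:expOperator:invariant}. For item \ref{le:quoOperator:invariant} the paper opts for your parenthetical alternative---it simply takes adjoints of the identity $I_\subAlg \circ T/\subAlg = T \circ I_\subAlg$ from \ref{le:quoOperator:invariantI}, using \ref{le:quoOperator:selfAdjoint} and $I_\subAlg^* = S_\subAlg$---rather than your primary route via the observation that $T$ commutes with $\ExpVal{\subAlg}$; both are valid and equally short.
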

\begin{proof}
    For (\ref{le:expOperator:adjoint}),
    we have
    $(T_\subAlg)^*= \ExpVal{\subAlg}^* \circ T^* \circ \ExpVal{\subAlg}^*= \ExpVal{\subAlg} \circ T^* \circ \ExpVal{\subAlg}= (T^*)_\subAlg$
    by \Cref{cl:conditionalExpectation}
    and
    $(T/\subAlg)^*= I_\subAlg^* \circ T^* \circ S_\subAlg^*= S_\subAlg \circ T^* \circ I_\subAlg = T^* /\subAlg$
    by (\ref{th:quotientSpaces:adjoint}) of \Cref{th:quotientSpaces}.
    This also immediately yields (\ref{le:expOperator:selfAdjoint}).
    For (\ref{le:quoOperator:expOperatorI}), we have
    \begin{equation*}
        I_\subAlg \circ T / \subAlg = I_\subAlg \circ S_\subAlg \circ T \circ I_\subAlg = \ExpVal{\subAlg} \circ T \circ I_\subAlg = \ExpVal{\subAlg} \circ T \circ \ExpVal{\subAlg} \circ I_\subAlg
        = T_\subAlg \circ I_\subAlg
    \end{equation*}
    by (\ref{th:quotientSpaces:ExpVal}) and (\ref{th:quotientSpaces:SExpVal}) of \Cref{th:quotientSpaces} and \Cref{cl:conditionalExpectation}.
    For (\ref{le:quoOperator:expOperator}), we have
    \begin{equation*}
        T/\subAlg \circ S_\subAlg = S_\subAlg \circ T \circ I_\subAlg \circ S_\subAlg = S_\subAlg \circ \ExpVal{\subAlg} \circ T \circ \ExpVal{\subAlg}= S_\subAlg \circ T_\subAlg
    \end{equation*}
    by (\ref{th:quotientSpaces:SExpVal}) and (\ref{th:quotientSpaces:ExpVal}) of \Cref{th:quotientSpaces}.
    For (\ref{le:expOperator:invariant}),
    assume that $\subAlg$ is $T$-invariant.
    By \Cref{cl:conditionalExpectation},
    the expectation $\ExpValSub$ is the orthogonal projection
    onto $\LTwoSub$.
    Hence,
    $(T \circ \ExpValSub)(\LTwo) = T(\LTwoSub) \subseteq \LTwoSub$
    and, as $\ExpValSub$ is the identity on $\LTwoSub$,
    the first claim $T_\subAlg = T \circ \ExpValSub$ follows.
    Then, continuing with (\ref{le:quoOperator:expOperatorI}), we get
    $I_\subAlg \circ T / \subAlg = T_\subAlg \circ I_\subAlg = T \circ \ExpVal{\subAlg} \circ I_\subAlg = T \circ I_\subAlg$
    by
    (\ref{th:quotientSpaces:SExpVal}) of \Cref{th:quotientSpaces}
    and \Cref{cl:conditionalExpectation}.
    Now, (\ref{le:quoOperator:invariant})
    follows from (\ref{le:quoOperator:selfAdjoint}),
    (\ref{le:quoOperator:invariantI}),
    and (\ref{th:quotientSpaces:adjoint}) of \Cref{th:quotientSpaces}.
\end{proof}

The following lemma is an application of the Mean Ergodic Theorem
for Hilbert spaces to Markov operators
\cite[Theorem $8.6$, Example $13.24$]{EisnerEtAl2015}
and is the essence of the proof
of the direction
\enquote{(\ref{th:colRefGraphons:markov})
$\implies$
(\ref{th:colRefGraphons:invariant})}
of \Cref{th:colRefGraphons}
by Greb\'ik and Rocha \cite{GrebikRocha2021}.
\begin{lemma}
    \label{th:markovOperatorToIsomorphism}
    Let $S \colon \LTwo \to \LTwo$ be a Markov operator.
    There are $\subAlgC, \subAlgD \in \subAlgs$
    with
    \begin{enumerate}
        \itemsep0em
        \item $\LTwoSub = \{f \in \LTwo \mid (S \circ S^*) f = f\}$,\label{th:markovOperatorToIsomorphism:subAlgC}
        \item $\LTwoSubD = \{f \in \LTwo \mid (S^* \circ S) f = f\}$,
        \item $\ExpValSub \circ S = S \circ \ExpVal{\subAlgD}$,\label{th:markovOperatorToIsomorphism:commutesExpVal}
\item $R \coloneqq S_\subAlg \circ S \circ I_\subAlgD \colon \LTwoQuoD \to \LTwoQuo$ is a Markov isomorphism, and\label{th:markovOperatorToIsomorphism:isomorphism}
\item
            for all operators $T_1, T_2 \colon \LTwo \to \LTwo$
            with $T_1 \circ S = S \circ T_2$ and $S^* \circ T_1 = T_2 \circ S^*$,
\begin{enumerate}
                \vspace{-3pt}
                \itemsep0em
                \item $\subAlg$ is $T_1$-invariant,\label{th:markovOperatorToIsomorphism:T1Inv}
                \item $\subAlgD$ is $T_2$-invariant, and\label{th:markovOperatorToIsomorphism:T2Inv}
                \item $T_1/\subAlg \circ R = R \circ T_2/\subAlgD$.\label{th:markovOperatorToIsomorphism:RCommutes}
            \end{enumerate}\label{th:markovOperatorToIsomorphism:commute}
\end{enumerate}
\end{lemma}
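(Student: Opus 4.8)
The plan is to apply the Mean Ergodic Theorem to the two self-adjoint Markov operators $P_1 \coloneqq S \circ S^*$ and $P_2 \coloneqq S^* \circ S$ on $\LTwo$. Since $S$ is a Markov operator, both $P_1$ and $P_2$ are Markov operators, and they are self-adjoint; moreover they are power-bounded (being contractions), so by the Mean Ergodic Theorem for Hilbert spaces \cite[Theorem~$8.6$]{EisnerEtAl2015} the Cesàro averages $\frac1n\sum_{m<n}P_i^m$ converge strongly to the orthogonal projection onto $\operatorname{fix}(P_i) = \{f \mid P_i f = f\}$. The first step is to show that $\operatorname{fix}(P_1)$ and $\operatorname{fix}(P_2)$ are of the form $\LTwoSub$ and $\LTwoSubD$ for suitable $\subAlgC, \subAlgD \in \subAlgs$: this is exactly the content of \cite[Example~$13.24$]{EisnerEtAl2015}, which identifies the fixed space of a self-adjoint (more generally, a Markov) operator with the $L^2$-space of a $\mu$-relatively complete sub-$\sigma$-algebra. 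Concretely, $\subAlgC$ is generated by the sets $\{f > c\}$ for $f$ ranging over $\operatorname{fix}(P_1)$, completed with respect to $\mu$, and similarly for $\subAlgD$. This gives items \ref{th:markovOperatorToIsomorphism:subAlgC} and its $\subAlgD$-analogue, and simultaneously identifies $\ExpValSub$ with the projection onto $\operatorname{fix}(P_1)$ and $\ExpVal{\subAlgD}$ with the projection onto $\operatorname{fix}(P_2)$.

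For item \ref{th:markovOperatorToIsomorphism:commutesExpVal}, the key observation is that $S$ intertwines $P_2$ and $P_1$: indeed $P_1 \circ S = S S^* S = S \circ P_2$, and likewise $S^* \circ P_1 = P_2 \circ S^*$. From $S \circ P_2 = P_1 \circ S$ one gets $S \circ P_2^m = P_1^m \circ S$ for all $m$, hence $S$ commutes with the Cesàro averages, and passing to the strong limit yields $S \circ \ExpVal{\subAlgD} = \ExpValSub \circ S$. (One also needs $S \circ \ExpVal{\subAlgD}$ to land in $\LTwoSub$, which is immediate once the intertwining relation is established.) For item \ref{th:markovOperatorToIsomorphism:isomorphism}, set $R \coloneqq S_\subAlg \circ S \circ I_\subAlgD$. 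That $R$ is a Markov operator follows since it is a composition of Markov operators ($S_\subAlg$, $S$, $I_\subAlgD$ are all Markov by \Cref{th:quotientSpaces} and hypothesis). To see it is a Markov isomorphism, one computes its adjoint $R^* = S_\subAlgD \circ S^* \circ I_\subAlg$ and checks $R^* \circ R = \mathrm{id}$ and $R \circ R^* = \mathrm{id}$ on the respective quotient spaces; here the computation uses $I_\subAlg \circ S_\subAlg = \ExpValSub$, $S_\subAlgD \circ I_\subAlgD = \mathrm{id}$, the intertwining identity from \ref{th:markovOperatorToIsomorphism:commutesExpVal}, and the fact that $\ExpVal{\subAlgD} = \ExpVal{\subAlgD} \circ S^* \circ S$ restricted appropriately (since $S^* S$ fixes $\LTwoSubD$ pointwise — this is the crucial use of the Mean Ergodic Theorem: the fixed space is exactly where the averaged operator is the identity, and here $S^*S$ itself, being a self-adjoint contraction, is already the identity on its fixed space). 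Unwinding, $R^* R = S_\subAlgD \circ S^* \circ \ExpValSub \circ S \circ I_\subAlgD = S_\subAlgD \circ S^* S \circ \ExpVal{\subAlgD} \circ I_\subAlgD = S_\subAlgD \circ I_\subAlgD = \mathrm{id}$, and symmetrically for $R R^*$.

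For item \ref{th:markovOperatorToIsomorphism:commute}, suppose $T_1 \circ S = S \circ T_2$ and $S^* \circ T_1 = T_2 \circ S^*$. Then $T_1 \circ P_1 = T_1 S S^* = S T_2 S^* = S S^* T_1 = P_1 \circ T_1$, so $T_1$ commutes with $P_1$, hence with its Cesàro averages, hence with $\ExpValSub$; since $\ExpValSub$ is the projection onto $\LTwoSub$, commuting with it forces $T_1(\LTwoSub) \subseteq \LTwoSub$, i.e.\ $\subAlgC$ is $T_1$-invariant, giving \ref{th:markovOperatorToIsomorphism:T1Inv}; \ref{th:markovOperatorToIsomorphism:T2Inv} is symmetric. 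For \ref{th:markovOperatorToIsomorphism:RCommutes}, one computes directly, using \ref{le:quoOperator:invariantI} of \Cref{le:quoOperator} (applicable since $\subAlgC$ is $T_1$-invariant and $\subAlgD$ is $T_2$-invariant), that
\[
    T_1/\subAlg \circ R = S_\subAlg \circ T_1 \circ I_\subAlg \circ S_\subAlg \circ S \circ I_\subAlgD = S_\subAlg \circ T_1 \circ \ExpValSub \circ S \circ I_\subAlgD = S_\subAlg \circ T_1 \circ S \circ I_\subAlgD,
\]
and then $T_1 \circ S = S \circ T_2$ turns this into $S_\subAlg \circ S \circ T_2 \circ I_\subAlgD = S_\subAlg \circ S \circ I_\subAlgD \circ T_2/\subAlgD = R \circ T_2/\subAlgD$, where the middle step again uses $T_2$-invariance of $\subAlgD$ via \Cref{le:quoOperator}. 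The main obstacle I expect is purely bookkeeping: correctly citing the Mean Ergodic Theorem in the precise form that identifies the fixed space of a self-adjoint Markov operator as $\LTwoSub$ for a $\mu$-relatively complete $\subAlg$, and then being careful with the many compositions of $S_\bullet$, $I_\bullet$, $\ExpVal{\bullet}$ so that every intertwining identity is invoked on the correct space. No single step is deep; the Mean Ergodic Theorem plus the intertwining relations $P_1 S = S P_2$, $S^* P_1 = P_2 S^*$ do essentially all the work.
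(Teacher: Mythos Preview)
Your proposal is correct and follows essentially the same route as the paper: items \ref{th:markovOperatorToIsomorphism:subAlgC}--\ref{th:markovOperatorToIsomorphism:isomorphism} are obtained via the Mean Ergodic Theorem applied to $SS^*$ and $S^*S$ exactly as in \cite[Theorem~$8.6$, Example~$13.24$]{EisnerEtAl2015} (the paper defers these to \cite{GrebikRocha2021}), and your treatment of item \ref{th:markovOperatorToIsomorphism:commute} matches the paper's almost line for line. The only cosmetic difference is that for \ref{th:markovOperatorToIsomorphism:T1Inv} the paper argues directly from the fixed-space characterization ($P_1 f = f$ for $f \in \LTwoSub$, and $T_1$ commutes with $P_1$) rather than passing through Ces\`aro averages to get commutation with $\ExpValSub$; and in your displayed chain for \ref{th:markovOperatorToIsomorphism:RCommutes} the step $T_1 \circ \ExpValSub \circ S \circ I_\subAlgD = T_1 \circ S \circ I_\subAlgD$ is really item \ref{th:markovOperatorToIsomorphism:commutesExpVal} combined with $\ExpVal{\subAlgD} \circ I_\subAlgD = I_\subAlgD$, not \Cref{le:quoOperator}~\ref{le:quoOperator:invariantI} as you cite.
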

\begin{proof}
    The proof of the existence of $\subAlg, \subAlgD \in \subAlgs$
    satisfying (\ref{th:markovOperatorToIsomorphism:subAlgC}) to (\ref{th:markovOperatorToIsomorphism:isomorphism}) uses the Mean Ergodic Theorem
    and is identical to the the proof of Theorem $1.2$, ($4$) $\implies$ ($5$),
    in \cite{GrebikRocha2021};
    we leave it out here.
    To prove (\ref{th:markovOperatorToIsomorphism:commute}),
    let $T_1, T_2 \colon \LTwo \to \LTwo$
    be bounded linear operators satisfying
    $T_1 \circ S = S \circ T_2$
    and $S^* \circ T_1 = T_2 \circ S^*$.
    We get
    $T_1 \circ (S \circ S^*)= S \circ T_2 \circ S^*= (S \circ S^*) \circ T_1$.
    Then, for $f \in \LTwoSub$, we have $(S \circ S^*)f = f$ by
    (\ref{th:markovOperatorToIsomorphism:subAlgC})
    and get
    $T_1 f= (T_1 \circ S \circ S^*) f= (S \circ S^* \circ T_1) f= (S \circ S^*) (T_1 f)$,
    which, again by (\ref{th:markovOperatorToIsomorphism:subAlgC}),
    implies $T_1 f \in \LTwoSub$.
    Therefore, $\subAlg$ is $T_1$-invariant, which proves (\ref{th:markovOperatorToIsomorphism:T1Inv}).
    Analogously, we get that $T_2 \circ (S^* \circ S) = (S^* \circ S) \circ T_2$
    and that $\subAlgD$ is $T_2$ invariant,
    which proves (\ref{th:markovOperatorToIsomorphism:T2Inv}).
    Now, we use (\ref{th:markovOperatorToIsomorphism:commutesExpVal})
    and the $T_2$-invariance of $\subAlgD$ to obtain
    to obtain
    \begin{align*}
        T_1/\subAlg \circ R= S_\subAlg \circ T_1 \circ I_\subAlg \circ S_\subAlg \circ S \circ I_\subAlgD
        &= S_\subAlg \circ T_1 \circ \ExpValSub \circ S \circ I_\subAlgD \tag{\Cref{th:quotientSpaces} (\ref{th:quotientSpaces:ExpVal})}\\
        &= S_\subAlg \circ T_1 \circ S \circ \ExpVal{\subAlgD} \circ I_\subAlgD \tag{(\ref{th:markovOperatorToIsomorphism:commutesExpVal})}\\
        &= S_\subAlg \circ T_1 \circ S \circ I_\subAlgD \tag{\Cref{th:quotientSpaces} (\ref{th:quotientSpaces:adjoint}) and (\ref{th:quotientSpaces:SExpVal})}\\
        &= S_\subAlg \circ S \circ T_2 \circ I_\subAlgD\\
        &= S_\subAlg \circ S \circ I_\subAlgD \circ T_2/\subAlgD \tag{\Cref{le:quoOperator} (\ref{le:quoOperator:invariantI})}\\
        &= R \circ T_2/\subAlgD.
    \end{align*}
\end{proof}

\subsection{Permutation Invariance}
\label{subsec:permutations}

Let $k \ge 1$ and consider $\LTwoProdK$.
Every permutation $\pi \colon [k] \to [k]$
induces a measure-preserving measurable map $\pi \colon X^k \to X^k$
by setting
$\pi(x_1, \dots, x_k) \coloneqq (x_{\pi(1)}, \dots, x_{\pi(k)})$
for all $x_1, \dots, x_k \in X$,
which allows us to consider
its Koopman operator $T_\pi$
on $\LTwoProdK$.
Clearly, the adjoint of $T_\pi$ is given by $T_{\pi^{-1}}$.
We call a $\mu^{\otimes k}$-relatively complete sub-$\sigma$-algebra
$\subAlg \in \subAlgsk$ \textit{permutation invariant}
if $\subAlg$ is $T_{\pi}$-invariant for every permutation $\pi \colon [k] \to [k]$.
It is easy to see that this is the case if and only if
$\pi(\subAlg) \subseteq \subAlg$
for every permutation $\pi \colon [k] \to [k]$,
which again is equivalent to
$\pi(\subAlg) = \subAlg$
for every permutation $\pi \colon [k] \to [k]$.
A trivial example of such a permutation-invariant
sub-$\sigma$-algebra is $\Borel^{\otimes k}$
itself.

For $\subAlg, \subAlgD \in \subAlgsk$,
an operator $T \colon \LTwoProdKQuo \to \LTwoProdKQuoD$
is called \textit{permutation invariant} if
$T_\pi / \subAlgD \circ T = T \circ T_\pi / \subAlg$
for every permutation $\pi \colon [k] \to [k]$.
For the special case $\subAlg = \subAlgD = \Borel^{\otimes k}$,
this means that an operator $T$ on $\LTwoProdK$
is permutation invariant if
$T_{\pi} \circ T = T \circ T_{\pi}$
for every permutation $\pi \colon [k] \to [k]$.
Of course, this notion
depends on the underlying space $(X, \Borel, \mu)$, i.e.,
if we consider $(X^k, \Borel^{\otimes k}, \mu^{\otimes k})$
as the underlying space, then
all these operators mentioned before are trivially permutation invariant.
However, since the intended underlying space is always clear from the context,
we just use the term permutation invariant.
It is not hard to prove that,
if $\subAlg \in \subAlgsk$ is permutation invariant,
then so are $S_\subAlg$ and $I_\subAlg$, i.e.,
$T_\pi / \subAlg \circ S_\subAlg = S_\subAlg \circ T_\pi$
and
$T_\pi \circ I_\subAlg = I_\subAlg \circ T_\pi / \subAlg$
for every permutation $\pi \colon [k] \to [k]$.

\section{Graphon Operators}
\label{sec:graphons}

In this section, we present the main
ingredient to \Cref{th:kWLGraphons}.
The key insight to go from color refinement
to $k$-WL is, for a graphon $W$, to replace the operator $T_W$
on $\LTwo$
by a family $\naturalKFamilyOperatorsShortW$
of operators
on the product space $\LTwoProdK$.
This idea is somewhat already present
in the work of Grohe and Otto \cite[Section $5.1$]{GroheOtto2015},
where they define a family of graphs and consider a matrix
that is a fractional isomorphism between all these graphs simultaneously.
The graphon setting will show that the step of defining these graphs
for the sake of them having the right adjacency matrix
is rather artificial and only works in the setting of (finite-dimensional)
matrices: the operators we define are not integral operators
defined by a graphon.

The family $\naturalKFamilyOperatorsShortW$
we define is closely related to oblivious $k$-WL
and tree decompositions, or more precisely, tree-decomposed graphs.
In \Cref{sec:bilabeledgraphs},
we follow the approach of \cite{MacinskaRoberson2020}
of using a set of \textit{bi-labeled graphs} as building blocks
that are then glued together to form larger graphs.
From our set $\adjNeiGraphs^k$ of bi-labeled graphs,
we obtain precisely the multigraphs
of treewidth at most $k-1$.
In \Cref{sec:graphonOperators},
we adapt the concept of \textit{homomorphism matrices}
of bi-labeled graphs from \cite{MacinskaRoberson2020}
by defining the \textit{graphon operator}
of a bi-labeled graph and a graphon.
The graphon operators of the bi-labeled graphs in $\adjNeiGraphs^k$
and a graphon $W$
then yield the family
$\naturalKFamilyOperatorsShortW$.
We show
how this family is related to homomorphisms:
on the level of bi-labeled graphs, we obtain
all multigraphs of treewidth at most $k-1$,
while we obtain all \textit{homomorphism functions}
of multigraphs of treewidth at most $k-1$
on the operator level.

\subsection{Bi-Labeled Graphs}
\label{sec:bilabeledgraphs}

A \textit{bi-labeled graph} $\bm{G}$ is a triple $(G, \bm{a}, \bm{b})$,
where $G$ is a multigraph and
$\bm{a} \in V(G)^k$, $\bm{b} \in V(G)^\ell$ for $k,\ell \ge 0$ are tuples 
of vertices
such that both the entries of $\bm{a}$ and
the entries of $\bm{b}$ are pairwise distinct;
$\bm{a}$ and $\bm{b}$ may however overlap.
When there is no fear of ambiguity, we sometimes just use the term \textit{graph}
to refer to a bi-labeled graph.
The multigraph $G$ is called the \textit{underlying graph} of $\bm{G}$,
and the tuples $\bm{a}$ and $\bm{b}$ are called
the tuples of \textit{input} and \textit{output vertices},
respectively.
That is, a bi-labeled graph is a multigraph where
additionally \textit{input}
and \textit{output labels} are assigned to the vertices
with every vertex having at most one label of each type.
Note that one usually does not require that every
vertex has at most one label of each type,
cf.\ \cite{MacinskaRoberson2020},
but this is needed to ensure that
graphon operators are well defined;
the precise reason for this will be seen later
when graphon operators are defined.

\begin{figure}
    \centering
    \begin{tikzpicture}
		\node[vertex, label={90:$a_1$}, label={270:$b_1$}] (N1) {};
        \node[vertex, label={90:$a_2$}, label={270:$b_2$}, right = of N1] (N2) {};
        \node[vertex, fill=white, draw=white, right = of N2] (N3g) {};
		\node[vertex, label={90:$a_3$}, above = 0.1cm of N3g] (N3a) {};
		\node[vertex, label={270:$b_3$}, below = 0.1cm of N3g] (N3b) {};

		\node[vertex, label={90:$a_1$}, label={270:$b_1$}, right = 1.5cm of N3g] (A1) {};
        \node[vertex, label={90:$a_2$}, label={270:$b_2$}, right = of A1] (A2) {};
        \node[vertex, label={90:$a_3$}, label={270:$b_3$}, right = of A2] (A3) {};
        \path[draw, thick] (A1) edge (A2);
        \path[draw, thick] (A2) edge (A3);
        \path[draw, thick] (A1) edge[bend right = 15] (A3);
        \node[] (circ) at ($(N3g)!0.5!(A1)$) {$\circ$};

		\node[vertex, label={90:$a_1$}, label={270:$b_1$}, right = 1.5cm of A3] (R1) {};
        \node[vertex, label={90:$a_2$}, label={270:$b_2$}, right = of R1] (R2) {};
        \node[vertex, fill=white, draw=white, right = of R2] (R3g) {};
		\node[vertex, label={90:$a_3$}, above = 0.1cm of R3g] (R3a) {};
		\node[vertex, label={270:$b_3$}, below = 0.1cm of R3g] (R3b) {};
        \path[draw, thick] (R1) edge (R2);
        \path[draw, thick] (R2) edge (R3b);
        \path[draw, thick] (R1) edge (R3b);
        \node[] (equals) at ($(A3)!0.5!(R1)$) {$=$};
    \end{tikzpicture}
    \caption{Composition of bi-labeled graphs.}
    \label{fig:composition}
\end{figure}

Two bi-labeled graphs $\bm{G} = (G, \bm{a}, \bm{b})$ and $\bm{G'} = (G', \bm{a'}, \bm{b'})$
are \textit{isomorphic} if there is an isomorphism $\varphi \colon V(G) \to V(G')$
from $G$ to $G'$ such that
$\varphi(\bm{a}) = \bm{a'}$ and $\varphi(\bm{b}) = \bm{b'}$.
For $k, \ell \ge 0$,
let $\multiGraphs^{k, \ell}$ denote the set of all (isomorphism types of) bi-labeled graphs
with $k$ input and $\ell$ output vertices,
and let $\graphs^{k, \ell} \subseteq \multiGraphs^{k, \ell}$ be the subset
whose underlying graphs are simple.
Let $\multiGraphs \coloneqq \cup_{k, \ell \ge 0} \multiGraphs^{k, \ell}$
and $\graphs \coloneqq \cup_{k, \ell \ge 0} \graphs^{k, \ell}$.

The \textit{transpose} of a bi-labeled graph
$\bm{G} = (G, \bm{a}, \bm{b}) \in \multiGraphs^{k, \ell}$
is the bi-labeled graph
$\bm{G}^* \coloneqq (G, \bm{b}, \bm{a}) \in \multiGraphs^{\ell, k}$,
and $\bm{G}$ is called \textit{symmetric} if $\bm{G}^* = \bm{G}$.
The \textit{composition} of two bi-labeled graphs
$\bm{F_1} = (F_1, \bm{a_1}, \bm{b_1}) \in \multiGraphs^{k, m}$
and $\bm{F_2} = (F_2, \bm{a_2}, \bm{b_2}) \in \multiGraphs^{m, \ell}$
is the bi-labeled graph
$\bm{F_1} \circ \bm{F_2} \coloneqq (F, \bm{a_1}, \bm{b_2}) \in \multiGraphs^{k, \ell}$,
where $F$ is obtained from the disjoint union
of $F_1$ and $F_2$ by identifying vertices $b_{1,i}$ and $a_{2,i}$
for every $i \in [m]$.
An example is given in \Cref{fig:composition}.
The \textit{Schur product} of two bi-labeled graphs
without output labels
$\bm{F_1} = (F_1, \bm{a_1}, ()),
\bm{F_2} = (F_2, \bm{a_2}, ()) \in \multiGraphs^{k, 0}$
is the bi-labeled graph
$\bm{F_1} \blProd \bm{F_2} \coloneqq (F, \bm{a_1}, ()) \in \multiGraphs^{k, 0}$,
where $F$ is obtained from the disjoint union
of $F_1$ and $F_2$ by identifying vertices $a_{1,i}$ and $a_{2,i}$
for every $i \in [m]$.
One usually defines the Schur product for general
bi-labeled graphs in $\multiGraphs^{k, \ell}$ by also identifying output vertices,
cf.\ \cite{MacinskaRoberson2020}.
This, however, can result in vertices with
multiple input or output labels, which we do not allow
by our definition of a bi-labeled graph as remarked earlier.
Both the composition
and the Schur product of bi-labeled graph may introduce parallel
edges, cf.\ \Cref{fig:parallelEdges}, which means that
the set $\graphs$ is neither closed under composition nor under Schur products.

\newcommand{\drawAdjacencyGraph}[4]{
    \node[vertex, label={90:$a_1$}, #2] (#1L) {};
    \node[vertex, label={90:$a_2$}, right = of #1L, #3] (#1R) {};
    \path[draw, thick] (#1L) edge (#1R);
}

\newcommand{\drawParallelEdges}[2]{
    \node[vertex, label={90:$a_1$}, #2] (#1L) {};
    \node[vertex, label={90:$a_2$}, right = of #1L] (#1R) {};
    \path[draw, thick] (#1L) edge[bend left = 30] (#1R);
    \path[draw, thick] (#1L) edge[bend right = 30] (#1R);
}

\begin{figure}
    \centering
    \begin{tikzpicture}
\drawAdjacencyGraph{E3}{label={270:$b_1$}}{label={270:$b_2$}}{$\bm{A}_{2, 12} \circ \identityGraph\colon$}
        \drawAdjacencyGraph{E4}{right = 1.0cm of E3R}{}{$\bm{A}_{2, 12} \circ \identityGraph\colon$}
        \node[] (circ) at ($(E3R)!0.5!(E4L)$) {$\circ$};
        \drawParallelEdges{P2}{right = 1.5cm of E4R}
        \node[] (equals2) at ($(E4R)!0.5!(P2L)$) {$=$};

        \drawAdjacencyGraph{E1}{right = 1.5cm of P2R}{}{$\bm{A}_{2, 12} \circ \identityGraph\colon$}
        \drawAdjacencyGraph{E2}{right = 1.0cm of E1R}{}{$\bm{A}_{2, 12} \circ \identityGraph\colon$}
        \node[] (cdot) at ($(E1R)!0.5!(E2L)$) {$\cdot$};
\node[] (equals) at ($(P2R)!0.5!(E1L)$) {$=$};
    \end{tikzpicture}
    \caption{Both composition and the Schur product may introduce parallel edges.}
    \label{fig:parallelEdges}
\end{figure}

\textit{Treewidth} is a graph parameter that measures
how \enquote{tree-like} a graph is.
Too see how the concept is related to bi-labeled graphs,
let us first recall the standard definition of treewidth
via \textit{tree decompositions}.
Formally, a \textit{tree decomposition} of a multigraph $G$ is a pair $(T, \beta)$,
where $T$ is a tree and $\beta \colon V(T) \to 2^{V(G)}$ such that,
\begin{enumerate}
    \item
        for every $v \in V(G)$,
        the set $\{t \mid v \in \beta(t)\}$ is non-empty and connected and,
    \item
        for every $uv \in E(G)$,
        there is a $t \in V(T)$ such that $u,v \in \beta(t)$.
\end{enumerate}
For every $t \in V(T)$, the set $\beta(t)$ is called the \textit{bag} at $t$.
The \textit{width} of the tree decomposition $(T, \beta)$ is
$\max \{\lvert \beta(t) \rvert \mid t \in V(T)\} - 1$.
The \textit{treewidth} $\operatorname{tw}(G)$ of a multigraph $G$ is the minimum of the widths
of all tree decompositions of $G$.
Note that treewidth is usually defined for simple graphs and not for multigraphs,
but for us,
ignoring the edge multiplicities like in the previous definition
yields just the right notion for multigraphs.
For the sake of completeness,
note that \textit{path decompositions} and \textit{pathwidth}
of a multigraph $G$ can be defined analogously
by only considering tree decomposition $(T, \beta)$
where $T$ is a path.

General tree decompositions are impractical to work with,
and we rather use the following restricted form of a tree decomposition:
First, a \textit{rooted tree decomposition}
is a triple $(T, r, \beta)$ where
$(T, \beta)$ is a tree decomposition of $G$
and $r \in V(T)$ a vertex of $T$, which we view as the root of $T$.
Then,
a \textit{nice tree decomposition} of a multigraph $G$
is a rooted tree decomposition $(T, r, \beta)$
such that
\begin{enumerate}
    \item
        $\beta(r) = \emptyset$ and $\beta(t) = \emptyset$ for every
        leaf $t$ of $(T, r)$ and
    \item
        every internal node $s \in V(T)$ of $T$ is of one of the following three types:

        \;\textbf{Introduce node:}
        $s$ has exactly one child $t$ with $\beta(s) = \beta(t) \cup {v}$
        for some $v \in V(G) \setminus \beta(t)$.

        \;\textbf{Forget node:}
        $s$ has exactly one child $t$ with $\beta(s) \cup {v} = \beta(t)$
        for some $v \in V(G) \setminus \beta(s)$.

        \;\textbf{Join node:}
        $s$ has exactly two children $t_1, t_2$ with $\beta(s) = \beta(t_1) = \beta(t_2)$.
\end{enumerate}
The \textit{width} of $(T, r, \beta)$ is the width of $(T, \beta)$.
Nice tree decompositions are attractive from an algorithmic point of view
because of their simplified structure, which allows one to specify
dynamic-programming algorithms by a simple case distinction
based on the node type.
We do not design such an algorithm here but use
nice tree decompositions to obtain a simple set of bi-labeled graphs
that serve as building blocks for all graphs of treewidth at most $k$;
nice tree decompositions do not pose a restriction since
every graph $G$ with treewidth $k$
has a nice tree decomposition of width $k$.

\begin{lemma}[{\cite[Lemma $13.1.2$]{kloks_treewidth_1994}}]
    Every graph $G$ with treewidth $k$
    has a nice tree decomposition of width $k$.
\end{lemma}

\begin{figure}
    \centering
    \begin{tikzpicture}
		\node[vertex, label={90:$a_1$}, label={270:$b_1$}] (I1) {};
		\node[vertex, label={90:$a_2$}, right = of I1] (I2) {};
		\node[vertex, label={90:$a_3$}, label={270:$b_2$}, right = of I2] (I3) {};
        \node[left = 0.5cm of I1, scale = 1.0] (ILabel){$\introduceGraphOf{3}{2}\colon$};

		\node[vertex, label={90:$a_1$}, label={270:$b_1$}, below = 2.0cm of I1] (F1) {};
		\node[vertex, label={270:$b_2$}, right = of F1] (F2) {};
		\node[vertex, label={90:$a_2$}, label={270:$b_3$}, right = of F2] (F3) {};
        \node[left = 0.5cm of F1, scale = 1.0] (FLabel){$\forgetGraphOf{3}{2}\colon$};

		\node[vertex, label={90:$a_1$}, label={270:$b_1$}, below = 2.0cm of F1] (N1) {};
        \node[vertex, fill=white, draw=white, right = of N1] (N2g) {};
		\node[vertex, label={90:$a_2$}, above = 0.1cm of N2g] (N2a) {};
		\node[vertex, label={270:$b_2$}, below = 0.1cm of N2g] (N2b) {};
		\node[vertex, label={90:$a_3$}, label={270:$b_3$}, right = of N2g] (N3) {};
        \node[left = 0.5cm of N1, scale = 1.0] (NLabel){$\neighborGraphOf{3}{2}\colon$};

		\node[vertex, label={90:$a_1$}, label={270:$b_1$}] (A1) at ($(I3)!0.5!(F3) + (4,0)$) {};
		\node[vertex, label={90:$a_2$}, label={270:$b_2$}, right = of A1] (A2) {};
		\node[vertex, label={90:$a_3$}, label={270:$b_3$}, right = of A2] (A3) {};
        \node[left = 0.5cm of A1, scale = 1.0] (ALabel){$\adjacencyGraphOf{3}{12}\colon$};
        \path[draw, thick] (A1) edge (A2);

\node[vertex, label={90:$a_1$}, below = 2.0cm of A1] (11) {};
		\node[vertex, label={90:$a_2$}, right = of 11] (12) {};
        \node[vertex, label={90:$a_3$}, right = of 12] (13) {};
        \node[left = 0.5cm of 11, scale = 1.0] (1Label){$\identityGraphOf{3}\colon$};

    \end{tikzpicture}
    \caption{The bi-labeled graphs $\introduceGraphOf{3}{2}$, $\forgetGraphOf{3}{2}$, $\neighborGraphOf{3}{2}$, $\adjacencyGraphOf{3}{12}$, and $\identityGraphOf{3}$.}
\end{figure}

We now want to view a bi-labeled graph $\bm{G}$ that is decomposed by a
nice tree decomposition as a term built
from atomic terms, where these atomic terms
act as building blocks for the decomposition
by providing elementary operations like
adding an edge to a bag
or moving between bags of the decomposition.
Such a term can then be evaluated to obtain
$\bm{G}$, and in the next section, we show
that by defining \emph{graphon operators} for each atomic term,
we can alternatively evaluate this expression
to a function describing
the homomorphism density of $\bm{G}$ in a graphon.
The following definition gives us this set $\adjNeiGraphs^k$
of building blocks.

\begin{definition}
    \label{def:differentgraphs}
    Let $k \ge 1$.
    Define
    \begin{enumerate}
        \itemsep0em
        \item
            the \textit{$ij$-adjacency graph}
                $\ijAdjacencyij \coloneqq (([k], \Set{ij}), (1, \dots, k), (1, \dots, k)) \in \graphs^{k,k}$
            for $i \neq j \in [k]$,
        \item
            the \textit{$j$-introduce graph}
            $\jIntroducej \coloneqq (([k], \emptyset), (1, \dots, k),(1, \dots, j-1, j+1, \dots, k)) \in \graphsOf{k, k-1}$,
        \item
            the \textit{$j$-forget graph}
            $\jForgetj \coloneqq {\jIntroducej}^* \in \graphsOf{k-1, k}$, and
        \item
            the \textit{$j$-neighbor graph}
            $\jNeighborj \coloneqq \jIntroducej \circ \jForgetj \in \graphsOf{k,k}$ for $j \in [k]$, and finally,
        \item the \textit{all-one graph}
            $\identityGraph \coloneqq (([k], \emptyset), (1, \dots, k), ()) \in \graphs^{k,0}$.
    \end{enumerate}
    Let $\adjGraphs^k \coloneqq \Set{\ijAdjacencyij \mid i \neq j \in [k]} \subseteq \graphs^{k,k}$
    and $\jNeighbors^k \coloneqq \Set{\jNeighbor{j} \mid j \in [k]} \subseteq \graphsOf{k,k}$
    be the sets of all adjacency graphs and all neighbor graphs, respectively.
    Finally, let
    $\adjNeiGraphs^k \coloneqq \adjGraphs^k \cup \jNeighbors^k$.
\end{definition}

The set $\adjNeiGraphs^k$ of adjacency and neighbor graphs
together with $\identityGraph$
suffices to construct
essentially every graph of treewidth at most $k$.
Let us first illustrate this with an
example: Consider the tree-decomposed graph in
\Cref{fig:treeDecompTerm},
which can be translated to
the language of bi-labeled graphs as the expression
\begin{equation}
    (\neighborGraphOf{3}{3}
        \circ \adjacencyGraphOf{3}{12} \circ \adjacencyGraphOf{3}{13}
        \circ \neighborGraphOf{3}{1}
        \circ \adjacencyGraphOf{3}{12} \circ \adjacencyGraphOf{3}{13}
        \circ \identityGraphOf{3})
    \blProd
    (\neighborGraphOf{3}{1}
        \circ \adjacencyGraphOf{3}{13} \circ \adjacencyGraphOf{3}{23}
        \circ \neighborGraphOf{3}{3}
        \circ \adjacencyGraphOf{3}{13} \circ \adjacencyGraphOf{3}{23}
        \circ \identityGraphOf{3}).
    \label{eq:termExample}
\end{equation}
Consider the left subtree and the subexpression left of the Schur product.
The all-one graph $\identityGraphOf{3}$ corresponds
to the leaf containing $v_6$, $v_2$, and $v_4$ in the left subtree,
the adjacency graphs $\adjacencyGraphOf{3}{12}$
and $\adjacencyGraphOf{3}{13}$ insert
the edges $v_6 v_2$ and $v_6 v_4$, respectively,
and
the neighbor graph $\neighborGraphOf{3}{1}$ moves
to the new bag containing $v_1$, $v_2$, and $v_4$
as it replaces the first vertex $v_6$ by $v_1$.
Then, the adjacency graphs $\adjacencyGraphOf{3}{12}$
and $\adjacencyGraphOf{3}{13}$ insert
the edges $v_1 v_2$ and $v_1 v_4$, respectively,
before the neighbor graph $\neighborGraphOf{3}{3}$ moves
to the new bag containing $v_1$, $v_2$, and $v_3$
as it replaces the third vertex $v_4$ by $v_1$.
The right subexpression is constructed analogously
from the right subtree, and finally,
the Schur product corresponds to the join node
of the tree decomposition
and glues the two bi-labeled graphs obtained from the two subexpressions
together.
Then,
evaluating the expression in Equation~(\ref{eq:termExample}) yields the bi-labeled graph
in \Cref{fig:evaluatedTerm}, i.e.,
its underlying graph is the graph in \Cref{fig:treeDecompTerm},
its input labels are $v_1$, $v_2$, and $v_3$,
and it has no output labels.
The expression in (\ref{eq:termExample})
is an example of a \emph{term},
and we can formalize this view of tree-decomposed graphs
as expressions built from bi-labeled graphs
by composition and the Schur product.

\begin{figure}
    \centering
    \begin{tikzpicture}
        \node[vertex, label={90:$v_2$}] (V2) {};

        \node[vertex, label={180:$v_1$}, above left = 0.7cm and 0.7cm of V2] (V1) {};
        \node[vertex, label={180:$v_6$}, below left = 0.7cm and 0.7cm of V2] (V3) {};
        \node[vertex, label={180:$v_4$}, below left = 0.7cm and 0.7cm of V1] (V4) {};

        \node[vertex, label={0:$v_3$}, above right = 0.7cm and 0.7cm of V2] (V5) {};
        \node[vertex, label={0:$v_7$}, below right = 0.7cm and 0.7cm of V2] (V6) {};
        \node[vertex, label={0:$v_5$}, below right = 0.7cm and 0.7cm of V5] (V7) {};

        \path[draw, thick]
            (V1) edge (V2)
            (V2) edge (V3)
            (V1) edge (V4)
            (V4) edge (V3)
            (V5) edge (V2)
            (V2) edge (V6)
            (V5) edge (V7)
            (V7) edge (V6)
        ;

        \node[right = 5.0cm of V2, yshift = 1.8cm] (B1) {$\{v_1,\, v_2,\, v_3\}$};

        \node[below left = 0.7cm and -0.5cm of B1] (B2) {$\{v_1,\, v_2,\, v_3\}$};
        \node[below = 0.7cm of B2] (B3) {$\{v_1,\, v_2,\, v_4\}$};
        \node[below = 0.7cm of B3] (B4) {$\{v_6,\, v_2,\, v_4\}$};

        \node[below right = 0.7cm and -0.5cm of B1] (B5) {$\{v_1,\, v_2,\, v_3\}$};
        \node[below = 0.7cm of B5] (B6) {$\{v_5,\, v_2,\, v_3\}$};
        \node[below = 0.7cm of B6] (B7) {$\{v_5,\, v_2,\, v_7\}$};

        \path[draw, thick]
            (B1) edge (B2)
            (B2) edge (B3)
            (B3) edge (B4)
            (B1) edge (B5)
            (B5) edge (B6)
            (B6) edge (B7)
        ;
    \end{tikzpicture}
    \caption{A graph and a tree decomposition of it.}
    \label{fig:treeDecompTerm}
\end{figure}

\begin{figure}
    \centering
    \begin{tikzpicture}
        \node[vertex, label={90:$a_2$}] (L2) {};

        \node[vertex, label={90:$a_1$}, above left = 0.7cm and 0.7cm of L2] (L1) {};
        \node[vertex, below left = 0.7cm and 0.7cm of L2] (L3) {};
        \node[vertex, below left = 0.7cm and 0.7cm of L1] (L4) {};

        \node[vertex, label={90:$a_3$}, above right = 0.7cm and 0.7cm of L2] (L5) {};
        \node[vertex, below right = 0.7cm and 0.7cm of L2] (L6) {};
        \node[vertex, below right = 0.7cm and 0.7cm of L5] (L7) {};

        \path[draw, thick]
            (L1) edge (L2)
            (L2) edge (L3)
            (L1) edge (L4)
            (L4) edge (L3)
            (L5) edge (L2)
            (L2) edge (L6)
            (L5) edge (L7)
            (L7) edge (L6)
        ;
    \end{tikzpicture}
    \caption{The bi-labeled graph obtained by evaluation of (\ref{eq:termExample}).}
    \label{fig:evaluatedTerm}
\end{figure}

\begin{definition}
    Let $k \ge 1$ and $\mathcal{F} \subseteq \multiGraphs^{k,k}$
    be a set of bi-labeled graphs with $k$ input and $k$ output labels.
    The set $\treeClosure{\mathcal{F}}$ of \emph{$\mathcal{F}$-terms (terms)}
    is the smallest set of expressions such that
    \begin{enumerate}
        \item
            $\identityGraph \in \treeClosure{\mathcal{F}}$, \label{def:terms:identity}
        \item
            $\bm{F} \circ \term \in \treeClosure{\mathcal{F}}$
            for all $\bm{F} \in \mathcal{F}$, $\term \in \treeClosure{\mathcal{F}}$, and \label{def:terms:composition}
        \item
            $\term_1 \cdot \term_2 \in \treeClosure{\mathcal{F}}$
            for all $\term_1, \term_2 \in \treeClosure{\mathcal{F}}$. \label{def:terms:product}
    \end{enumerate}
    Similarly, let $\pathClosure{\mathcal{F}} \subseteq \treeClosure{\mathcal{F}}$
    be the smallest set of terms satisfying Conditions~(\ref{def:terms:identity}) and (\ref{def:terms:composition}).
    For a term $\term \in \treeClosure{\mathcal{F}}$,
    let $\graphOfTerm$
    denote
    the bi-labeled graph obtained from evaluating it.
\end{definition}

Note that, for a set $\mathcal{F} \subseteq \multiGraphs^{k,k}$
and a term $\term \in \treeClosure{\mathcal{F}}$, the bi-labeled graph $\graphOfTerm$ is well-defined
as we always have $\graphOfTerm \in \multiGraphs^{k, 0}$
since every term starts with $\identityGraph$.
As demonstrated before, for the specific set $\adjNeiGraphs^k$
of adjacency and neighbor graphs,
a term $\term \in \treeClosureK$
is essentially a tree-decomposed graph,
where the tree decomposition is rooted,
the multigraph being decomposed is the
bi-labeled graph underlying $\graphOfTerm$,
and the bag at the root is given by the input vertices
of $\graphOfTerm$.

\begin{theorem}
    \label{le:graphsOfTerms}
    The
    underlying graphs of the bi-labeled graphs obtained by
    evaluating the terms in
    $\pathClosure{\adjNeiGraphs^k}$
    and
    $\treeClosure{\adjNeiGraphs^k}$
    are, up to isolated vertices, precisely the multigraphs
    of pathwidth and treewidth at most $k-1$, respectively.
\end{theorem}
\begin{proof}
    The proof is quite simple and we only sketch it here.
    The main idea is the following:
    For a bi-labeled graph $\bm{G} \in \multiGraphs^{k, \ell}$,
    the composition $\ijAdjacencyij \circ \bm{G}$
    of the $ij$-adjacency graph $\ijAdjacencyij$
    with $\bm{G}$
    adds an edge between the vertices in $\bm{G}$
    with the $i$th and the $j$th input label.
    When viewing the $k$ input vertices of $\bm{G}$
    as the \enquote{current} bag of a tree decomposition,
    we can compose adjacency graphs with $\bm{G}$
    to add any edge between the vertices
    of this bag.
    Similarly,
    the composition $\jNeighborj \circ \bm{G}$
    of the $j$-neighbor graph $\jNeighborj$
    with $\bm{G}$ moves to a new bag obtained by replacing
    the vertex with the $j$th input label by a fresh vertex.
    In terms of nice tree decompositions, this corresponds to
    an introduce node that immediately follows on a forget node.

    Following the interpretation sketched above,
    a term $\term \in \treeClosure{\adjNeiGraphs^k}$
    can inductively be translated into
    a rooted tree decomposition
    for the underlying graph of $\graphOfTerm$
    of width $k-1$:
    For $\identityGraph$, we take a single leaf containing
    $k$ fresh vertices.
    For $\ijAdjacencyij \circ \term$,
    we just take the tree decomposition for $\term$.
    For $\jNeighborj \circ \term$,
    we take the tree decomposition for $\term$
    and connect a new node to its root and replace the
    vertex specified by the $j$th input label in $\graphOfTerm$
    by the fresh vertex introduced by $\jNeighborj$.
    Finally, for the Schur product $\term_1 \cdot \term_2$
    we take the tree decompositions of $\term_1$ and $\term_2$
    and turn them into a single tree decomposition
    by connecting their roots using a join node.

    For the converse direction,
    we inductively turn a nice tree decomposition $(T, r, \beta)$ of a graph $G$
    of width at most $k-1$
    into a term $\term \in \treeClosure{\adjNeiGraphs^k}$
    such that the underlying graph of $\graphOfTerm$
    is $G$ with some additional isolated vertices.
    This direction requires a bit more thought
    since we have to make sure that the set $\adjNeiGraphs^k$
    is not too restrictive.
    We do this by modifying the nice tree decomposition
    $(T, r, \beta)$ as follows, where we have to keep in mind
    that a term fixes an ordering of the vertices of the graph:
    First, pad the bag of every leaf to size $k$
    by adding $k$ fresh isolated vertices.
    At an introduce node,
    add a forget node below
    that removes one of the isolated vertices.
    At a forget node,
    add an introduce node directly above adding a fresh isolated vertex.
    At a join node, re-order the vertices in one of the terms
    such that the original vertices of $G$ are at the same positions
    in both terms
    and, then, identify every additional isolated vertex with
    the one at the same position in the other term.
    Then, on this modified tree decomposition, we can apply the inverse
    construction to the one described in the previous paragraph.
\end{proof}

The \textit{height} $h(\term)$ of a term $\term \in \treeClosureK$
is inductively defined by letting $h(\identityGraph) \coloneqq 0$,
$h(\bm{N} \circ \term) \coloneqq h(\term) + 1$
for all $\bm{N} \in \jNeighbors^k$, $\term \in \treeClosureK$,
$h(\bm{A} \circ \term) \coloneqq h(\term)$
for all $\bm{A} \in \adjGraphs^k$, $\term \in \treeClosureK$,
and
$h(\term_1 \cdot \term_2) \coloneqq  \max\Set{h(\term_1), h(\term_2)}$
for all $\term_1, \term_2 \in \treeClosureK$.
Then, the height of $\term$ corresponds to the height
of the tree of the tree decomposition when viewing $\term$ as a tree-decomposed graph.

\begin{remark}
    In terms of nice tree decompositions,
    a $j$-neighbor graph corresponds to
    an introduce node that immediately follows on a forget node.
    This is also nicely reflected in the definition of $\jNeighborj$:
    it is a $j$-introduce graph composed with a $j$-forget graph,
    which correspond to an introduce node and a forget node, respectively.
    \Cref{le:graphsOfTerms} and its proof
    would have been simplified if we included
    individual
    $j$-introduce and $j$-forget graphs,
    in $\adjNeiGraphs^k$:
    we would not have to deal with isolated vertices.
    However, just considering $j$-neighbor graphs
    instead  has the advantage that all bi-labeled graphs
    in $\adjNeiGraphs^k$
    have both $k$ input and $k$ output labels, which means
    that we can restrict ourselves to the single product space $\LTwoProdK$
    later on instead of having to deal with
    all product spaces $L^2(X^1, \mu^{\otimes 1}), \dots, \LTwoProdK$.
    For this very same reason, we use the all-one graph $\identityGraphOf{k}$
    instead of defining a \emph{leaf graph}
    as an empty graph and then
    using $k$ individual introduce graphs,
    each introducing a single vertex, to obtain a bag of size $k$.
    The downside of these restrictions is that we may have to add
    isolated vertices to a graph, cf.\ the statement and proof of
    \Cref{le:graphsOfTerms}.
    Since additional isolated vertices do not affect
    the homomorphism density of a graph in a graphon,
    this is perfectly fine for us.
    Moreover, it is also not a restriction that,
    in a $j$-neighbor graph, both the forgotten and introduced
    vertex use the $j$th label
    since we may just inductively re-order the vertices of the whole term
    afterwards to make sure that the newly introduced vertex
    has the desired label;
    this is also done in the proof of \Cref{le:graphsOfTerms}.
\end{remark}

In the proof of \Cref{le:graphsOfTerms}, we use that
we can re-label input vertices by inductively re-labeling a whole term.
This could have been simplified by
including \textit{permutation graphs} in $\adjNeiGraphs^k$:
for $k \ge 1$ and a permutation $\pi \colon [k] \to [k]$,
we define the \textit{permutation graph}
\begin{equation*}
    \permutationGraph \coloneqq (([k], \emptyset), (1, \dots, k), (\pi(1), \dots, \pi(k)) \in \graphs^{k,k}.
\end{equation*}
Moreover, for
a tuple $\bm{a} \in V(F)^k$ of vertices of a graph $F$,
let $\pi(\bm{a}) \coloneqq (a_{\pi(1)}, \dots, a_{\pi(k)})$.
Then, for a bi-labeled graph $(F, \bm{a}, \bm{b}) \in \multiGraphsOf{k, \ell}$,
we have
$\permutationGraph \circ (F, \bm{a}, \bm{b})
= (F, \pi^{-1}(\bm{a}), \bm{b})$
for every permutation $\pi \colon [k] \to [k]$
and
$(F, \bm{a}, \bm{b}) \circ \permutationGraph
= (F, \bm{a}, \pi(\bm{b}))$
for every permutation $\pi \colon [\ell] \to [\ell]$.
In order to keep the set $\adjNeiGraphs^k$ simple,
we do not include permutations graphs in it.
Nevertheless, they
come in handy later when proving that the operators
and sub-$\sigma$-algebras
we define are permutation invariant.

\begin{figure}
    \centering
    \begin{tikzpicture}
        \node[vertex, label={90:$a_{\pi(1)}$}, label={270:$b_1$}] (P1) {};
		\node[right = 0.5cm of P1] (Pd) {$\dots$};
        \node[vertex, label={90:$a_{\pi(k)}$}, label={270:$b_k$}, right = 0.5cm of Pd] (Pk) {};
        \node[left = 0.5cm of P1, scale = 1.0] (PLabel){$\bm{P}_{\pi}\colon$};

        \node[vertex, label={90:$a_1$}, label={270:$b_{\pi^{-1}(1)}$}, right = 2.0cm of Pk] (Pp1) {};
		\node[right = 0.5cm of Pp1] (Ppd) {$\dots$};
        \node[vertex, label={90:$a_k$}, label={270:$b_{\pi^{-1}(k)}$}, right = 0.5cm of Ppd] (Ppk) {};

        \node[] (equals) at ($(Pk)!0.5!(Pp1)$) {$=$};
    \end{tikzpicture}
    \caption{Two representations of the graph $\permutationGraph$. In the first representation, the vertices are sorted by their output labels, and in the second, by their their input labels.}
\end{figure}

\subsection{Graphon Operators}
\label{sec:graphonOperators}

\textit{Graphon operators} generalize the homomorphism density $t(F, W)$
of a multigraph $F$ in a graphon $W \colon X \times X \to [0,1]$
to bi-labeled graphs.
To this end,
let $\bm{F} = (F, \bm{a}, \bm{b})\in \multiGraphs^{k, \ell}$
be a bi-labeled graph.
To simplify notation, let $t(\bm{F}, W) \coloneqq t(F, W)$
denote the homomorphism density of the underlying graph of $\bm{F}$ in $W$,
i.e., we ignore both the input and output labels.
Now, let us first take the input labels of $\bm{F}$ into account,
that is, we view $\bm{F}$ as a multi-rooted multigraph
and the homomorphism density becomes
a function by not fixing
the vertices that have an input label.
Formally, the
\textit{homomorphism function} of
$\bm{F}$ in $W$
is the function $\homFunctionFW \colon X^k \to [0,1]$
defined by
\begin{equation}
    \homFunctionFW (x_{a_1}, \dots, x_{a_k})
    \coloneqq
    \int_{X^{V(F)} \setminus \bm{a}} \prod_{ij \in E(F)} W(x_i, x_j)
    \, d\mu^{\otimes V(F) \setminus \bm{a}}(\bar{x})
    \label{eq:homFunctionGraphon}
\end{equation}
for all $x_{a_1}, \dots, x_{a_k} \in X$.
We note that we again slightly abuse notation and assume that
each factor $W(x_i, x_j)$ occurs as often in the product
$\prod_{ij \in E(F)} W(x_i, x_j)$ as $ij$ is contained in $E(F)$.
The Tonelli-Fubini theorem immediately yields that
\begin{equation*}
    \langle \allOne_{X^k}, \homFunctionFW \rangle = t(\bm{F}, W).
\end{equation*}

Taking both input and output labels of $\bm{F}$ into account,
we obtain an operator $\OperatorFW$ instead of a function $\homFunctionFW$
by, intuitively, \enquote{gluing} a given function $f$
to the output vertices of $\bm{F}$ to obtain the function $\OperatorFW f$.
Formally, the \textit{$\bm{F}$-operator of $W$}
is the mapping
$\OperatorFromTo{\bm{F}}{W} \colon \LTwoProdL \to \LTwoProdK$
defined by
\begin{equation}
    (\OperatorFromTo{\bm{F}}{W} f) (x_{a_1}, \dots, x_{a_k})\coloneqq \int\limits_{X^{V(F) \setminus \bm{a}}}\prod_{ij \in E(F)} W(x_i, x_j) \cdot f(x_{b_1}, \ldots, x_{b_\ell})\, d\mu^{\otimes V(F) \setminus \bm{a}}(\bar{x})
    \label{eq:homOperatorDefinition}
\end{equation}
for every $f \in \LTwoProdL$
and all $x_{a_1}, \dots, x_{a_k} \in X$,
where we again slightly abuse notation w.r.t.\ the product over all $ij \in E(F)$.
The goal of this definition is that an application of $\OperatorFW$
to a homomorphism function $\homFunction{\bm{G}}{W}$
should yield the homomorphism function $\homFunction{\bm{F} \circ \bm{G}}{W}$;
as we will see in \Cref{le:graphOperations}, this is the case.
We note that
$\homFunctionFW = \OperatorFW \allOne_{X^\ell}$
as an element of $\LInftyProdK$ and, in particular,
\begin{equation*}
    \langle \allOne_{X^k}, \OperatorFW \allOne_{X^\ell} \rangle = t(\bm{F}, W).
    \label{le:homOperatorHomDensity}
\end{equation*}

The definition of $\OperatorFW$
only depends on the isomorphism type of $\bm{F}$,
i.e., isomorphic bi-labeled graphs
define the same operator.
\begin{lemma}
    \label{le:isomorphicGraphsOperator}
    Let $\bm{F} = (F, \bm{a}, \bm{b}), \bm{F'} = (F', \bm{a'}, \bm{b'})
    \in \multiGraphsOf{k, \ell}$
    be isomorphic bi-labeled graphs and
    $W \colon X \times X \to [0,1]$ be a graphon.
    Then,
    $\OperatorFromTo{\bm{F}}{W} =  \OperatorFromTo{\bm{F'}}{W}$.
\end{lemma}
\begin{proof}
    Let $\varphi \colon V(F) \to V(F')$ be an isomorphism
    from $F$ to $F'$ such that
    $\varphi(\bm{a}) = \bm{a'}$ and $\varphi(\bm{b}) = \bm{b'}$.
    Note that
    $\prod_{ij \in E(F')} W(x_i, x_j) = \prod_{ij \in E(F)} W(x_{\varphi(i)}, x_{\varphi(j)})$
    for $\mu^{\otimes V(F')}$-almost every $\bar{x} \in X^{V(F')}$.
    Hence, by substituting variable $x_{\varphi(i)}$ by $x_i$
    for every $i \in V(F)$
    in (\ref{eq:homOperatorDefinition}),
    we immediately get
    $\OperatorFromTo{\bm{F}}{W} =  \OperatorFromTo{\bm{F'}}{W}$.
\end{proof}
Moreover, if $\bm{F}$ does not have any edges,
then the definition of $\OperatorFromTo{\bm{F}}{W}$
is independent of $W$ and we just write $T_{\bm{F}}$.
We just have to be a bit careful since
$T_{\bm{F}}$ is still dependent on
the standard Borel space $(X, \Borel)$
and the Borel probability measure $\mu$.

\begin{example}
    \label{ex:graphonOperators}
    \begin{enumerate}
        \item
        Define $\bm{A} \coloneqq (([2], \Set{12}), (1), (2)) \in \graphs^{1,1}$
        to be the edge with one input and one output vertex.
        Let $W \colon X \times X \to [0,1]$ be a graphon
        and $f \in \LTwo$.
        Then,
        \begin{equation*}
            (\OperatorFromTo{\bm{A}}{W} f)(x_1)
            = \int_X W(x_1, x_2) \cdot f(x_2) \, d \mu(x_2)
            = (T_W f) (x_1)
        \end{equation*}
        for every $x_1 \in X$, i.e.,
        $\OperatorFromTo{\bm{A}}{W} = T_W$.
        \item
        Define $\bm{D} \coloneqq (([2], \Set{12}), (1), (1)) \in \graphs^{1,1}$
        to be the edge where one vertex has both an input
        and an output label.
        Let $W \colon X \times X \to [0,1]$ be a graphon
        and $f \in \LTwo$.
        Then,
        \begin{equation*}
            (\OperatorFromTo{\bm{D}}{W} f)(x_1)
            = \int_X W(x_1, x_2) \cdot f(x_1) \, d \mu(x_2)
            = f(x_1) \cdot \deg_W(x_1)
        \end{equation*}
        for every $x_1 \in X$,
        where $\deg_W(x) \coloneqq \int_X W(x, y) \, d \mu(y)$
        for every $x \in X$.
        \item
        Let $k \ge 1$ and $\pi \colon [k] \to [k]$ be a permutation.
        Since $\permutationGraph$ does not have any edges,
        $\OperatorFromTo{\permutationGraph}{W}$ is independent of
        a specific graphon $W \colon X \times X \to [0,1]$
        and we simply denote it by $\OperatorFrom{\permutationGraph}$
        as agreed on before.
        The operator $\OperatorFrom{\permutationGraph}$
        is equal to the Koopman operator $T_\pi$
        of the measure-preserving measurable map $X^k \to X^k$ induced by $\pi$:
        Let $f \in \LTwoProdK$.
        Then,
        \begin{equation*}
            (\OperatorFrom{\permutationGraph} f)(x_1, \dots, x_k)= f(x_{\pi(1)}, \dots, x_{\pi(k)})= (f \circ \pi)(x_1, \dots, x_k)= (T_\pi f)(x_1, \dots, x_k)
        \end{equation*}
        for all $x_1, \dots, x_k \in X$.
    \end{enumerate}
\end{example}

The Tonelli-Fubini theorem
and the Cauchy-Schwarz inequality allow us to
verify that \Cref{eq:homOperatorDefinition}
is indeed is a well-defined operator and, furthermore, a contraction,
where it is crucial
that we made the somewhat unusual assumption that, in a bi-labeled graph,
every vertex has at most one label of each type.
The intuitive reason for this is that the diagonal
$(X^k, \Borel^{\otimes k}, \mu^{\otimes k})$
has measure zero (as long as our standard Borel space is atom free),
a problem which one does not face in the case of (finite-dimensional) matrices.
\begin{lemma}
    \label{le:homOperatorWellDefined}
    Let $\bm{F} \in \multiGraphsOf{k, \ell}$ be a bi-labeled graph
    and $W \colon X \times X \to [0,1]$ be a graphon.
    Then, $\OperatorFromTo{\bm{F}}{W} \colon \LTwoProdL \to \LTwoProdK$ is
    well-defined and an $L^2$- and $L^\infty$-contraction.
\end{lemma}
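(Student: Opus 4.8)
The plan is to fix a representative $\tilde f$ of $f \in \LTwoProdL$, write $\bm{F} = (F, \bm{a}, \bm{b})$ with $\bm{a} = (a_1, \dots, a_k)$ and $\bm{b} = (b_1, \dots, b_\ell)$, set $g(\bar x) \coloneqq \prod_{ij \in E(F)} W(x_i, x_j)$ — a $\Borel^{\otimes V(F)}$-measurable function $X^{V(F)} \to [0,1]$, being a finite product of coordinate projections of the measurable $W$ — and let $\pi_{\bm{b}} \colon X^{V(F)} \to X^\ell$ be the coordinate map $(x_v)_v \mapsto (x_{b_1}, \dots, x_{b_\ell})$. The single most important observation is that, \emph{because the entries of $\bm{b}$ are pairwise distinct}, $\pi_{\bm{b}}$ pushes $\mu^{\otimes V(F)}$ forward to $\mu^{\otimes\ell}$; the analogous statement fails for a vertex carrying two output labels, where $\pi_{\bm{b}}$ would land inside a diagonal of $\mu^{\otimes\ell}$-measure zero. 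Using this, the integrand $h \coloneqq g \cdot (\tilde f \circ \pi_{\bm{b}})$ of \Cref{eq:homOperatorDefinition} is measurable and, since $0 \le g \le 1$, satisfies $\lvert h\rvert \le \lvert \tilde f \circ \pi_{\bm{b}}\rvert$ with $\int_{X^{V(F)}} \lvert \tilde f \circ \pi_{\bm{b}}\rvert \, d\mu^{\otimes V(F)} = \lVert \tilde f\rVert_{L^1(X^\ell)} \le \lVert \tilde f\rVert_2 < \infty$, so $h$ is $\mu^{\otimes V(F)}$-integrable.

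Next I would derive well-definedness from the Tonelli–Fubini theorem applied to the splitting $X^{V(F)} = X^{\{a_1, \dots, a_k\}} \times X^{V(F) \setminus \bm{a}}$: the inner integral in \Cref{eq:homOperatorDefinition} exists for $\mu^{\otimes k}$-almost every $(x_{a_1}, \dots, x_{a_k})$ and is a measurable function of them, so it represents an element of $\LTwoProdK$ (once the norm bound below is in place). Independence of the representative follows the same way: if $\tilde f = \tilde f'$ off a $\mu^{\otimes\ell}$-null set $N$, then $\pi_{\bm{b}}^{-1}(N)$ is $\mu^{\otimes V(F)}$-null by the pushforward identity, and Tonelli–Fubini applied to $\mathbbm{1}_{\pi_{\bm{b}}^{-1}(N)}$ shows that for $\mu^{\otimes k}$-almost every input tuple the corresponding fibre of $N$ is null, whence $\OperatorFW \tilde f = \OperatorFW \tilde f'$ almost everywhere. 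Linearity is immediate from linearity of the integral, so it remains to establish the two contraction bounds.

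For the $L^\infty$-bound, if $\lvert \tilde f\rvert \le \lVert f\rVert_\infty$ almost everywhere, then $0 \le g \le 1$ together with $\mu^{\otimes V(F) \setminus \bm{a}}(X^{V(F) \setminus \bm{a}}) = 1$ gives $\lvert \OperatorFW \tilde f\rvert \le \lVert f\rVert_\infty$ pointwise almost everywhere on $X^k$, so the restriction of $\OperatorFW$ to $\LInftyProdL$ is a contraction into $\LInftyProdK$. For the $L^2$-bound I would, for fixed $(x_{a_1}, \dots, x_{a_k})$, apply the Cauchy–Schwarz inequality with respect to the finite measure $g \, d\mu^{\otimes V(F) \setminus \bm{a}}$ of total mass at most $1$, obtaining
\begin{equation*}
    \lvert (\OperatorFW \tilde f)(x_{a_1}, \dots, x_{a_k})\rvert^2 \le \int_{X^{V(F) \setminus \bm{a}}} g \cdot \lvert \tilde f \circ \pi_{\bm{b}}\rvert^2 \, d\mu^{\otimes V(F) \setminus \bm{a}} \le \int_{X^{V(F) \setminus \bm{a}}} \lvert \tilde f \circ \pi_{\bm{b}}\rvert^2 \, d\mu^{\otimes V(F) \setminus \bm{a}},
\end{equation*}
and then integrate over $(x_{a_1}, \dots, x_{a_k}) \in X^k$, using Tonelli's theorem for the non-negative integrand to collapse everything to $\int_{X^{V(F)}} \lvert \tilde f \circ \pi_{\bm{b}}\rvert^2 \, d\mu^{\otimes V(F)} = \lVert \tilde f\rVert_2^2$ by the pushforward identity. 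Hence $\lVert \OperatorFW f\rVert_2 \le \lVert f\rVert_2$, so $\OperatorFW$ is a bounded linear $L^2$-contraction. The only part needing genuine care — and exactly where the hypothesis that no vertex of $\bm{F}$ carries two input or two output labels is used — is the bookkeeping around $\pi_{\bm{b}}$ and the identity $(\pi_{\bm{b}})_* \mu^{\otimes V(F)} = \mu^{\otimes\ell}$ (for input labels the analogous point is what makes $V(F) \setminus \bm{a}$ and the splitting of $X^{V(F)}$ behave correctly); the contraction estimates themselves are then routine.
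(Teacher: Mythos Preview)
Your proposal is correct and follows essentially the same approach as the paper: both rely on Tonelli--Fubini for measurability and well-definedness, Cauchy--Schwarz for the $L^2$-contraction, and the pointwise bound $0 \le \prod W(x_i,x_j) \le 1$ for the $L^\infty$-contraction, with the distinctness of the input and output labels playing exactly the role you identify. The only cosmetic differences are that the paper phrases the key identity $\int_{X^{V(F)}} |\tilde f \circ \pi_{\bm b}|^2 = \lVert f\rVert_2^2$ as a direct Tonelli--Fubini computation rather than via the pushforward $(\pi_{\bm b})_* \mu^{\otimes V(F)} = \mu^{\otimes \ell}$, and applies Cauchy--Schwarz against the constant function on the unweighted measure rather than against the weighted measure $g\, d\mu^{\otimes V(F)\setminus \bm a}$ as you do; both yield the same contraction bound.
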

\begin{proof}
    Let $\bm{F} = (F, \bm{a}, \bm{b})$.
    For $f \in \LLTwoProdL$, define
    $S$ by
    \begin{equation*}
        (Sf) (\bar{x}) \coloneqq \prod_{ij \in E(F)} W(x_i, x_j) \cdot f(x_{b_1}, \ldots, x_{b_\ell})
    \end{equation*}
    for every $\bar{x} \in X^{V(F)}$.
    Then, $S$ is clearly linear, and
    we claim that
    \begin{equation*}
        \normT{S f} \le \normI{W}^{\es(F)} \cdot \normT{f} < \infty,
    \end{equation*}
    which not only implies that
    $S f \in \LL^2(X^{V(F)}, \mu^{\otimes V(F)})$
    but also that
    $S$ is a well-defined mapping
    $\LTwoProdL \to L^2(X^{V(F)}, \mu^{\otimes V(F)})$
    that additionally is an $L^2$-contraction.

    To prove the claim, we first note that
    it is easy to see
    that $\bar{x} \mapsto \prod_{ij \in E(F)} W(x_i, x_j)$
    is a function in $\LL^\infty(X^{V(F)}, \mu^{\otimes V(F)})$
    with $\normI{\cdot}$-norm
    at most $\normI{W}$ since $\bm{F}$ does not have loops, i.e.,
    $i \neq j$.
    Second,
    $\bar{x} \mapsto f(x_{b_1}, \ldots, x_{b_\ell})$ is a function
    in $\LL^2(X^{V(F)}, \mu^{\otimes V(F)})$
    and, by the Tonelli-Fubini theorem,
    its $\normT{\cdot}$-norm
    is $\normT{f}$, where
    it is important that the entries
    of $\bm{b}$ are pairwise distinct.
    Then, the claim easily follows.
    Similarly, for $f \in \LLInftyProdL$,
    we have $\normI{S f} \le \normI{W}^{\es(F)} \cdot \normI{f}$, i.e.,
    $S$ is also an $L^\infty$-contraction.

    Now, $\OperatorFromTo{\bm{F}}{W} f$ is the function
    obtained from $Sf$ by
    integrating out the variables $x_i$ for $i \in V(F) \setminus \bm{a}$.
    The Cauchy-Schwarz inequality together with the Tonelli-Fubini Theorem
    yields that this is an $L^2$-contraction, i.e.,
    $\normT{\OperatorFromTo{\bm{F}}{W} f} \le \normT{Sf}$.
    Moreover, this is trivially an $L^\infty$-contraction, i.e.,
    $\normI{\OperatorFromTo{\bm{F}}{W} f} \le \normI{Sf}$
    for $f \in \LLInftyProdL$.
    Since this operation is also linear, this finishes the proof.
\end{proof}

The operator $\OperatorFW$ was defined such that
the application
to a homomorphism function $\homFunction{\bm{G}}{W}$
yields the homomorphism function $\homFunction{\bm{F} \circ \bm{G}}{W}$.
The following lemma formalizes this by stating that the composition
of bi-labeled graphs corresponds to the composition of graphon operators.
Moreover, the analogous correspondence holds
between the transpose and the Hilbert adjoint and between
the Schur product and the point-wise product.

\begin{lemma}
    \label{le:graphOperations}
    \label{le:homOperatorAdjoint}
    Let $W \colon X \times X \to [0,1]$ be a graphon.
    Then,
    \begin{enumerate}
        \item
            $\OperatorFromTo{\bm{F}^*}{W} = \OperatorFromTo{\bm{F}}{W}^*$
            for every $\bm{F} \in \multiGraphs$,
            \label{le:graphOperations:transpose}
        \item
            if $\bm{F} \in \multiGraphs$ is symmetric, then $\OperatorFromTo{\bm{F}}{W}$ is self-adjoint,
            \label{le:graphOperations:symmetric}
        \item
            $\OperatorFromTo{\bm{F_1} \circ \bm{F_2}}{W}= \OperatorFromTo{\bm{F_1}}{W} \circ \OperatorFromTo{\bm{F_2}}{W}$
            for all $\bm{F_1} \in \multiGraphsOf{k, m}$,
            $\bm{F_2} \in \multiGraphsOf{m, \ell}$, $k, \ell, m \ge 0$, and
            \label{le:graphOperations:composition}
        \item
            $\OperatorFromTo{\bm{F_1} \blProd \bm{F_2}}{W} (c_1 \cdot c_2)
            = (\OperatorFromTo{\bm{F_1}}{W} c_1) \cdot
            (\OperatorFromTo{\bm{F_2}}{W} c_2)$
            for all $c_1, c_2 \in \R$, $\bm{F_1}, \bm{F_2} \in \multiGraphsOf{k, 0}$,
            $k \ge 0$.
            \label{le:graphOperations:product}
    \end{enumerate}
\end{lemma}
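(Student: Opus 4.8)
The plan is to reduce all four parts to the defining integral formula \eqref{eq:homOperatorDefinition} together with repeated applications of the Tonelli--Fubini theorem. The only combinatorial input needed is that, in both the composition $\bm{F_1}\circ\bm{F_2}$ and the Schur product $\bm{F_1}\blProd\bm{F_2}$, the edge multiset of the resulting underlying graph $F$ is the disjoint union $E(F_1)\sqcup E(F_2)$ and its vertex set decomposes cleanly, so that the product $\prod_{ij\in E(F)}W(x_i,x_j)$ factors and the iterated integral separates along that factorization. Part \ref{le:graphOperations:symmetric} then needs no separate argument: if $\bm{F}^* = \bm{F}$, then part \ref{le:graphOperations:transpose} gives $(\OperatorFromTo{\bm{F}}{W})^* = \OperatorFromTo{\bm{F}^*}{W} = \OperatorFromTo{\bm{F}}{W}$.

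For part \ref{le:graphOperations:transpose}, write $\bm{F} = (F,\bm{a},\bm{b}) \in \multiGraphsOf{k,\ell}$ and fix $f \in \LTwoProdL$ and $g \in \LTwoProdK$. Unfolding $\langle \OperatorFromTo{\bm{F}}{W}f, g\rangle$ with \eqref{eq:homOperatorDefinition}, the relevant integrand on $X^{V(F)}$ is $\prod_{ij\in E(F)}W(x_i,x_j)\cdot f(x_{b_1},\dots,x_{b_\ell})\cdot g(x_{a_1},\dots,x_{a_k})$; since $W$ is bounded by $1$ and, exactly as in the proof of \Cref{le:homOperatorWellDefined}, the functions $\bar{x}\mapsto f(x_{b_1},\dots,x_{b_\ell})$ and $\bar{x}\mapsto g(x_{a_1},\dots,x_{a_k})$ lie in $\LL^2(X^{V(F)},\mu^{\otimes V(F)})$ with norms $\normT{f}$ and $\normT{g}$, the Cauchy--Schwarz inequality shows this integrand is $\mu^{\otimes V(F)}$-integrable. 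The Tonelli--Fubini theorem then yields
\begin{equation*}
    \langle \OperatorFromTo{\bm{F}}{W}f, g\rangle = \int_{X^{V(F)}} \prod_{ij\in E(F)}W(x_i,x_j)\cdot f(x_{b_1},\dots,x_{b_\ell})\cdot g(x_{a_1},\dots,x_{a_k})\, d\mu^{\otimes V(F)}(\bar{x}),
\end{equation*}
and this expression is symmetric under exchanging $\bm{a}$ with $\bm{b}$ and $g$ with $f$ simultaneously. Running the same computation for $\bm{F}^* = (F,\bm{b},\bm{a})$ identifies it with $\langle f, \OperatorFromTo{\bm{F}^*}{W}g\rangle$, so by uniqueness of the Hilbert adjoint $(\OperatorFromTo{\bm{F}}{W})^* = \OperatorFromTo{\bm{F}^*}{W}$.

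For part \ref{le:graphOperations:composition}, let $\bm{F_1} = (F_1,\bm{a_1},\bm{b_1})$ and $\bm{F_2} = (F_2,\bm{a_2},\bm{b_2})$, and let $F$ be the underlying graph of $\bm{F_1}\circ\bm{F_2}$, obtained by identifying $b_{1,i}$ with $a_{2,i}$ for $i\in[m]$. The bookkeeping facts I would record are: $V(F)\setminus\bm{a_1}$ is the disjoint union of $V(F_1)\setminus\bm{a_1}$ and $V(F_2)\setminus\bm{a_2}$, with the glued vertices $a_{2,i}=b_{1,i}$ lying inside $V(F_1)$; $E(F) = E(F_1)\sqcup E(F_2)$; every edge of $F_1$ involves only $V(F_1)$-coordinates while every edge of $F_2$ and the tuple $(b_{2,1},\dots,b_{2,\ell})$ involve only $V(F_2)$-coordinates; and the entries of $\bm{b_1}$ are pairwise distinct, so $(\OperatorFromTo{\bm{F_2}}{W}f)(x_{b_{1,1}},\dots,x_{b_{1,m}})$ is meaningful. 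Given $f \in \LTwoProdL$, I would reduce first to $f\ge 0$ by linearity so that all integrands are non-negative and Tonelli applies unconditionally. Then both $\OperatorFromTo{\bm{F_1}}{W}\big(\OperatorFromTo{\bm{F_2}}{W}f\big)$ and $\OperatorFromTo{\bm{F_1}\circ\bm{F_2}}{W}f$, evaluated at input coordinates $(x_{a_{1,1}},\dots,x_{a_{1,k}})$, equal the single integral over $X^{(V(F_1)\setminus\bm{a_1})}\times X^{(V(F_2)\setminus\bm{a_2})}$ of $\prod_{ij\in E(F_1)}W(x_i,x_j)\cdot\prod_{ij\in E(F_2)}W(x_i,x_j)\cdot f(x_{b_{2,1}},\dots,x_{b_{2,\ell}})$: for the first by unfolding the outer operator, then the inner one, then merging with Tonelli; for the second straight from \eqref{eq:homOperatorDefinition} using the decomposition of $V(F)\setminus\bm{a_1}$ and $E(F) = E(F_1)\sqcup E(F_2)$. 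The $L^2$-bounds of \Cref{le:homOperatorWellDefined} keep everything finite and defined almost everywhere.

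For part \ref{le:graphOperations:product}, linearity of $\OperatorFromTo{\bm{F_i}}{W}$ (now a map $\R\to\LTwoProdK$) gives $\OperatorFromTo{\bm{F_i}}{W}(c_i) = c_i\cdot\homFunction{\bm{F_i}}{W}$, so the claim reduces to the pointwise (a.e.) identity $\homFunction{\bm{F_1}\blProd\bm{F_2}}{W} = \homFunction{\bm{F_1}}{W}\cdot\homFunction{\bm{F_2}}{W}$; here the underlying graph $F$ of $\bm{F_1}\blProd\bm{F_2}$ satisfies $E(F) = E(F_1)\sqcup E(F_2)$ and $V(F)\setminus\bm{a_1}$ is the \emph{disjoint} union of $V(F_1)\setminus\bm{a_1}$ and $V(F_2)\setminus\bm{a_2}$, with the two edge-products depending on disjoint coordinate sets apart from the shared inputs $a_{1,i}=a_{2,i}$, so since $W\ge 0$ the Tonelli--Fubini theorem factors the integral defining $\homFunction{\bm{F_1}\blProd\bm{F_2}}{W}$ via \eqref{eq:homFunctionGraphon} into the product of the two integrals defining $\homFunction{\bm{F_1}}{W}$ and $\homFunction{\bm{F_2}}{W}$. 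I expect the main obstacle to be purely organizational: carefully tracking the vertex identifications, checking that the vertex sets decompose disjointly — which is exactly where the requirement that each vertex carries at most one input and at most one output label is used, just as in \Cref{le:homOperatorWellDefined} — and making every application of Tonelli--Fubini legitimate, either by first passing to non-negative functions or by invoking the $L^2$/$L^\infty$ bounds of \Cref{le:homOperatorWellDefined} together with Cauchy--Schwarz.
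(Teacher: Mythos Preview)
Your proposal is correct and follows essentially the same approach as the paper: all four parts are reduced to the defining integral \eqref{eq:homOperatorDefinition} via Tonelli--Fubini, with Cauchy--Schwarz justifying integrability in part~\ref{le:graphOperations:transpose} and the vertex/edge-set decompositions supplying the factorizations in parts~\ref{le:graphOperations:composition} and~\ref{le:graphOperations:product}. Your reduction to $f\ge 0$ in part~\ref{le:graphOperations:composition} and your reformulation of part~\ref{le:graphOperations:product} via $\homFunction{\bm{F_1}\blProd\bm{F_2}}{W} = \homFunction{\bm{F_1}}{W}\cdot\homFunction{\bm{F_2}}{W}$ are minor presentational variants of what the paper does.
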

\begin{proof}
    (\ref{le:graphOperations:transpose}):
    We have
    \begin{align*}
        &\langle \OperatorFromTo{\bm{F}}{W} f, g \rangle\\
        = {}&
        \int\limits_{{X^{\bm{a}}}}
        \vast(
        \int\limits_{{X^{V(F) \setminus \bm{a}}}}\prod_{{ij \in E(F)}} \!\!\! W(x_i, x_j) \cdot f(x_{b_1}, \ldots, x_{b_\ell})\,d\mu^{\otimes V(F) \setminus \bm{a}}(\bar{x})\vast)\cdot g(x_{a_1}, \ldots, x_{a_k})\,d\mu^{\otimes \bm{a}}(\bar{x})\\
        = {}&
        \int\limits_{{X^{V(F)}}}\prod_{{ij \in E(F)}} \!\!\! W(x_i, x_j) \cdot f(x_{b_1}, \ldots, x_{b_\ell})\cdot g(x_{a_1}, \ldots, x_{a_k})\,d\mu^{\otimes V(F)}(\bar{x})\\
        = {}&
        \int\limits_{\mathclap{X^{\bm{b}}}}\;f(x_{b_1}, \ldots, x_{b_\ell}) \cdot \vast(
            \int\limits_{{X^{V(F) \setminus \bm{b}}}}\prod_{{ij \in E(F)}} \!\!\! W(x_i, x_j)\cdot g(x_{a_1}, \ldots, x_{a_k})\,d\mu^{\otimes V(F) \setminus \bm{b}}(\bar{x})\vast)\,d\mu^{\otimes \bm{b}}(\bar{x})\\
        = {}&\langle f, \OperatorFromTo{\bm{F}^*}{W} g \rangle
    \end{align*}
    for all $f \in \LTwoProdL, g \in \LTwoProdK$
    by the Tonelli-Fubini theorem,
    which is applicable since the product being integrated
    is a function in $L^1(X^{V(F)}, \mu^{\otimes V(F)})$
    by the Cauchy-Schwarz inequality.

    (\ref{le:graphOperations:symmetric}):
    By (\ref{le:graphOperations:transpose}), we have
    $\OperatorFromTo{\bm{F}}{W}^* = \OperatorFromTo{\bm{F}^*}{W}= \OperatorFromTo{\bm{F}}{W}$.

    (\ref{le:graphOperations:composition}):
    Let $\bm{F_1} = (F_1, \bm{a_1}, \bm{b_1})$,
    $\bm{F_2} = (F_2, \bm{a_2}, \bm{b_2})$, and
    $\bm{F_1} \circ \bm{F_2} = (F, \bm{a_1}, \bm{b_2})$.
    In the following, we identify vertices
    $b_{1,1}, \dots, b_{1,m}$
    with $a_{2,1}, \dots, a_{2,m}$.
    Note that the sets
    $V(F_1) \setminus \bm{a_1}$
    and $V(F_2) \setminus \bm{b_1} = V(F_2) \setminus \bm{a_2}$
    form a partition of $V(F_1 \circ F_2) \setminus \bm{a_1}$.
    Then, we have
    \begin{align*}
        &(\OperatorFromTo{\bm{F_1}}{W} (\OperatorFromTo{\bm{F_2}}{W} f)) (x_{a_{1,1}}, \dots, x_{a_{1,k}})\\
        = {}&
        \int\limits_{{X^{V(F_1) \setminus \bm{a_1}}}}\!\prod_{{ij \in E(F_1)}} \!\!\! W(x_i, x_j)\cdot (\OperatorFromTo{\bm{F_2}}{W} f)(x_{b_{1,1}}, \ldots, x_{b_{1,m}})\,d\mu^{\otimes V(F_1) \setminus \bm{a_1}}(\bar{x})\\
        = {}&
        \int\limits_{{X^{V(F_1) \setminus \bm{a_1}}}}\!\prod_{{ij \in E(F_1)}} \!\!\! W(x_i, x_j)\cdot \!
        \vast(
        \int\limits_{{X^{V(F_2) \setminus \bm{a_2}}}}\!\prod_{{ij \in E(F_2)}} \!\!\! W(x_i, x_j)
        \cdot f(x_{b_{2,1}}, \ldots, x_{b_{2,\ell}})\begin{aligned}[t]
            &\,d\mu^{\otimes V(F_2) \setminus \bm{a_2}}(\bar{x})\vast)\\[0.5em]
            &\;\,d\mu^{\otimes V(F_1) \setminus \bm{a_1}}(\bar{x})
        \end{aligned}\\[0.5em]
        = {}&
        \int\limits_{X^{V(F_1) \setminus \bm{a_1}}}\vast(\, \int\limits_{X^{V(F_2) \setminus \bm{a_2}}}\!\prod_{ij \in E(F)} \!\!\! W(x_i, x_j)\cdot f(x_{b_{2,1}}, \ldots, x_{b_{2,\ell}})\,d\mu^{\otimes V(F_2) \setminus \bm{a_2}}(\bar{x})\vast)\,d\mu^{\otimes V(F_1) \setminus \bm{a_1}}(\bar{x})\\
        = {}&
        \int\limits_{{X^{V(F) \setminus \bm{a_1}}}}\!\prod_{{ij \in E(F)}} \!\!\! W(x_i, x_j)\cdot f(x_{b_{2,1}}, \ldots, x_{b_{2,\ell}})\,d\mu^{\otimes V(F) \setminus \bm{a_1}}(\bar{x})
    \end{align*}
    for every $f \in \LTwoProdL$
    and $\mu^{\otimes \bm{a_1}}$-almost all $x_{a_{1,1}}, \dots, x_{a_{1,k}} \in X$
    by the Tonelli-Fubini theorem.

    (\ref{le:graphOperations:product}):
    Let $\bm{F_1} = (F_1, \bm{a_1}, ())$,
    $\bm{F_2} = (F_2, \bm{a_2}, ())$, and
    $\bm{F_1} \cdot \bm{F_2} = (F, \bm{a_1}, ())$.
    In the following, we identify vertices
    $a_{1,1}, \dots, a_{1,k}$
    with $a_{2,1}, \dots, a_{2,k}$.
    Then, we have
    \begin{align*}
        &\left((\OperatorFromTo{\bm{F_1}}{W} c_1) \cdot (\OperatorFromTo{\bm{F_2}}{W} c_2) \right) (x_{a_{1,1}}, \dots, x_{a_{1,k}})\\
        = {}&
        \begin{aligned}[t]
            &\vast( \int\limits_{{X^{V(F_1) \setminus \bm{a_1}}}}\prod_{{ij \in E(F_1)}} \!\!\! W(x_i, x_j) \cdot c_1\,d\mu^{\otimes V(F_1) \setminus \bm{a_1}}(\bar{x})\vast)\\
            &\;\cdot \vast(
                \int\limits_{{X^{V(F_2) \setminus \bm{a_2}}}}\prod_{{ij \in E(F_2)}} \!\!\! W(x_i, x_j) \cdot c_2\,d\mu^{\otimes V(F_2) \setminus \bm{a_2}}(\bar{x})\vast)
        \end{aligned}
        \\
= {}&
        \int\limits_{{X^{V(F_1) \setminus \bm{a_1}}}}\int\limits_{X^{V(F_2) \setminus \bm{a_2}}}\prod_{{ij \in E(F_1)}} \!\!\! W(x_i, x_j) \cdot \prod_{{ij \in E(F_2)}} \!\!\! W(x_i, x_j)
            \cdot
            c_1 \cdot c_2\begin{aligned}[t]
            &\,d\mu^{\otimes V(F_2) \setminus \bm{a_2}}(\bar{x})\\
            &\;\,d\mu^{\otimes V(F_1) \setminus \bm{a_1}}(\bar{x})
            \end{aligned}
        \\
        = {}&
        \int\limits_{{X^{V(F) \setminus \bm{a_1}}}}\prod_{{ij \in E(F)}}\!\!\! W(x_i, x_j)\cdot c_1 \cdot c_2\,d\mu^{\otimes V(F) \setminus \bm{a_1}}(\bar{x})\\
        = {}& \OperatorFromTo{\bm{F_1} \blProd \bm{F_2}}{W} (c_1 \cdot c_2)(x_{a_{1,1}}, \dots, x_{a_{1,k}})
    \end{align*}
    for all $c_1, c_2 \in \R$
    and $\mu^{\otimes \bm{a_1}}$-almost all $x_{a_{1,1}}, \dots, x_{a_{1,k}} \in X$
    by the Tonelli-Fubini theorem.
\end{proof}

For a set $\mathcal{F} \subseteq \multiGraphs^{k,k}$,
every graphon $W \colon X \times X \to [0,1]$
induces a family of $L^\infty$-contractions
$\OperatorFamilyFromTo{\mathcal{F}}{W} \coloneqq (\OperatorFW)_{\bm{F} \in \mathcal{F}}$
on $\LTwoProdK$,
cf.\ \Cref{le:homOperatorWellDefined}.
When handling such families of operators,
we often use notation like
$\OperatorFamilyFromTo{\mathcal{F}}{W} \circ T$
for an $L^\infty$-contraction $T$
or $\OperatorFamilyFromTo{\mathcal{F}}{W} / \subAlg$
for $\subAlg \in \subAlgsk$
to denote the family obtained by applying the operation
to every operator in the family;
for these examples, we obtain the families
$(\OperatorFW \circ T)_{\bm{F} \in \mathcal{F}}$
and
$(\OperatorFW / \subAlg)_{\bm{F} \in \mathcal{F}}$.
Moreover, if the
graphs in $\mathcal{F}$
do not have any edges,
we again abbreviate
$\OperatorFamilyFrom{\mathcal{F}} \coloneqq (\OperatorFrom{\bm{F}})_{\bm{F} \in \mathcal{F}}$.
Recall that $\adjNeiGraphs^k$
is the set of all neighbor and adjacency graphs with $k$ input and output labels.
Let us finally define the family
\begin{equation*}
    \naturalKFamilyOperatorsShortW \coloneqq \OperatorFamilyFromTo{\adjNeiGraphs^k}{W}
\end{equation*}
that replaces the single operator $T_W$
in \Cref{th:kWLGraphons}, our characterization of oblivious $k$-WL.

Let us explore the connection between
the family $\naturalKFamilyOperatorsShortW$
and treewidth $k-1$ homomorphism functions:
Recall that the terms in $\treeClosure{\adjNeiGraphs^k}$
correspond to the
tree-decomposed multigraphs of treewidth at most $k-1$ by \Cref{le:graphsOfTerms}.
Given such a term $\term \in \treeClosure{\adjNeiGraphs^k}$,
we can use the correspondence of bi-labeled graph operations
to their operator counterparts, cf.\
\Cref{le:graphOperations},
to inductively compute the homomorphism function $\homFunction{\graphOfTerm}{W}$
of $\graphOfTerm$ in a graphon $W$
using the operators $\naturalKFamilyOperatorsShortW$.
Hence,
the operators in $\naturalKFamilyOperatorsShortW$
yield
all homomorphism functions of multigraphs
of treewidth at most $k-1$ in $W$.
An important part of
the proof of \Cref{th:kWLGraphons}
consists of defining different families
of $L^\infty$-contractions indexed by $\adjNeiGraphs^k$
that we may use instead of $\naturalKFamilyOperatorsShortW$
and still yield the same homomorphism functions.
For example, we may replace $\naturalKFamilyOperatorsShortW$
by the quotient operators $\naturalKFamilyOperatorsShortW / \subAlg$
for an appropriate $\subAlg \in \subAlgsk$.
This leads to the following general definition, where we consider
families of operators on a space $\LTwo$ where $(X, \Borel, \mu)$
may not necessarily be a product space.
For example, it could be a quotient space.

\begin{definition}
    \label{def:homFunctionFamily}
    Let $(Y, \mathcal{D})$ be
    a standard Borel space
    and $\nu$ be a Borel probability measure on $Y$.
    Let $k \ge 1$
    and
    $\TT = (T_{\bm{F}})_{\bm{F} \in \mathcal{F}}$ be a family of $L^\infty$-contractions
    on $\LTwoY$
    indexed by a set $\mathcal{F} \subseteq \multiGraphs^{k,k}$.
    For every term $\term \in \treeClosure{\mathcal{F}}$,
    the \textit{homomorphism function} of $\term$ in $\TT$
    is the function
    $\homFunctionTOp \in \LInftyY$ with $\normI{\homFunctionTOp} \le 1$
    defined inductively by
    \begin{enumerate}
        \item
            $\homFunctionTOp \coloneqq \allOne_{Y}$
            for $\term = \identityGraph$,
        \item
            $\homFunctionTOp \coloneqq T_{\bm{F}} \homFunctionOp{\term'}$
            for $\term = \bm{F} \circ \term'$, where $\bm{F} \in \mathcal{F}$, and
        \item
            $\homFunctionTOp \coloneqq \homFunctionOp{\term_1} \cdot \homFunctionOp{\term_2}$
            for $\term = \term_1 \blProd \term_2$.
    \end{enumerate}
    Moreover, the \textit{homomorphism density} of $\term$ in $\TT$
    is defined as
$t(\term, \TT) \coloneqq \langle \allOne_Y, \homFunctionTOp \rangle$.
\end{definition}

As remarked above,
given a term $\term \in \treeClosure{\adjNeiGraphs^k}$,
we can use the correspondence of bi-labeled graph operations
to their operator counterparts
to inductively compute the homomorphism function $\homFunction{\graphOfTerm}{W}$
and, in particular, the homomorphism density $t(\graphOfTerm, W)$
of $\graphOfTerm$ in a graphon $W$
using the operators in $\naturalKFamilyOperatorsShortW$.

\begin{lemma}
    \label{le:operatorsPreserveHomomorphisms}
    Let $k \ge 1$.
    Let $W \colon X \times X \to [0,1]$ be a graphon.
    Then,
    \begin{align*}
        &\homFunctionT{\naturalKFamilyOperatorsShortW} = \homFunction{\graphOfTerm}{W}&
        &\text{and}&
        &t(\term, \naturalKFamilyOperatorsShortW) = t(\graphOfTerm, W)&
    \end{align*}
    for every $\term \in \treeClosureK$.
\end{lemma}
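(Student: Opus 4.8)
The plan is to prove both equalities simultaneously by structural induction on the term $\term \in \treeClosureK$, following the three clauses defining $\treeClosureK$ (the all-one graph $\identityGraph$, composition $\bm{F}\circ\term'$ with $\bm{F}\in\adjNeiGraphs^k$, and the Schur product $\term_1\blProd\term_2$) together with the parallel recursion defining $\homFunctionT{\naturalKFamilyOperatorsShortW}$ in \Cref{def:homFunctionFamily}. Two observations reduce the argument to bookkeeping. First, $\graphOf{\cdot}$ respects the term operations by definition of term evaluation, i.e.\ $\graphOf{\bm{F}\circ\term'} = \bm{F}\circ\graphOf{\term'}$ and $\graphOf{\term_1\blProd\term_2} = \graphOf{\term_1}\blProd\graphOf{\term_2}$, and in each case the evaluated term lies in $\multiGraphs^{k,0}$. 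Second, every homomorphism function is a graphon operator applied to the constant on the zero-dimensional factor: for $\bm{G}\in\multiGraphs^{k,0}$ we have $\homFunction{\bm{G}}{W} = \OperatorFromTo{\bm{G}}{W}\,\allOne_{X^0}$ by the identity $\homFunctionFW = \OperatorFW\,\allOne_{X^\ell}$ noted after \Cref{eq:homFunctionGraphon}, specialised to $\ell=0$ (here $L^2(X^0,\mu^{\otimes 0})\cong\R$ and $\allOne_{X^0}$ is the scalar $1$). Hence the bi-labeled-graph identities of \Cref{le:graphOperations} transfer directly to homomorphism functions.

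For the base case $\term = \identityGraph$, the left-hand side is $\allOne_{X^k}$ by \Cref{def:homFunctionFamily}, and $\graphOf{\identityGraph} = \identityGraph$ has all $k$ of its vertices labelled as inputs and no edges, so \Cref{eq:homFunctionGraphon} is an integral over an empty set of a product over an empty edge set, i.e.\ the constant $1$ on $X^k$; the two sides agree. For the composition step $\term = \bm{F}\circ\term'$ with $\bm{F}\in\adjNeiGraphs^k$, \Cref{def:homFunctionFamily} gives $\homFunctionT{\naturalKFamilyOperatorsShortW} = \OperatorFW\,\homFunction{\term'}{\naturalKFamilyOperatorsShortW}$, the induction hypothesis rewrites $\homFunction{\term'}{\naturalKFamilyOperatorsShortW}$ as $\homFunction{\graphOf{\term'}}{W} = \OperatorFromTo{\graphOf{\term'}}{W}\,\allOne_{X^0}$, and \Cref{le:graphOperations}, item~\ref{le:graphOperations:composition}, gives $\OperatorFW\circ\OperatorFromTo{\graphOf{\term'}}{W} = \OperatorFromTo{\bm{F}\circ\graphOf{\term'}}{W}$; evaluating at $\allOne_{X^0}$ and using $\graphOf{\term} = \bm{F}\circ\graphOf{\term'}$ yields $\homFunction{\graphOf{\term}}{W}$. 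For the Schur-product step $\term = \term_1\blProd\term_2$, \Cref{def:homFunctionFamily} gives the pointwise product $\homFunction{\term_1}{\naturalKFamilyOperatorsShortW}\cdot\homFunction{\term_2}{\naturalKFamilyOperatorsShortW}$, the induction hypothesis turns each factor into $\OperatorFromTo{\graphOf{\term_i}}{W}\,\allOne_{X^0}$, and \Cref{le:graphOperations}, item~\ref{le:graphOperations:product}, with $c_1=c_2=1$ gives $(\OperatorFromTo{\graphOf{\term_1}}{W}1)\cdot(\OperatorFromTo{\graphOf{\term_2}}{W}1) = \OperatorFromTo{\graphOf{\term_1}\blProd\graphOf{\term_2}}{W}\,\allOne_{X^0} = \homFunction{\graphOf{\term}}{W}$, using $\graphOf{\term}=\graphOf{\term_1}\blProd\graphOf{\term_2}$.

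The density identity follows at once: by \Cref{def:homFunctionFamily}, $t(\term,\naturalKFamilyOperatorsShortW) = \langle\allOne_{X^k},\homFunctionT{\naturalKFamilyOperatorsShortW}\rangle$, which by the function identity just proved equals $\langle\allOne_{X^k},\homFunction{\graphOf{\term}}{W}\rangle$, and this is $t(\graphOf{\term},W)$ by the remark following \Cref{eq:homFunctionGraphon}. I do not expect a genuine obstacle: the content is entirely contained in \Cref{le:graphOperations}, $L^\infty$-boundedness of all functions involved is already guaranteed by \Cref{def:homFunctionFamily} and \Cref{le:homOperatorWellDefined}, and the only mild care needed is in handling the zero-dimensional factor $X^0$ so that ``apply the operator to $\allOne_{X^0}$'' is meaningful and matches the recursive base value $\allOne_{X^k}$.
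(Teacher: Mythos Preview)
Your proposal is correct and follows essentially the same approach as the paper: a structural induction on $\term$ over the three term-forming clauses, using \Cref{def:homFunctionFamily} together with items~\ref{le:graphOperations:composition} and~\ref{le:graphOperations:product} of \Cref{le:graphOperations}, and then deducing the density identity from the function identity via the pairing with $\allOne_{X^k}$. Your explicit treatment of the zero-dimensional factor $X^0$ is slightly more careful than the paper's, which writes $\allOne_{X^\ell}$ generically without spelling out that $\ell=0$ here.
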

\begin{proof}
    $\naturalKFamilyOperatorsShortW$
    is a family of $L^\infty$-contractions
    on $\LTwoProdK$.
    We show that
    $\homFunctionT{\naturalKFamilyOperatorsShortW} = \homFunction{\graphOfTerm}{W}$
    by induction on $\term \in \treeClosureK$.
    Then,
    \begin{equation*}
        t(\term, \naturalKFamilyOperatorsShortW)
        = \langle \allOne_{X^k}, \homFunctionT{\naturalKFamilyOperatorsShortW} \rangle
        = \langle \allOne_{X^k}, \homFunction{\graphOfTerm}{W} \rangle
        = t(\graphOfTerm, W)
    \end{equation*}
    by definition of $t(\term, \naturalKFamilyOperatorsShortW)$
    and $\homFunction{\graphOfTerm}{W}$.

    For the induction basis $\term = \identityGraph$, we have
    $\homFunction{\term}{\naturalKFamilyOperatorsShortW} = \allOne_{X^k}$
    by \Cref{def:homFunctionFamily}
    and
    $\homFunction{\graphOfTerm}{W}
    = \OperatorFromTo{\identityGraph}{W} \allOne_{X^0}
    = \allOne_{X^k}$
    by the definition of
    $\OperatorFromTo{\identityGraph}{W}$.
    For the first case of the inductive step
    $\term = \bm{F} \circ \term'$,
    where $\bm{F} \in \adjNeiGraphs^k$
    and $\graphOf{\term'} \in \multiGraphs^{k, 0}$,
    we have
    \begin{align*}
        \homFunctionT{\naturalKFamilyOperatorsShortW}
        = \OperatorFW \homFunction{\term'}{\naturalKFamilyOperatorsShortW}
        = \OperatorFW \homFunction{\graphOf{\term'}}{W}
        &= \OperatorFW (\OperatorFromTo{\graphOf{\term'}}{W} \allOne_{X^0})\\
        &= \OperatorFromTo{\bm{F} \circ \graphOf{\term'}}{W} \allOne_{X^0}\\
        &= \OperatorFromTo{\graphOf{\term}}{W} \allOne_{X^0}\\
        &= \homFunction{\graphOfTerm}{W}
    \end{align*}
    by \Cref{def:homFunctionFamily},
    the induction hypothesis,
    the definition of $\OperatorFromTo{\graphOf{\term}}{W}$,
    and
    \Cref{le:graphOperations} (\ref{le:graphOperations:composition}).
    For the second case of the inductive step
    $\term = \term_1 \blProd \term_2$,
    where $\graphOf{\term}, \graphOf{\term_1}, \graphOf{\term_2} \in \multiGraphs^{k, 0}$.
    Then, we have
    \begin{align*}
        \homFunctionT{\naturalKFamilyOperatorsShortW}
        = \homFunction{\term_1}{\naturalKFamilyOperatorsShortW} \cdot \homFunction{\term_2}{\naturalKFamilyOperatorsShortW}
        = \homFunction{\graphOf{\term_1}}{W} \cdot \homFunction{\graphOf{\term_2}}{W}
        &= (\OperatorFromTo{\graphOf{\term_1}}{W} \allOne_{X^{0}}) \cdot (\OperatorFromTo{\graphOf{\term_2}}{W} \allOne_{X^{0}})\\
        &= \OperatorFromTo{\graphOf{\term_1} \cdot \graphOf{\term_2}}{W} \allOne_{X^{0}}\\
        &= \OperatorFromTo{\graphOf{\term}}{W} \allOne_{X^{0}}\\
        &= \homFunction{\graphOfTerm}{W}
    \end{align*}
    by \Cref{def:homFunctionFamily},
    the induction hypothesis,
    the definition of $\OperatorFromTo{\graphOf{\term}}{W}$, and
    \Cref{le:graphOperations} (\ref{le:graphOperations:product}).
\end{proof}

The following lemma gives a sufficient condition under
which
two families of $L^\infty$- contractions
yield the same homomorphism densities.
Recall that a Markov embedding
is a Markov operator that is an isometry.
Unlike Markov operators in general,
Markov embeddings are compatible with
point-wise products of functions,
cf.\ \cite[Theorem $13.9$, Remark $13.10$]{EisnerEtAl2015}.
This is crucial since we need the point-wise product
of functions to get from bounded pathwidth to bounded treewidth
homomorphism functions.

\begin{lemma}
    \label{le:markovEmbeddingPreservesHomomorphisms}
    Let $k \ge 1$.
    Let $(X_1, \Borel_1)$ and $(X_2, \Borel_2)$ be standard Borel spaces
    with Borel probability measures $\mu_1$ and $\mu_2$ on $X_1$ and $X_2$, respectively.
    Let $\TT_1$ and $\TT_2$ be families of $L^\infty$-contractions
    on $\LTwoOne$ and $\LTwoTwo$, respectively, indexed by $\adjNeiGraphs^k$.
    If $I \colon \LTwoTwo \to \LTwoOne$ is a Markov embedding
    such that $\TT_1 \circ I = I \circ \TT_2$, then
    \begin{align*}
        &I \homFunctionT{\TT_2} = \homFunctionT{\TT_1}&
        &\text{and}&
        &t(\term, \TT_1) = t(\term, \TT_2)&
    \end{align*}
    for every $\term \in \treeClosureK$.
\end{lemma}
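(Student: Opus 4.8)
The plan is to establish the first identity $I \homFunctionT{\TT_2} = \homFunctionT{\TT_1}$ by induction on the structure of the term $\term \in \treeClosureK$, following the three clauses of \Cref{def:homFunctionFamily}; the density identity then drops out of a one-line computation. Throughout, write $T_{\bm{F},1}$ and $T_{\bm{F},2}$ for the operators indexed by $\bm{F} \in \adjNeiGraphs^k$ in $\TT_1$ and $\TT_2$, respectively, so that the hypothesis $\TT_1 \circ I = I \circ \TT_2$ reads $T_{\bm{F},1} \circ I = I \circ T_{\bm{F},2}$ for every $\bm{F} \in \adjNeiGraphs^k$.

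For the base case $\term = \identityGraph$, both homomorphism functions are the respective all-one functions, and $I \allOne_{X_2} = \allOne_{X_1}$ since $I$ is a Markov operator. For the composition case $\term = \bm{F} \circ \term'$ with $\bm{F} \in \adjNeiGraphs^k$, \Cref{def:homFunctionFamily} and the induction hypothesis give $\homFunctionT{\TT_1} = T_{\bm{F},1}\, \homFunction{\term'}{\TT_1} = T_{\bm{F},1}\, I\, \homFunction{\term'}{\TT_2}$, and applying $T_{\bm{F},1} \circ I = I \circ T_{\bm{F},2}$ turns this into $I\, T_{\bm{F},2}\, \homFunction{\term'}{\TT_2} = I\, \homFunctionT{\TT_2}$. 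For the product case $\term = \term_1 \blProd \term_2$, the induction hypothesis yields $\homFunctionT{\TT_1} = \homFunction{\term_1}{\TT_1} \cdot \homFunction{\term_2}{\TT_1} = (I\, \homFunction{\term_1}{\TT_2}) \cdot (I\, \homFunction{\term_2}{\TT_2})$, so it remains to see that $(If)(Ig) = I(fg)$ for the two factors $f = \homFunction{\term_1}{\TT_2}$ and $g = \homFunction{\term_2}{\TT_2}$, which lie in $\LInftyTwo$ with $\normI{\cdot} \le 1$ by \Cref{def:homFunctionFamily}. This is the one step that is not pure unwinding of definitions: a Markov operator need not be multiplicative, but a Markov \emph{embedding} is — an isometric Markov operator acts as an algebra homomorphism on $L^\infty$ — cf.\ \cite[Theorem $13.9$, Remark $13.10$]{EisnerEtAl2015}. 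Hence $(If)(Ig) = I(fg) = I\, \homFunctionT{\TT_2}$, completing the induction.

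Finally, for the density identity, compute $t(\term, \TT_1) = \langle \allOne_{X_1}, \homFunctionT{\TT_1} \rangle = \langle \allOne_{X_1}, I\, \homFunctionT{\TT_2} \rangle = \langle I^* \allOne_{X_1}, \homFunctionT{\TT_2} \rangle = \langle \allOne_{X_2}, \homFunctionT{\TT_2} \rangle = t(\term, \TT_2)$, using the definition of homomorphism density, the first identity, the definition of the Hilbert adjoint, and $I^* \allOne_{X_1} = \allOne_{X_2}$, which again holds because $I$ is a Markov operator. The only genuine obstacle is the product case, where one must invoke the multiplicativity of the Markov embedding on $L^\infty$; everything else is bookkeeping with \Cref{def:homFunctionFamily} and the Markov-operator axioms.
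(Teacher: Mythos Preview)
Your proof is correct and follows essentially the same approach as the paper: induction on the term structure for $I\,\homFunctionT{\TT_2} = \homFunctionT{\TT_1}$ using $I\allOne_{X_2}=\allOne_{X_1}$, the intertwining hypothesis, and the multiplicativity of Markov embeddings on $L^\infty$ (\cite[Theorem~13.9]{EisnerEtAl2015}), followed by the same one-line adjoint computation for the densities. The only difference is cosmetic (direction of the chain of equalities and notation for the indexed operators).
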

\begin{proof}
    We show that
$I \homFunctionT{\TT_1} = \homFunctionT{\TT_2}$
by induction on $\term \in \treeClosureK$.
    Then, also
    \begin{equation*}
        t(\term, \TT_1)= \langle \allOne_{X_1}, \homFunctionT{\TT_1} \rangle = \langle \allOne_{X_1}, I \homFunctionT{\TT_2} \rangle = \langle I^* \allOne_{X_1}, \homFunctionT{\TT_2} \rangle = \langle \allOne_{X_2}, \homFunctionT{\TT_2} \rangle = t(\term, \TT_2).
    \end{equation*}
    For the induction basis $\term = \identityGraph$, we have
    \begin{equation*}
        I \homFunctionT{\TT_2}= I \allOne_{X_2}= \allOne_{X_1}= \homFunctionT{\TT_1}.
    \end{equation*}
    For $\term = \bm{F} \circ \term'$, where $\bm{F} \in \adjNeiGraphs^k$,
    we have
    \begin{equation*}
        I \homFunctionT{\TT_2}= (I \circ (\TT_2)_{\bm{F}}) \homFunction{\term'}{\TT_2}= ((\TT_1)_{\bm{F}} \circ I) \homFunction{\term'}{\TT_2}= (\TT_1)_{\bm{F}} \homFunction{\term'}{\TT_1}= \homFunctionT{\TT_1}
    \end{equation*}
    by the assumption and the induction hypothesis.
    Finally, for $\term = \term_1 \blProd \term_2$,
    we use that $I$ is a Markov embedding and, hence,
    satisfies $I(f \cdot g) = I f \cdot I g$
    for all $f, g \in \LInftyTwo$ \cite[Theorem $13.9$]{EisnerEtAl2015}.
    We have
    \begin{align*}
        I \homFunctionT{\TT_2}= I (\homFunction{\term_1}{\TT_2} \cdot \homFunction{\term_2}{\TT_2})= I \homFunction{\term_1}{\TT_2} \cdot I \homFunction{\term_2}{\TT_2}= \homFunction{\term_1}{\TT_1} \cdot \homFunction{\term_2}{\TT_1}= \homFunction{\term}{\TT_1}
    \end{align*}
    by the induction hypothesis.
\end{proof}

An important application of \Cref{le:markovEmbeddingPreservesHomomorphisms}
is to replace the family
$\naturalKFamilyOperatorsShortW$
by the quotient operators $\naturalKFamilyOperatorsShortW / \subAlg$
for an appropriate $\subAlg \in \subAlgsk$.
To this end, we call $\subAlg \in \subAlgsk$
\textit{$W$-invariant}
if $\subAlg$ is invariant for every operator in the family $\naturalKFamilyOperatorsShortW$,
i.e.,
$\subAlg$ is $\OperatorFromTo{\bm{F}}{W}$-invariant
for every $\bm{F} \in \adjNeiGraphs^k$,
which in turn means that
$\OperatorFromTo{\bm{F}}{W}(\LTwoProdKOf{\subAlg})
\subseteq \LTwoProdKOf{\subAlg}$
for every $\bm{F} \in \adjNeiGraphs^k$.
\begin{corollary}
    \label{le:invariantQuotientPreservesHomomorphisms}
    Let $k \ge 1$.
    Let $W \colon X \times X \to [0,1]$ be a graphon
    and $\subAlg \in \subAlgsk$ be $W$-invariant.
    Then,
    \begin{equation*}
t(\term, (\naturalKFamilyOperatorsShortW)_\subAlg)= t(\term, \naturalKFamilyOperatorsShortW \!/ \subAlg)= t(\term, \naturalKFamilyOperatorsShortW)= t(\graphOfTerm, W)
    \end{equation*}
    for every $\term \in \treeClosureK$.
\end{corollary}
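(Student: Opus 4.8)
The plan is to derive all three equalities from \Cref{le:operatorsPreserveHomomorphisms} and \Cref{le:markovEmbeddingPreservesHomomorphisms}, using the canonical Markov embedding attached to the quotient space $X^k/\subAlg$ as the intertwiner. The rightmost equality $t(\term, \naturalKFamilyOperatorsShortW) = t(\graphOfTerm, W)$ is precisely \Cref{le:operatorsPreserveHomomorphisms}, so only the two equalities involving $\subAlg$ need work.

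First I would fix the quotient data for $\subAlg \in \subAlgsk$, taking $(X^k, \Borel^{\otimes k}, \mu^{\otimes k})$ as the underlying space in \Cref{th:quotientSpaces}: this supplies the Markov operator $S_\subAlg \colon \LTwoProdK \to \LTwoProdKQuo$ and the Markov embedding $I_\subAlg \colon \LTwoProdKQuo \to \LTwoProdK$ (it is the Koopman operator of $q_\subAlg$, hence an isometry). Next I would record that $\naturalKFamilyOperatorsShortW/\subAlg$ and $(\naturalKFamilyOperatorsShortW)_\subAlg$ are genuinely families of $L^\infty$-contractions indexed by $\adjNeiGraphs^k$, so that \Cref{le:markovEmbeddingPreservesHomomorphisms} is applicable to them: each $\OperatorFromTo{\bm{F}}{W}$ is an $L^\infty$-contraction by \Cref{le:homOperatorWellDefined}, while $S_\subAlg$, $I_\subAlg$, and the conditional expectation $\ExpVal{\subAlg}$ are all $L^2$- and $L^\infty$-contractions (for $\ExpVal{\subAlg}$ because $\lvert \ExpVal{\subAlg} f \rvert \le \ExpVal{\subAlg} \lvert f \rvert \le \normI{f}$ $\mu^{\otimes k}$-almost everywhere), and a composition of $L^\infty$-contractions is again one.

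For $t(\term, \naturalKFamilyOperatorsShortW) = t(\term, \naturalKFamilyOperatorsShortW/\subAlg)$ I would use the hypothesis that $\subAlg$ is $W$-invariant, i.e.\ $\OperatorFromTo{\bm{F}}{W}$-invariant for every $\bm{F} \in \adjNeiGraphs^k$; then \Cref{le:quoOperator}~\ref{le:quoOperator:invariantI} gives $\OperatorFromTo{\bm{F}}{W} \circ I_\subAlg = I_\subAlg \circ (\OperatorFromTo{\bm{F}}{W}/\subAlg)$ for each $\bm{F}$, i.e.\ $\naturalKFamilyOperatorsShortW \circ I_\subAlg = I_\subAlg \circ \naturalKFamilyOperatorsShortW/\subAlg$, so applying \Cref{le:markovEmbeddingPreservesHomomorphisms} with $\TT_1 = \naturalKFamilyOperatorsShortW$, $\TT_2 = \naturalKFamilyOperatorsShortW/\subAlg$, and $I = I_\subAlg$ yields the equality. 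For $t(\term, (\naturalKFamilyOperatorsShortW)_\subAlg) = t(\term, \naturalKFamilyOperatorsShortW/\subAlg)$ I would instead invoke \Cref{le:quoOperator}~\ref{le:quoOperator:expOperatorI}, which holds for an arbitrary operator and gives $(\OperatorFromTo{\bm{F}}{W})_\subAlg \circ I_\subAlg = I_\subAlg \circ (\OperatorFromTo{\bm{F}}{W}/\subAlg)$, i.e.\ $(\naturalKFamilyOperatorsShortW)_\subAlg \circ I_\subAlg = I_\subAlg \circ \naturalKFamilyOperatorsShortW/\subAlg$; a second application of \Cref{le:markovEmbeddingPreservesHomomorphisms}, now with $\TT_1 = (\naturalKFamilyOperatorsShortW)_\subAlg$, closes the chain.

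There is no genuine obstacle here; the argument is pure bookkeeping. The one point to watch is that $(\naturalKFamilyOperatorsShortW)_\subAlg$ and $\naturalKFamilyOperatorsShortW/\subAlg$ really are $L^\infty$-contraction families, and that the two intertwining identities come from two different items of \Cref{le:quoOperator} — the one relating $\naturalKFamilyOperatorsShortW$ and $\naturalKFamilyOperatorsShortW/\subAlg$ needing $W$-invariance, the one relating $(\naturalKFamilyOperatorsShortW)_\subAlg$ and $\naturalKFamilyOperatorsShortW/\subAlg$ holding unconditionally.
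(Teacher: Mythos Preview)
Your proposal is correct and follows essentially the same approach as the paper: invoke \Cref{le:operatorsPreserveHomomorphisms} for the last equality, then use the Markov embedding $I_\subAlg$ together with the intertwining identities from \Cref{le:quoOperator} (items \ref{le:quoOperator:expOperatorI} and \ref{le:quoOperator:invariantI}) to feed \Cref{le:markovEmbeddingPreservesHomomorphisms} twice. Your extra verification that the quotient and conditional-expectation families remain $L^\infty$-contractions is a welcome piece of bookkeeping that the paper leaves implicit.
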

\begin{proof}
    The last equality is just \Cref{le:operatorsPreserveHomomorphisms}.
    By \Cref{le:quoOperator} (\ref{le:quoOperator:expOperatorI}) and (\ref{le:quoOperator:invariantI}),
    we have
    $I_\subAlg \circ \naturalKFamilyOperatorsShortW \!/ \subAlg = (\naturalKFamilyOperatorsShortW)_\subAlg \circ I_\subAlg$
    and
    $I_\subAlg \circ \naturalKFamilyOperatorsShortW \!/ \subAlg = \naturalKFamilyOperatorsShortW \circ I_\subAlg$,
    respectively,
    where $I_\subAlg$ is a Markov embedding by
    \Cref{th:quotientSpaces} (\ref{th:quotientSpaces:isometry}),
    Therefore, \Cref{le:markovEmbeddingPreservesHomomorphisms}
    yields the first two equalities.
\end{proof}

\section{Weisfeiler-Leman and Graphons}
\label{sec:WL}

In \Cref{sec:minInv} to \Cref{sec:WLIndistinguishability},
we closely follow Greb\'ik and Rocha \cite{GrebikRocha2021}
to prove \Cref{th:kWLGraphons} and formally
define all notions appearing in it.
We start off by defining
the minimum $W$-invariant $\mu^{\otimes k}$-relatively complete
sub-$\sigma$-algebra $\subAlg^k_W$ of $\Borel^{\otimes k}$
for a graphon $W$
via the family of operators $\naturalKFamilyOperatorsShortW$.
in \Cref{sec:minInv}.
Then, in \Cref{subsec:degreeMeasures}, we
define the space $\Mk$,
i.e., the space of all colors used by oblivious $k$-WL,
and \textit{$k$-WL distributions},
which generalize multisets of colors.
In \Cref{subsec:mappingOwl},
we define the function $\owlk_W \colon X^k \to \Mk$
and the $k$-WL distribution $\nu^k_W$ for a graphon $W$.
In \Cref{sec:operatorsOnMeasures}, we deviate
from Greb\'ik and Rocha \cite{GrebikRocha2021}
by a larger margin:
They show that every
distribution on iterative degree measures $\nu$ defines a
graphon on the space $\MM$;
this graphon for $\nu_W$
is then isomorphic to the quotient graphon $W/\subAlg_W$.
Since the operators in $\naturalKFamilyOperatorsShortW$
are not integral operators defined by a graphon
(intuitively, these graphons would have to be non-zero
only on the diagonal, which has measure zero),
we take the different route of
showing that a $k$-WL distribution $\nu$
defines a family of operators $\TT_\nu$
on $\LTwoMkNu$;
the family $\TT_{\nu^k_W}$
then corresponds to $\naturalKFamilyOperatorsShortW$.
In \Cref{sec:WLIndistinguishability},
we define the set $\Tk$ of \emph{homomorphism functions} on $\Mk$
and use the Stone-Weierstrass Theorem to show that it
is dense in $C(\Mk)$
before we finally prove \Cref{th:kWLGraphons} in
\Cref{sec:mainProof}.
The remaining sections discuss some implications of \Cref{th:kWLGraphons}:
\Cref{sec:measureHierarchies} shows that one can combine
all $k$-WL distributions $\nu^1_W, \nu^2_W, \dots$
of a graphon $W$
into a single distribution to obtain a new characterization of
weak isomorphism.
\Cref{sec:opHierarchies} explains how
the characterization of
\Cref{th:kWLGraphons} using Markov operators
corresponds to the system $\Lkk$ of linear equations.

\subsection{The Minimum \texorpdfstring{$W$}{W}-Invariant Sub-\texorpdfstring{$\sigma$}{Sigma}-Algebra}
\label{sec:minInv}

For a family $\TT = (T_i)_{i \in I}$ of operators
$T_i \colon \LTwo \to \LTwo$, where $i \in I$,
and a sub-$\sigma$-algebra $\subAlg \in \subAlgs$,
define
\begin{equation*}
    \TT(\subAlg) \coloneqq \bigcap \Set{\subAlgD \in \subAlgs \mid \subAlgD \supseteq \subAlg \text{ and } T_i(\LTwoSub) \subseteq \LTwoSubD \text{ for every } i \in I}.
\end{equation*}
Then, $\TT(\subAlg) \in \subAlgs$, cf.\ \Cref{subsec:algebras},
and $\subAlg$ is called \textit{$\TT$-invariant} if
$\TT(\subAlg) \subseteq \subAlg$,
which is equivalent to requiring that
$\subAlg$ is $T_i$-invariant for every $i \in I$.
Note that this operation is monotonous, i.e.,
for all $\subAlg, \subAlgD \in \subAlgs$
with $\subAlg \subseteq \subAlgD$, we have
$\TT(\subAlg) \subseteq \TT(\subAlgD)$.
By definition, the family
$\naturalKFamilyOperatorsShortW$
consists of the operators from the two families
$\OperatorFamilyFromTo{\adjGraphs^k}{W}$
and
$\OperatorFamilyFrom{\jNeighbors^k}$.
The following definition uses these two individual
families to define the
sub-$\sigma$-algebra $\kInftySubAlg_W$ of $\Borel^{\otimes k}$.
Already at this point, one should notice the connection
to oblivious $k$-WL, cf.\ \Cref{sec:kWLGraphons}:
the operators in $\OperatorFamilyFromTo{\adjGraphs^k}{W}$
capture the concept of atomic types
while the operators in $\OperatorFamilyFrom{\jNeighbors^k}$
correspond to the refinement rounds via $j$-neighbors used in oblivious $k$-WL.

\begin{definition}
    Let $k \ge 1$
    and $W \colon X \times X \to [0,1]$ be a graphon.
    Define $\kSubAlg_{W, n} \in \subAlgsk$ for every $n \in \N$ by setting
    $\kSubAlg_{W,0} \coloneqq \OperatorFamilyFromTo{\adjGraphs^k}{W}(\left\langle \Set{\emptyset, X^k} \right\rangle)$,
    $\kSubAlg_{W, n + 1} \coloneqq \OperatorFamilyFrom{\jNeighbors^k}(\kSubAlg_{W, n}) \text{ for every } n \in \N$,
    and
    $\kInftySubAlg_W \coloneqq \kInftySubAlg_{W, \infty} \coloneqq \langle \bigcup_{n \in \N} \kSubAlg_{W, n} \rangle$.
\end{definition}

Verifying that $\kInftySubAlg_W$ is in fact the minimum
$W$-invariant
$\mu^{\otimes k}$-relatively complete sub-$\sigma$-algebra
of $\Borel^{\otimes k}$
is mostly analogous to \cite[Proposition $5.13$]{GrebikRocha2021}.
A difference is given by the operators in $\OperatorFamilyFromTo{\adjGraphs^k}{W}$,
which are \emph{multiplication operators}, i.e.,
they multiply their arguments with a fixed function. This implies that
a single initial application guarantees $\OperatorFamilyFromTo{\adjGraphs^k}{W}$-invariance
for all subsequent sub-$\sigma$-algebras in the sequence.
Moreover, we also verify that $\kInftySubAlg_W$ is permutation invariant, i.e.,
$\kInftySubAlg_W$ is $T_{\pi}$-invariant for every permutation $\pi \colon [k] \to [k]$.
\begin{lemma}
    \label{le:algebraSequence}
    \label{le:minimumTA}
    \label{le:kSubAlgInduction}
    Let $k \ge 1$
    and $W \colon X \times X \to [0,1]$ be a graphon.
    Then,
    \begin{enumerate}
        \item $\kSubAlg_{W,0} = \big\langle \bigcup_{\bm{A} \in \adjGraphs^k} \Set{(\OperatorFromTo{\bm{A}}{W} \allOne_{X^k})^{-1}(A) \mid A \in \Borel([0,1])} \big\rangle$,\label{le:algebraSequence:zeroAltDef}
        \item $\kSubAlg_{W,0}$ is the minimum $\OperatorFamilyFromTo{\adjGraphs^k}{W}$-invariant $\mu^{\otimes k}$-relatively complete sub-$\sigma$-algebra of $\Borel^{\otimes k}$,\label{le:algebraSequence:minimumTA}
        \item $\kSubAlg_{W, n+1} = \big\langle \kSubAlg_{W,n} \cup \bigcup_{\bm{N} \in \jNeighbors^k} \Set{(\OperatorN \allOne_A)^{-1}(B) \mid A \in \kSubAlg_{W,n}, B \in \Borel([0,1])} \big\rangle$ for every $n \in \N$,\label{le:algebraSequence:nAltDef}
        \item $\kSubAlg_{W,n}$ is $\OperatorFamilyFromTo{\adjGraphs^k}{W}$-invariant for every $n \in \N \cup \Set{\infty}$,\label{le:algebraSequence:TAInvariant}
        \item $\kInftySubAlg_W$ is the minimum $W$-invariant $\mu^{\otimes k}$-relatively complete sub-$\sigma$-algebra of $\Borel^{\otimes k}$, and \label{le:algebraSequence:TInvariant}
        \item $\kSubAlg_{W, n}$ is permutation invariant for every $n \in \N \cup \Set{\infty}$. \label{le:algebraSequence:permutation}
    \end{enumerate}
\end{lemma}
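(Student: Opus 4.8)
The plan is to prove the six assertions roughly in the order listed, since each builds on the previous, and to lean heavily on the structural lemmas already established: the correspondence between bi-labeled graph operations and operator operations (\Cref{le:graphOperations}), the general properties of the $\TT(\cdot)$ operation (monotonicity and the characterization of invariance just stated), and the basic facts about $\langle \cdot \rangle$ from \Cref{subsec:algebras}. Throughout, the key new phenomenon compared to \cite{GrebikRocha2021} is that the adjacency operators $\OperatorFromTo{\bm{A}}{W}$ are \emph{multiplicative} — more precisely, by \Cref{le:graphOperations} \ref{le:graphOperations:product}, applying an adjacency operator to $\allOne_{X^k}$ and multiplying amounts to composition with adjacency graphs — so a single round of $\OperatorFamilyFromTo{\adjGraphs^k}{W}$ already closes off invariance for all adjacency operators.

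First I would prove \ref{le:algebraSequence:zeroAltDef}: by definition $\kSubAlg_{W,0} = \OperatorFamilyFromTo{\adjGraphs^k}{W}(\langle\{\emptyset, X^k\}\rangle)$, and since the functions $\OperatorFromTo{\bm{A}}{W}\allOne_{X^k}$ are precisely the images of the constant function $\allOne_{X^k}$ (which generates $L^2$ over $\langle\{\emptyset,X^k\}\rangle$) under the adjacency operators, the smallest sub-$\sigma$-algebra making all these $\OperatorFromTo{\bm{A}}{W}\allOne_{X^k}$ measurable is exactly $\kSubAlg_{W,0}$; here I would use that a sub-$\sigma$-algebra $\subAlgD$ makes a function $g$ measurable iff all preimages $g^{-1}(A)$, $A \in \Borel([0,1])$, lie in $\subAlgD$, and that $\OperatorFromTo{\bm{A}}{W}$ maps $\subAlgD$-measurable functions to $\subAlgD$-measurable functions once these generators are in $\subAlgD$. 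For \ref{le:algebraSequence:minimumTA}, I would show $\kSubAlg_{W,0}$ is $\OperatorFamilyFromTo{\adjGraphs^k}{W}$-invariant: since each adjacency graph $\ijAdjacencyij$ is symmetric, $\OperatorFromTo{\bm{A}}{W}$ is self-adjoint, and the composition $\bm{A} \circ \bm{A}'$ of two adjacency graphs is again a (multi)graph built from adjacency graphs, so by \Cref{le:graphOperations} repeated application stays within functions measurable over $\kSubAlg_{W,0}$ — this uses multiplicativity crucially since composing adjacency graphs with the identity-component structure produces products of the $\OperatorFromTo{\bm{A}}{W}\allOne$'s. Minimality is immediate from the definition of $\TT(\cdot)$ as an intersection. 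Assertion \ref{le:algebraSequence:nAltDef} is the analogue of \ref{le:algebraSequence:zeroAltDef} one level up: the neighbor operators applied to indicator functions $\allOne_A$ for $A \in \kSubAlg_{W,n}$ generate the next algebra, by the same measurability argument; here I would also note that neighbor operators commute appropriately with the already-present adjacency structure.

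For \ref{le:algebraSequence:TAInvariant} I would argue by induction on $n$: the base case is \ref{le:algebraSequence:minimumTA}, and for the inductive step, going from $\kSubAlg_{W,n}$ to $\kSubAlg_{W,n+1} = \OperatorFamilyFrom{\jNeighbors^k}(\kSubAlg_{W,n})$, I need that applying an adjacency operator to a $\kSubAlg_{W,n+1}$-measurable function stays $\kSubAlg_{W,n+1}$-measurable; this follows because $\bm{A} \circ \bm{N}_j = $ a bi-labeled graph which, via \Cref{le:graphsOfTerms}-style reasoning, can be re-expressed using neighbor graphs and adjacency graphs applied to things measurable over $\kSubAlg_{W,n}$ — intuitively, adding an edge to a bag and then doing a neighbor step is the same as a neighbor step followed by finitely many adjacency/neighbor operations in the lower algebra. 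The $\infty$ case follows by taking unions, using that $\langle \bigcup_n \kSubAlg_{W,n}\rangle$ inherits invariance because any operator image of a function measurable over the union is, by continuity of the operators and density of the $\kSubAlg_{W,n}$-step functions, measurable over the union. Then \ref{le:algebraSequence:TInvariant}: $\kInftySubAlg_W$ is $\OperatorFamilyFrom{\jNeighbors^k}$-invariant because each $\kSubAlg_{W,n+1} = \OperatorFamilyFrom{\jNeighbors^k}(\kSubAlg_{W,n}) \subseteq \kInftySubAlg_W$, and $\OperatorFamilyFromTo{\adjGraphs^k}{W}$-invariant by \ref{le:algebraSequence:TAInvariant}; combined these give $\naturalKFamilyOperatorsShortW$-invariance, i.e.\ $W$-invariance, and minimality follows since any $W$-invariant $\subAlgD$ must contain $\kSubAlg_{W,0}$ (being $\OperatorFamilyFromTo{\adjGraphs^k}{W}$-invariant and containing $\{\emptyset, X^k\}$) and then inductively contain every $\kSubAlg_{W,n}$ by monotonicity of $\TT(\cdot)$, hence contain $\kInftySubAlg_W$. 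Finally \ref{le:algebraSequence:permutation}: I would prove by induction that each $\kSubAlg_{W,n}$ is $T_\pi$-invariant for all permutations $\pi$, using the identities $\permutationGraph \circ \ijAdjacencyij = \adjacencyGraphOf{k}{\pi^{-1}(i)\pi^{-1}(j)} \circ \permutationGraph$ and the analogous identity relating permutation graphs with neighbor graphs (since permuting then taking a $j$-neighbor equals taking a $\pi^{-1}(j)$-neighbor then permuting), which translates into $T_\pi \circ \OperatorFromTo{\bm{F}}{W} = \OperatorFromTo{\bm{F}'}{W} \circ T_\pi$ for a suitable $\bm{F}' \in \adjNeiGraphs^k$ via \Cref{le:graphOperations} \ref{le:graphOperations:composition}; hence $T_\pi$ maps each generating family of $\kSubAlg_{W,n+1}$ into $\kSubAlg_{W,n+1}$, and the $\infty$ case again follows by taking the union.

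The main obstacle I expect is the inductive step in \ref{le:algebraSequence:TAInvariant} — showing that adjacency-operator invariance is preserved under the neighbor-operator closure step. The subtlety is that $\OperatorFromTo{\bm{A}}{W} \circ \OperatorFromTo{\bm{N}_j}{W}$ is \emph{not} directly of the form $\OperatorFromTo{\bm{N}_{j'}}{W} \circ (\text{something over } \kSubAlg_{W,n})$; one has to observe at the level of bi-labeled graphs that $\bm{A} \circ \bm{N}_j = \bm{I}_j \circ (\text{adjacency graph in the forgotten coordinate}) \circ \bm{F}_j$ or a similar decomposition — essentially the tree-decomposition bookkeeping that underlies \Cref{le:graphsOfTerms}, done locally. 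Getting this decomposition exactly right, keeping careful track of which coordinate an adjacency graph acts on after a forget/introduce pair, and then feeding it through \Cref{le:graphOperations}, is the genuinely fiddly part; everything else is routine $\sigma$-algebra manipulation of the kind already done in \cite{GrebikRocha2021}. A clean way to organize it is to first establish, as a separate sublemma, that for every $\bm{A} \in \adjGraphs^k$ and $\bm{N} \in \jNeighbors^k$ there is a finite term over $\adjNeiGraphs^k$ whose leftmost operation is a neighbor graph and which equals $\bm{A} \circ \bm{N}$ as a bi-labeled graph operation on functions measurable over a given invariant algebra; once that is in hand the induction closes immediately.
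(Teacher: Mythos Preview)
Your approach is largely correct for parts \ref{le:algebraSequence:zeroAltDef}, \ref{le:algebraSequence:minimumTA}, \ref{le:algebraSequence:nAltDef}, \ref{le:algebraSequence:TInvariant}, and \ref{le:algebraSequence:permutation}, and matches the paper's line of argument. However, you are substantially overcomplicating part \ref{le:algebraSequence:TAInvariant}, which you flag as the ``main obstacle'' and the ``genuinely fiddly part.'' It is neither.

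The observation you are missing is one you already used in part \ref{le:algebraSequence:minimumTA}: the adjacency operator $\OperatorFromTo{\bm{A}}{W}$ is \emph{multiplicative}, namely $(\OperatorFromTo{\ijAdjacencyij}{W} f)(\bar{x}) = W(x_i, x_j) \cdot f(\bar{x})$, or equivalently $\OperatorFromTo{\bm{A}}{W} f = (\OperatorFromTo{\bm{A}}{W}\allOne_{X^k}) \cdot f$. Since $\OperatorFromTo{\bm{A}}{W}\allOne_{X^k}$ is $\kSubAlg_{W,0}$-measurable by part \ref{le:algebraSequence:zeroAltDef}, and $\kSubAlg_{W,0} \subseteq \kSubAlg_{W,n}$ for every $n \in \N \cup \{\infty\}$, the product of this fixed function with any $\kSubAlg_{W,n}$-measurable $f$ is again $\kSubAlg_{W,n}$-measurable. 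That is the entire argument the paper gives for \ref{le:algebraSequence:TAInvariant}: no induction on $n$, no decomposition of $\bm{A} \circ \bm{N}_j$.

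Your proposed sublemma --- rewriting $\bm{A} \circ \bm{N}_j$ so that a neighbor graph appears leftmost --- is not only unnecessary but does not obviously go through. When $j$ is one of the two indices of $\bm{A}$, the composite $\ijAdjacencyij \circ \bm{N}_j$ has an edge incident to the freshly introduced input vertex $a_j$, whereas no graph of the form $\bm{N}_{j'} \circ \bm{G}$ with $\bm{G} \in \multiGraphs^{k,k}$ can have an edge at the fresh $a_{j'}$ (neighbor graphs carry no edges). Your suggested workaround via $\bm{I}_j \circ (\cdots) \circ \bm{F}_j$ uses introduce and forget graphs separately, but these are not in $\adjNeiGraphs^k$, so they do not give operators known to respect the chain $\kSubAlg_{W,n}$. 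The multiplicativity argument sidesteps all of this in one line.
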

\begin{proof}
    (\ref{le:algebraSequence:zeroAltDef}) and (\ref{le:algebraSequence:minimumTA}):
    Let $\subAlg$ denote the minimum $\OperatorFamilyFromTo{\adjGraphs^k}{W}$-invariant $\mu^{\otimes k}$-relatively complete sub-$\sigma$-algebra of $\Borel^{\otimes k}$,
    and let $\subAlgD$ denote the $\mu^{\otimes k}$-relatively complete sub-$\sigma$-algebra of $\Borel^{\otimes k}$ on the right-hand side
    of the equality in (\ref{le:algebraSequence:zeroAltDef}).
    We prove that $\subAlg = \subAlgD = \kSubAlg_{W,0}$.
    We start by proving $\subAlg \subseteq \subAlgD$.
    Let $\bm{A} \in \adjGraphs^k$.
    The function $\OperatorFromTo{\bm{A}}{W} \allOne_{X^k}$
    is $\subAlgD$-measurable by definition of $\subAlgD$.
    Hence, for a $\subAlgD$-measurable function $g \in \LTwoProdK$,
    the product $(\OperatorFromTo{\bm{A}}{W} \allOne_{X^k}) \cdot g = \OperatorFromTo{\bm{A}}{W} g$
    is again $\subAlgD$-measurable, where the equality holds since $\OperatorFromTo{\bm{A}}{W}$
    is a multiplication operator.
    That is, $\subAlgD$ is $\OperatorFamilyFromTo{\adjGraphs^k}{W}$-invariant,
    which yields $\subAlg \subseteq \subAlgD$.
    For the inclusion $\subAlgD \subseteq \subAlg$
    on the other hand, $\allOne_{X^k}$ is trivially $\subAlg$-measurable
    and, since $\subAlg$ is
    $\OperatorFamilyFromTo{\adjGraphs^k}{W}$-invariant,
    the function $\OperatorFromTo{\bm{A}}{W} \allOne_{X^k}$
    is $\subAlg$-measurable for every $\bm{A} \in \adjGraphs^k$.
    Hence, $\subAlgD \subseteq \subAlg$.
    We have established $\subAlg = \subAlgD$
    and it remains to prove that these are also equal to $\kSubAlg_{W,0}$.
    We have $\left\langle \Set{\emptyset, X^k} \right\rangle \subseteq \subAlg$
    and, hence, $\kSubAlg_{W,0} = \OperatorFamilyFromTo{\adjGraphs^k}{W}(\left\langle \Set{\emptyset, X^k} \right\rangle) \subseteq \OperatorFamilyFromTo{\adjGraphs^k}{W}(\subAlg) \subseteq \subAlg$.
    On the other hand,
    for every $\bm{A} \in \adjGraphs^k$,
    the function $\OperatorFromTo{\bm{A}}{W} \allOne_{X^k}$
    is $\kSubAlg_{W,0}$-measurable.
    Hence, $\subAlgD \subseteq \kSubAlg_{W,0}$.

    (\ref{le:algebraSequence:nAltDef}):
    Let $\subAlgD$ denote the $\mu^{\otimes k}$-relatively complete sub-$\sigma$-algebra of $\Borel^{\otimes k}$ from (\ref{le:algebraSequence:nAltDef}),
    i.e.,
    $\subAlgD$ is the minimum
    $\mu^{\otimes k}$-relatively complete sub-$\sigma$-algebra of $\Borel^{\otimes k}$
    that contains $\kSubAlg_{W,n}$
    and makes the maps $\OperatorN \allOne_A$ for $\bm{N} \in \jNeighbors^k$
    and $A \in \kSubAlg_{W,n}$ measurable.
    It is easy to see that $\subAlgD \subseteq \kSubAlg_{W, n+1}$:
    We have $\kSubAlg_{W, n} \subseteq \kSubAlg_{W, n+1}$ by definition of $\kSubAlg_{W, n+1}$.
    Moreover,
    for $\bm{N} \in \jNeighbors^k$ and $A \in \Borel(\kSubAlg_{W,n})$,
    the function $\allOne_A$ is $\kSubAlg_{W,n}$-measurable and, hence
    by definition of $\kSubAlg_{W, n + 1}$, the function
    $\OperatorN \allOne_A$ is then $\kSubAlg_{W, n+1}$ measurable.
    It remains to prove that $\kSubAlg_{W, n + 1} \subseteq \subAlgD$, i.e.,
    that $\kSubAlg_{W,n} \subseteq \subAlgD$ and
    $\OperatorN(\LTwoProdKOf{\kSubAlg_{W, n}}) \subseteq \LTwoProdKOf{\subAlgD}$ for every $\bm{N} \in \jNeighbors^k$.
    We have $\kSubAlg_{W,n} \subseteq \subAlgD$ by definition of $\subAlgD$.
    Let $\bm{N} \in \jNeighbors^k$.
    We have
    $\OperatorN \allOne_{A} \in L^2(X^k, \subAlgD, \mu^{\otimes k})$
    for $A \in \kSubAlg_{W,n}$ by definition of $\subAlgD$.
    Since the linear hull of $\Set{\allOne_A}_{A \in \kSubAlg_{W,n}}$
    is dense in subspace $L^2(X^k, \kSubAlg_{W,n}, \mu^{\otimes k})$
    and since $\LTwoProdKOf{\subAlgD}$ is closed,
    linearity and continuity of $\OperatorN$
    then yield that
    $\OperatorN(\LTwoProdKOf{\kSubAlg_{W, n}}) \subseteq \LTwoProdKOf{\subAlgD}$.

    (\ref{le:algebraSequence:TAInvariant}):
    Let $n \in \N \cup \Set{\infty}$ and $\bm{A} \in \adjGraphs^k$.
    We have $\kSubAlg_{W,0} \subseteq \kSubAlg_{W,n}$, which means that
    the function $\OperatorFromTo{\bm{A}}{W} \allOne_{X^k}$
    is $\kSubAlg_{W,n}$-measurable.
    Then, the claim follows as $\OperatorFromTo{\bm{A}}{W}$
    is a multiplication operator, cf.\ the proof of
    (\ref{le:algebraSequence:zeroAltDef}) and (\ref{le:algebraSequence:minimumTA}).

    (\ref{le:algebraSequence:TInvariant}):
    We first show that $\kInftySubAlg_W \subseteq \subAlg$ for every
    $\naturalKFamilyOperatorsShortW$-invariant sub-$\sigma$-algebra $\subAlg \in \subAlgsk$.
    We have $\left\langle \Set{\emptyset, X^k} \right\rangle \subseteq \subAlg$
    and, hence, $\kSubAlg_{W,0} = \OperatorFamilyFromTo{\adjGraphs^k}{W}(\left\langle \Set{\emptyset, X^k} \right\rangle) \subseteq \OperatorFamilyFromTo{\adjGraphs^k}{W}(\subAlg) \subseteq \subAlg$.
    From there on, induction yields
    $\kSubAlg_{W, n + 1} = \OperatorFamilyFrom{\jNeighbors^k}(\kSubAlg_{W, n})
    \subseteq \OperatorFamilyFrom{\jNeighbors^k}(\subAlg)
    \subseteq \subAlg$
    for every $n \in \N$.
    Hence, $\kInftySubAlg_W \subseteq \subAlg$.

    It remains to prove that $\kInftySubAlg_W$ is $\naturalKFamilyOperatorsShortW$-invariant.
    By (\ref{le:algebraSequence:TAInvariant}), it suffices to show that
    that $\kInftySubAlg_W$ is $\OperatorN$-invariant for $\bm{N} \in \jNeighbors^k$.
    This is essentially Proposition~5.13 of \cite{GrebikRocha2021}:
    We first show that
    $\OperatorN \allOne_{A} \in L^2(X^k, \kInftySubAlg_W, \mu^{\otimes k})$
    for $A \in \kInftySubAlg_W$.
    To this end, note that
    $\bigcup_{n \in \N} \kSubAlg_{W, n}$ is an algebra and the $\sigma$-algebra
    generated by it is $\kInftySubAlg_W$.
    Hence, from \cite[Theorem $3.1.10$]{Dudley2002}, it easily follows
    that we can approximate every set in $\kInftySubAlg_W$
    by a set in $\bigcup_{n \in \N} \kSubAlg_{W, n}$
    w.r.t.\ the measure of their symmetric difference.
    This implies that, for every $A \in \kInftySubAlg_W$,
    there is a sequence $(A_n)_{n \in \N}$ with $A_n \in \kSubAlg_{W,n}$
    such that $\allOne_{A_n} \rightarrow \allOne_A$ in $\LTwoProdK$.
    Let $\bm{N} \in \jNeighbors^k$.
    By continuity of $\OperatorN$,
    we have $\OperatorN \allOne_{A_n} \rightarrow \OperatorN \allOne_A$.
    Note that, for $n \in \N$, we have
    $\OperatorN \allOne_{A_n} \in L^2(X^k, \kSubAlg_{W, n + 1}, \mu^{\otimes k})
    \subseteq L^2(X^k, \kInftySubAlg_W, \mu^{\otimes k})$,
    which is a closed subspace by \Cref{cl:conditionalExpectation}.
    Hence, $\OperatorN \allOne_A \in L^2(X^k, \kInftySubAlg_W, \mu^{\otimes k})$.
    Since the linear hull of $\Set{\allOne_A}_{A \in \kInftySubAlg_W}$
    is dense in the closed subspace $L^2(X^k, \kInftySubAlg_W, \mu^{\otimes k})$,
    linearity and continuity of $\OperatorN$
    then yields that $L^2(X^k, \kInftySubAlg_W, \mu^{\otimes k})$
    is $\OperatorN$-invariant.

    (\ref{le:algebraSequence:permutation}):
    First, recall that $\Borel^{\otimes k}$ is permutation invariant.
    Moreover, if $\subAlg \in \subAlgsk$, then $\pi(\subAlg) \in \subAlgsk$
    for every permutation $\pi \colon [k] \to [k]$.
    This implies that, if $\mathcal{X} \subseteq \Borel^{\otimes k}$
    is a set with $\pi(\mathcal{X}) \subseteq \mathcal{X}$
    for every permutation $\pi \colon [k] \to [k]$,
    then $\langle \mathcal{X} \rangle$ is permutation invariant.
    Hence, $\langle \Set{\emptyset, X^k} \rangle$ is permutation invariant,
    and it suffices to show that,
    for a permutation-invariant sub-$\sigma$-algebra $\subAlg \in \subAlgsk$,
    both $\OperatorFamilyFromTo{\adjGraphs^k}{W}(\subAlg)$
    and $\OperatorFamilyFrom{\jNeighbors^k}(\subAlg)$
    are permutation-invariant.
    Then, induction yields that $\kSubAlg_{W, n}$ is permutation invariant
    for every $n \in \N$ and, hence, also $\kSubAlg_W$ since
    $\pi\left(\bigcup_{n \in \N} \kSubAlg_{W, n}\right)
    = \bigcup_{n \in \N} \pi(\kSubAlg_{W, n})
    \subseteq \bigcup_{n \in \N} \kSubAlg_{W, n}$
    for every permutation $\pi \colon [k] \to [k]$.

    It remains to show that,
    for a permutation-invariant sub-$\sigma$-algebra $\subAlg \in \subAlgsk$,
    both $\OperatorFamilyFromTo{\adjGraphs^k}{W}(\subAlg)$
    and $\OperatorFamilyFrom{\jNeighbors^k}(\subAlg)$
    are permutation-invariant.
    We prove the statement for $\OperatorFamilyFromTo{\adjGraphs^k}{W}(\subAlg)$;
    the proof for $\OperatorFamilyFrom{\jNeighbors^k}(\subAlg)$ is analogous.
    To this end, we show that, for an arbitrary sub-$\sigma$-algebra $\subAlg \in \subAlgsk$,
    we have
    \begin{equation}
        \pi(\OperatorFamilyFromTo{\adjGraphs^k}{W}(\subAlg))
        = \OperatorFamilyFromTo{\adjGraphs^k}{W}(\pi(\subAlg))
        \label{eq:adjOperatorFamilyPermInvariant}
    \end{equation}
    for every permutation $\pi \colon [k] \to [k]$.
    Then, if $\subAlg$ is permutation invariant,
    we get
$\pi(\OperatorFamilyFromTo{\adjGraphs^k}{W}(\subAlg))
        = \OperatorFamilyFromTo{\adjGraphs^k}{W}(\pi(\subAlg))
        = \OperatorFamilyFromTo{\adjGraphs^k}{W}(\subAlg)$
for every permutation $\pi \colon [k] \to [k]$.

    To prove \Cref{eq:adjOperatorFamilyPermInvariant}, let $\pi \colon [k] \to [k]$
    be a permutation
    and observe that
$T_\pi \circ \OperatorFromTo{\ijAdjacencyij}{W} \circ T_{\pi^{-1}}
        = \OperatorFromTo{\adjacencyGraphOf{k}{\pi(i)\pi(j)}}{W}$
for all $i \neq j \in [k]$.
    As a side note, the analogous observation for
    $\OperatorFamilyFrom{\jNeighbors^k}(\subAlg)$
    is
    $T_\pi \circ \OperatorFromTo{\jNeighborj}{W} \circ T_{\pi^{-1}}
    = \OperatorFromTo{\neighborGraphOf{k}{\pi(j)}}{W}$ for every $j \in [k]$.
    We get that
    \begin{align*}
        \OperatorFromTo{\adjacencyGraphOf{k}{ij}}{W}(L^2(X^k, \pi(\subAlg), \mu^{\otimes k}))
        &= \OperatorFromTo{\adjacencyGraphOf{k}{ij}}{W}(T_{\pi^{-1}}(L^2(X^k, \subAlg, \mu^{\otimes k})))\\
        &= T_{\pi^{-1}}(\OperatorFromTo{\adjacencyGraphOf{k}{\pi(i)\pi(j)}}{W}(L^2(X^k, \subAlg, \mu^{\otimes k}))).
    \end{align*}
    Let $\subAlgD \in \subAlgsk$. Then, we have
    \begin{align*}
        &\OperatorFromTo{\ijAdjacencyij}{W}(L^2(X^k, \pi(\subAlg), \mu^{\otimes k}))
        \subseteq L^2(X^k, \subAlgD, \mu^{\otimes k})\\
        \iff {} &
        \OperatorFromTo{\adjacencyGraphOf{k}{\pi(i)\pi(j)}}{W}(L^2(X^k, \subAlg, \mu^{\otimes k}))
        \subseteq T_\pi(L^2(X^k, \subAlgD, \mu^{\otimes k}))\\
        \iff {} &
        \OperatorFromTo{\adjacencyGraphOf{k}{\pi(i)\pi(j)}}{W}(L^2(X^k, \subAlg, \mu^{\otimes k}))
        \subseteq L^2(X^k, \pi^{-1}(\subAlgD), \mu^{\otimes k}).
    \end{align*}
    As the mapping $\ijAdjacencyij \mapsto \adjacencyGraphOf{k}{\pi(i)\pi(j)}$
    is a permutation of $\adjGraphs^k$ and
    we also have
    $\subAlgD \supseteq \pi(\subAlg)
    \iff \pi^{-1}(\subAlgD) \supseteq \subAlg$,
    this implies \Cref{eq:adjOperatorFamilyPermInvariant}.
\end{proof}

\subsection{Weisfeiler-Leman Measures and Distributions}
\label{subsec:degreeMeasures}

Before defining the mapping $\owlk_W \colon X^k \to \Mk$,
we have to define the space $\Mk$,
which can be seen as the space of all colors
used by oblivious $k$-WL.
We state some facts regarding spaces of measures first:
For a separable metrizable space $(X, \Topology)$,
let $\probMeas(X)$ denote the set of all Borel
probability measures on $X$.
Let $C_b(X)$ denote the set of bounded continuous
real-valued functions on $X$.
We endow $\probMeas(X)$
with the topology generated by the maps
$\mu \mapsto \int f \dmu$ for $f \in C_b(X)$.
This leads to the notion of \emph{weak convergence of measures}
of which the \textit{Portmanteau Theorem} gives many equivalent
characterizations \cite[Theorem $17.20$]{Kechris1995}.
We only use that, for $(\mu_i)_{i \in \N}$ with $\mu_i \in \probMeas(X)$
and $\mu \in \probMeas(X)$,
we have $\mu_i \rightarrow \mu$
if and only if
\begin{equation*}
    \int f d \mu_i \rightarrow \int f d \mu
\end{equation*}
for every $f \in C_b(X)$,
where we may replace $C_b(X)$ by a dense subset,
i.e., a subset that
is dense for $d_{\sup}(f, g) \coloneqq \sup \lvert f - g \rvert$.
If $(X, \Topology)$ is compact, which is the case for the spaces we define,
then $C_b(X) = C(X)$,
where $C(X)$
denotes the set of continuous real-valued functions on $X$.
The Borel $\sigma$-algebra $\Borel(\probMeas(X))$
is generated by
the maps $\mu \mapsto \mu(A)$ for $A \in \Borel(X)$
and also by the maps $\mu \mapsto \int f \dmu$
for bounded Borel real-valued functions $f$ \cite[Theorem 17.24]{Kechris1995}.
If $(X, \Topology)$ is Polish, then so is $P(X)$ \cite[Theorem 17.23]{Kechris1995},
which means that,
if $(X, \Borel)$ is a standard Borel space,
then so is
$(\probMeas(X), \Borel(\probMeas(X)))$.
We note that every compact metrizable space
$K = (X, \Topology)$ is separable \cite[Proposition $4.6$]{Kechris1995},
which means that $(X, \Borel)$ is a standard Borel space,
where $\Borel$ is the Borel $\sigma$-algebra generated by $\Topology$.
Additionally, in the case of such a $K$,
the topological space $\probMeas(X)$
is again compact metrizable \cite[Theorem 17.22]{Kechris1995}.

We are ready to define the space $\Mk$.
One should pay attention to the connection
to oblivious $k$-WL, cf.\ \Cref{sec:kWLGraphons}:
Here, $\iPk_0 = [0,1]^{\binom{[k]}{2}}$ is the space
of possible \enquote{edge weights} of a tuple $\bar{x} \in X^k$,
generalizing possible atomic types.
Moreover, oblivious $k$-WL defines $k$
multisets of colors in every refinement,
which results in $k$ probability measures on the previous space $\Mk_n$
in the following definition,
where we recall that
$f_* \mu$ denotes the push-forward of $\mu$ via $f$.

\begin{definition}[The Spaces $\Mk$ and $\Pk$]
    Let $k \ge 1$.
    Let $\iPk_0 \coloneqq [0,1]^{\binom{[k]}{2}}$ and inductively define
    $\Mk_n \coloneqq \prod_{i \le n} \iPk_i$
    and
    $\iPk_{n+1} \coloneqq \left(\probMeas\left(\Mk_n\right)\right)^{k}$
    for every $n \in \N$.
    Let $\Mk \coloneqq \Mk_\infty \coloneqq \prod_{n \in \N} \iPk_i$ and,
    for $n \le m \le \infty$,
    let
    $p_{m,n} \colon \Mk_m \to \Mk_n$ be the natural projection, i.e.,
    the restriction to the first $n$ components.
    Finally, define
    \begin{equation*}
        \Pk \coloneqq \Set{\alpha \in \Mk \mid (\alpha_{n+1})_{j} = (p_{n+1,n})_* (\alpha_{n+2})_{j} \text{ for all }  j \in [k], n \in \N}.
    \end{equation*}
\end{definition}

As a product of a sequence of metrizable compact spaces,
$\Mk$ is metrizable \cite[Proposition $2.4.4$]{Dudley2002}
and also compact by Tychonoff's Theorem \cite[Theorem $2.2.8$]{Dudley2002}.
Moreover, as $\Mk$ is a product of a sequence of second-countable spaces,
the Borel $\sigma$-algebra of $\Mk$
and the product of the Borel $\sigma$-algebras of its factors are the same,
cf.\ \Cref{sec:standardBorelSpaces}.

Consider the requirement
$(\alpha_{n+1})_{j} = (p_{n+1,n})_* (\alpha_{n+2})_{j}$
in the definition of $\Pk$,
and
note that
$\alpha_{n+1} \in \iPk_{n+1} = \left(\probMeas\left(\Mk_n\right)\right)^{k}$
and $\alpha_{n+2} \in \iPk_{n+2} = \left(\probMeas\left(\Mk_{n+1}\right)\right)^{k}$
for $\alpha \in \Mk$, i.e.,
$\Pk$ is well-defined.
This requirement intuitively expresses that $\alpha_{n+2}$,
which can be thought of as a coloring
after $n+2$ refinement rounds,
is consistent with $\alpha_{n+1}$ for every $n \in \N$,
but it does not require that $\alpha_0$ is consistent with $\alpha_1$.
One could add the additional consistency condition that,
for $ij \in \binom{[k]}{2}$ and $u \notin ij$,
the push-forward of $(\alpha_1)_{u}$ via the projection to component $ij$ is the Dirac
measure of $(\alpha_0)_{ij}$, but this would introduce an inconsistency
in the case $k = 2$ where there is no such $u$.
For simplicity, we just leave this out;
it does not cause any problems for us.

In terms of graphs,
an element $(\alpha_0, \alpha_1, \dots)$ of $\Mk$ can be thought of
as a sequence of unfoldings of a graph, cf.\ \cite{Dell2018},
of heights $0, 1, 2, \dots$.
These unfoldings, however, do not have to be related in any way.
The subspace $\Pk$ contains these sequences
where each unfolding is a continuation of the
previous one.
These sequences can also be viewed as a single, infinite
unfolding:
By the Kolmogorov Consistency Theorem \cite[Exercise $17.16$]{Kechris1995},
for all $\alpha \in \Pk$ and $j \in [k]$,
there is a unique measure $\mu^\alpha_{j} \in \probMeas(\Mk)$ such that
$(p_{\infty, n})_* \mu^\alpha_{j} = (\alpha_{n+1})_{j}$
for every $n \in \N$.
Moreover, one can verify that this mapping
$\alpha \mapsto \mu^\alpha_{j}$ is continuous,
cf.\ \cite[Claim $6.2$]{GrebikRocha2021}.

\begin{lemma}
    \label{le:continuity}
    $\Pk$ is closed in $\Mk$ and
    $\Pk \to \probMeas(\Mk),\, \alpha \mapsto \mu^\alpha_j$
    is continuous for every $j \in [k]$.
\end{lemma}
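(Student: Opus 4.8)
The plan is to handle the two claims separately, in each case reducing everything to the defining property of the topology on a space $\probMeas(Y)$ of Borel probability measures — namely that it is the initial topology for the evaluation maps $\mu \mapsto \int f \dmu$, $f \in C(Y)$ (all the spaces $Y$ occurring here are compact) — together with the fact that push-forward along a continuous map is continuous. For the latter, note that if $p \colon Y \to Z$ is continuous between compact metrizable spaces, then $p_* \colon \probMeas(Y) \to \probMeas(Z)$ is continuous: for $f \in C(Z)$ we have $\int f \, d(p_* \mu) = \int (f \circ p) \dmu$ with $f \circ p \in C(Y)$, so $\mu \mapsto \int f \, d(p_* \mu)$ is continuous by the definition of the topology on $\probMeas(Y)$, and these maps generate the topology on $\probMeas(Z)$.

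For closedness, I would write $\Pk = \bigcap_{j \in [k]} \bigcap_{n \in \N} C_{n,j}$ with $C_{n,j} \coloneqq \Set{\alpha \in \Mk \mid (\alpha_{n+1})_j = (p_{n+1,n})_* (\alpha_{n+2})_j}$. The maps $\alpha \mapsto (\alpha_{n+1})_j$ and $\alpha \mapsto (\alpha_{n+2})_j$ are continuous, being compositions of coordinate projections of the product space $\Mk$; composing the second with the continuous push-forward map $\mu \mapsto (p_{n+1,n})_* \mu$ shows that $\alpha \mapsto \big( (\alpha_{n+1})_j, (p_{n+1,n})_* (\alpha_{n+2})_j \big)$ is continuous into $\probMeas(\Mk_n)^2$. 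Since $\probMeas(\Mk_n)$ is metrizable, hence Hausdorff, its diagonal is closed, so $C_{n,j}$, being the preimage of the diagonal, is closed; intersecting over $j$ and $n$ shows that $\Pk$ is closed. In particular $\Pk$ is compact metrizable, being a closed subset of the compact metrizable space $\Mk$.

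For continuity of $\alpha \mapsto \mu^\alpha_j$ on $\Pk$ (with $j \in [k]$ fixed), by the definition of the topology on $\probMeas(\Mk)$ it suffices to show that $\alpha \mapsto \int f \, d\mu^\alpha_j$ is continuous for every $f \in C(\Mk)$. First I would reduce to cylinder functions $f = g \circ p_{\infty,n}$ with $n \in \N$, $g \in C(\Mk_n)$: these form a subalgebra of $C(\Mk)$ — one brings two of them to a common level $m = \max$ via $g \circ p_{\infty,n} = (g \circ p_{m,n}) \circ p_{\infty,m}$ — which contains the constants and separates points (if $\alpha \neq \beta$ then $p_{\infty,n}(\alpha) \neq p_{\infty,n}(\beta)$ for some $n$, and $C(\Mk_n)$ separates points of the metrizable space $\Mk_n$), hence it is uniformly dense in $C(\Mk)$ by Stone--Weierstrass; and since each $\mu^\alpha_j$ is a probability measure, $\sup_{\alpha \in \Pk} \lvert \int f_m \, d\mu^\alpha_j - \int f \, d\mu^\alpha_j \rvert \le \lVert f_m - f \rVert_\infty$, so the general case follows from the cylinder case by passing to a uniform limit of continuous functions. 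For a cylinder function, the defining property $(p_{\infty,n})_* \mu^\alpha_j = (\alpha_{n+1})_j$ of $\mu^\alpha_j$ furnished by the Kolmogorov Consistency Theorem gives $\int g \circ p_{\infty,n} \, d\mu^\alpha_j = \int g \, d(\alpha_{n+1})_j$, and the right-hand side is continuous in $\alpha$ as the composite of the continuous coordinate projection $\alpha \mapsto (\alpha_{n+1})_j \in \probMeas(\Mk_n)$ with the continuous evaluation $\mu \mapsto \int g \dmu$.

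The step that needs the most care is the reduction to cylinder functions in the second assertion: checking that the cylinder family really is an algebra (using the compatibility $p_{\infty,n} = p_{m,n} \circ p_{\infty,m}$), that it is uniformly dense (Stone--Weierstrass, via metrizability of each $\Mk_n$ so that $C(\Mk_n)$ separates points), and that passing to a uniform limit is legitimate precisely because we integrate against probability measures. Everything else is a direct unwinding of the definitions of the relevant weak topologies and of push-forward.
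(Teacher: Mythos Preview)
Your proof is correct and follows essentially the same approach as the paper: for closedness, the paper gives a sequential version of your preimage-of-the-diagonal argument (both boil down to continuity of the coordinate projections and of the push-forward $(p_{n+1,n})_*$, plus Hausdorffness of $\probMeas(\Mk_n)$); for continuity of $\alpha \mapsto \mu^\alpha_j$, the paper likewise reduces to cylinder functions $f \circ p_{\infty,n}$ via Stone--Weierstrass and then uses $(p_{\infty,n})_* \mu^\alpha_j = (\alpha_{n+1})_j$ exactly as you do.
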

\begin{proof}
    To prove that $\Pk$ is closed,
    let $\alpha_i \rightarrow \alpha$
    with $\alpha_i \in \Pk$ for every $i \in \N$ and $\alpha \in \Mk$.
    Let $j \in [k]$ and $n \in \N$.
    By definition of the product topology, we have
    $((\alpha_i)_{n+2})_{j} \rightarrow (\alpha_{n+2})_j$, which yields
    \begin{align*}
        \int\limits_{\Mk_n} f \,d ((\alpha_i)_{n+1})_j
        \stackrel{\alpha_i \in \Pk}{=} \int\limits_{\Mk_n} f \,d (p_{n+1,n})_* ((\alpha_i)_{n+2})_{j}
        &= \int\limits_{\Mk_{n+1}} f \circ p_{n+1,n} \,d ((\alpha_i)_{n+2})_{j}\\
        &\rightarrow \int\limits_{\Mk_{n+1}} f \circ p_{n+1,n} \,d (\alpha_{n+2})_{j}\\
        &= \int\limits_{\Mk_n} f \,d (p_{n+1,n})_*(\alpha_{n+2})_{j}
    \end{align*}
    for every $f \in C(\Mk_n)$.
    Therefore,
    $((\alpha_i)_{n+1})_{j} \rightarrow (p_{n+1,n})_*(\alpha_{n+2})_{j}$.
    Since also
    $((\alpha_i)_{n+1})_{j} \rightarrow (\alpha_{n+1})_j$
    and the metrizable space $\probMeas(\Mk_n)$ is Hausdorff,
    we get
    $(\alpha_{n+1})_j = (p_{n+1,n})_*(\alpha_{n+2})_{j}$.
    Hence, $\alpha \in \Pk$.

    Let $j \in [k]$.
    Let $\alpha_i \rightarrow \alpha$
    with $\alpha_i \in \Pk$ for every $i \in \N$ and $\alpha \in \Pk$.
    To prove that $\mu^{\alpha_i}_j \rightarrow \mu^\alpha_j$,
    we observe that
    \begin{align*}
        \int\limits_{\Mk} f \circ p_{\infty, n} \, d\mu^{\alpha_i}_j
        = \int\limits_{\Mk_n} f \, d(p_{\infty, n})_* \mu^{\alpha_i}_j
        = \int\limits_{\Mk_n} f \, d((\alpha_i)_{n+1})_j
        \rightarrow
        \int\limits_{\Mk_n} f \, d(\alpha_{n+1})_j
        &= \int\limits_{\Mk_n} f \, d(p_{\infty, n})_* \mu^{\alpha}_j\\
        &= \int\limits_{\Mk} f \circ p_{\infty, n} \, d\mu^{\alpha}_j
    \end{align*}
    for every $n \in \N$ and every $f \in C(\Mk_n)$.
    This already proves the claim as the set
    $\bigcup_{n \in \N} C(\Mk_n) \circ p_{\infty, n}$
    is uniformly dense in $C(\Mk)$
    by the Stone-Weierstrass Theorem \cite[Theorem $2.4.11$]{Dudley2002};
    in particular, this set separates points by the definition
    of the product topology and the fact that every metrizable space
    is completely Hausdorff.
\end{proof}

\Cref{le:continuity} implies that $\Pk \in \Borel(\Mk)$
and that
$\Pk \to \R,\, \alpha \mapsto \int f \, d \mu^\alpha_j$ is measurable
for every bounded measurable real-valued function $f$ on $\Mk$
and every $j \in [k]$, cf.\
the definition of $\probMeas(\Mk)$.
This justifies the following definition of a $k$-WL distribution ($k$-WLD),
which intuitively generalizes the concept of a multiset of colors
with the additional constraints that, first,
the non-consistent sequences $\alpha \in \Mk$
have measure zero and, second, it satisfies a variant
of the Tonelli-Fubini Theorem w.r.t.\
the measures given by the mappings
$\Pk \to \probMeas(\Mk),\, \alpha \mapsto \mu^\alpha_j$.
This definition only become fully clear in the next subsections:
we will show that every graphon $W$ has a natural $k$-WLD $\nu^k_W$ associated
with it that satisfies both conditions and that these
conditions guarantee the existence of certain operators
associated with the $k$-WLD.

\begin{definition}
    \label{def:kWLD}
    Let $k \ge 1$.
    A measure $\nu \in \probMeas(\Mk)$ is called a
    \textit{$k$-Weisfeiler-Leman distribution ($k$-WLD)} if
    \begin{enumerate}
        \item $\nu(\Pk) = 1$ and
        \item $\int\limits_{\Mk} f \, d \nu = \int\limits_{\Mk} \Big(\int\limits_{\Mk} f \, d\mu^\alpha_j\Big) \, d \nu(\alpha)$ for all bounded measurable $f \colon \Mk \to \R$, $j \in [k]$.
    \end{enumerate}
\end{definition}

\subsection{The Mapping \texorpdfstring{$\owlk_W$}{owlkW}}
\label{subsec:mappingOwl}

Having defined the compact metrizable space $\Mk$,
we can finally
define the mapping $\owlk_W \colon X^k \to \Mk$
and the $k$-WL distribution $\nu^k_W$
for a graphon $W$.
To this end, let us first recall that
oblivious $k$-WL for a graph $G$ initially colors a $k$-tuple
$\bar{v} \in V(G)^k$
by its atomic type,
which includes the information of
which vertices in $\bar{v}$
are equal and which are connected by an edge.
In our case, this becomes somewhat simpler
since we do not have to deal with the case
that entries of a $k$-tuple $\bar{x} \in X^k$
are equal;
if our standard Borel space is atom free,
such diagonal sets have measure zero in the product space
and do not matter.
Hence, we only include the information $W(x_i, x_j)$
for every $ij \in \binom{[k]}{2}$.
Notice the connection to the operators
$\OperatorFamilyFromTo{\adjGraphs^k}{W}$:
by definition, we have
$(\OperatorFromTo{\ijAdjacencyij}{W} f) (\bar{x}) = W(x_i, x_j) \cdot f(\bar{x})$
for every $f \in \LTwoProdK$
and $\mu^{\otimes k}$-almost every $\bar{x} \in X^k$.

Let us also take a look at the
substitution operation in the refinement rounds
of oblivious $k$-WL.
Fix $\bar{x} \in X^k$ and $j \in [k]$.
Define
$\bar{x}[/j] \coloneqq (x_1, \dots, x_{j-1}, x_{j+1}, \dots, x_k) \in X^{k-1}$
to be the tuple obtained from $\bar{x}$ by removing the $j$th component,
and for $y \in X$, also
$\bar{x}[y/j] \coloneqq (x_1, \dots, x_{j-1}, y, x_{j+1}, \dots, x_k) \in X^k$,
which is the tuple obtained from $\bar{x}$ by replacing the $j$th component
by $y$.
The preimage of a set $A \subseteq X^k$ under the map
$\jsection \colon X \to X^k,\, y \mapsto \bar{x}[y/j]$
is
\begin{equation*}
    \jsection^{-1}(A) = \Set{y \in X \mid \bar{x}[y/j] \in A} \eqqcolon A_{\bar{x}[/j]},
\end{equation*}
which we call the \textit{section of $A$ determined by $\bar{x}[/j]$}.
Note that, technically, $A_{\bar{x}[/j]}$ also depends on $j$
and not only on the $(k-1)$-tuple
$\bar{x}[/j] \in X^{k-1}$,
but we nevertheless stick to this notation.
The mapping $\jsection$ is measurable, i.e.,
we have $A_{\bar{x}[/j]} \in \Borel$
for every $A \in \Borel^{\otimes k}$ \cite[Theorem $18.1$ (i)]{Billingsley1995}.
If we let $p_j \colon X^k \to X$ denote the projection to the $j$th component,
which is measurable by definition of $\Borel^{\otimes k}$,
then, the mapping $\jsection \circ p_j \colon X^k \to X^k,
\bar{y} \mapsto \bar{x}[y_j/j]$
is measurable as the composition of measurable functions
and we have
$(\jnsection)_*\mu^{\otimes k} = \jsection_* \mu$.
To see the connection to
the operators $\OperatorFamilyFrom{\jNeighbors^k}$,
note that the definition of
$\OperatorFrom{\jNeighborj}$
yields that
\begin{align}
    \label{eq:OperatorNJSection}
    (\OperatorFrom{\jNeighborj} f) (\bar{x})
    = \int\limits_{X} f(\bar{x}[y/j]) \dmu(y)
    = \int\limits_{X} f \circ \jsection \dmu
    = \int\limits_{X^k} f d\, (\jsection_* \mu)
\end{align}
for every $f \in \LTwoProdK$
and $\mu^{\otimes k}$-almost every $\bar{x} \in X^k$.

\begin{definition}[The Mapping $\owlk_W$]
    \label{def:owlk}
    Let $k \ge 1$
    and $W \colon X \times X \to [0,1]$ be a graphon.
    Define $\owlk_{W,0} \colon X^k \to \Mk_0$ by
    \begin{equation*}
        \owlk_{W,0}(\bar{x}) \coloneqq \big(W(x_i, x_j)\big)_{ij \in \binom{[k]}{2}}
    \end{equation*}
    for every $\bar{x} \in X^k$.
    Inductively define $\owlk_{W, n+1} \colon X^k \to \Mk_{n+1}$ by
    \begin{equation*}
        \owlk_{W,n+1}(\bar{x}) \coloneqq \left(\owlk_{W,n}(\bar{x}),\Big(\left(\owlk_{W,n} \circ \jsection\right)_*\mu\Big)_{j \in [k]}\right)
\end{equation*}
    for every $\bar{x} \in X^k$.
    Then, let $\owlk_W = \owlk_{W,\infty} \colon X^k \to \Mk$ be the mapping
    defined by
$(\owlk_W(\bar{x}))_n \coloneqq (\owlk_{W, \infty}(\bar{x}))_n \coloneqq (\owlk_{W,n}(\bar{x}))_n$
for all $n \in \N$, $\bar{x} \in X^k$.
    Finally, let
$\nu^k_W \coloneqq {\owlk_W}_{*} \mu^{\otimes k} \in \probMeas(\Mk)$
be the push-forward of $\mu^{\otimes k}$ via $\owlk_W$.
\end{definition}

An immediate consequence of \Cref{def:owlk}
is the following lemma. In particular, we use it to
prove that the mapping
$\owlk_{W,n}$ is measurable
for every $n \in \NInfty$, which actually is needed for
everything in \Cref{def:owlk} to be well defined.
\begin{lemma}
    \label{le:ikPreImageProjection}
    Let $k \ge 1$
    and $W \colon X \times X \to [0,1]$ be a graphon.
    Then,
    ${\owlk_{W,m}}^{\invSpace-1}(p_{m,n}^{-1}(A)) = {\owlk_{W,n}}^{\invSpace-1}(A)$
    for all $1 \le n < m \le \infty$ and every $A \in \Borel(\Mk_n)$.
\end{lemma}
\begin{proof}
    Let $A \in \Borel(\Mk_n)$.
    We have
    \begin{align*}
        {\owlk_{W,m}}^{\invSpace-1}(p_{m,n}^{-1}(A))
        &= \Set{\bar{x} \in X^k \mid \left(\left(\owlk_{W,m}(\bar{x})\right)_1, \dots, \left(\owlk_{W,m}(\bar{x})\right)_n\right) \in A}\\
        &= \Set{\bar{x} \in X^k \mid \left(\left(\owlk_{W,n}(\bar{x})\right)_1, \dots, \left(\owlk_{W,n}(\bar{x})\right)_n\right) \in A}\\
        &= {\owlk_{W,n}}^{\invSpace-1}(A)
    \end{align*}
    by definition of $\owlk_{W,m}$ and $\owlk_{W,n}$.
\end{proof}
\Cref{le:minAlgebraIKWMeasurable}
states not only that $\owlk_{W,n}$ is measurable
but also that the minimum $\mu^{\otimes k}$-relatively complete
sub-$\sigma$-algebra that makes it measurable is given by $\kSubAlg_{W,n}$,
cf.\ \cite[Proposition $6.6$]{GrebikRocha2021}.

\begin{lemma}
    \label{le:minAlgebraIKWMeasurable}
    Let $k \ge 1$
    and $W \colon X \times X \to [0,1]$ be a graphon.
    For $n \in \NInfty$,
    \begin{equation*}
        \kSubAlg_{W,n} = \left\langle \Set{{\owlk_{W,n}}^{\invSpace-1}(A) \mid A \in \Borel(\Mk_n)} \right\rangle.
    \end{equation*}
\end{lemma}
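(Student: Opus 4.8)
The plan is to prove the identity by induction on $n\in\N$, the inductive hypothesis also carrying that $\owlk_{W,n}$ is $\Borel^{\otimes k}$-measurable, and then to deduce the case $n=\infty$ from the finite ones. The two facts that translate between the two descriptions are the pointwise identity $(\OperatorFromTo{\ijAdjacencyij}{W}\allOne_{X^k})(\bar x)=W(x_i,x_j)$ --- immediate from \eqref{eq:homOperatorDefinition}, since $\ijAdjacencyij$ has all its vertices labelled so no coordinate is integrated out --- and the identity $(\OperatorFrom{\jNeighborj}\allOne_A)(\bar x)=\mu(A_{\bar x[/j]})$ for $\mu^{\otimes k}$-almost all $\bar x$, which is \eqref{eq:OperatorNJSection} applied to $f=\allOne_A$. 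Combined with the descriptions of $\kSubAlg_{W,0}$ and $\kSubAlg_{W,n+1}$ from \Cref{le:algebraSequence}~\ref{le:algebraSequence:zeroAltDef} and~\ref{le:algebraSequence:nAltDef}, these let both sides of the asserted equality be generated by the same sets, after which the relative-completeness bookkeeping is routine.

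For the base case $n=0$, the space $\Mk_0=[0,1]^{\binom{[k]}{2}}$ is a finite product, so $\Borel(\Mk_0)$ is generated by the coordinate projections $\pi_{ij}$, and $\langle\Set{\owlk_{W,0}^{-1}(A)\mid A\in\Borel(\Mk_0)}\rangle$ is the $\mu^{\otimes k}$-relative completion of the $\sigma$-algebra generated by the sets $(\pi_{ij}\circ\owlk_{W,0})^{-1}(B)$ for $ij\in\binom{[k]}{2}$, $B\in\Borel([0,1])$. Since $\pi_{ij}\circ\owlk_{W,0}$ is the function $\bar x\mapsto W(x_i,x_j)$, a representative of $\OperatorFromTo{\ijAdjacencyij}{W}\allOne_{X^k}$, these generators agree up to $\mu^{\otimes k}$-null sets with those in \Cref{le:algebraSequence}~\ref{le:algebraSequence:zeroAltDef}; as $\langle\cdot\rangle$ is insensitive to null sets, the two relative completions agree and equal $\kSubAlg_{W,0}$, and $\owlk_{W,0}$ is clearly $\Borel^{\otimes k}$-measurable.

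For the inductive step, write $\Mk_{n+1}=\Mk_n\times(\probMeas(\Mk_n))^k$. All factors are compact metrizable, hence second countable, so $\Borel(\Mk_{n+1})$ is the product $\sigma$-algebra, generated by the projection $q\colon\Mk_{n+1}\to\Mk_n$ and the projections $\rho_j\colon\Mk_{n+1}\to\probMeas(\Mk_n)$, $j\in[k]$; and $\Borel(\probMeas(\Mk_n))$ is generated by the evaluations $\beta\mapsto\beta(A')$, $A'\in\Borel(\Mk_n)$ (recalled in \Cref{subsec:degreeMeasures}). By \Cref{def:owlk} we have $q\circ\owlk_{W,n+1}=\owlk_{W,n}$ and $(\rho_j\circ\owlk_{W,n+1})(\bar x)=(\owlk_{W,n}\circ\jsection)_*\mu$, whence $(\rho_j\circ\owlk_{W,n+1})(\bar x)(A')=\mu\big((\owlk_{W,n}^{-1}(A'))_{\bar x[/j]}\big)$, which by \eqref{eq:OperatorNJSection} equals $(\OperatorFrom{\jNeighborj}\allOne_{\owlk_{W,n}^{-1}(A')})(\bar x)$ for almost all $\bar x$. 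Thus $\langle\Set{\owlk_{W,n+1}^{-1}(A)\mid A\in\Borel(\Mk_{n+1})}\rangle$ is the relative completion of the $\sigma$-algebra generated by $\owlk_{W,n}^{-1}(\Borel(\Mk_n))$ together with the (null-set-ambiguous, hence to be read inside the relative completion) sets $(\OperatorFrom{\jNeighborj}\allOne_{\owlk_{W,n}^{-1}(A')})^{-1}(B)$. The inclusion ``$\supseteq$'' is then immediate: $\owlk_{W,n}^{-1}(\Borel(\Mk_n))\subseteq\kSubAlg_{W,n}\subseteq\kSubAlg_{W,n+1}$ by the inductive hypothesis, each $(\OperatorFrom{\jNeighborj}\allOne_{\owlk_{W,n}^{-1}(A')})^{-1}(B)$ lies in $\kSubAlg_{W,n+1}$ by \Cref{le:algebraSequence}~\ref{le:algebraSequence:nAltDef} since $\owlk_{W,n}^{-1}(A')\in\kSubAlg_{W,n}$, and $\kSubAlg_{W,n+1}\in\subAlgsk$ already contains the whole relative completion (this also re-establishes $\Borel^{\otimes k}$-measurability of $\owlk_{W,n+1}$). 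For ``$\subseteq$'' I use \Cref{le:algebraSequence}~\ref{le:algebraSequence:nAltDef} the other way: $\kSubAlg_{W,n+1}$ is the relative completion of $\kSubAlg_{W,n}$ together with $(\OperatorFrom{\jNeighborj}\allOne_A)^{-1}(B)$ for arbitrary $A\in\kSubAlg_{W,n}$; by the inductive hypothesis and the description of $\langle\cdot\rangle$ from \Cref{subsec:algebras}, such an $A$ equals $\owlk_{W,n}^{-1}(A')\triangle Z$ with $A'\in\Borel(\Mk_n)$ and $\mu^{\otimes k}(Z)=0$, so $\allOne_A=\allOne_{\owlk_{W,n}^{-1}(A')}$ in $\LTwoProdK$ and hence $\OperatorFrom{\jNeighborj}\allOne_A=\OperatorFrom{\jNeighborj}\allOne_{\owlk_{W,n}^{-1}(A')}$, a coordinate of $\owlk_{W,n+1}$ up to null sets; and $\kSubAlg_{W,n}$ itself is contained on the right because $\owlk_{W,n}^{-1}(A')=\owlk_{W,n+1}^{-1}(q^{-1}(A'))$.

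For $n=\infty$, $\Mk$ is a countable product of the second-countable spaces $\iPk_n$, so $\Borel(\Mk)$ is generated by the projections $p_{\infty,n}$, $n\in\N$; using $p_{\infty,n}\circ\owlk_{W,\infty}=\owlk_{W,n}$ one gets $\langle\Set{\owlk_{W,\infty}^{-1}(A)\mid A\in\Borel(\Mk)}\rangle=\big\langle\bigcup_{n\in\N}\owlk_{W,n}^{-1}(\Borel(\Mk_n))\big\rangle$, which by the finite cases equals $\langle\bigcup_{n\in\N}\kSubAlg_{W,n}\rangle=\kSubAlg_W$ by definition. I do not expect a genuine obstacle; the only points needing care are the generation statements for the Borel $\sigma$-algebras of the product spaces $\Mk_n$ and of $\probMeas(\Mk_n)$, and the relative-completeness bookkeeping that lets one pass freely between the honest coordinate functions of $\owlk_{W,n+1}$ and the $L^2$-defined operators $\OperatorFromTo{\ijAdjacencyij}{W}\allOne_{X^k}$ and $\OperatorFrom{\jNeighborj}\allOne_A$ supplied by \Cref{le:algebraSequence}.
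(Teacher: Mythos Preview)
Your proposal is correct and follows essentially the same approach as the paper's proof: induction on $n$ using the alternative descriptions of $\kSubAlg_{W,0}$ and $\kSubAlg_{W,n+1}$ from \Cref{le:algebraSequence}~\ref{le:algebraSequence:zeroAltDef} and~\ref{le:algebraSequence:nAltDef}, the generating sets for the Borel $\sigma$-algebras of $\Mk_n$ and $\probMeas(\Mk_n)$, the identification of the coordinate functions of $\owlk_{W,n+1}$ with the operators $\OperatorFromTo{\ijAdjacencyij}{W}\allOne_{X^k}$ and $\OperatorFrom{\jNeighborj}\allOne_{\owlk_{W,n}^{-1}(A')}$, and the description of $\langle\cdot\rangle$ to write an arbitrary $A\in\kSubAlg_{W,n}$ as $\owlk_{W,n}^{-1}(A')\triangle Z$ for the reverse inclusion. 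The case $n=\infty$ is also handled identically via the projections $p_{\infty,n}$.
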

\begin{proof}
    Let $\subAlgD_n \coloneqq \langle \{{\owlk_{W,n}}^{\invSpace-1}(A) \mid A \in \Borel(\Mk_n)\} \rangle$.
    First, we prove $\kSubAlg_{W,n} = \subAlgD_n$ for every $n \in \N$
    by induction on $n$.
    For the induction basis $n = 0$, we have
    \begin{align*}
        \subAlgD_0
        = \left\langle \Set{{\owlk_{W,0}}^{\invSpace-1}(A) \mid A \in \Borel(\Mk_0)} \right\rangle
        = \left\langle \Set{{\owlk_{W,0}}^{\invSpace-1}(A) \mid A \in \Borel([0,1]^{\binom{[k]}{2}})} \right\rangle
    \end{align*}
    The Borel $\sigma$-algebra $\Borel([0,1]^{\binom{[k]}{2}})$
    is generated by the sets of the form $\prod_{ij \in \binom{[k]}{2}} A_{ij}$
    where $A_{ij} \in \Borel([0,1])$ and $A_{ij} = [0,1]$ for all but at most one $ij$ \cite[Section $10.B$]{Kechris1995}.
    Since it suffices to check measurability of a function for a generating set \cite[Theorem $4.1.6$]{Dudley2002},
    we may replace $\Borel([0,1]^{\binom{[k]}{2}})$
    by a generating set in the definition of $\subAlgD_0$, which yields that
    \begin{align*}
        \subAlgD_0
        &= \left\langle \Set{{\owlk_{W,0}}^{\invSpace-1}(A) \mid A \in \Borel([0,1]^{\binom{[k]}{2}})} \right\rangle\\
        &= \left\langle \Set{ \Set{\bar{x} \in X^k \mid (W(x_i,x_j))_{ij \in \binom{[k]}{2}} \in A} \mid A \in \Borel([0,1]^{\binom{[k]}{2}})} \right\rangle\\
        &= \left\langle \Set{ \Set{\bar{x} \in X^k \mid W(x_i,x_j) \in A} \mid A \in \Borel([0,1]),\, ij \in {\textstyle \binom{[k]}{2}}} \right\rangle\\
        &= \left\langle {\textstyle \bigcup_{ij \in \binom{[k]}{2}} }\Set{ \Set{\bar{x} \in X^k \mid W(x_i,x_j) \in A} \mid A \in \Borel([0,1])} \right\rangle\\
        &= \left\langle {\textstyle \bigcup_{\bm{A} \in \adjGraphs^k}} \Set{ (\OperatorAW \allOne_{X^k})^{-1}(A) \mid A \in \Borel([0,1])} \right\rangle\\
&= \kSubAlg_{W,0}.\tag{\Cref{le:algebraSequence} (\ref{le:algebraSequence:zeroAltDef})}
    \end{align*}

    For the inductive step, let $n \in \N$.
    We have to prove that $\kSubAlg_{W,n+1} = \subAlgD_{n+1}$.
    Recall that we have $\Mk_{n+1} = \Mk_n \times \left(\probMeas\left(\Mk_n\right)\right)^{k}$
    by definition
    and that the Borel $\sigma$-algebra $\Borel(\probMeas\left(\Mk_n\right))$
    is generated by the maps
    $\mu \mapsto \mu(A)$ for $A \in \Borel(\Mk_n)$ \cite[Theorem $17.24$]{Kechris1995}.
    Hence, by definition of the product $\sigma$-algebra
    and since it suffices to check measurability of a function for a generating set \cite[Theorem $4.1.6$]{Dudley2002},
    $\Borel(\Mk_{n+1})$
    is the smallest $\sigma$-algebra
    containing $\Set{p_{n+1, n}^{-1}(A) \mid A \in \Borel(\Mk_n)}$
    and making the maps
    \begin{equation*}
        \Mk_{n+1} \ni \alpha \mapsto ((\alpha)_{n+1})_j (A)
    \end{equation*}
    for $A \in \Borel(\Mk_n)$ and $j \in [k]$ measurable.
    Again by \cite[Theorem $4.1.6$]{Dudley2002},
    this means that $\subAlgD_{n+1}$
    is the smallest $\mu^{\otimes k}$-relatively complete
    sub-$\sigma$-algebra of $\Borel^{\otimes k}$
    containing
    \begin{equation*}
        \Set{{\owlk_{W, n+1}}^{\invSpace-1}(p_{n+1,n}^{-1}(A)) \mid A \in \Borel(\Mk_n)}
        =
        \Set{{\owlk_{W, n}}^{\invSpace-1}(A) \mid A \in \Borel(\Mk_n)}
    \end{equation*}
    and making the maps
    \begin{align*}
        X^k \ni \bar{x}
        \mapsto ((\owlk_{W, n+1}(\bar{x}))_{n+1})_j (A)
        &=\Big(\left(\owlk_{W,n} \circ \jsection\right)_*\mu\Big)(A)\\
        &=\int\limits_{\Mk_n} \allOne_{A} d\,\left(\owlk_{W,n} \circ \jsection\right)_*\mu\\
        &=\int\limits_{X^k} \allOne_{A} \circ \owlk_{W,n} d\, \jsection_*\mu\\
        &=(\OperatorFrom{\jNeighborj} \allOne_{A} \circ \owlk_{W,n}) (\bar{x})
    \end{align*}
    for $A \in \Borel(\Mk_n)$ and $j \in [k]$ measurable,
    where the equalities hold
    $\mu^{\otimes k}$-almost everywhere,
    cf.\ also \Cref{eq:OperatorNJSection}.

    To see that $\subAlgD_{n+1} \subseteq \kSubAlg_{W, n+1}$,
    we verify that $\kSubAlg_{W, n+1}$ contains the aforementioned sets
    and that the aforementioned maps are measurable for it.
    We have
    \begin{equation*}
        \Set{{\owlk_{W, n}}^{\invSpace-1}(A) \mid A \in \Borel(\Mk_n)}
        \stackrel{\text{def.}}{\subseteq} \subAlgD_n
        \stackrel{\text{IH}}{\subseteq} \kSubAlg_{W, n}
        \stackrel{\text{def.}}{\subseteq} \kSubAlg_{W, n + 1}.
    \end{equation*}
    By the induction hypothesis, $\owlk_{W,n}$ is $\kSubAlg_{W,n}$-measurable,
    and since $A \in \Borel(\Mk_n)$, so is
    $\allOne_{A} \circ \owlk_{W,n}$.
    Hence, by definition of $\kSubAlg_{W, n+1}$,
    $\OperatorFrom{\jNeighborj} \allOne_{A} \circ \owlk_{W,n}$
    is $\kSubAlg_{W, n+1}$-measurable,
    which is just what we wanted to prove.

    It remains to verify that $\kSubAlg_{W, n+1} \subseteq \subAlgD_{n+1}$.
    By \Cref{le:algebraSequence} (\ref{le:algebraSequence:nAltDef}),
    it suffices to prove that $\subAlgD_{n+1}$
    contains $\kSubAlg_{W, n}$ and
    makes the functions $\OperatorN \allOne_A$
    for $\bm{N} \in \jNeighbors^k$ and $A \in \kSubAlg_{W,n}$ measurable.
    We have
    \begin{equation*}
        \kSubAlg_{W,n}
        \stackrel{\text{IH}}{\subseteq} \subAlgD_n
        = \left\langle \Set{{\owlk_{W,n}}^{\invSpace-1}(A) \mid A \in \Borel(\Mk_n)} \right\rangle
        \subseteq \subAlgD_{n+1}.
    \end{equation*}
    Let $A \in \kSubAlg_{W,n}$.
    By the induction hypothesis, we have $A \in \subAlgD_{n}$.
    Since the preimage of a $\sigma$-algebra is a $\sigma$-algebra,
    we have $A = {\owlk_{W,n}}^{\invSpace-1}(B) \triangle Z$
    for some $B \in \Borel(\Mk_n)$ and $Z \in \Borel^{\otimes k}$ with $\mu^{\otimes k}(Z) = 0$.
    Then, $\bar{x} \in A \iff \owlk_{W,n}(\bar{x}) \in B$
    for every $\bar{x} \notin Z$, i.e.,
    $\allOne_{B} \circ \owlk_{W,n} = \allOne_A$, where the equality holds
    $\mu^{\otimes k}$-almost everywhere.
    Let $j \in [k]$.
    We know that $\subAlgD_{n+1}$ makes the map
    $\OperatorFrom{\jNeighborj} \allOne_{B} \circ \owlk_{W,n}
    = \OperatorFrom{\jNeighborj} \allOne_{A}$ measurable,
    but this is already what we wanted to show.

    It remains to prove that
    \begin{equation*}
        \kSubAlg_{W} = \left\langle \Set{{\owlk_{W}}^{\invSpaceSmall-1}(A) \mid A \in \Borel(\Mk)} \right\rangle,
    \end{equation*}
    where, by definition,
    we have
    $\kSubAlg_W = \langle \bigcup_{n \in \N} \kSubAlg_{W,n} \rangle$.
    It is easy to see that the Borel $\sigma$-algebra $\Borel(\Mk)$
    is generated by the projections $p_{\infty, n}$.
    Hence, by \cite[Theorem $4.1.6$]{Dudley2002},
    \begin{align*}
        \kSubAlg_W
        = \left\langle \bigcup_{n \in \N} \kSubAlg_{W,n} \right\rangle
        &= \left\langle \bigcup_{n \in \N} \Set{{\owlk_{W,n}}^{\invSpace-1}(A) \mid A \in \Borel(\Mk_n)} \right\rangle\\
        &= \left\langle \Set{{\owlk_{W,n}}^{\invSpace-1}(A) \mid n \in \N,\, A \in \Borel(\Mk_n)} \right\rangle\\
        &= \left\langle \Set{{\owlk_{W}}^{\invSpaceSmall-1}(p_{\infty, n}^{-1}(A)) \mid n \in \N,\, A \in \Borel(\Mk_n)} \right\rangle\\
        &= \left\langle \Set{{\owlk_{W}}^{\invSpaceSmall-1}(A) \mid A \in \Borel(\Mk)} \right\rangle.
    \end{align*}
\end{proof}

By \Cref{le:minAlgebraIKWMeasurable},
$\kInftySubAlg_W$ is the minimum
$\mu^{\otimes k}$-relatively complete sub-$\sigma$-algebra that
makes $\owlk_W$ measurable.
Hence $\owlk_W \colon X^k \to \Mk$
is a measurable and measure-preserving
mapping from
the measure space
$(X^k, \Borel^{\otimes k}, \mu^{\otimes k})$
to $(\Mk, \Borel(\Mk), \nu^k_W)$
and we can
consider the Koopman operator
$T_{\owlk_W} \colon \LTwoMkNuKW \to \LTwoProdK$ of $\owlk_W$.
\begin{corollary}
    \label{le:ikWMarkovIso}
    \label{le:fullIkWMarkovIso}
    Let $k \ge 1$
    and $W \colon X \times X \to [0,1]$ be a graphon.
    Then,
    \begin{enumerate}
        \item the Koopman operator
            $T_{\owlk_W} \colon \LTwoMkNuKW \to \LTwoProdK$ of $\owlk_W$
            is a Markov embedding onto
            $L^2(X^k, \kSubAlg_W, \mu^{\otimes k})$, \label{le:ikWMarkovIso:KoopmanIkW}
        \item
            the operator $S_{\kSubAlg_W} \colon \LTwoProdK \to \LTwoProdKQuoOf{\kSubAlg_W}$
            becomes a Markov isomorphism when restricted to
            $L^2(X^k, \kSubAlg_W, \mu^{\otimes k})$, and \label{le:ikWMarkovIso:quotientOp}
        \item $R^k_W \coloneqq S_{\kSubAlg_W} \circ T_{\owlk_W} \colon \LTwoMkNuKW \to \LTwoProdKQuoOf{\kSubAlg_W}$ is a Markov isomorphism. \label{le:ikWMarkovIso:iso}
    \end{enumerate}
\end{corollary}
\begin{proof}
    (\ref{le:ikWMarkovIso:KoopmanIkW}):
    Consider the measure spaces $(X^k, \Borel^{\otimes k}, \mu^{\otimes k})$
    and $(\Mk, \Borel(\Mk), \nu^k_W)$.
    The mapping $\owlk_W \colon X^k \to \Mk$ is measurable
    by \Cref{le:minAlgebraIKWMeasurable}
    and measure-preserving by definition of $\nu^k_W$.
    Hence, its Koopman operator
    $T_{\owlk_W} \colon \LTwoMkNuKW \to \LTwoProdK$
    is a Markov embedding \cite[Theorem $7.20$]{EisnerEtAl2015}.
    By \Cref{co:quotientSpaceUnique},
    it is an isometry onto
    $L^2(X^k, \kSubAlg_W, \mu^{\otimes k})$.

    (\ref{le:ikWMarkovIso:quotientOp}):
    By \Cref{th:quotientSpaces} (\ref{th:quotientSpaces:isometry}),
    the operator $S_{\kSubAlg_W} \colon \LTwoProdK \to \LTwoProdKQuoOf{\kSubAlg_W}$
    becomes a Markov isomorphism when restricted to
    $L^2(X^k, \kSubAlg_W, \mu^{\otimes k})$.

    (\ref{le:ikWMarkovIso:iso}):
    Follows immediately from (\ref{le:ikWMarkovIso:KoopmanIkW})
    and (\ref{le:ikWMarkovIso:quotientOp}).
\end{proof}

It remains to verify that $\nu^k_W$ is in fact a $k$-WLD.
The following lemma can also be seen as a justification
of the definition of a $k$-WLD.
In particular, it shows that
Tonelli-Fubini-like requirement in \Cref{def:kWLD}
actually stems from the Tonelli-Fubini Theorem.
In other words, the definition of a $k$-WLD
is chosen such that it captures the essential
properties of $\nu^k_W$ that
make it possible to define
the analogue of $\naturalKFamilyOperatorsShortW$
on the space $\LTwoMkNuKW$.
In the next section,
we define these operators on $\LTwoMkNu$
for an arbitrary $k$-WLD $\nu$.

\begin{lemma}
    \label{le:nuKWIskWLD}
    Let $k \ge 1$
    and $W \colon X \times X \to [0,1]$ be a graphon.
    Then,
    \begin{enumerate}
        \item $\owlk_W(X^k) \subseteq \Pk$,\label{le:nuKWIskWLD:subsetPk}
        \item $\mu_j^{\owlk_W(\bar{x})} = (\owlk_{W} \circ \jsection)_*\mu$
            for all $j \in [k]$, $\bar{x} \in X^k$, and \label{le:nuKWIskWLD:jNeighbors}
        \item $\nu^k_W$ is a $k$-WLD. \label{le:nuKWIskWLD:kWLD}
    \end{enumerate}
\end{lemma}
\begin{proof}
    (\ref{le:nuKWIskWLD:subsetPk}):
    Let $\bar{x} \in X^k$.
    For $j \in [k]$ and $n \in \N$, we have
    \begin{align*}
        (p_{n+1,n})_*((\owlk_W(\bar{x}))_{n+2})_j
        &= (p_{n+1,n})_*((\owlk_{W,n+2}(\bar{x}))_{n+2})_j \tag{Definition $\owlk_W$}\\
        &= (p_{n+1,n})_*((\owlk_{W,n+1} \circ \jsection)_* \mu) \tag{Definition $\owlk_{W,n+2}$}\\
        &= (\owlk_{W,n} \circ \jsection)_* \mu\\
        &= ((\owlk_{W,n+1}(\bar{x}))_{n+1})_j \tag{Definition $\owlk_{W,n+1}$}\\
        &= ((\owlk_{W}(\bar{x}))_{n+1})_j. \tag{Definition $\owlk_{W}$}
    \end{align*}
    Hence, $\owlk_W(\bar{x}) \in \Pk$.

    (\ref{le:nuKWIskWLD:jNeighbors}):
    For $n \in \N$ and $A \in \Borel(\Mk_n)$, we have
    \begin{align*}
        \mu_j^{\owlk_W(\bar{x})}(p_{\infty,n}^{-1}(A))
        = (p_{\infty, n})_* \mu_j^{\owlk_W(\bar{x})}(A)
        &= ((\owlk_W(\bar{x}))_{n+1})_j(A) \tag{Definition $\mu_j^{\owlk_W(\bar{x})}$}\\
        &= ((\owlk_{W, n+1}(\bar{x}))_{n+1})_j(A) \tag{Definition $\owlk_W$}\\
        &= (\owlk_{W, n} \circ \jsection)_* \mu (A) \tag{Definition $\owlk_{W,n+1}$}\\
        &= (\owlk_{W} \circ \jsection)_* \mu (p_{\infty,n}^{-1}(A)).
    \end{align*}
    That is, $\mu_j^{\owlk_W(\bar{x})}$
    and $(\owlk_{W} \circ \jsection)_*\mu$
    both are probability measures that
    agree on the set
    $\bigcup_{n \in \N} \Set{p_{\infty,n}^{-1}(A) \mid A \in \Borel(\Mk_n)}$,
    which generates $\Borel(\Mk)$.
    By the $\pi$-$\lambda$ Theorem \cite[Theorem $10.1$ iii)]{Kechris1995},
    they agree on all of $\Borel(\Mk)$.

    (\ref{le:nuKWIskWLD:kWLD}):
    By (\ref{le:nuKWIskWLD:subsetPk}), we have
    $\nu^k_W(\Pk) = \mu^{\otimes k}({\owlk_W}^{-1}(\Pk)) = \mu^{\otimes k}(X^k) = 1$.
    Let $j \in [k]$.
    Let $f \colon \Mk \to \R$ be bounded and measurable.
    We have
    \begin{align*}
        \int\limits_{\Mk} f \, d \nu^k_W
        \stackrel{\text{def.\ $\nu^k_W$}}{=} \int\limits_{\Mk} f \, d {\owlk_W}_* \mu^{\otimes k}
        &= \int\limits_{X^k} f \circ \owlk_W \dmuk\\
        &\stackrel{\mathclap{\text{T.-F.}}}{=} \int\limits_{X^{k-1}} \vast(\int\limits_X f \circ \owlk_W(\bar{x}[y/j]) \,d\mu(y)\vast)\,d\mu^{\otimes k-1}(\bar{x}[/j]) \tag{$x_j \in X$ arb.}\\
        &= \int\limits_{X^{k}} \vast(\int\limits_X f \circ \owlk_W(\bar{x}[y/j]) \,d\mu(y)\vast)\,d\mu^{\otimes k}(\bar{x})\\
        &= \int\limits_{X^{k}} \vast(\,\int\limits_{\Mk} f \,d (\owlk_W \circ \jsection)_* \mu\vast)\,d\mu^{\otimes k}(\bar{x})\\
        &\stackrel{\mathclap{(\ref{le:nuKWIskWLD:jNeighbors})}}{=} \int\limits_{X^{k}} \vast(\,\int\limits_{\Mk} f \,d \mu_j^{\owlk_W(\bar{x})}\vast)\,d\mu^{\otimes k}(\bar{x}) \\
        &\stackrel{\mathclap{\text{push-f.}}}{=} \; \int\limits_{\Mk} \vast(\,\int\limits_{\Mk} f \,d \mu_j^\alpha\vast)\,d{\owlk_W}_*\mu^{\otimes k}(\alpha)\\
        &\stackrel{\mathclap{{\text{def.\ $\nu^k_W$}}}}{=} \; \int\limits_{\Mk} \vast(\,\int\limits_{\Mk} f \,d \mu_j^\alpha\vast)\,d \nu^k_W(\alpha).
    \end{align*}
\end{proof}

\subsection{Operators and Weisfeiler-Leman Measures}
\label{sec:operatorsOnMeasures}

For a graphon $W$, the operator
$R^k_W \colon \LTwoMkNuKW \to \LTwoProdKQuoOf{\kSubAlg_W}$ is a Markov isomorphism
by \Cref{le:ikWMarkovIso}.
Hence, if $U$ is another graphon with $\nu^k_U = \nu^k_W$, then
$R^k_U \circ (R^k_W)^*$
is a Markov isomorphism from
$\LTwoProdKQuoOf{\kSubAlg_W}$
to $\LTwoProdKQuoOf{\kSubAlg_U}$.
However, we still lack that this Markov isomorphism
\enquote{maps} the family $\naturalKFamilyOperatorsShortW / \kSubAlg_W$
to $\naturalKFamilyOperatorsShortU / \kSubAlg_U$.
To close this gap, we show that
we can define a family $\TT_{\nu^k_W}$ of
operators on $\LTwoMkNuKW$ such that $R^k_W$
\enquote{maps} $\TT_{\nu^k_W}$
to $\naturalKFamilyOperatorsShortW / \kSubAlg_W$.
This replaces the graphon
$\MM \times \MM \to [0,1]$ defined
by Greb\'ik and Rocha \cite{GrebikRocha2021}.
Let us begin with operators
for neighbor graphs as this is the interesting case;
in particular,
it shows why we have the Tonelli-Fubini-like requirement
in the definition of a $k$-WLD.

\begin{lemma}
    \label{le:operatorNWellDefined}
    Let $k \ge 1$, and let $\nu \in \probMeas(\Mk)$ be a $k$-WLD.
    Let $j \in [k]$.
    Setting
    \begin{equation*}
        (\OperatorFromTo{\jNeighborj}{\nu} f) (\alpha) \coloneqq \int_{\Mk} f \, d \mu^\alpha_{j}
    \end{equation*}
    for all $f \in \LInftyMkNu$, $\alpha \in \Mk$
    defines an $L^\infty$-contraction
    that uniquely extends to an $L^2$-contraction
    $\LTwoMkNu \to \LTwoMkNu$.
\end{lemma}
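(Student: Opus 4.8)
The plan is to argue the formula first at the level of bounded measurable representatives, then to check that it descends to $\nu$-equivalence classes using the Tonelli--Fubini-type axiom of a $k$-WLD, and finally to upgrade the resulting $L^\infty$-bound to an $L^2$-bound via Jensen's inequality and extend by density. To begin, I would recall from the discussion following \Cref{le:continuity} that $\Pk \in \Borel(\Mk)$, that $\mu^\alpha_j$ is defined for every $\alpha \in \Pk$, and that $\alpha \mapsto \int_{\Mk} f \, d\mu^\alpha_j$ is measurable on $\Pk$ for every bounded measurable $f \colon \Mk \to \R$. Since $\nu(\Pk) = 1$ by \Cref{def:kWLD}, extending this function by an arbitrary value off $\Pk$ yields, for each bounded measurable representative $f$, a bounded measurable function on $\Mk$ whose sup-norm is at most $\normI{f}$, because each $\mu^\alpha_j$ is a probability measure. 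Hence $f \mapsto \OperatorFromTo{\jNeighborj}{\nu} f$ is a well-defined linear contraction $\LL^\infty(\Mk, \nu) \to \LL^\infty(\Mk, \nu)$ at the level of representatives.

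Next I would check that this passes to the quotient space $\LInftyMkNu$. Suppose $f = f'$ holds $\nu$-almost everywhere and put $h \coloneqq \lvert f - f' \rvert$, a bounded measurable function with $\int_{\Mk} h \, d\nu = 0$. The Tonelli--Fubini-type clause of \Cref{def:kWLD}, applied to $h$, gives $\int_{\Mk} \left( \int_{\Mk} h \, d\mu^\alpha_j \right) d\nu(\alpha) = 0$, and since the inner integral is nonnegative it vanishes for $\nu$-almost every $\alpha$; therefore $\lvert (\OperatorFromTo{\jNeighborj}{\nu} f)(\alpha) - (\OperatorFromTo{\jNeighborj}{\nu} f')(\alpha) \rvert \le \int_{\Mk} h \, d\mu^\alpha_j = 0$ for $\nu$-almost every $\alpha$. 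Thus $\OperatorFromTo{\jNeighborj}{\nu}$ is a well-defined linear $L^\infty$-contraction on $\LInftyMkNu$.

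For the $L^2$-bound I would apply Jensen's inequality with respect to the probability measure $\mu^\alpha_j$ (equivalently, Cauchy--Schwarz): for a bounded measurable $f$,
\[
    \left\lvert (\OperatorFromTo{\jNeighborj}{\nu} f)(\alpha) \right\rvert^2 \le \int_{\Mk} f^2 \, d\mu^\alpha_j
\]
for $\nu$-almost every $\alpha$, and integrating over $\alpha$ against $\nu$ and invoking \Cref{def:kWLD} once more for the bounded measurable function $f^2$ yields $\normT{\OperatorFromTo{\jNeighborj}{\nu} f}^2 \le \int_{\Mk} f^2 \, d\nu = \normT{f}^2$. Since $\nu$ is a probability measure, $\LInftyMkNu$ is dense in $\LTwoMkNu$, so $\OperatorFromTo{\jNeighborj}{\nu}$ extends uniquely, by continuity, to a bounded operator $\LTwoMkNu \to \LTwoMkNu$ of norm at most $1$, i.e.\ an $L^2$-contraction agreeing with the original map on $\LInftyMkNu$. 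I expect the one genuinely delicate point to be the interplay between representatives and $\nu$-equivalence classes in the second step; this is exactly what the Tonelli--Fubini-type requirement in the definition of a $k$-WLD is designed to supply, while the remaining estimates are routine.
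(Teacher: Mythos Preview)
Your proof is correct and follows essentially the same route as the paper: measurability via \Cref{le:continuity} together with $\nu(\Pk)=1$, the Tonelli--Fubini clause of \Cref{def:kWLD} to handle $\nu$-equivalence, Cauchy--Schwarz (equivalently Jensen) plus the same clause applied to $f^2$ for the $L^2$-bound, and extension by density. The only cosmetic difference is that the paper first extracts the intermediate statement that $\nu$-null sets are $\mu^\alpha_j$-null for $\nu$-a.e.\ $\alpha$ and derives both the $L^\infty$-bound and well-definedness from it, whereas you apply the Tonelli--Fubini clause directly to $\lvert f-f'\rvert$; the content is the same.
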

\begin{proof}
    We show that the definition yields a well-defined contraction
    $\OperatorFromTo{\jNeighborj}{\nu}$ on $\LInftyMkNu$
    such that
    $\normT{\OperatorFromTo{\jNeighborj}{\nu} f} \le \normT{f}$
    for every $f \in \LInftyMkNu$.
    Then, $\OperatorFromTo{\jNeighborj}{\nu}$
    uniquely extends to a contraction
    on $\LTwoMkNu$
    since $\LInftyMkNu$ is dense in $\LTwoMkNu$.

    The definition of a $k$-WLD immediately yields that,
    if $A \in \Borel(\Mk)$ with $\nu(A) = 0$,
    then $\mu^\alpha_j(A) = 0$ for $\nu$-almost every $\alpha \in \Mk$.
    Hence, if a property holds $\nu$-almost everywhere,
    it holds $\mu^\alpha_j$-almost everywhere for $\nu$-almost every $\alpha \in \Mk$.
    Let $f \in \LLInftyMkNu$.
    Then, $\lvert f \rvert \le \normI{f}$ $\nu$-almost everywhere, and hence,
    $\lvert f \rvert \le \normI{f}$ holds $\mu^\alpha_j$-almost everywhere
    for $\nu$-almost every $\alpha \in \Mk$.
    Thus, for $\nu$-almost every $\alpha \in \Mk$, we have
    \begin{equation*}
        \Big\lvert \int\limits_{\Mk} f \, d \mu^\alpha_j \Big\rvert
        \le \int\limits_{\Mk} \lvert f \rvert \, d \mu^\alpha_j
        \le \int\limits_{\Mk} \normI{f} \, d \mu^\alpha_j
        = \normI{f},
    \end{equation*}
    which yields that
    $\normI{\OperatorFromTo{\jNeighborj}{\nu} f} \le \normI{f}$.
    In particular,
    if $f,g \in \LLInftyMkNu$ are equal $\nu$-almost everywhere,
    then
    \begin{align*}
        \normI{\OperatorFromTo{\jNeighborj}{\nu} f - \OperatorFromTo{\jNeighborj}{\nu} g}
        = \normI{\OperatorFromTo{\jNeighborj}{\nu} (f - g)}
        \le \normI{f-g}
        = 0,
    \end{align*}
    that is, $\OperatorFromTo{\jNeighborj}{\nu} f$ and $\OperatorFromTo{\jNeighborj}{\nu} g$
    are equal $\nu$-almost everywhere.
    Here we used that the mapping $\OperatorFromTo{\jNeighborj}{\nu}$ is linear,
    which follows directly from the linearity of the integral.
    Recall that
    $\Pk \to \R, \alpha \mapsto \int f \mu^\alpha_j$
    is measurable for every bounded measurable $\R$-valued function $f$ on $\Mk$
    by \Cref{le:continuity} and the definition of $\probMeas(\Pk)$.
    Since $\Pk \in \Borel(\Mk)$ by \Cref{le:continuity} and $\nu(\Pk) = 1$,
    this combined with the previous considerations yields that
    $\OperatorFromTo{\jNeighborj}{\nu}$ is a well-defined
    mapping $\LInftyMkNu \to \LInftyMkNu$.

    It remains to show that
    $\normT{\OperatorFromTo{\jNeighborj}{\nu} f} \le \normT{f}$
    for every $f \in \LInftyMkNu$.
    We have
    \begin{align*}
        \normT{\OperatorFromTo{\jNeighborj}{\nu} f}^2
        = \int\limits_{\Mk} \notsovast( \,\int\limits_{\Mk} f \, d \mu^\alpha_j \notsovast)^2 \, d \nu(\alpha)
        \;&\stackrel{\mathclap{\text{C.-S.}}}{\le} \int\limits_{\Mk} \notsovast(\,\int\limits_{\Mk} f^2 \, d \mu^\alpha_j \notsovast)\, d \nu(\alpha) \tag{Cauchy-Schwarz}\\
        &= \int\limits_{\Mk} f^2 \, d \nu \tag{def.\ $k$-WLD}\\
        &= \normT{f}^2
    \end{align*}
    Note that we again used that
    $\lvert f \rvert \le \normI{f}$ holds $\mu^\alpha_j$-almost everywhere
    for $\nu$-almost every $\alpha \in \Mk$
    in order to apply the Cauchy-Schwarz inequality.
\end{proof}

The following lemma states that
\Cref{le:operatorNWellDefined}
is indeed the right definition.
\begin{lemma}
    \label{le:DIDMOperatorN}
    Let $k \ge 1$
    and $W \colon X \times X \to [0,1]$ be a graphon.
    For every $\bm{N} \in \jNeighbors^k$,
    \begin{multicols}{2}
    \begin{enumerate}
        \item $T_{\bm{N}} \circ T_{\owlk_W} = T_{\owlk_W} \circ \OperatorNNuW$, \label{le:DIDMOperatorN:normalOp}
        \item $(T_{\bm{N}})_{\kSubAlg_W} \circ T_{\owlk_W} = T_{\owlk_W} \circ \OperatorNNuW$, and \label{le:DIDMOperatorN:expOp}
        \item $T_{\bm{N}} / \kSubAlg_W \circ R^k_W = R^k_W \circ \OperatorNNuW$. \label{le:DIDMOperatorN:iso}
    \end{enumerate}
    \end{multicols}
\end{lemma}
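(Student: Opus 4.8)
The plan is to establish the three identities in the order stated, with (\ref{le:DIDMOperatorN:normalOp}) carrying essentially all of the work and (\ref{le:DIDMOperatorN:expOp}), (\ref{le:DIDMOperatorN:iso}) then dropping out by formal manipulation. Fix $j \in [k]$ so that $\bm{N} = \jNeighborj$, and recall that $\nu^k_W$ is a $k$-WLD by \Cref{le:nuKWIskWLD} \ref{le:nuKWIskWLD:kWLD}, so that $\OperatorNNuW$ is the well-defined $L^\infty$- and $L^2$-contraction on $\LTwoMkNuKW$ from \Cref{le:operatorNWellDefined}. Since every operator appearing is an $L^2$-contraction and $\LInftyMkNuKW$ is dense in $\LTwoMkNuKW$, it suffices to check each identity on functions $g \in \LInftyMkNuKW$.

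For (\ref{le:DIDMOperatorN:normalOp}), I would take $g \in \LInftyMkNuKW$, so that $T_{\owlk_W} g = g \circ \owlk_W \in \LInftyProdK$, and evaluate $\bigl(T_{\bm{N}}(g \circ \owlk_W)\bigr)(\bar{x})$ for $\mu^{\otimes k}$-almost every $\bar{x} \in X^k$. By \Cref{eq:OperatorNJSection} this equals
\[
    \int_X (g \circ \owlk_W)(\bar{x}[y/j]) \dmu(y)
    = \int_X g \circ (\owlk_W \circ \jsection) \dmu
    = \int_{\Mk} g \, d(\owlk_W \circ \jsection)_*\mu ,
\]
where the last step is the change-of-variables formula for push-forwards. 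Now \Cref{le:nuKWIskWLD} \ref{le:nuKWIskWLD:jNeighbors} gives $(\owlk_W \circ \jsection)_*\mu = \mu_j^{\owlk_W(\bar{x})}$, so this is exactly $(\OperatorNNuW g)(\owlk_W(\bar{x})) = \bigl(T_{\owlk_W}(\OperatorNNuW g)\bigr)(\bar{x})$, proving (\ref{le:DIDMOperatorN:normalOp}).

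For (\ref{le:DIDMOperatorN:expOp}), the key auxiliary fact is that $T_{\owlk_W}$ is an isometry onto $L^2(X^k, \kSubAlg_W, \mu^{\otimes k})$ — this follows from \Cref{le:minAlgebraIKWMeasurable} together with \Cref{co:quotientSpaceUnique}, as noted just before \Cref{le:ikWMarkovIso} — so that $\ExpVal{\kSubAlg_W} \circ T_{\owlk_W} = T_{\owlk_W}$ because $\ExpVal{\kSubAlg_W}$ is the orthogonal projection onto that subspace (\Cref{cl:conditionalExpectation}). Using $(T_{\bm{N}})_{\kSubAlg_W} = \ExpVal{\kSubAlg_W} \circ T_{\bm{N}} \circ \ExpVal{\kSubAlg_W}$ I would then compute
\[
    (T_{\bm{N}})_{\kSubAlg_W} \circ T_{\owlk_W}
    = \ExpVal{\kSubAlg_W} \circ T_{\bm{N}} \circ T_{\owlk_W}
    \stackrel{(\ref{le:DIDMOperatorN:normalOp})}{=} \ExpVal{\kSubAlg_W} \circ T_{\owlk_W} \circ \OperatorNNuW
    = T_{\owlk_W} \circ \OperatorNNuW ,
\]
the last equality again because the range of $T_{\owlk_W}$ consists of $\kSubAlg_W$-measurable functions. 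Finally, for (\ref{le:DIDMOperatorN:iso}) I would expand $R^k_W = S_{\kSubAlg_W} \circ T_{\owlk_W}$, apply \Cref{le:quoOperator} \ref{le:quoOperator:expOperator} in the form $T_{\bm{N}}/\kSubAlg_W \circ S_{\kSubAlg_W} = S_{\kSubAlg_W} \circ (T_{\bm{N}})_{\kSubAlg_W}$, and substitute (\ref{le:DIDMOperatorN:expOp}):
\[
    T_{\bm{N}}/\kSubAlg_W \circ R^k_W
    = S_{\kSubAlg_W} \circ (T_{\bm{N}})_{\kSubAlg_W} \circ T_{\owlk_W}
    \stackrel{(\ref{le:DIDMOperatorN:expOp})}{=} S_{\kSubAlg_W} \circ T_{\owlk_W} \circ \OperatorNNuW
    = R^k_W \circ \OperatorNNuW .
\]

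The main obstacle is entirely contained in (\ref{le:DIDMOperatorN:normalOp}): one must line up the various ``almost everywhere'' qualifiers, since \Cref{eq:OperatorNJSection} only holds for $\mu^{\otimes k}$-almost every $\bar{x}$, and check that the resulting map $\bar{x} \mapsto \int_{\Mk} g \, d\mu_j^{\owlk_W(\bar{x})}$ — which a priori only makes sense via the measurability of $\alpha \mapsto \int_{\Mk} g \, d\mu^\alpha_j$ (by \Cref{le:continuity} and the definition of $\probMeas(\Mk)$) — really does agree $\mu^{\otimes k}$-almost everywhere with $T_{\owlk_W}(\OperatorNNuW g)$. Crucially, the identity $(\owlk_W \circ \jsection)_*\mu = \mu_j^{\owlk_W(\bar{x})}$ from \Cref{le:nuKWIskWLD} \ref{le:nuKWIskWLD:jNeighbors} holds for \emph{every} $\bar{x}$, so it contributes no further exceptional set; and the remaining arguments for (\ref{le:DIDMOperatorN:expOp}) and (\ref{le:DIDMOperatorN:iso}) are purely formal consequences of the already-established relations among $\ExpVal{\kSubAlg_W}$, $S_{\kSubAlg_W}$, $I_{\kSubAlg_W}$, and $T_{\owlk_W}$.
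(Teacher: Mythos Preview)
Your proof is correct and follows essentially the same approach as the paper: part~\ref{le:DIDMOperatorN:normalOp} is the identical pointwise computation via \Cref{eq:OperatorNJSection} and \Cref{le:nuKWIskWLD}~\ref{le:nuKWIskWLD:jNeighbors}, and parts~\ref{le:DIDMOperatorN:expOp} and~\ref{le:DIDMOperatorN:iso} are the same formal manipulations. The only cosmetic difference is that for \ref{le:DIDMOperatorN:expOp} the paper invokes the $T_{\bm{N}}$-invariance of $\kSubAlg_W$ (via \Cref{le:expOperator}~\ref{le:expOperator:invariant} and \Cref{le:algebraSequence}~\ref{le:algebraSequence:TInvariant}) to simplify $(T_{\bm{N}})_{\kSubAlg_W}$ to $T_{\bm{N}} \circ \ExpVal{\kSubAlg_W}$, whereas you bypass invariance by applying $\ExpVal{\kSubAlg_W} \circ T_{\owlk_W} = T_{\owlk_W}$ twice --- both routes are equally short.
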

\begin{proof}
    (\ref{le:DIDMOperatorN:normalOp}):
    Let $j \in [k]$.
    We have
    \begin{align*}
        (T_{\jNeighborj} \circ T_{\owlk_W} f) (\bar{x})
        = (T_{\jNeighborj} f \circ \owlk_W) (\bar{x})
        &= \int\limits_X f \circ \owlk_W(\bar{x}[y/j]) \, d\mu(y) \tag{def.}\\
        &= \int\limits_{\Mk} f \, d (\owlk_W \circ \jsection)_* \mu\\
        &= \int\limits_{\Mk} f \, d \mu_j^{\owlk_W(\bar{x})} \tag{\Cref{le:nuKWIskWLD} (\ref{le:nuKWIskWLD:jNeighbors})}\\
        &= (\OperatorFromTo{\jNeighborj}{\nu^k_W} f) (\owlk_W(\bar{x})) \tag{def.\ $\OperatorFromTo{\jNeighborj}{\nu^k_W}$}\\
        &= (T_{\owlk_W} \circ \OperatorFromTo{\jNeighborj}{\nu^k_W} f) (\bar{x}) \tag{def.}
    \end{align*}
    for $\mu^{\otimes k}$-almost every $\bar{x} \in X^k$
    and every $f \in \LInftyMkNuKW$.
    This already proves the claim
    as $\LInftyMkNuKW$ is dense in $\LTwoMkNuKW$.

    (\ref{le:DIDMOperatorN:expOp}):
    We have
    \begin{align*}
        (T_{\bm{N}})_{\kSubAlg_W} \circ T_{\owlk_W}
        &= T_{\bm{N}} \circ \ExpVal{\kSubAlg_W} \circ T_{\owlk_W} \tag{\Cref{le:expOperator} (\ref{le:expOperator:invariant}) and \Cref{le:algebraSequence} (\ref{le:algebraSequence:TInvariant})}\\
        &= T_{\bm{N}} \circ T_{\owlk_W} \tag{cf.\ \Cref{le:ikWMarkovIso}}\\
        &= T_{\owlk_W} \circ \OperatorNNuW. \tag{(\ref{le:DIDMOperatorN:normalOp})}
    \end{align*}

    (\ref{le:DIDMOperatorN:iso}):
    We have
    \begin{align*}
        T_{\bm{N}} / \kSubAlg_W \circ R^k_W
        &=S_{\kSubAlg_W} \circ T_{\bm{N}} \circ I_{\kSubAlg_W} \circ S_{\kSubAlg_W} \circ T_{\owlk_W} \tag{def.}\\
        &= S_{\kSubAlg_W} \circ \ExpVal{\kSubAlg_W} \circ T_{\bm{N}} \circ \ExpVal{\kSubAlg_W} \circ T_{\owlk_W} \tag{\Cref{th:quotientSpaces} (\ref{th:quotientSpaces:SExpVal}) and (\ref{th:quotientSpaces:ExpVal})}\\
        &= S_{\kSubAlg_W} \circ (T_{\bm{N}})_{\kSubAlg_W} \circ T_{\owlk_W} \tag{def.}\\
        &= S_{\kSubAlg_W} \circ T_{\owlk_W} \circ \OperatorNNuW \tag{(\ref{le:DIDMOperatorN:expOp})}\\
        &= R^k_W \circ \OperatorNNuW. \tag{def.}
    \end{align*}
\end{proof}

Defining the operators
for adjacency graphs is much simpler.
Intuitively, every $\alpha \in \Mk$
contains the values $W(x_i, x_j)$ for every
$ij \in \binom{[k]}{2}$ at position $0$.

\begin{lemma}
    \label{le:operatorAWellDefined}
    Let $k \ge 1$, and let $\nu \in \probMeas(\Mk)$ be a $k$-WLD.
    Let $ij \in \binom{[k]}{2}$.
    Setting
    \begin{equation*}
        (\OperatorFromTo{\ijAdjacencyij}{\nu} f) (\alpha) \coloneqq (\alpha_0)_{ij} \cdot f(\alpha)
    \end{equation*}
    for all $f \in \LTwoMkNu$, $\alpha \in \Mk$
    defines an $L^\infty$- and $L^2$-contraction $\LTwoMkNu \to \LTwoMkNu$.
\end{lemma}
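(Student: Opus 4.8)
The plan is to recognize $\OperatorFromTo{\ijAdjacencyij}{\nu}$ as multiplication by a fixed bounded measurable multiplier. Define $m_{ij} \colon \Mk \to \R$ by $m_{ij}(\alpha) \coloneqq (\alpha_0)_{ij}$, so that $\OperatorFromTo{\ijAdjacencyij}{\nu} f = m_{ij} \cdot f$ pointwise. The first step is to check that $m_{ij}$ is Borel measurable and satisfies $0 \le m_{ij}(\alpha) \le 1$ for every $\alpha \in \Mk$: it is the composition of the projection $p_{\infty,0} \colon \Mk \to \Mk_0 = \iPk_0 = [0,1]^{\binom{[k]}{2}}$ with the coordinate projection onto the $ij$-th factor, both continuous by definition of the product topology, hence Borel; and every coordinate of $\Mk_0$ lies in $[0,1]$ by construction, which gives the bound $\normI{m_{ij}} \le 1$.

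Next I would verify that $f \mapsto m_{ij} \cdot f$ is a well-defined linear contraction $\LTwoMkNu \to \LTwoMkNu$ that restricts to a contraction on $\LInftyMkNu$. For $f \in \LL^2(\Mk, \nu)$ the product $m_{ij} \cdot f$ is measurable as a product of measurable functions, and since $\lvert m_{ij} \rvert \le 1$ everywhere we have $\lvert m_{ij} \cdot f \rvert \le \lvert f \rvert$ pointwise; hence $m_{ij} \cdot f \in \LL^2(\Mk, \nu)$ with
\[
\normT{\OperatorFromTo{\ijAdjacencyij}{\nu} f}^2 = \int_{\Mk} m_{ij}^2\, f^2 \, d\nu \le \int_{\Mk} f^2 \, d\nu = \normT{f}^2 .
\]
If $f = g$ holds $\nu$-almost everywhere then $m_{ij} \cdot f = m_{ij} \cdot g$ holds $\nu$-almost everywhere, so the assignment descends to $\LTwoMkNu$, and linearity is immediate from distributivity of pointwise multiplication. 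Finally, if $f \in \LLInftyMkNu$ then $\lvert m_{ij} \cdot f \rvert \le \lvert f \rvert \le \normI{f}$ $\nu$-almost everywhere, so $\OperatorFromTo{\ijAdjacencyij}{\nu}$ restricts to a well-defined contraction $\LInftyMkNu \to \LInftyMkNu$; that is, $\OperatorFromTo{\ijAdjacencyij}{\nu}$ is both an $L^2$- and an $L^\infty$-contraction.

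I do not expect a genuine obstacle here: in contrast to \Cref{le:operatorNWellDefined}, the multiplier $m_{ij}$ is already bounded, so there is no need to start on $\LInftyMkNu$ and extend, and the Tonelli–Fubini-like clause of \Cref{def:kWLD} plays no role (only the trivial fact that $\nu$ is a probability measure is used, via $\normT{\cdot} \le \normI{\cdot}$). The only mildly delicate point is the bookkeeping in the first step, namely spelling out that $\alpha \mapsto (\alpha_0)_{ij}$ is measurable and $[0,1]$-valued, which follows directly from the iterated-product construction of $\Mk$.
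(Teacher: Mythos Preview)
Your proposal is correct and follows essentially the same approach as the paper: both recognize the operator as pointwise multiplication by the bounded measurable function $\alpha \mapsto (\alpha_0)_{ij}$, and derive the $L^2$- and $L^\infty$-contraction properties from the bound $0 \le (\alpha_0)_{ij} \le 1$. Your write-up is in fact more explicit than the paper's (you spell out measurability via the projection maps and check well-definedness on $\nu$-a.e.\ equivalence classes), but the argument is the same.
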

\begin{proof}
    The mapping $\alpha \mapsto (\alpha_0)_{ij}$ is measurable
    by definition of the product $\sigma$-algebra.
    Hence,
    $\OperatorFromTo{\ijAdjacencyij}{\nu} f$
    for $f \in \LTwoMkNu$
    is measurable as the product of measurable functions.
    Moreover,
    by definition of $\Mk$,
    the function
    $\alpha \mapsto (\alpha_0)_{ij}$
    is bounded by $1$,
    which immediately yields that
    $\normT{\OperatorFromTo{\ijAdjacencyij}{\nu} f} \le \normT{f}$
    for $f \in \LTwoMkNu$
    and
    $\normI{\OperatorFromTo{\ijAdjacencyij}{\nu} f} \le \normI{f}$
    for $f \in \LInftyMkNu$.
    Moreover, $\OperatorFromTo{\ijAdjacencyij}{\nu}$ is linear
    as a multiplication operator.
\end{proof}

Analogously to \Cref{le:DIDMOperatorN},
one can verify that
\Cref{le:operatorAWellDefined}
is in fact the right definition.

\begin{lemma}
    \label{le:DIDMOperatorA}
    Let $k \ge 1$
    and $W \colon X \times X \to [0,1]$ be a graphon.
    For every $\bm{A} \in \adjGraphs^k$,
    \begin{multicols}{2}
    \begin{enumerate}
        \item $\OperatorAW \circ T_{\owlk_W} = T_{\owlk_W} \circ \OperatorANuW$, \label{le:DIDMOperatorA:normalOp}
        \item $(\OperatorAW)_{\kSubAlg_W} \circ T_{\owlk_W} = T_{\owlk_W} \circ \OperatorANuW$, and\label{le:DIDMOperatorA:expOp}
        \item $\OperatorAW / \kSubAlg_W \circ R^k_W = R^k_W \circ \OperatorANuW$. \label{le:DIDMOperatorA:iso}
    \end{enumerate}
    \end{multicols}
\end{lemma}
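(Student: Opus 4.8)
The plan is to follow the proof of \Cref{le:DIDMOperatorN} step for step, with the neighbor graph $\bm{N}$ replaced by the adjacency graph $\bm{A}$. The adjacency case is in fact easier: both $\OperatorAW$ (by \Cref{le:homOperatorWellDefined}, as it is a multiplication operator) and $\OperatorANuW$ (by \Cref{le:operatorAWellDefined}) are already well-defined $L^2$-contractions, so no Tonelli--Fubini argument and no passage through the dense subspace $\LInftyMkNuKW$ are needed; the three statements are proved for all $f \in \LTwoMkNuKW$ directly.

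For \ref{le:DIDMOperatorA:normalOp}, write $\bm{A} = \ijAdjacencyij$ with $ij \in \binom{[k]}{2}$ and unfold the definitions: for every $f \in \LTwoMkNuKW$ and $\mu^{\otimes k}$-almost every $\bar{x} \in X^k$,
\[
    (\OperatorAW \circ T_{\owlk_W} f)(\bar{x}) = W(x_i, x_j)\cdot f(\owlk_W(\bar{x})), \qquad
    (T_{\owlk_W} \circ \OperatorANuW f)(\bar{x}) = \big((\owlk_W(\bar{x}))_0\big)_{ij}\cdot f(\owlk_W(\bar{x})).
\]
The only thing to check is that $\big((\owlk_W(\bar{x}))_0\big)_{ij} = (\owlk_{W,0}(\bar{x}))_{ij} = W(x_i, x_j)$, which is immediate from the base case of \Cref{def:owlk} (and mirrors the way $\OperatorANuW$ reads off the coordinate $(\alpha_0)_{ij}$ in \Cref{le:operatorAWellDefined}). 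Hence the two sides agree, proving \ref{le:DIDMOperatorA:normalOp}.

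Statements \ref{le:DIDMOperatorA:expOp} and \ref{le:DIDMOperatorA:iso} are then obtained by copying the corresponding computations in the proof of \Cref{le:DIDMOperatorN}. For \ref{le:DIDMOperatorA:expOp}: $\kSubAlg_W$ is $\OperatorFamilyFromTo{\adjGraphs^k}{W}$-invariant (\Cref{le:algebraSequence} \ref{le:algebraSequence:TAInvariant}), hence $\OperatorAW$-invariant, so \Cref{le:expOperator} \ref{le:expOperator:invariant} gives $(\OperatorAW)_{\kSubAlg_W} = \OperatorAW \circ \ExpVal{\kSubAlg_W}$; composing with $T_{\owlk_W}$ and using $\ExpVal{\kSubAlg_W} \circ T_{\owlk_W} = T_{\owlk_W}$ (cf.\ \Cref{le:ikWMarkovIso}), the claim reduces to \ref{le:DIDMOperatorA:normalOp}. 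For \ref{le:DIDMOperatorA:iso}: expand $R^k_W = S_{\kSubAlg_W} \circ T_{\owlk_W}$ and $\OperatorAW / \kSubAlg_W = S_{\kSubAlg_W} \circ \OperatorAW \circ I_{\kSubAlg_W}$, insert the identities $I_{\kSubAlg_W} \circ S_{\kSubAlg_W} = \ExpVal{\kSubAlg_W}$ and $S_{\kSubAlg_W} \circ \ExpVal{\kSubAlg_W} = S_{\kSubAlg_W}$ (\Cref{th:quotientSpaces} \ref{th:quotientSpaces:SExpVal}, \ref{th:quotientSpaces:ExpVal}) to rewrite the left-hand side as $S_{\kSubAlg_W} \circ (\OperatorAW)_{\kSubAlg_W} \circ T_{\owlk_W}$, and conclude with \ref{le:DIDMOperatorA:expOp}. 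I expect no genuine obstacle here: the entire content sits in the one-line coordinate identification of \ref{le:DIDMOperatorA:normalOp}, and the other two parts are purely formal and textually identical to the neighbor-graph case.
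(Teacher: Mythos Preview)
Your proof is correct and follows essentially the same approach as the paper: the paper proves \ref{le:DIDMOperatorA:normalOp} by the same one-line coordinate identification $W(x_i,x_j)=((\owlk_W(\bar{x}))_0)_{ij}$ and then simply states that \ref{le:DIDMOperatorA:expOp} and \ref{le:DIDMOperatorA:iso} are analogous to the corresponding parts of \Cref{le:DIDMOperatorN}. Your additional remarks (that no $L^\infty$-density argument is needed here, and citing \Cref{le:algebraSequence} \ref{le:algebraSequence:TAInvariant} rather than \ref{le:algebraSequence:TInvariant} for the adjacency-operator invariance) are correct refinements but do not change the substance.
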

\begin{proof}
    (\ref{le:DIDMOperatorA:normalOp}):
    Let $ij \in \binom{[k]}{2}$.
    We have
    \begin{align*}
        (\OperatorFromTo{\ijAdjacencyij}{W} \circ T_{\owlk_W} f) (\bar{x})
        &= (\OperatorFromTo{\ijAdjacencyij}{W} f \circ \owlk_W) (\bar{x})\\
        &= W(x_i, x_j) \cdot (f \circ \owlk_W) (\bar{x}) \tag{def.}\\
        &= ((\owlk_W(\bar{x}))_0)_{ij} \cdot (f \circ \owlk_W) (\bar{x}) \tag{def.\ $\owlk_W$}\\
        &= (\OperatorFromTo{\ijAdjacencyij}{\nu^k_W} f) (\owlk_W(\bar{x})) \tag{def.\ $\OperatorFromTo{\ijAdjacencyij}{\nu^k_W}$}\\
        &= (T_{\owlk_W} \circ \OperatorFromTo{\ijAdjacencyij}{\nu^k_W} f) (\bar{x}) \tag{def.}
    \end{align*}
    for $\mu^{\otimes k}$-almost every $\bar{x} \in X^k$
    and every $f \in \LTwoMkNuKW$.

    (\ref{le:DIDMOperatorA:expOp}) and
    (\ref{le:DIDMOperatorA:iso}):
    Analogous to the proof of
    (\ref{le:DIDMOperatorN:expOp}) and
    (\ref{le:DIDMOperatorN:iso})
    of \Cref{le:DIDMOperatorN}, respectively.
\end{proof}

For a $k$-WLD $\nu \in \probMeas(\Mk)$,
we define the family of $L^\infty$-contractions
$\TT_\nu \coloneqq (\OperatorFromTo{\bm{F}}{\nu})_{\bm{F} \in \adjNeiGraphs^k}$.
\Cref{le:DIDMOperatorN}~(\ref{le:DIDMOperatorN:iso}) and
\Cref{le:DIDMOperatorA}~(\ref{le:DIDMOperatorA:iso}) can then be rephrased as the following corollary.

\begin{corollary}
    \label{co:DIDMOperatorFamily}
    Let $k \ge 1$
    and $W \colon X \times X \to [0,1]$ be a graphon.
    Then, $\naturalKFamilyOperatorsShortW/\kSubAlg_W \circ R^k_W = R^k_W \circ \TT_{\nu^k_W}$.
\end{corollary}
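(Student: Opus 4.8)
The plan is to recognize that the asserted identity of families of operators is nothing more than the conjunction of the two commutation relations already proved, taken over the index set $\adjNeiGraphs^k = \jNeighbors^k \cup \adjGraphs^k$. Recall from \Cref{sec:graphonOperators} that $\naturalKFamilyOperatorsShortW = \OperatorFamilyFromTo{\adjNeiGraphs^k}{W} = (\OperatorFromTo{\bm{F}}{W})_{\bm{F} \in \adjNeiGraphs^k}$, that $\TT_{\nu^k_W} = (\OperatorFromTo{\bm{F}}{\nu^k_W})_{\bm{F} \in \adjNeiGraphs^k}$, and that expressions such as $\naturalKFamilyOperatorsShortW/\kSubAlg_W \circ R^k_W$ and $R^k_W \circ \TT_{\nu^k_W}$ denote the families obtained by applying the operation in question to every member. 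So the first step is to unwind these conventions and reduce the claim to showing
\[
    \OperatorFromTo{\bm{F}}{W}/\kSubAlg_W \circ R^k_W = R^k_W \circ \OperatorFromTo{\bm{F}}{\nu^k_W}
    \qquad \text{for every } \bm{F} \in \adjNeiGraphs^k.
\]

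Then I would split into the two kinds of building block. If $\bm{F} = \bm{N} \in \jNeighbors^k$ is a neighbor graph, the displayed identity is exactly \Cref{le:DIDMOperatorN} \ref{le:DIDMOperatorN:iso}. If $\bm{F} = \bm{A} \in \adjGraphs^k$ is an adjacency graph, it is exactly \Cref{le:DIDMOperatorA} \ref{le:DIDMOperatorA:iso}. Since $\adjNeiGraphs^k$ consists precisely of the neighbor and adjacency graphs, this covers every member of the index set, so the member-wise identity holds, and hence the family identity $\naturalKFamilyOperatorsShortW/\kSubAlg_W \circ R^k_W = R^k_W \circ \TT_{\nu^k_W}$ follows.

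There is no real obstacle here: the corollary is a bookkeeping restatement of the two preceding lemmas, and all the analytic content lives in their proofs (and, through \Cref{le:DIDMOperatorN} \ref{le:DIDMOperatorN:expOp} together with the invariance of $\kSubAlg_W$ recorded in \Cref{le:algebraSequence} \ref{le:algebraSequence:TInvariant}, in the behaviour of the conditional expectation). The only point worth stating carefully is the notational convention that $R^k_W$ is composed with each member of a family on the indicated side; once that is made explicit, the proof is a single line invoking \Cref{le:DIDMOperatorN} \ref{le:DIDMOperatorN:iso} and \Cref{le:DIDMOperatorA} \ref{le:DIDMOperatorA:iso}.
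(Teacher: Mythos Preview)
Your proposal is correct and matches the paper's approach exactly: the paper presents this corollary as a direct rephrasing of \Cref{le:DIDMOperatorN}~\ref{le:DIDMOperatorN:iso} and \Cref{le:DIDMOperatorA}~\ref{le:DIDMOperatorA:iso}, without giving a separate proof. Your unpacking of the family notation and case split on $\adjNeiGraphs^k = \jNeighbors^k \cup \adjGraphs^k$ is precisely what is intended.
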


Recall \Cref{def:homFunctionFamily}, i.e.,
the homomorphism density of a term
in a family of $L^\infty$-contractions.
In particular, this definition
applies to the family $\TT_{\nu^k_W}$
of the $k$-WLD $\nu^k_W$
of a graphon $W$.
\Cref{le:markovEmbeddingPreservesHomomorphisms} with
the previous corollary yields
that $\TT_{\nu^k_W}$ and $\naturalKFamilyOperatorsShortW / \kSubAlg_W$
give us the same homomorphism densities (and also functions),
which are just the original homomorphism densities in $W$.

\begin{corollary}
    \label{le:operatorsOnMkPreserveHomomorphisms}
    Let $k \ge 1$.
    Let $W \colon X \times X \to [0,1]$ be a graphon
    and $\subAlg \in \subAlgsk$ be $W$-invariant.
    Then,
    $t(\term, \TT_{\nu^k_W}) = t(\graphOfTerm, W)$
    for every $\term \in \treeClosureK$.
\end{corollary}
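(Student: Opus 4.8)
The plan is to read off the corollary as a bookkeeping consequence of three facts already in hand: \Cref{le:invariantQuotientPreservesHomomorphisms}, \Cref{le:markovEmbeddingPreservesHomomorphisms}, and \Cref{co:DIDMOperatorFamily}. First I would note that $\kSubAlg_W$ is itself a $W$-invariant $\mu^{\otimes k}$-relatively complete sub-$\sigma$-algebra of $\Borel^{\otimes k}$ by \Cref{le:algebraSequence} \ref{le:algebraSequence:TInvariant}, so \Cref{le:invariantQuotientPreservesHomomorphisms} applies with $\subAlg = \kSubAlg_W$ and gives
\[
    t(\term, \naturalKFamilyOperatorsShortW / \kSubAlg_W) = t(\graphOfTerm, W)
    \qquad\text{for every } \term \in \treeClosureK.
\]
(The hypothesis on the auxiliary $\subAlg$ in the statement of this corollary is not actually used; it is inherited verbatim from \Cref{le:invariantQuotientPreservesHomomorphisms}.)

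Next I would invoke \Cref{le:markovEmbeddingPreservesHomomorphisms} with the two families of $L^\infty$-contractions $\TT_1 \coloneqq \naturalKFamilyOperatorsShortW / \kSubAlg_W$ on $\LTwoProdKQuoOf{\kSubAlg_W}$ and $\TT_2 \coloneqq \TT_{\nu^k_W}$ on $\LTwoMkNuKW$, both indexed by $\adjNeiGraphs^k$, and with the map $I \coloneqq R^k_W \colon \LTwoMkNuKW \to \LTwoProdKQuoOf{\kSubAlg_W}$: by \Cref{le:ikWMarkovIso} this $R^k_W$ is a Markov isomorphism, hence in particular an isometry, i.e.\ a Markov embedding. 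The compatibility hypothesis $\TT_1 \circ I = I \circ \TT_2$ demanded by \Cref{le:markovEmbeddingPreservesHomomorphisms} is precisely \Cref{co:DIDMOperatorFamily}, namely $\naturalKFamilyOperatorsShortW / \kSubAlg_W \circ R^k_W = R^k_W \circ \TT_{\nu^k_W}$. Thus \Cref{le:markovEmbeddingPreservesHomomorphisms} yields $t(\term, \naturalKFamilyOperatorsShortW / \kSubAlg_W) = t(\term, \TT_{\nu^k_W})$ for every $\term \in \treeClosureK$, and combining this with the display above gives $t(\term, \TT_{\nu^k_W}) = t(\graphOfTerm, W)$, as required.

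I do not expect any genuine obstacle here; the work is entirely in checking the trivial side conditions. Specifically, one should verify that every operator in both families is an $L^\infty$-contraction — for $\TT_{\nu^k_W}$ this is \Cref{le:operatorNWellDefined} and \Cref{le:operatorAWellDefined}, and for $\naturalKFamilyOperatorsShortW / \kSubAlg_W$ it follows since $S_\subAlg$ and $I_\subAlg$ are Markov operators (hence $L^\infty$-contractions) composed with $L^\infty$-contractions — that both families are indexed by the same set $\adjNeiGraphs^k$, and that $R^k_W$ goes in the correct direction so that $\TT_1,\TT_2$ are matched to the roles in \Cref{le:markovEmbeddingPreservesHomomorphisms} (the source of $R^k_W$ is $\LTwoMkNuKW$, so $\TT_{\nu^k_W}$ plays the role of $\TT_2$). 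Each of these is immediate from the definitions, so the proof is short.
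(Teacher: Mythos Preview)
Your proposal is correct and follows essentially the same route as the paper's proof: apply \Cref{le:invariantQuotientPreservesHomomorphisms} with $\kSubAlg_W$ (using \Cref{le:algebraSequence} \ref{le:algebraSequence:TInvariant} for its $W$-invariance), then combine \Cref{co:DIDMOperatorFamily} and \Cref{le:ikWMarkovIso} to feed into \Cref{le:markovEmbeddingPreservesHomomorphisms}. Your observation that the hypothesis on the auxiliary $\subAlg$ is never used is also correct---it is a vestigial copy from the statement of \Cref{le:invariantQuotientPreservesHomomorphisms}.
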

\begin{proof}
    By \Cref{le:invariantQuotientPreservesHomomorphisms},
    we have
    $t(\term, \naturalKFamilyOperatorsShortW \!/ \kSubAlg_W) = t(\graphOfTerm, W)$
    since $\kSubAlg_W$ is $W$-invariant by \Cref{le:algebraSequence}.
    By \Cref{co:DIDMOperatorFamily},
    we have
    $\naturalKFamilyOperatorsShortW/\kSubAlg_W \circ R^k_W = R^k_W \circ \TT_{\nu^k_W}$,
    where $R^k_W$ is a Markov isomorphism by \Cref{le:ikWMarkovIso}.
    Then, \Cref{le:markovEmbeddingPreservesHomomorphisms}
    yields $t(\term, \naturalKFamilyOperatorsShortW/\kSubAlg_W) = t(\term, \TT_{\nu^k_W})$.
\end{proof}

The mapping $\owlk_W \colon X^k \to \Mk$
assigns an element of $\Mk$ to a $k$-tuple
of elements from $X$.
The order of these elements from $X$ in the tuple
has no deeper meaning and should, intuitively, not matter.
While we already have defined what it means for an operator
and a sub-$\sigma$-algebra to be permutation invariant,
we now also formalize this for $k$-WLDs, and in particular,
show that the $k$-WLD $\nu^k_W$ of a graphon $W$,
which is defined via $\owlk$, is permutation invariant.
This allows us to show that, in \Cref{th:WLForGraphons},
Markov operators and Markov isomorphisms can always be assumed
to be permutation invariant and, hence, confirms the intuition
that the order of the elements in a $k$-tuple from $X$ does not matter.
This becomes important for
the connection to linear equations in \Cref{sec:opHierarchies},
where this assumption of permutation invariance is always implicitly present.

A permutation $\pi \colon [k] \to [k]$
extends to a measurable bijection $\pi \colon \Mk \to \Mk$ as follows:
We obtain a measurable bijection $\pi \colon P^k_0 \to P^k_0$
by setting
$\pi((y_{ij})_{ij}) \coloneqq (y_{\pi(i) \pi(j)})_{ij}$
for $(y_{ij})_{ij} \in [0,1]^{\binom{[k]}{2}}$.
From there on, $\pi$ inductively extends to a measurable
bijection $\pi \colon \Mk_n \to \Mk_n$
by component-wise application
and, then, to a measurable bijection
$\pi \colon P^k_{n+1} \to P^k_{n+1}$ by setting
$\pi((\mu_j)_{j \in [k]}) = (\pi_* \mu_{\pi(j)})_{j \in [k]}$
for every $(\mu_j)_{j \in [k]} \in P^k_{n+1}$.
Finally, we obtain
the measurable
bijection $\pi \colon \Mk_n \to \Mk_n$
by component-wise application.

Let $\nu \in \probMeas(\Mk)$ be a $k$-WLD
and $\pi \colon [k] \to [k]$ be a permutation.
By definition of $\pi_*\nu$, the extension $\pi \colon \Mk \to \Mk$
is a measure-preserving map from
$(\Mk, \Borel(\Mk), \nu)$ to $(\Mk, \Borel(\Mk), \pi_*\nu)$ by definition.
The $k$-WLD $\nu$ is called \textit{$\pi$-invariant}
if $\pi_* \nu = \nu$,
in which case
we can view the Koopman operator of $\pi$
as an operator
$\OperatorFromTo{\pi}{\nu} \colon \LTwoMkNu \to \LTwoMkNu$.
The notation $\OperatorFromTo{\pi}{\nu}$
avoids confusion with the Koopman operator of $\pi$
when viewing it as a map $X^k \to X^k$, which we denote just by $T_\pi$.
We call a $k$-WLD $\nu \in \probMeas(\Mk)$ \textit{permutation-invariant} if it is
$\pi$-invariant for every permutation $\pi \colon [k] \to [k]$.
Then \Cref{le:DIDMOperatorP}
yields that the $k$-WLD $\nu^k_W$ of a graphon $W$ is permutation invariant.

\begin{lemma}
    \label{le:DIDMOperatorP}
    Let $k \ge 1$
    and $W \colon X \times X \to [0,1]$ be a graphon.
    For every permutation $\pi \colon [k] \to [k]$,
    \begin{multicols}{2}
    \begin{enumerate}
        \item $\pi \circ \owlk_W = \owlk_W \circ \pi$, \label{le:DIDMOperatorP:commutes}
        \item $\nu^k_W$ is $\pi$-invariant, \label{le:DIDMOperatorP:invariant}
        \item $T_{\pi} \circ T_{\owlk_W} = T_{\owlk_W} \circ \OperatorPNuW$, \label{le:DIDMOperatorP:normalOp}
        \item $(T_{\pi})_{\kSubAlg_W} \circ T_{\owlk_W} = T_{\owlk_W} \circ \OperatorPNuW$, and\label{le:DIDMOperatorP:expOp}
        \item $T_{\pi} / \kSubAlg_W \circ S_{\kSubAlg_W} \circ T_{\owlk_W} = S_{\kSubAlg_W} \circ T_{\owlk_W} \circ \OperatorPNuW$.\label{le:DIDMOperatorP:iso}
    \end{enumerate}
    \end{multicols}
\end{lemma}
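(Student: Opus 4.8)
The plan is to prove the five items in the stated order, treating \ref{le:DIDMOperatorP:commutes} as the one genuinely new ingredient; once it is in place, items \ref{le:DIDMOperatorP:invariant}--\ref{le:DIDMOperatorP:iso} are essentially transcriptions of the corresponding parts of \Cref{le:DIDMOperatorN} and \Cref{le:DIDMOperatorA}.

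For \ref{le:DIDMOperatorP:commutes} I would prove $\pi \circ \owlk_{W,n} = \owlk_{W,n} \circ \pi$ as maps $X^k \to \Mk_n$ for every $n \in \N$ by induction on $n$, and then read off the case $n = \infty$ componentwise. The base case $n = 0$ is immediate: writing $\pi(\bar{x})_i = x_{\pi(i)}$ and using the definition of $\pi$ on $\iPk_0 = [0,1]^{\binom{[k]}{2}}$, both sides send $\bar{x}$ to $(W(x_{\pi(i)}, x_{\pi(j)}))_{ij}$. For the inductive step I unfold the definition of $\owlk_{W,n+1}$ on both sides. The first (history) component agrees by the induction hypothesis. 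For the $j$th measure component, recalling that $\pi$ acts on $\iPk_{n+1} = \probMeas(\Mk_n)^k$ by $(\mu_j)_j \mapsto (\pi_* \mu_{\pi(j)})_j$, what must be checked is
\[
    \big(\owlk_{W,n} \circ \pi(\bar{x})[\cdot/j]\big)_* \mu
    = \pi_*\big((\owlk_{W,n} \circ \bar{x}[\cdot/\pi(j)])_* \mu\big).
\]
This reduces to the elementary identity $\pi(\bar{x}[y/\pi(j)]) = \pi(\bar{x})[y/j]$ for all $y \in X$ (replacing the $\pi(j)$th entry and then permuting coordinates equals permuting and then replacing the $j$th entry), which, combined with the induction hypothesis $\owlk_{W,n} \circ \pi = \pi \circ \owlk_{W,n}$ and the functoriality of the push-forward, yields the display.

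Item \ref{le:DIDMOperatorP:invariant} is then immediate: $\pi \colon X^k \to X^k$ permutes coordinates and hence preserves $\mu^{\otimes k}$, so by \ref{le:DIDMOperatorP:commutes} we get $\pi_* \nu^k_W = (\pi \circ \owlk_W)_*\mu^{\otimes k} = (\owlk_W \circ \pi)_*\mu^{\otimes k} = (\owlk_W)_*\mu^{\otimes k} = \nu^k_W$, so $\nu^k_W$ is $\pi$-invariant and $\OperatorPNuW$, the Koopman operator of $\pi \colon \Mk \to \Mk$, is well-defined on $\LTwoMkNuKW$. For \ref{le:DIDMOperatorP:normalOp}, all three operators are Koopman operators, so both $T_\pi \circ T_{\owlk_W}$ and $T_{\owlk_W} \circ \OperatorPNuW$ send $f$ to $f \circ \owlk_W \circ \pi$, which equals $f \circ \pi \circ \owlk_W$ by \ref{le:DIDMOperatorP:commutes}. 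For \ref{le:DIDMOperatorP:expOp}, $\kSubAlg_W$ is permutation invariant by \Cref{le:algebraSequence} \ref{le:algebraSequence:permutation}, hence $T_\pi$-invariant, so $(T_\pi)_{\kSubAlg_W} = T_\pi \circ \ExpVal{\kSubAlg_W}$ by \Cref{le:expOperator} \ref{le:expOperator:invariant}; using $\ExpVal{\kSubAlg_W} \circ T_{\owlk_W} = T_{\owlk_W}$ (cf.\ \Cref{le:ikWMarkovIso}) and then \ref{le:DIDMOperatorP:normalOp} gives the claim, exactly as for \Cref{le:DIDMOperatorN} \ref{le:DIDMOperatorN:expOp}. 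Finally \ref{le:DIDMOperatorP:iso} is obtained by expanding $T_\pi / \kSubAlg_W = S_{\kSubAlg_W} \circ T_\pi \circ I_{\kSubAlg_W}$, applying $I_{\kSubAlg_W} \circ S_{\kSubAlg_W} = \ExpVal{\kSubAlg_W}$ (\Cref{th:quotientSpaces} \ref{th:quotientSpaces:ExpVal}), rewriting $T_\pi \circ \ExpVal{\kSubAlg_W} = (T_\pi)_{\kSubAlg_W}$ and invoking \ref{le:DIDMOperatorP:expOp}, mirroring \Cref{le:DIDMOperatorN} \ref{le:DIDMOperatorN:iso}.

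The step I expect to require the most care is the inductive verification in \ref{le:DIDMOperatorP:commutes}: one must keep straight the two ``directions'' in which $\pi$ acts --- inside, through $\pi(j)$ and the section map $\bar{x}[\cdot/\pi(j)]$, and outside, through $j$ and the slot-permutation of $\probMeas(\Mk_n)^k$ --- and check that the extension of $\pi$ to $\Mk_{n+1}$ simultaneously permutes the $k$ measure-slots and pushes each one forward along $\pi \colon \Mk_n \to \Mk_n$. Once the bookkeeping is pinned down, the rest is routine.
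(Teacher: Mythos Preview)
Your proposal is correct and follows essentially the same approach as the paper: induction on $n$ for \ref{le:DIDMOperatorP:commutes} using the coordinate identity $\pi(\bar{x}[y/\pi(j)]) = \pi(\bar{x})[y/j]$, then deducing \ref{le:DIDMOperatorP:invariant}--\ref{le:DIDMOperatorP:iso} exactly as in \Cref{le:DIDMOperatorN}. The only minor difference is that you explicitly invoke \Cref{le:algebraSequence}~\ref{le:algebraSequence:permutation} for the $T_\pi$-invariance of $\kSubAlg_W$ in \ref{le:DIDMOperatorP:expOp}, whereas the paper leaves this implicit in the word ``analogous''; your version is slightly more careful here.
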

\begin{proof}
    (\ref{le:DIDMOperatorP:commutes}):
    We prove that
    $\pi \circ \owlk_{W,n} = \owlk_{W, n} \circ \pi$
    by induction on $n \in \N$.
    This yields
    $(\pi \circ \owlk_W (\bar{x}))_n = (\owlk_W \circ \pi (\bar{x}))_n$
    for every $\bar{x} \in X^k$
    by induction on $n \in \N$,
    which then implies the claim.
    For the base case, we have
    \begin{align*}
        \pi(\owlk_{W,0} (\bar{x}))
        = \left((\owlk_{W,0}(\bar{x}))_{\pi(i)\pi(j)}\right)_{ij \in \binom{[k]}{2}}
        = \left(W(x_{\pi(i)},x_{\pi(j)})\right)_{ij \in \binom{[k]}{2}}
        = \owlk_{W,0}(\pi(\bar{x}))
    \end{align*}
    for every $\bar{x} \in X^k$.
    For the inductive step,
    the induction hypothesis yields that
    $(\pi (\owlk_{W,n+1} (\bar{x})))_{i} = (\owlk_{W,n+1}(\pi(\bar{x})))_{i}$
    for every $\bar{x} \in X^k$ and every $i \le n$.
    Moreover, we have
    \begin{align*}
        (\pi (\owlk_{W,n+1} (\bar{x})))_{n+1}
        &=\left(\pi_*((\owlk_{W,n+1}(\bar{x}))_{n+1})_{\pi(j)}\right)_{j \in [k]}\\
        &=\left(\pi_*\left(\left(\owlk_{W,n} \circ \bar{x}[\cdot/\pi(j)]\right)_*\mu\right)\right)_{j \in [k]}\\
        &=\left(\left(\pi \circ \owlk_{W,n} \circ \bar{x}[\cdot/\pi(j)]\right)_*\mu\right)_{j \in [k]}\\
&=\left(\left(\pi \circ \owlk_{W,n} \circ \pi^{-1}\circ \pi(\bar{x})[\cdot/j])\right)_*\mu\right)_{j \in [k]}\\
        &=\left(\left(\owlk_{W,n} \circ \pi(\bar{x})[\cdot/j])\right)_*\mu\right)_{j \in [k]} \tag{IH}\\
        &= (\owlk_{W,n+1}(\pi(\bar{x})))_{n+1}
    \end{align*}
    for every $\bar{x} \in X^k$.

    (\ref{le:DIDMOperatorP:invariant}):
    We have
    \begin{align*}
        \pi_* \nu^k_W
        = \pi_* ({\owlk_W}_* \mu^{\otimes k})
        = (\pi \circ \owlk_W)_* \mu^{\otimes k}
        \stackrel{(\ref{le:DIDMOperatorP:commutes})}{=} (\owlk_W \circ \pi)_* \mu^{\otimes k}
        = {\owlk_W}_* (\pi_* \mu^{\otimes k})
        &= {\owlk_W}_* \mu^{\otimes k}\\
        &= \nu^k_W.
    \end{align*}

    (\ref{le:DIDMOperatorP:normalOp}):
    We have
    \begin{equation*}
        T_{\pi} \circ T_{\owlk_W} f
        = f \circ \owlk_W \circ \pi
        \stackrel{(\ref{le:DIDMOperatorP:commutes})}{=} f \circ \pi \circ \owlk_W
        = T_{\owlk_W} \circ \OperatorPNuW f
    \end{equation*}
    for every $f \in \LTwoMkNuKW$.

    (\ref{le:DIDMOperatorP:expOp}) and (\ref{le:DIDMOperatorP:iso}):
    Analogous to the proof of
    (\ref{le:DIDMOperatorN:expOp}) and
    (\ref{le:DIDMOperatorN:iso})
    of \Cref{le:DIDMOperatorN}, respectively.
\end{proof}

\subsection{Homomorphism Functions and Weisfeiler-Leman Measures}
\label{sec:WLIndistinguishability}

For the proof of \Cref{th:kWLGraphons},
\Cref{le:operatorsOnMkPreserveHomomorphisms}
allows us to get from $k$-WLDs to homomorphism densities,
but getting to the other characterizations
from there is arguably the most involved part of the proof.
As Greb\'ik and Rocha have shown \cite{GrebikRocha2021},
the key tool needed for this is the Stone-Weierstrass Theorem:
It yields that the set of homomorphism functions on $\Mk$,
which is yet to be defined,
is dense in the set $C(\Mk)$ of continuous functions on $\Mk$.
This then means that
equal homomorphism densities already imply equal $k$-WLDs.

To apply the Stone-Weierstrass Theorem,
we have to define
the homomorphism function
of a term on the set $\Mk$.
Recall that an $\alpha \in \Mk$ is a sequence
$\alpha = (\alpha_0, \alpha_1, \alpha_2, \dots)$
that, intuitively, corresponds to a sequence
of unfoldings of heights $0, 1, 2, \dots$ of a graphon.
However, as the components $\alpha_0, \alpha_1, \alpha_2$
do not have to be consistent, cf.\
the definition of $\Pk$,
using different components may lead to different functions.
Hence, we define a whole set of functions for a single term
by considering all ways in which we may use the components
to define a homomorphism function.
We could avoid this by defining homomorphism functions
just on $\Pk$ instead of $\Mk$;
this, however, would complicate things further down the road,
which is why we just accept this small inconvenience.
Note the similarity between the following definition
and the operators defined in the previous section.
\begin{definition}
    \label{def:setOfHomFuns}
    Let $k \ge 1$.
    For every term $\term \in \treeClosureK$
    and every $n \in \N$ with $n \ge h(\term)$,
    we inductively define the set
    $F^\term_n$ of functions $\Mk_n \to [0,1]$
    as the smallest set such that
    \begin{enumerate}
        \item
            $\allOne_{\Mk_n} \in F_n^\identityGraph$,
        \item
            $\alpha \mapsto (\alpha_0)_{ij} \cdot f(\alpha) \in F_n^{\ijAdjacencyij \circ \term}$
            for every $f \in F_n^{\term}$,
        \item
            $\alpha \mapsto \int_{\Mk_{n}} f \, d(\alpha_{n+1})_{j}
            \in F_{n+1}^{\jNeighborj \circ \term}$
            for every $f\in F_n^{\term}$ and every $j \in [k]$,
        \item
            $f_1 \cdot f_2 \in F_n^{\term_1 \blProd \term_2}$
            for all $f_1 \in F_n^{\term_1}, f_2 \in F_n^{\term_2}$, and
        \item
            $f \circ p_{n, m} \in F_n^{\term}$
            for every $f \in F_m^{\term}$ and every $m \in \N$ with $n > m \ge h(\term)$.
    \end{enumerate}
    Moreover, for every term $\term \in \treeClosureK$, define the set $F^\term$ of
    functions $\Mk \to [0,1]$
    by
    \begin{equation*}
        F^\term \coloneqq F^\term_\infty \coloneqq \bigcup_{h(\term) \le n < \infty} F^\term_n \circ p_{\infty, n}.
    \end{equation*}
\end{definition}

With a simple induction, one can verify that
for every term $\term \in \treeClosureK$
and every $n \in \NInfty$ with $n \ge h(\term)$,
the set $F^\term_n$ is non-empty and
all functions in it are
well-defined
and continuous.
Recall that,
for a
term $\term \in \treeClosure{\adjNeiGraphs^k}$
and a $k$-WLD $\nu \in \probMeas(\Mk)$,
the operators $\TT_{\nu}$
already define the homomorphism function
$\homFunctionT{\TT_{\nu}} \in \LInftyMkNu$
by \Cref{def:homFunctionFamily}.
Note that the $k$-WLD $\nu$ satisfying $\nu(\Pk) = 1$
is the reason why we only have this single function $\homFunctionT{\TT_{\nu}}$.
Then, it should come at no surprise that this single function
is equal to all of the previously defined functions $\nu$-almost everywhere.

\begin{lemma}
    \label{le:allHomFunsEqual}
    Let $k \ge 1$ and
    $\nu \in \probMeas(\Mk)$ be a $k$-WLD.
    Let $\term \in \treeClosure{\adjNeiGraphs^k}$ be a term
    and $n \in \N$ with $n \ge h(\term)$.
    Then,
    every function in $F_n^\term \circ p_{\infty, n}$
    is equal to $\homFunctionT{\TT_{\nu}}$ $\nu$-almost everywhere.
\end{lemma}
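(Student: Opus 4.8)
The plan is to prove the statement by structural induction on $\term$, reading off the inductive definition of $F^\term_n$ in \Cref{def:setOfHomFuns} and matching each of its five rules against the corresponding clause of \Cref{def:homFunctionFamily} for $\homFunctionT{\TT_\nu}$, using the concrete descriptions of $\OperatorFromTo{\ijAdjacencyij}{\nu}$ and $\OperatorFromTo{\jNeighborj}{\nu}$ from \Cref{le:operatorAWellDefined} and \Cref{le:operatorNWellDefined}. For the base case $\term = \identityGraph$ one checks $F^{\identityGraph}_n = \Set{\allOne_{\Mk_n}}$ and $\allOne_{\Mk_n} \circ p_{\infty,n} = \allOne_{\Mk} = \homFunction{\identityGraph}{\TT_\nu}$. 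A fact used throughout is that natural projections compose, $p_{n,m} \circ p_{\infty,n} = p_{\infty,m}$ for $m \le n$; in particular the fifth (lifting) rule of \Cref{def:setOfHomFuns} is harmless, since if $f \in F^\term_m$ satisfies $f \circ p_{\infty,m} = \homFunctionT{\TT_\nu}$ $\nu$-almost everywhere, then $(f \circ p_{n,m}) \circ p_{\infty,n} = f \circ p_{\infty,m}$ does too. Hence it suffices to treat the remaining rules, which within a fixed term only refer to the function sets of its proper subterms, so the induction is genuinely structural.

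In the adjacency case $\term = \ijAdjacencyij \circ \term'$ (where $h(\term) = h(\term')$), take $f \in F^{\term'}_n$ with $f \circ p_{\infty,n} = \homFunction{\term'}{\TT_\nu}$ $\nu$-almost everywhere. Since $p_{\infty,n}$ preserves the zeroth component, the function $g(\alpha) \coloneqq (\alpha_0)_{ij} \cdot f(\alpha)$ satisfies
\[
(g \circ p_{\infty,n})(\alpha) = (\alpha_0)_{ij} \cdot \homFunction{\term'}{\TT_\nu}(\alpha) = \big(\OperatorFromTo{\ijAdjacencyij}{\nu}\homFunction{\term'}{\TT_\nu}\big)(\alpha) = \homFunction{\ijAdjacencyij \circ \term'}{\TT_\nu}(\alpha)
\]
for $\nu$-almost every $\alpha$. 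The Schur-product case $\term = \term_1 \blProd \term_2$ is similarly immediate: composition with $p_{\infty,n}$ commutes with point-wise products, and $\homFunction{\term_1 \blProd \term_2}{\TT_\nu} = \homFunction{\term_1}{\TT_\nu} \cdot \homFunction{\term_2}{\TT_\nu}$ by \Cref{def:homFunctionFamily}, so the inductive hypotheses for $\term_1$ and $\term_2$ combine directly.

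The heart of the argument is the neighbour case $\term = \jNeighborj \circ \term'$ (where $h(\term) = h(\term')+1$), which is the only place the two defining properties of a $k$-WLD are used. Given $f \in F^{\term'}_n$ with $f \circ p_{\infty,n} = \homFunction{\term'}{\TT_\nu}$ $\nu$-almost everywhere, the third rule of \Cref{def:setOfHomFuns} produces the function $g \coloneqq \big(\alpha \mapsto \int_{\Mk_n} f \, d(\alpha_{n+1})_j\big) \in F^{\jNeighborj \circ \term'}_{n+1}$, and the goal is $g \circ p_{\infty,n+1} = \homFunction{\jNeighborj \circ \term'}{\TT_\nu}$ $\nu$-almost everywhere. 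First, because $\nu$ is a $k$-WLD, every $\nu$-null set is $\mu^\alpha_j$-null for $\nu$-almost every $\alpha$ (as observed in the proof of \Cref{le:operatorNWellDefined}), so $f \circ p_{\infty,n} = \homFunction{\term'}{\TT_\nu}$ holds $\mu^\alpha_j$-almost everywhere for $\nu$-almost every $\alpha$, whence
\[
\big(\OperatorFromTo{\jNeighborj}{\nu}\homFunction{\term'}{\TT_\nu}\big)(\alpha) = \int_{\Mk} \homFunction{\term'}{\TT_\nu} \, d\mu^\alpha_j = \int_{\Mk} f \circ p_{\infty,n} \, d\mu^\alpha_j = \int_{\Mk_n} f \, d(p_{\infty,n})_*\mu^\alpha_j .
\]
Second, since $\nu(\Pk) = 1$, for $\nu$-almost every $\alpha$ we have $\alpha \in \Pk$, so $(p_{\infty,n})_*\mu^\alpha_j = (\alpha_{n+1})_j$ by the defining property of $\mu^\alpha_j$ from the Kolmogorov Consistency Theorem, and the right-hand side equals $\int_{\Mk_n} f \, d(\alpha_{n+1})_j = (g \circ p_{\infty,n+1})(\alpha)$. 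Combining this with $\homFunction{\jNeighborj \circ \term'}{\TT_\nu} = \OperatorFromTo{\jNeighborj}{\nu}\homFunction{\term'}{\TT_\nu}$ closes the case. I expect this neighbour step to be the only delicate point: one must keep the index shifts between $F^{\term'}_n$, $F^{\jNeighborj \circ \term'}_{n+1}$ and the projections $p_{\infty,n}$, $p_{\infty,n+1}$ straight, and transfer the ``$\nu$-almost everywhere'' statements first to the fibre measures $\mu^\alpha_j$ and then back; everything else is routine bookkeeping against \Cref{def:homFunctionFamily}.
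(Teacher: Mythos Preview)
The proposal is correct and follows essentially the same approach as the paper's own proof: a structural induction on $\term$ (together with~$n$) matching each of the five rules in \Cref{def:setOfHomFuns} against \Cref{def:homFunctionFamily}, with the neighbour case using $\nu(\Pk)=1$ to identify $(p_{\infty,n})_*\mu^\alpha_j$ with $(\alpha_{n+1})_j$. The only cosmetic difference is that in the neighbour step the paper applies $\OperatorFromTo{\jNeighborj}{\nu}$ directly to $f\circ p_{\infty,n}$ and then invokes the IH at the operator level, whereas you first transfer the $\nu$-a.e.\ equality to the fibre measures $\mu^\alpha_j$ before integrating---both are valid and amount to the same thing.
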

\begin{proof}
    We prove the statement by induction on $\term$ and $n$.
    For the base case, we have
    $\allOne_{\Mk_n} \circ p_{\infty, n} = \allOne_{\Mk} = \homFunction{\identityGraph}{\TT_\nu}$
    $\nu$-almost everywhere.
    For the inductive step, first consider
    $\alpha \mapsto (\alpha_0)_{ij} \cdot f(\alpha) \in F_n^{\ijAdjacencyij \circ \term}$
    for an $f \in F_n^{\term}$,
    where we have
    \begin{align*}
(\alpha_0)_{ij} \cdot f(p_{\infty,n}(\alpha))
        = (\OperatorFromTo{\ijAdjacencyij}{\nu} f \circ p_{\infty, n})(\alpha)
        \stackrel{\text{IH}}{=} (\OperatorFromTo{\ijAdjacencyij}{\nu} \homFunctionT{\TT_\nu})(\alpha)
        = \homFunction{\ijAdjacencyij \circ \term}{\nu}(\alpha)
    \end{align*}
    for $\nu$-almost every $\alpha \in \Mk$.
    Next, consider
    $\alpha \mapsto \int_{\Mk_{n}} f \, d(\alpha_{n+1})_{j}
    \in F_{n+1}^{\jNeighborj \circ \term}$
    for an $f\in F_n^{\term}$ and a $j \in [k]$.
    Since $\nu$ is a $k$-WLD, we have $\nu(\Pk) = 1$,
    which yields that
    \begin{align*}
        \int_{\Mk_{n}} f \, d(\alpha_{n+1})_{j}
        = \int_{\Mk_{n}} f \, d (p_{\infty, n})_* \mu^\alpha_j
        = \int_{\Mk} f \circ p_{\infty,n} \, d \mu^\alpha_j
        &= (\OperatorFromTo{\jNeighborj}{\nu} f \circ p_{\infty,n})(\alpha)\\
        &\stackrel{\mathclap{\text{IH}}}{=} (\OperatorFromTo{\jNeighborj}{\nu}  \homFunction{\term}{\nu})(\alpha)\\
        &= \homFunction{\jNeighborj \circ \term}{\nu}(\alpha)
    \end{align*}
    for $\nu$-almost every $\alpha \in \Mk$.
    For the product
    $f_1 \cdot f_2 \in F_n^{\term_1 \blProd \term_2}$
    of $f_1 \in F_n^{\term_1}, f_2 \in F_n^{\term_2}$, we have
    \begin{align*}
        (f_1 \cdot f_2) \circ p_{\infty, n}
        = (f_1 \circ p_{\infty, n}) \cdot (f_2 \circ p_{\infty, n})
        &\stackrel{\mathclap{\text{IH}}}{=} \homFunction{\term_1}{\TT_\nu} \cdot \homFunction{\term_2}{\TT_\nu}
        =\homFunction{\term_1 \cdot \term_2}{\TT_\nu}
    \end{align*}
    $\nu$-almost everywhere.
    Finally, consider
    $f \circ p_{n, m} \in F_n^{\term}$
    for $f \in F_m^{\term}$ and $m \in \N$ with $n > m \ge h(\term)$.
    Then,
$f \circ p_{n,m} \circ p_{\infty, n}
        = f \circ p_{\infty, m}
        = \homFunctionT{\TT_\nu}$
holds $\nu$-almost everywhere
    by the inductive hypothesis.
\end{proof}

\Cref{le:operatorsOnMkPreserveHomomorphisms}
yields the following corollary to the previous lemma.

\begin{corollary}
    \label{co:homDensitykWLDIntegral}
    Let $k \ge 1$ and
    $W \colon X \times X \to [0,1]$ be a graphon.
    For every term $\term \in \treeClosure{\adjNeiGraphs^k}$
    and every function $f \in F^\term$,
    we have
    \begin{equation*}
        t(\graphOfTerm, W) = \int_{\Mk} f \nu^k_W.
    \end{equation*}
\end{corollary}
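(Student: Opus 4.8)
The plan is to assemble the statement from three ingredients already in hand: that $\nu^k_W$ is a $k$-WLD, that on $(\Mk,\Borel(\Mk),\nu^k_W)$ every function in $F^\term$ collapses to the single homomorphism function $\homFunctionT{\TT_{\nu^k_W}}$ almost everywhere, and that the homomorphism density of $\term$ in the operator family $\TT_{\nu^k_W}$ already equals the graphon density $t(\graphOfTerm,W)$. So the proof is essentially a chase through \Cref{le:nuKWIskWLD}, \Cref{le:allHomFunsEqual}, and \Cref{le:operatorsOnMkPreserveHomomorphisms}.

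Concretely, I would fix a term $\term \in \treeClosureK$ and a function $f \in F^\term$. By the definition $F^\term = \bigcup_{h(\term) \le n < \infty} F^\term_n \circ p_{\infty,n}$, there is some $n \ge h(\term)$ with $f \in F^\term_n \circ p_{\infty,n}$. Since $\nu^k_W$ is a $k$-WLD by \Cref{le:nuKWIskWLD}, \Cref{le:allHomFunsEqual} applies and shows $f = \homFunctionT{\TT_{\nu^k_W}}$ $\nu^k_W$-almost everywhere. Hence $\int_{\Mk} f \, d\nu^k_W = \int_{\Mk} \homFunctionT{\TT_{\nu^k_W}} \, d\nu^k_W = \langle \allOne_{\Mk}, \homFunctionT{\TT_{\nu^k_W}} \rangle = t(\term, \TT_{\nu^k_W})$, where the middle step is just the inner product on $L^2(\Mk, \nu^k_W)$ and the last is \Cref{def:homFunctionFamily}. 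Finally, the sub-$\sigma$-algebra $\kSubAlg_W \in \subAlgsk$ is $W$-invariant by \Cref{le:algebraSequence}, so \Cref{le:operatorsOnMkPreserveHomomorphisms} (applied with $\subAlg \coloneqq \kSubAlg_W$) gives $t(\term, \TT_{\nu^k_W}) = t(\graphOfTerm, W)$. Combining the two displays yields $t(\graphOfTerm, W) = \int_{\Mk} f \, d\nu^k_W$, which is the claim.

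There is no genuine obstacle here: the substantive work was already carried out in \Cref{le:allHomFunsEqual} and \Cref{le:operatorsOnMkPreserveHomomorphisms}, and this corollary is just their conjunction. The only point that needs a moment of care is the bookkeeping translation between the abstract expression $\langle \allOne_X, \homFunctionTOp \rangle$ from \Cref{def:homFunctionFamily} and the concrete integral $\int_{\Mk} f \, d\nu^k_W$ — i.e.\ remembering that when the family in \Cref{def:homFunctionFamily} is $\TT_{\nu^k_W}$, the underlying space $X$ is $(\Mk, \Borel(\Mk), \nu^k_W)$ and $\allOne_X = \allOne_{\Mk}$, so that the inner product is literally integration against $\nu^k_W$.
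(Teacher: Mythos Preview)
Your proof is correct and follows exactly the route the paper intends: the corollary is stated there simply as the conjunction of \Cref{le:allHomFunsEqual} (every $f\in F^\term$ equals $\homFunctionT{\TT_{\nu^k_W}}$ $\nu^k_W$-a.e.) and \Cref{le:operatorsOnMkPreserveHomomorphisms} (which gives $t(\term,\TT_{\nu^k_W})=t(\graphOfTerm,W)$), and you have spelled out precisely these steps along with the bookkeeping that $t(\term,\TT_{\nu^k_W})=\langle \allOne_{\Mk},\homFunctionT{\TT_{\nu^k_W}}\rangle=\int_{\Mk} f\,d\nu^k_W$.
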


For every $n \in \NInfty$, define
$\Tk_n \coloneqq \bigcup_{{\term \in \treeClosureK,\\ h(\term) \le n}} F_n^\term$
and abbreviate $\Tk \coloneqq \Tk_\infty$.
By induction, we can use the Stone-Weierstrass Theorem and the Portmanteau
Theorem to show that the Stone-Weierstrass Theorem is actually
applicable to all of these sets and, in particular,
to $\Tk$, cf.\ \cite[Proposition $7.5$]{GrebikRocha2021}.

\begin{lemma}
    \label{le:tkSeparatesPoints}
    Let $k \ge 1$.
    For every $n \in \NInfty$,
    the set $\Tk_n$ is closed under multiplication, contains $\allOne_{\Mk_n}$,
    and separates points of $\Mk_n$.
\end{lemma}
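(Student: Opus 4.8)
The assertions that $\Tk_n$ contains $\allOne_{\Mk_n}$ and is closed under multiplication are pure bookkeeping with \Cref{def:setOfHomFuns}: $\allOne_{\Mk_n} \in F_n^{\identityGraph} \subseteq \Tk_n$ since $h(\identityGraph) = 0 \le n$, and multiplicativity follows from the product clause together with $h(\term_1 \blProd \term_2) = \max\{h(\term_1), h(\term_2)\}$ — for $n = \infty$ one first lifts both factors to a common finite stage via the pull-back clause. The substance is the separation of points, which I would prove by induction on $n$, following \cite[Proposition $7.5$]{GrebikRocha2021} and bringing in the Stone--Weierstrass and Portmanteau theorems; the case $n = \infty$ is then handled separately at the end.

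For the base case $n = 0$ I would use that $\Mk_0 = [0,1]^{\binom{[k]}{2}}$: for each $ij \in \binom{[k]}{2}$ the coordinate function $\alpha \mapsto (\alpha_0)_{ij}$ equals $(\alpha_0)_{ij} \cdot \allOne_{\Mk_0}(\alpha)$ and so lies in $F_0^{\ijAdjacencyij \circ \identityGraph} \subseteq \Tk_0$ by the adjacency clause (with $h(\ijAdjacencyij \circ \identityGraph) = 0$), and these functions plainly separate points of the cube. For the inductive step I would write a point of $\Mk_{n+1} = \Mk_n \times (\probMeas(\Mk_n))^{k}$ as $(\alpha', (\mu_1, \dots, \mu_k))$ and take two distinct such points. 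If their $\Mk_n$-components differ, the induction hypothesis supplies $f \in \Tk_n$ separating them, and $f \circ p_{n+1,n}$ lies in $\Tk_{n+1}$ by the pull-back clause and separates the two original points.

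Otherwise some measure components differ, say $\mu_j \ne \lambda_j$. Here I would invoke that $\Tk_n$ is a family of continuous functions on the compact metrizable space $\Mk_n$ (continuity of all functions in $F_n^{\term}$ is recorded right after \Cref{def:setOfHomFuns}) that, by the induction hypothesis, is closed under multiplication, contains $\allOne_{\Mk_n}$, and separates points; hence its linear span is a unital, point-separating subalgebra of $C(\Mk_n)$ and so, by Stone--Weierstrass \cite[Theorem $2.4.11$]{Dudley2002}, is uniformly dense in $C(\Mk_n)$. Since $\mu_j \ne \lambda_j$ as Borel probability measures, the Portmanteau theorem \cite[Theorem $17.20$]{Kechris1995} — equivalently, the definition of the topology on $\probMeas(\Mk_n)$ — yields $g \in C(\Mk_n)$ with $\int g \, d\mu_j \ne \int g \, d\lambda_j$, and approximating $g$ uniformly by a linear combination of elements of $\Tk_n$ produces a single $f \in F_n^{\term} \subseteq \Tk_n$ with $\int f \, d\mu_j \ne \int f \, d\lambda_j$. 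The neighbor clause then produces the function $\alpha \mapsto \int_{\Mk_n} f \, d(\alpha_{n+1})_j \in F_{n+1}^{\jNeighborj \circ \term}$, of height $h(\term) + 1 \le n+1$ and hence in $\Tk_{n+1}$, and it separates the two points; so $\Tk_{n+1}$ separates points of $\Mk_{n+1}$.

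Finally, for $n = \infty$, any distinct $\alpha \ne \beta$ in $\Mk$ already differ after some finite projection $p_{\infty, n}$, so a function of $\Tk_n$ separating $p_{\infty,n}(\alpha)$ from $p_{\infty,n}(\beta)$ pulls back along $p_{\infty,n}$ to an element of $\Tk$ separating $\alpha$ and $\beta$; combined with the bookkeeping above, this completes the argument. The main obstacle, I expect, is the measure-separation step: one must remember that $\Tk_n$ is only closed under multiplication, not an algebra, so the linear span has to be formed before applying Stone--Weierstrass, and the term heights have to be tracked carefully so that the functions produced by the neighbor and pull-back clauses genuinely land in $\Tk_{n+1}$ (resp.\ $\Tk$) and not just in some larger function set.
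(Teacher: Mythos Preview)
Your proposal is correct and follows essentially the same approach as the paper's proof: bookkeeping for the unit and multiplication, induction on $n$ for separation using coordinate functions at $n=0$, the pull-back clause when the $\Mk_n$-components differ, and Stone--Weierstrass plus Portmanteau when only the measure components differ, with the $n=\infty$ case handled by projecting to a finite stage. Your exposition is arguably a bit more careful than the paper's (you explicitly pass to the linear span before invoking Stone--Weierstrass, and you correctly index the differing measure component by $j\in[k]$ rather than by a pair $ij$).
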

\begin{proof}
    First, consider the case that $n \in \N$.
    We trivially have $\allOne_{\Mk_n} \in F_n^\identityGraph \subseteq \Tk_n$
    by definition.
    Moreover, for $f_1, f_2 \in \Tk_n$, there are terms $\term_1, \term_2 \in \treeClosureK$
    with $h(\term_1) \le n$ and $h(\term_2) \le n$.
    such that $f_1 \in F_n^{\term_1}$ and $f_2 \in F_n^{\term_2}$.
    Then, $f_1 \cdot f_2 \in F_n^{\term_1 \cdot \term_2} \subseteq \Tk_n$
    as $h(\term_1 \cdot \term_2) = \max \Set{h(\term_1), h(\term_2)} \le n$.
    We prove that $\Tk_n$ separates points of $\Mk_n$ by induction on $n$.
    For the base case $n = 0$,
    let $\beta \neq \gamma \in \Mk_0$.
    Then, there is an $ij \in \binom{[k]}{2}$ such that $\beta_{ij} \neq \gamma_{ij}$,
    and the function $\alpha \mapsto (\alpha_0)_{ij} \in F_0^{\ijAdjacencyij \circ \identityGraph}$ separates $\beta$ and $\gamma$.

    For the inductive step, assume that $\Tk_n$ separates points of $\Mk_n$.
    Let $\beta \neq \gamma \in \Mk_{n+1}$.
    If there is an $0 \le m \le n$ such that $\beta_m \neq \gamma_m$,
    then $p_{n+1, n}(\beta) \neq p_{n+1, n}(\gamma) \in \Mk_n$.
    Hence, by the inductive hypothesis,
    there is an $f \in \Tk_n$ such that $f(p_{n+1, n}(\beta)) \neq f(p_{n+1,n}(\gamma))$.
    By definition, $f \in F_n^\term$ for some term $\term \in \treeClosureK$ with $h(\term) \le n$.
    Therefore, $f \circ p_{n+1, n} \in F_{n+1}^\term \subseteq \Tk_{n+1}$
    is a function that separates $\beta$ and $\gamma$.

    For the remaining case, assume that $\beta_{n+1} \neq \gamma_{n+1}$.
    Then, there is a $j \in {[k]}$ such that
    $(\beta_{n+1})_{j} \neq (\gamma_{n+1})_{j}$.
    By the inductive hypothesis
    and the Stone-Weierstrass Theorem \cite[Theorem $2.4.11$]{Dudley2002},
    the linear hull of $\Tk_n$ is uniformly dense in $C(\Mk_n)$.
    Since $\Mk_n$ is Hausdorff, it then follows from the
    Portmanteau Theorem \cite[Theorem $17.20$]{Kechris1995}
    that there is an $f \in \Tk_n$ such that
    $\int_{\Mk_n} f \, d (\beta_{n+1})_{j}
    \neq \int_{\Mk_n} f \, d (\gamma_{n+1})_{j}$.
    By definition, $f \in F_n^\term$ for some term $\term \in \treeClosureK$ with $h(\term) \le n$.
    Then, $\alpha \mapsto \int_{\Mk_{n}} f \, d(\alpha_{n+1})_{j}
    \in F_{n+1}^{\jNeighborj \circ \term} \subseteq \Tk_{n+1}$
    is a function that separates $\beta$ and $\gamma$.

    Having proven the statement for every $n \in \N$, one can also easily see
    that it holds in the case $n = \infty$ from the definitions, cf.\ also
    the first case of the induction.
\end{proof}

A final application of the Stone-Weierstrass Theorem
and the Portmanteau Theorem yields that,
for a sequence of graphons, convergence of their $k$-WLDs
is equivalent to convergence of treewidth $k-1$
multigraph homomorphism densities.
\begin{lemma}
    \label{le:kWLDConvergence}
    Let $k \ge 1$.
    Let $(W_n)_{n}$ and $W \colon X \times X \to [0,1]$
    be a sequence of graphons and a graphon, respectively.
    Then, $\nu^k_{W_n} \rightarrow \nu^k_W$ if and only if
    $t(F, W_n) \rightarrow t(F, W)$
    for every multigraph~$F$ of treewidth
    at most $k - 1$.
\end{lemma}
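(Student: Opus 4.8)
The plan is to transfer convergence in $\probMeas(\Mk)$ to convergence of homomorphism densities using the Stone--Weierstrass and Portmanteau theorems, exactly along the lines of Grebík and Rocha. Two ingredients are already in place: \Cref{co:homDensitykWLDIntegral}, which says that for every $\term \in \treeClosureK$ and every $f \in F^\term$ we have $t(\graphOfTerm, W) = \int_{\Mk} f \, d\nu^k_W$, and \Cref{le:graphsOfTerms}, which says that the underlying graphs of the $\graphOfTerm$ for $\term \in \treeClosureK$ are, up to isolated vertices, precisely the multigraphs of treewidth at most $k-1$. I would also recall that adding isolated vertices does not change a homomorphism density in a graphon since $\mu$ is a probability measure, so $t(\graphOfTerm, W) = t(F, W)$ whenever $F$ is the underlying multigraph of $\graphOfTerm$ (with or without isolated vertices); that $\Mk$ is compact metrizable, hence so is $\probMeas(\Mk)$; and that convergence in $\probMeas(\Mk)$ means exactly convergence of $\int f \, d(\cdot)$ for all $f \in C(\Mk)$.

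For the forward direction, assume $\nu^k_{W_n} \to \nu^k_W$ and let $F$ be a multigraph of treewidth at most $k-1$. By \Cref{le:graphsOfTerms} there is a term $\term \in \treeClosureK$ whose underlying graph is $F$ together with some isolated vertices, and the set $F^\term$ is non-empty (as noted after \Cref{def:setOfHomFuns}), so pick $f \in F^\term \subseteq \Tk \subseteq C(\Mk)$. Then $t(F, W_n) = t(\graphOfTerm, W_n) = \int_{\Mk} f \, d\nu^k_{W_n} \to \int_{\Mk} f \, d\nu^k_W = t(\graphOfTerm, W) = t(F, W)$, using \Cref{co:homDensitykWLDIntegral} and the definition of the topology on $\probMeas(\Mk)$.

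For the converse, assume $t(F, W_n) \to t(F, W)$ for every multigraph $F$ of treewidth at most $k-1$. Since $\probMeas(\Mk)$ is compact metrizable, hence sequentially compact, it suffices to show that every subsequential limit of $(\nu^k_{W_n})_n$ equals $\nu^k_W$. So let $\nu \in \probMeas(\Mk)$ with $\nu^k_{W_{n_i}} \to \nu$. For any $f \in \Tk$ there is a term $\term \in \treeClosureK$ with $f \in F^\term$; writing $F$ for the underlying multigraph of $\graphOfTerm$, which has treewidth at most $k-1$ by \Cref{le:graphsOfTerms}, \Cref{co:homDensitykWLDIntegral} gives $\int_{\Mk} f \, d\nu^k_{W_{n_i}} = t(F, W_{n_i}) \to t(F, W) = \int_{\Mk} f \, d\nu^k_W$, while $\int_{\Mk} f \, d\nu^k_{W_{n_i}} \to \int_{\Mk} f \, d\nu$ since $f \in C(\Mk)$; hence $\int_{\Mk} f \, d\nu = \int_{\Mk} f \, d\nu^k_W$ for every $f \in \Tk$. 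By \Cref{le:tkSeparatesPoints}, $\Tk$ is closed under multiplication, contains $\allOne_{\Mk}$, and separates points of $\Mk$, so by the Stone--Weierstrass theorem the linear hull of $\Tk$ is uniformly dense in $C(\Mk)$; therefore $\int_{\Mk} f \, d\nu = \int_{\Mk} f \, d\nu^k_W$ for every $f \in C(\Mk)$, which forces $\nu = \nu^k_W$. As every subsequential limit equals $\nu^k_W$, we conclude $\nu^k_{W_n} \to \nu^k_W$.

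The step I expect to be most delicate is the bookkeeping around passing between $\Tk$ and $C(\Mk)$: one must genuinely check that the Stone--Weierstrass hypotheses for $\Tk$ are those supplied by \Cref{le:tkSeparatesPoints}, and that a Borel probability measure on the compact metrizable space $\Mk$ is determined by its integrals against continuous functions, so that equality of integrals on a dense subalgebra of $C(\Mk)$ identifies the two measures. Everything else — identifying terms with bounded-treewidth multigraphs, discarding isolated vertices, and the compactness argument that upgrades "all subsequential limits agree" to convergence — is routine.
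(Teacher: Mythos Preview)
Your proof is correct and uses exactly the same ingredients as the paper (\Cref{le:tkSeparatesPoints} plus Stone--Weierstrass, \Cref{co:homDensitykWLDIntegral}, and \Cref{le:graphsOfTerms}); the only cosmetic difference is that for the converse the paper argues directly via the Portmanteau theorem on a uniformly dense subset of $C(\Mk)$, whereas you route through compactness of $\probMeas(\Mk)$ and uniqueness of subsequential limits. Both packagings are standard and equivalent here.
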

\begin{proof}
    Note that
    the linear hull of $\Tk$
    is uniformly dense in $C(\Mk)$ by
    \Cref{le:tkSeparatesPoints}
    and the Stone-Weierstrass Theorem \cite[Theorem $2.4.11$]{Dudley2002}.
    Hence, we have
    \begin{align*}
        \nu^k_{W_n} \rightarrow \nu^k_W
        &\iff \int_{\Mk} f \, d \nu^k_{W_n} \rightarrow \int_{\Mk} f \, d \nu^k_W \text{ for every $f \in C(\Mk)$} \tag{Portmanteau Theorem}\\&\iff \int_{\Mk} f \, d \nu^k_{W_n} \rightarrow \int_{\Mk} f \, d \nu^k_W \text{ for every $f$ in the linear hull of $\Tk$}\\
        &\iff \int_{\Mk} f \, d \nu^k_{W_n} \rightarrow \int_{\Mk} f \, d \nu^k_W \text{ for every $f \in \Tk$} \tag{Linearity of the integral}\\
        &\iff t(\graphOfTerm, W_n) \rightarrow t(\graphOfTerm, W) \text{ for every $\term \in \treeClosureK$} \tag{\Cref{co:homDensitykWLDIntegral}}\\
        &\iff t(F, W_n) \rightarrow t(F, W) \text{ for every multigraph $F$ of tw.\ $\le k - 1$} \tag{\Cref{le:graphsOfTerms}}.
    \end{align*}
\end{proof}

Since $\Mk$ is Hausdorff, this in particular means
that the $k$-WLDs of two graphons
are equal if and only if
their homomorphism densities are.
\begin{corollary}
    \label{le:distinguishingTerm}
    Let $k \ge 1$
    and $U, W \colon X \times X \to [0,1]$ be graphons,
    Then, $\nu^k_U = \nu^k_W$ if and only if
    $t(F, U) = t(F, W)$
    for every multigraph~$F$ of treewidth at most $k-1$.
\end{corollary}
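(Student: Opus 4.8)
The plan is to obtain the corollary as an immediate specialization of \Cref{le:kWLDConvergence} to a \emph{constant} sequence of graphons. First I would apply \Cref{le:kWLDConvergence} with the sequence $W_n := U$ for every $n$ and with the limit graphon taken to be $W$. The lemma then asserts that $\nu^k_U \to \nu^k_W$ in $\probMeas(\Mk)$ if and only if $t(F, U) \to t(F, W)$ for every multigraph $F$ of treewidth at most $k-1$.

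Next I would read off what convergence of a constant sequence means on each side of this biconditional. On the homomorphism-density side, $(t(F, U))_n$ is a constant sequence of reals, so it converges to $t(F, W)$ exactly when $t(F, U) = t(F, W)$. On the $k$-WLD side, $(\nu^k_U)_n$ is a constant sequence in $\probMeas(\Mk)$, so it converges to $\nu^k_W$ exactly when $\nu^k_U = \nu^k_W$, provided limits in $\probMeas(\Mk)$ are unique. Uniqueness of limits holds because $\probMeas(\Mk)$ is Hausdorff: $\Mk$ is compact metrizable (see \Cref{subsec:degreeMeasures}), hence $\probMeas(\Mk)$ is itself metrizable by \cite[Theorem 17.22]{Kechris1995} and therefore Hausdorff; equivalently, the maps $\mu \mapsto \int f \dmu$ for $f \in C(\Mk)$ that generate its topology already separate points of $\probMeas(\Mk)$. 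Combining these two equivalences through the biconditional of \Cref{le:kWLDConvergence} gives the statement of the corollary.

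I do not expect a genuine obstacle here; this is precisely the observation flagged in the sentence preceding the corollary (``Since $\Mk$ is Hausdorff, this in particular means\dots''). The only point that warrants explicit mention is the step from ``the constant sequence $\nu^k_U$ converges to $\nu^k_W$'' to ``$\nu^k_U = \nu^k_W$'', i.e.\ invoking the Hausdorff property of $\probMeas(\Mk)$; everything else is the routine bookkeeping of instantiating \Cref{le:kWLDConvergence} and interpreting constant-sequence convergence in $\R$ and in $\probMeas(\Mk)$.
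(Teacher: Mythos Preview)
Your proposal is correct and matches the paper's intended argument exactly: the paper does not spell out a proof for this corollary but simply notes in the preceding sentence that it follows from \Cref{le:kWLDConvergence} using that the space of measures is Hausdorff. You have expanded precisely this hint, and your care in passing from ``$\Mk$ is Hausdorff'' to ``$\probMeas(\Mk)$ is Hausdorff (being metrizable)'' is in fact slightly more precise than the paper's own phrasing.
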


\subsection{Proof of Theorem~\ref{th:WLForGraphons}}
\label{sec:mainProof}

We are finally ready to prove \Cref{th:WLForGraphons}.
The majority of the work is already done and, at this point,
it is just about putting all the previous results together.
For easier readability, we restate the theorem here.
\maintheorem*
\begin{proof}
    (\ref{th:WLForGraphons:homomorphisms}) $\implies$ (\ref{th:WLForGraphons:DIDM}):
    This is just \Cref{le:distinguishingTerm}.

    (\ref{th:WLForGraphons:DIDM}) $\implies$ (\ref{th:WLForGraphons:minInvSubalgebra}):
    Let $R \coloneqq R^k_U \circ (R^k_W)^*$.
    By the assumption, $R$ is well defined,
    and by \Cref{le:ikWMarkovIso},
    it is a Markov isomorphism
    as the composition of two Markov isomorphisms.
    By \Cref{co:DIDMOperatorFamily}, we have
    \begin{align*}
        \naturalKFamilyOperatorsShortU/\kSubAlg_U\circ R= \naturalKFamilyOperatorsShortU/\kSubAlg_U\circ R^k_U \circ (R^k_W)^*= R^k_U \circ \TT^k_{\nu^k_U} \circ (R^k_W)^*
        &= R^k_U \circ \TT^k_{\nu^k_W} \circ (R^k_W)^*\\
        &= R^k_U \circ (R^k_W)^* \circ \naturalKFamilyOperatorsShortW/\kSubAlg_W\\
        &= R \circ \naturalKFamilyOperatorsShortW/\kSubAlg_W.
    \end{align*}
    Similarly, \Cref{le:DIDMOperatorP}
    yields that $R$ is permutation invariant.

    (\ref{th:WLForGraphons:minInvSubalgebra}) $\implies$ (\ref{th:WLForGraphons:markovOperator}):
    Set $S \coloneqq I_{\kSubAlg_U} \circ R \circ S_{\kSubAlg_W}$,
    which is a Markov operator as the composition of Markov operators.
    By \Cref{le:algebraSequence} (\ref{le:algebraSequence:TInvariant}), $\kSubAlg_U$ and $\kSubAlg_W$
    are $\naturalKFamilyOperatorsShortU$-
    and $\naturalKFamilyOperatorsShortW$-invariant, respectively.
    Hence,
    \begin{align*}
        \naturalKFamilyOperatorsShortU \circ S= \naturalKFamilyOperatorsShortU \circ I_{\kSubAlg_U} \circ R \circ S_{\kSubAlg_W}= I_{\kSubAlg_U} \circ \naturalKFamilyOperatorsShortU/\kSubAlg_U \circ R \circ S_{\kSubAlg_W}&= I_{\kSubAlg_U} \circ R \circ \naturalKFamilyOperatorsShortW/\kSubAlg_W \circ S_{\kSubAlg_W}\\
        &= I_{\kSubAlg_U} \circ R \circ S_{\kSubAlg_W} \circ \naturalKFamilyOperatorsShortW\\
        &= S \circ \naturalKFamilyOperatorsShortW.
    \end{align*}
    by \Cref{le:quoOperator} (\ref{le:quoOperator:invariantI}) and (\ref{le:quoOperator:invariant}).
    In a similar fashion,
    \Cref{le:algebraSequence} (\ref{le:algebraSequence:permutation}) implies that,
    if $R$ is permutation invariant, then so is $S$.

    (\ref{th:WLForGraphons:markovOperator}) $\implies$ (\ref{th:WLForGraphons:markovIsomorphism}):
    Follows immediately from \Cref{th:markovOperatorToIsomorphism}.

    (\ref{th:WLForGraphons:markovIsomorphism}) $\implies$ (\ref{th:WLForGraphons:homomorphisms}):
    We have
$t(\graphOfTerm, U)= t(\term, \naturalKFamilyOperatorsShortU \!/ \subAlgC)= t(\term, \naturalKFamilyOperatorsShortW \!/ \subAlgD)= t(\graphOfTerm, W)$,
for every $\term \in \treeClosureK$
    by
    \Cref{le:invariantQuotientPreservesHomomorphisms} and
    \Cref{le:markovEmbeddingPreservesHomomorphisms}.
Then, \Cref{le:graphsOfTerms} yields the claim.
\end{proof}

\subsection{Measure Hierarchies}
\label{sec:measureHierarchies}

\Cref{th:kWLGraphons} implies that
the sequence $\nu^1_W, \nu^2_W, \dots$
of $k$-WLDs of a graphon $W$
characterizes $W$ up to weak isomorphism since
every graph has some finite treewidth.
Let us explore this a bit more in depth
by combining all these $k$-WLDs into a single measure.

For $1 \le \ell \le k < \infty$, we define the projection $p^{k, \ell}$
from $\Mk$ to $\MOf{\ell}$
as follows:
First, inductively define the function $p^{k, \ell} \colon P^k_n \to P^\ell_n$
by defining $p^{k, \ell} \colon P^k_0 \to P^\ell_0$
by
$p^{k, \ell}((w_{ij})_{ij \in \binom{[k]}{2}}) \coloneqq
    (w_{ij})_{ij \in \binom{[\ell]}{2}}$
and,
for the inductive step,
by defining $p^{k, \ell} \colon P^k_{n+1} \to P^\ell_{n+1}$
by
$p^{k, \ell}((\nu_j)_{j \in [k]}) \coloneqq
    ({p^{k,\ell}}_*\nu_j)_{j \in [\ell]}$.
This is well-defined as every $p^{k, \ell}$ is continuous.
Second,
the function $p^{k, \ell} \colon P^k_n \to P^\ell_n$
directly extends to a function
$p^{k, \ell} \colon \Mk_n \to \MOf{\ell}_n$
by applying it component wise.
Finally, by then applying this function component-wise,
$p^{k, \ell}$ extends to a continuous function $p^{k, \ell} \colon \Mk \to \MOf{\ell}$.

Consider the \textit{inverse limit} of the spaces $\Mk$
and the projections $p^{k+1,k}$ for $k \ge 1$
defined by
\begin{equation*}
    \MInfty \coloneqq \Bigl\{(\alpha^k)_{k \ge 1} \in \prod_{k \ge 1} \Mk \,\Big|\,p^{k+1,k}(\alpha^{k+1}) = \alpha^{k} \text{ for every } k \ge 1\Bigr\}
\end{equation*}
with the $\sigma$-algebra $\Borel(\MInfty)$ generated by
the projections $p^{\infty,k} \colon \MInfty \to \Mk, \alpha \mapsto \alpha^k$ for every $k \ge 1$.
Note that this notation is justified as $\MInfty$ is again a standard Borel space \cite[Exercise $17.16$]{Kechris1995}.
As a product of a sequence of metrizable compact spaces,
$\prod_{k \ge 1} \Mk$ is metrizable \cite[Proposition $2.4.4$]{Dudley2002}
and also compact by Tychonoff's Theorem \cite[Theorem $2.2.8$]{Dudley2002}.
Since $p^{k+1,k}$ is continuous, this implies that $\MInfty$ is closed and,
hence, a metrizable compact space.
Let
\begin{equation*}
    \OWLOf{} \coloneqq \Bigl\{(\nu^k)_{k \ge 1} \in \prod_{k \ge 1} \WLk \,\Big|\, \nu^k = {p^{k+1,k}}_* \nu^{k+1} \text{ for every } k \ge 1\Bigr\},
\end{equation*}
where $\OWLk$ denotes the set of all $k$-WLDs.
Then, by the Kolmogorov Consistency Theorem \cite[Exercise $17.16$]{Kechris1995},
for every $\nu \in \OWLOf{}$,
there is a unique $\nu^\infty \in \probMeas(\MInfty)$ such that
${p^{\infty, k}}_* \nu^\infty = \nu^k$
for every $k \ge 1$.
\begin{lemma}
    \label{le:inftyWLDContinuity}
    Let $(\nu_n)_n$ be a sequence with $\nu_n \in \OWLOf{}$ and $\nu \in \OWLOf{}$.
    Then, $\nu_n^\infty \rightarrow \nu^\infty$
    if and only if $\nu_n^k \rightarrow \nu^k$ for every $k \ge 1$.
\end{lemma}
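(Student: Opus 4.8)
The plan is to exploit that $\MInfty$ is a closed subspace of the compact metrizable space $\prod_{k \ge 1} \Mk$, so that $\probMeas(\MInfty)$ is again compact metrizable \cite[Theorem $17.22$]{Kechris1995}, and to transport everything through the projections $p^{\infty, k} \colon \MInfty \to \Mk$. The one elementary fact I would isolate first is that a continuous map $g \colon Y \to Z$ between separable metrizable spaces induces a continuous map $\probMeas(Y) \to \probMeas(Z)$, $\rho \mapsto g_* \rho$: for $f \in C_b(Z)$ we have $\int f \, d(g_* \rho) = \int f \circ g \, d\rho$ with $f \circ g \in C_b(Y)$, so convergence is preserved directly by the definition of the topology on spaces of measures.

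For the direction ``$\nu_n^\infty \to \nu^\infty \implies \nu_n^k \to \nu^k$'', I would just apply this fact with $g = p^{\infty, k}$: since $(p^{\infty, k})_* \nu_m^\infty = \nu_m^k$ and $(p^{\infty, k})_* \nu^\infty = \nu^k$ for all $m$ by the defining property of the inverse-limit measures coming from the Kolmogorov Consistency Theorem, $\nu_n^\infty \to \nu^\infty$ forces $\nu_n^k \to \nu^k$ for every $k \ge 1$.

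For the converse I would argue by subsequences. Since $\probMeas(\MInfty)$ is compact metrizable, it is sequentially compact, so it suffices to show that every convergent subsequence of $(\nu_n^\infty)_n$ has limit $\nu^\infty$. Suppose $\nu_{n_i}^\infty \to \rho$ for some $\rho \in \probMeas(\MInfty)$. By the fact above, $(p^{\infty, k})_* \nu_{n_i}^\infty \to (p^{\infty, k})_* \rho$, while $(p^{\infty, k})_* \nu_{n_i}^\infty = \nu_{n_i}^k \to \nu^k$ by hypothesis; as $\Mk$ is metrizable, hence Hausdorff, this gives $(p^{\infty, k})_* \rho = \nu^k = (p^{\infty, k})_* \nu^\infty$ for every $k \ge 1$. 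Because $\Borel(\MInfty)$ is generated by the algebra $\bigcup_{k \ge 1} (p^{\infty, k})^{-1}(\Borel(\Mk))$, a $\pi$-$\lambda$ argument — equivalently, the uniqueness part of the Kolmogorov Consistency Theorem \cite[Exercise $17.16$]{Kechris1995} — yields $\rho = \nu^\infty$, so the whole sequence converges to $\nu^\infty$.

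The only slightly delicate point, and the step I expect to take the most care to phrase, is precisely this passage from ``$\rho$ and $\nu^\infty$ have the same push-forwards under all $p^{\infty, k}$'' to ``$\rho = \nu^\infty$''; this is exactly the uniqueness assertion underlying the construction of $\nu^\infty$ invoked just before the lemma, so I would cite it rather than reprove it. A fully self-contained alternative that avoids the subsequence argument altogether is to note that $\Set{g \circ p^{\infty, k} \mid k \ge 1,\ g \in C(\Mk)}$ is a point-separating subalgebra of $C(\MInfty)$ containing the constants — compatibility for indices $k_1 \le k_2$ comes from $g_1 \circ p^{\infty, k_1} = (g_1 \circ p^{k_2, k_1}) \circ p^{\infty, k_2}$ — hence uniformly dense by the Stone-Weierstrass theorem \cite[Theorem $2.4.11$]{Dudley2002}; the Portmanteau theorem \cite[Theorem $17.20$]{Kechris1995} then reduces both $\nu_n^\infty \to \nu^\infty$ and $\nu_n^k \to \nu^k$ to the statement ``$\int g \, d\nu_n^k \to \int g \, d\nu^k$ for all $k \ge 1$ and all $g \in C(\Mk)$'', giving the equivalence immediately.
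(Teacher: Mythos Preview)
Your proposal is correct. In fact, the ``fully self-contained alternative'' you sketch at the end is precisely the paper's proof: the paper applies Stone--Weierstrass to $\bigcup_{k \ge 1} C(\Mk) \circ p^{\infty, k}$ to obtain uniform density in $C(\MInfty)$ and then runs a chain of equivalences via the Portmanteau theorem and the change-of-variables identity $\int f \circ p^{\infty,k}\,d\nu^\infty = \int f\,d\nu^k$.

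Your primary route---forward direction by continuity of push-forward, converse by sequential compactness of $\probMeas(\MInfty)$ plus Kolmogorov uniqueness---is a genuinely different but equally valid argument. It trades the Stone--Weierstrass step for a compactness-and-uniqueness step. The paper's approach is a little shorter and handles both implications symmetrically in one chain of iffs; your subsequence argument makes the forward direction essentially a one-liner and isolates all the content of the converse in the identification of the limit measure, which is conceptually clean but requires invoking compactness of $\probMeas(\MInfty)$ as an extra ingredient.
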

\begin{proof}
    The set $\bigcup_{1 \le k < \infty} C(\Mk) \circ p^{\infty, k}$
    is uniformly dense in $C(\MInfty)$ by
    the Stone-Weierstrass Theorem \cite[Theorem $2.4.11$]{Dudley2002},
    cf.\ also the proof of \Cref{le:continuity}.
    Hence, we have
    \begin{align*}
        \nu^\infty_n \rightarrow \nu^\infty
        &\iff \int_{\MInfty} f \, d \nu^\infty_n \rightarrow \int_{\MInfty} f \, d \nu^\infty \text{ for every $f \in C(\MInfty)$} \tag{Portmanteau Theorem}\\&\iff \int_{\MInfty} f \circ p^{\infty, k} \, d \nu^\infty_n \rightarrow \int_{\MInfty} f \circ p^{\infty, k} \, d \nu^\infty \text{ for all $k \ge 1$, $f \in C(\Mk)$}\\
        &\iff \int_{\MInfty} f \, d {p^{\infty, k}}_* \nu^\infty_n \rightarrow \int_{\MInfty} f \, d {p^{\infty, k}}_* \nu^\infty \text{ for all $k \ge 1$, $f \in C(\Mk)$}\\
        &\iff \int_{\MInfty} f \, d \nu^k_n\rightarrow \int_{\MInfty} f \, d \nu^k \text{ for all $k \ge 1$, $f \in C(\Mk)$}\\
        &\iff \nu^k_n \rightarrow \nu^k \text{ for every $k \ge 1$}. \tag{Portmanteau Theorem}\end{align*}
\end{proof}

One can show that, for every graphon $W \colon X \times X \to [0,1]$,
the sequence $(\nu^k_W)_{k \ge 1}$ of its $k$-WLDs
is in $\OWLOf{}$ and, hence,
yields a measure $\nu^\infty_W \in \probMeas(\MInfty)$.
Together, \Cref{le:kWLDConvergence} and
\Cref{le:inftyWLDContinuity} imply that
these measures induce the same topology
on the space of graphons
as multigraph homomorphism densities;
note that this topology is different from
the one induced by simple graph homomorphism densities,
cf.\ \cite[Exercise $10.26$]{Lovasz2012} or \cite[Lemma C.$2$]{Janson2013}.
\begin{corollary}
    \label{le:inftyWLDConvergence}
    Let $(W_n)_{n}$ and $W \colon X \times X \to [0,1]$
    be a sequence of graphons and a graphon, respectively.
    Then, the following are equivalent:
    \begin{enumerate}
        \item
            $\nu^\infty_{W_n} \rightarrow \nu^\infty_W$.
            \label{le:inftyWLDConvergence:inftyWLD}
        \item
            $t(F, W_n) \rightarrow t(F, W)$
            for every multigraph~$F$.
            \label{le:inftyWLDConvergence:multigraphs}
    \end{enumerate}
\end{corollary}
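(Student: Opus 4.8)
The plan is to first establish the claim stated just before the corollary — that $(\nu^k_W)_{k \ge 1}$ lies in $\OWLOf{}$, so that $\nu^\infty_W$ is indeed defined — and then to chain \Cref{le:inftyWLDContinuity} with \Cref{le:kWLDConvergence}. For the first part I would show that, up to the projection $p^{k+1,k}$, the mapping $\owl^k_W$ is just the restriction of $\owl^{k+1}_W$ to the first $k$ coordinates. Concretely, let $q \colon X^{k+1} \to X^k$ be the projection onto the first $k$ coordinates, so that $q_* \mu^{\otimes k+1} = \mu^{\otimes k}$. I would prove $p^{k+1,k} \circ \owl^{k+1}_{W,n} = \owl^k_{W,n} \circ q$ by induction on $n$; passing to the limit componentwise then gives $p^{k+1,k} \circ \owl^{k+1}_W = \owl^k_W \circ q$, and pushing $\mu^{\otimes k+1}$ forward yields ${p^{k+1,k}}_* \nu^{k+1}_W = (\owl^k_W \circ q)_* \mu^{\otimes k+1} = (\owl^k_W)_* \mu^{\otimes k} = \nu^k_W$ for every $k \ge 1$, i.e.\ $(\nu^k_W)_{k \ge 1} \in \OWLOf{}$. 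The Kolmogorov Consistency Theorem then produces the measure $\nu^\infty_W \in \probMeas(\MInfty)$, and the same argument applies to each $W_n$.

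For the inductive identity itself, the base case is immediate from $\owl^k_{W,0}(\bar{x}) = (W(x_i, x_j))_{ij \in \binom{[k]}{2}}$ and the definition of $p^{k+1,k}$ on $P^{k+1}_0$, which simply drops the entries indexed by pairs meeting $k+1$. For the inductive step, using the definition of $\owl^{k+1}_{W,n+1}$ and that $p^{k+1,k}$ on $P^{k+1}_{n+1}$ keeps only the measure components indexed by $j \in [k]$ and acts on them by push-forward along $p^{k+1,k} \colon \MOf{k+1}_n \to \Mk_n$, it suffices to observe that $q(\bar{x}[y/j]) = q(\bar{x})[y/j]$ for every $j \in [k]$ and $y \in X$, and then invoke the induction hypothesis on $\owl^{k+1}_{W,n}$.

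With $\nu^\infty_{W_n}$ and $\nu^\infty_W$ in hand, the equivalence is short. I would apply \Cref{le:inftyWLDContinuity} to the sequence $(\nu^k_{W_n})_{k \ge 1}$ in $\OWLOf{}$ and the element $(\nu^k_W)_{k \ge 1} \in \OWLOf{}$ to reduce \ref{le:inftyWLDConvergence:inftyWLD} to: $\nu^k_{W_n} \rightarrow \nu^k_W$ for every $k \ge 1$. By \Cref{le:kWLDConvergence}, for each fixed $k$ this is equivalent to $t(F, W_n) \rightarrow t(F, W)$ for every multigraph $F$ of treewidth at most $k-1$; taking the conjunction over $k \ge 1$ and using that every multigraph has finite treewidth (hence is of treewidth at most $k-1$ for some $k$) turns this into \ref{le:inftyWLDConvergence:multigraphs}. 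The only genuinely delicate step is the compatibility $(\nu^k_W)_k \in \OWLOf{}$ — more precisely, the bookkeeping of indices in $p^{k+1,k} \circ \owl^{k+1}_{W,n} = \owl^k_{W,n} \circ q$, in particular that $p^{k+1,k}$ discards exactly the $(k+1)$-st measure component and that its action on the remaining components matches the induction hypothesis; once that is in place, everything else is a direct citation of the two preceding lemmas together with the finiteness of treewidth.
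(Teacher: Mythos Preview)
Your proposal is correct and follows exactly the approach the paper indicates: the paper states (without proof) that $(\nu^k_W)_{k \ge 1} \in \OWLOf{}$ and then presents the corollary as an immediate consequence of \Cref{le:kWLDConvergence} and \Cref{le:inftyWLDContinuity}, which is precisely the chaining you carry out. Your inductive verification of $p^{k+1,k} \circ \owl^{k+1}_{W,n} = \owl^k_{W,n} \circ q$ supplies the details the paper omits, and the bookkeeping you flag as delicate is handled correctly.
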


While simple graph and multigraph homomorphism densities
yield different topologies, they do not make a difference
for weak isomorphism:
two graphons
are weakly isomorphic
if and only if they have the same
multigraph homomorphism densities \cite[Corollary $10.36$]{Lovasz2012}.
Since $\measOne(\MInfty)$ is Hausdorff,
this yields the following corollary.

\begin{corollary}
    \label{co:weaklyIsomorphic}
    Let $U, W \colon X \times X \to [0,1]$ be graphons.
    Then,
    $\nu^\infty_U = \nu^\infty_W$
    if and only if
    $U$ and $W$ are weakly isomorphic.
\end{corollary}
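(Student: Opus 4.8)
The plan is to read the statement off \Cref{le:inftyWLDConvergence} by specializing it to a constant sequence of graphons, and then to convert the resulting statement about multigraph homomorphism densities into weak isomorphism via Lovász's characterization \cite[Corollary $10.36$]{Lovasz2012}. The only subtlety is the harmless one of turning a convergence statement for a constant sequence into an equality, which is exactly where the Hausdorff property of $\measOne(\MInfty)$ enters.

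Concretely, I would first fix the graphons $U$ and $W$ and apply \Cref{le:inftyWLDConvergence} to the constant sequence defined by $W_n \coloneqq U$ for every $n$. On the one hand, since $\nu^\infty_{W_n} = \nu^\infty_U$ for all $n$ and $\measOne(\MInfty)$ is metrizable, hence Hausdorff, the condition $\nu^\infty_{W_n} \rightarrow \nu^\infty_W$ holds if and only if $\nu^\infty_U = \nu^\infty_W$. On the other hand, for this constant sequence the condition $t(F, W_n) \rightarrow t(F, W)$ for every multigraph $F$ is just $t(F, U) = t(F, W)$ for every multigraph $F$. Therefore \Cref{le:inftyWLDConvergence} gives that $\nu^\infty_U = \nu^\infty_W$ if and only if $t(F, U) = t(F, W)$ for every multigraph $F$.

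Finally I would invoke \cite[Corollary $10.36$]{Lovasz2012}, which says that two graphons agree on all multigraph homomorphism densities precisely when they are weakly isomorphic, to rewrite the right-hand condition and conclude.

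I do not expect a genuine obstacle here: each step is an application of a result already established in the excerpt or cited from the literature. If one preferred to bypass \Cref{le:inftyWLDConvergence} entirely, an equally short alternative would be to note that, by the Kolmogorov Consistency Theorem, $\nu^\infty_W$ is determined by and determines the sequence $(\nu^k_W)_{k \ge 1}$ (via $\nu^k_W = {p^{\infty,k}}_*\nu^\infty_W$), so $\nu^\infty_U = \nu^\infty_W$ iff $\nu^k_U = \nu^k_W$ for every $k \ge 1$; by \Cref{le:distinguishingTerm} this is equivalent to $t(F,U) = t(F,W)$ for every multigraph $F$ of treewidth at most $k-1$ and every $k$, hence for every multigraph $F$ since each multigraph has finite treewidth, and \cite[Corollary $10.36$]{Lovasz2012} then finishes exactly as above.
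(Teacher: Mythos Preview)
Your proposal is correct and follows exactly the route the paper intends: specialize \Cref{le:inftyWLDConvergence} to a constant sequence, use that $\measOne(\MInfty)$ is Hausdorff to turn convergence into equality, and finish with \cite[Corollary~$10.36$]{Lovasz2012}. The alternative you sketch via Kolmogorov consistency and \Cref{le:distinguishingTerm} is also valid and equally short.
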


\subsection{Linear Equations}
\label{sec:opHierarchies}

In this section, we elaborate the connection between
Characterization~(\ref{th:WLForGraphons:markovOperator}) of \Cref{th:WLForGraphons}
and the system $\Lkk(G, H)$ of linear equations mentioned in the introduction.
To this end, we first describe $\Lkk(G, H)$:
$G$ and $H$ are graphs and $\Lkk(G, H)$
has a variable $X_\pi$ for every set
$\pi \subseteq V(G) \times V(H)$
of size $\lvert \pi \rvert \le k$.
Such a set $\pi$ is called a \textit{partial isomorphism}
if the mapping it induces is injective and
preserves adjacency and non-adjacency.
Then, $\Lkk(G, H)$ is given by the following equations:
\begin{equation*}
    \Lkk(G, H) \colon
    \begin{cases}
        \displaystyle\sum_{v \in V(G)} X_{\pi \cup \{(v,w)\}} = X_\pi
        &\begin{aligned}[t]
            &\text{for every $\pi \subseteq V(G) \times V(H)$ of size}\\
            &\text{$\lvert \pi \rvert \le k-1$ and every $w \in V(H)$}
        \end{aligned}\\
        \displaystyle\sum_{w \in V(H)} X_{\pi \cup \{(v,w)\}} = X_\pi
        &\begin{aligned}[t]
            &\text{for every $\pi \subseteq V(G) \times V(H)$ of size}\\
            &\text{$\lvert \pi \rvert \le k-1$ and every $v \in V(G)$}
        \end{aligned}\\
        X_\emptyset = 1\\
        X_\pi = 0
        &\begin{aligned}[t]
            &\text{for every $\pi \subseteq V(G) \times V(H)$ of size $\lvert \pi \rvert \le k$}\\
            &\text{that is not a partial isomorphism}
        \end{aligned}
    \end{cases}
\end{equation*}
The graphs $G$ and $H$ are not distinguished by oblivious $k$-WL
if and only if $\Lkk(G,H)$ has a non-negative real solution.
The equivalence to precisely this system of linear equations is from
\cite{Dell2018},
although it is already implicit in earlier work
\cite{ImmermanLander1990, AtseriasManeva2013, GroheOtto2015}.

$\Lkk(G, H)$ is much closer related to
Characterization~(\ref{th:WLForGraphons:markovOperator}) of \Cref{th:WLForGraphons}
than it might seem at first glance:
The variables of $\Lkk(G, H)$
can be interpreted
as permutation-invariant
matrices on $V(G)^1 \times V(H)^1, \dots, V(G)^k \times V(H)^k$.
In \Cref{th:WLForGraphons},
instead of permutation-invariant operators on all spaces
$L^2(X^1, \mu^{\otimes 1}), \dots, \LTwoProdK$,
we only have a single permutation-invariant Markov operator
$S$ on $\LTwoProdK$.
In general, for an operator $S$ on $\LTwoProdK$,
defining
\begin{equation*}
    \stepDown \coloneqq \OperatorFrom{\forgetGraphOf{k}{k}} \circ S \circ \OperatorFrom{\introduceGraphOf{k}{k}}\end{equation*}
yields an operator on $\LTwoProdKM$.
It is easy to see
that
$(\stepDown)^* = \stepDownOf{S^*}$
since the adjoint of a forget graph is the corresponding introduce graph
and vice versa.
Moreover, if $S$ is permutation-invariant, this definition
is independent of the specific pair of forget and introduce graphs, i.e.,
we have
$\stepDown = \OperatorFrom{\jForgetj} \circ S \circ \OperatorFrom{\jIntroducej}$
for every $j \in [k]$
since
$\OperatorFrom{\jForget{k}} \circ T_{(k \dots j)} = \OperatorFrom{\jForgetj}$
and
$T_{(j \dots k)} \circ \OperatorFrom{\jIntroduce{k}} = \OperatorFrom{\jIntroducej}$.

\begin{lemma}
    \label{le:markovOperatorStepDown}
    Let $k \ge 1$ and
    $S$ be a
    permutation-invariant Markov operator on $\LTwoProdK$.
    Then, $\stepDown$ is a permutation-invariant Markov operator.
    Moreover,
    if $\OperatorFrom{\neighborGraphOf{k}{k}} \circ S = S \circ \OperatorFrom{\neighborGraphOf{k}{k}}$, then
    \begin{multicols}{2}
    \begin{enumerate}
        \item $S \circ \OperatorFrom{\introduceGraphOf{k}{k}} = \OperatorFrom{\introduceGraphOf{k}{k}} \circ \stepDown$,
            \label{le:markovOperatorStepDown:commutesI}
        \item $\OperatorFrom{\forgetGraphOf{k}{k}} \circ S = \stepDown \circ \OperatorFrom{\forgetGraphOf{k}{k}}$, and
            \label{le:markovOperatorStepDown:commutesF}
        \item $\OperatorFrom{\neighborGraphOf{k-1}{k-1}} \circ \stepDown = \stepDown \circ \OperatorFrom{\neighborGraphOf{k-1}{k-1}}$.
            \label{le:markovOperatorStepDown:commutesN}
    \end{enumerate}
    \end{multicols}
\end{lemma}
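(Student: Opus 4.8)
The plan is to reduce every assertion to a short list of identities between the bi-labeled graphs $\introduceGraphOf{k}{k}$, $\forgetGraphOf{k}{k}$, $\neighborGraphOf{k}{k}$, $\neighborGraphOf{k-1}{k-1}$ (and, transiently, $\neighborGraphOf{k}{k-1}$), and then to translate these into operator identities via \Cref{le:graphOperations}. First I would record the combinatorial facts. Unwinding the vertex identifications in the composition of bi-labeled graphs, one sees that $\forgetGraphOf{k}{k} \circ \introduceGraphOf{k}{k}$ consists of $k$ isolated vertices with input and output vectors both equal to $(1,\dots,k-1)$ together with one unlabeled vertex, so $\OperatorFrom{\forgetGraphOf{k}{k}} \circ \OperatorFrom{\introduceGraphOf{k}{k}}$ is the identity on $\LTwoProdKM$; while $\introduceGraphOf{k}{k} \circ \forgetGraphOf{k}{k} = \neighborGraphOf{k}{k}$ directly from the definition of a neighbor graph, so $\OperatorFrom{\introduceGraphOf{k}{k}} \circ \OperatorFrom{\forgetGraphOf{k}{k}} = \OperatorFrom{\neighborGraphOf{k}{k}}$ by \Cref{le:graphOperations} \ref{le:graphOperations:composition}. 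Combining these gives $\OperatorFrom{\neighborGraphOf{k}{k}} \circ \OperatorFrom{\introduceGraphOf{k}{k}} = \OperatorFrom{\introduceGraphOf{k}{k}}$ and $\OperatorFrom{\forgetGraphOf{k}{k}} \circ \OperatorFrom{\neighborGraphOf{k}{k}} = \OperatorFrom{\forgetGraphOf{k}{k}}$. Finally, since a neighbor operation at a position $j \le k-1$ acts on a coordinate disjoint from the one introduced or forgotten at position $k$, one checks that $\neighborGraphOf{k}{j} \circ \introduceGraphOf{k}{k} = \introduceGraphOf{k}{k} \circ \neighborGraphOf{k-1}{j}$ as isomorphism types of bi-labeled graphs (again by unwinding identifications; there are no stray isolated vertices), hence $\OperatorFrom{\neighborGraphOf{k}{j}} \circ \OperatorFrom{\introduceGraphOf{k}{k}} = \OperatorFrom{\introduceGraphOf{k}{k}} \circ \OperatorFrom{\neighborGraphOf{k-1}{j}}$, and taking transposes — neighbor graphs are symmetric and $\forgetGraphOf{k}{k} = (\introduceGraphOf{k}{k})^*$ — also $\OperatorFrom{\forgetGraphOf{k}{k}} \circ \OperatorFrom{\neighborGraphOf{k}{j}} = \OperatorFrom{\neighborGraphOf{k-1}{j}} \circ \OperatorFrom{\forgetGraphOf{k}{k}}$.

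For the first assertion, $\OperatorFrom{\introduceGraphOf{k}{k}}$ is the Koopman operator of the measure-preserving projection $X^k \to X^{k-1}$ and hence a Markov embedding, so $\OperatorFrom{\forgetGraphOf{k}{k}} = (\OperatorFrom{\introduceGraphOf{k}{k}})^*$ is a Markov operator (it maps nonnegative functions to nonnegative functions, sends $\allOne$ to $\allOne$, and its adjoint sends $\allOne$ to $\allOne$); thus $\stepDown = \OperatorFrom{\forgetGraphOf{k}{k}} \circ S \circ \OperatorFrom{\introduceGraphOf{k}{k}}$ is a composition of Markov operators and hence Markov. For permutation invariance I would use that, for a permutation $\pi$ of $[k-1]$ and its extension $\hat\pi$ to $[k]$ fixing $k$, the permutation-graph composition rules of \Cref{sec:bilabeledgraphs} give $\OperatorFrom{\introduceGraphOf{k}{k}} \circ T_\pi = T_{\hat\pi} \circ \OperatorFrom{\introduceGraphOf{k}{k}}$ and hence, by taking adjoints, $T_\pi \circ \OperatorFrom{\forgetGraphOf{k}{k}} = \OperatorFrom{\forgetGraphOf{k}{k}} \circ T_{\hat\pi}$; together with permutation invariance of $S$ on $\LTwoProdK$ this yields $T_\pi \circ \stepDown = \stepDown \circ T_\pi$.

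Parts \ref{le:markovOperatorStepDown:commutesI} and \ref{le:markovOperatorStepDown:commutesF} are then immediate formal computations: assuming $\OperatorFrom{\neighborGraphOf{k}{k}} \circ S = S \circ \OperatorFrom{\neighborGraphOf{k}{k}}$, we get $\OperatorFrom{\introduceGraphOf{k}{k}} \circ \stepDown = \OperatorFrom{\introduceGraphOf{k}{k}} \circ \OperatorFrom{\forgetGraphOf{k}{k}} \circ S \circ \OperatorFrom{\introduceGraphOf{k}{k}} = \OperatorFrom{\neighborGraphOf{k}{k}} \circ S \circ \OperatorFrom{\introduceGraphOf{k}{k}} = S \circ \OperatorFrom{\neighborGraphOf{k}{k}} \circ \OperatorFrom{\introduceGraphOf{k}{k}} = S \circ \OperatorFrom{\introduceGraphOf{k}{k}}$, using the identities of the first step; part \ref{le:markovOperatorStepDown:commutesF} is symmetric. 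For part \ref{le:markovOperatorStepDown:commutesN} I would first upgrade the hypothesis: since $S$ is permutation invariant and $T_\sigma \circ \OperatorFrom{\neighborGraphOf{k}{k}} \circ T_{\sigma^{-1}} = \OperatorFrom{\neighborGraphOf{k}{k-1}}$ for the transposition $\sigma = (k-1\;k)$ (the side-note identity in the proof of \Cref{le:algebraSequence}), conjugating gives $\OperatorFrom{\neighborGraphOf{k}{k-1}} \circ S = S \circ \OperatorFrom{\neighborGraphOf{k}{k-1}}$; then, with the $j = k-1$ case of the last identities of the first step, $\OperatorFrom{\neighborGraphOf{k-1}{k-1}} \circ \stepDown = \OperatorFrom{\forgetGraphOf{k}{k}} \circ \OperatorFrom{\neighborGraphOf{k}{k-1}} \circ S \circ \OperatorFrom{\introduceGraphOf{k}{k}} = \OperatorFrom{\forgetGraphOf{k}{k}} \circ S \circ \OperatorFrom{\neighborGraphOf{k}{k-1}} \circ \OperatorFrom{\introduceGraphOf{k}{k}} = \stepDown \circ \OperatorFrom{\neighborGraphOf{k-1}{k-1}}$. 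The only genuinely fiddly point is the combinatorial verification of the graph identities in the first step — in particular that $\forgetGraphOf{k}{k} \circ \introduceGraphOf{k}{k}$ contributes exactly one dummy vertex and that $\neighborGraphOf{k}{j} \circ \introduceGraphOf{k}{k}$ and $\introduceGraphOf{k}{k} \circ \neighborGraphOf{k-1}{j}$ are literally the same isomorphism type; once these are in hand, everything downstream is bookkeeping with \Cref{le:graphOperations} and permutation invariance.
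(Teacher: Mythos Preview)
Your proposal is correct and follows essentially the same route as the paper's proof: the same bi-labeled graph identities ($\introduceGraphOf{k}{k}\circ\forgetGraphOf{k}{k}=\neighborGraphOf{k}{k}$, $\forgetGraphOf{k}{k}\circ\introduceGraphOf{k}{k}$ yielding the identity operator, and $\neighborGraphOf{k-1}{k-1}\circ\forgetGraphOf{k}{k}=\forgetGraphOf{k}{k}\circ\neighborGraphOf{k}{k-1}$), the same permutation-conjugation to upgrade the hypothesis to $\OperatorFrom{\neighborGraphOf{k}{k-1}}\circ S=S\circ\OperatorFrom{\neighborGraphOf{k}{k-1}}$, and the same extension $\hat\pi$ for permutation invariance. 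The only cosmetic differences are that you argue the Markov property via ``composition of Markov operators'' (the paper verifies positivity and the $\allOne$-conditions by hand) and that you front-load the combinatorial identities rather than invoking them inline.
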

\begin{proof}
    First note that
    \begin{equation*}
        \stepDown \allOne_{X^{k-1}}= (\OperatorFrom{\forgetGraphOf{k}{k}} \circ S \circ \OperatorFrom{\introduceGraphOf{k}{k}}) \allOne_{X^{k-1}}= (\OperatorFrom{\forgetGraphOf{k}{k}} \circ S)\allOne_{X^{k}}= \OperatorFrom{\forgetGraphOf{k}{k}}\allOne_{X^{k}}= \allOne_{X^{k-1}},
    \end{equation*}
    where the last equality holds since $\mu$ is a probability measure.
    Since $S^*$ is also a Markov operator,
    we also obtain $(\stepDown)^* \allOne_{X^{k-1}} = \stepDownOf{S^*} \allOne_{X^{k-1}} = \allOne_{X^{k-1}}$.
    Let $f \in \LTwoProdKM$ with $f \ge 0$.
    Then, $\OperatorFrom{\introduceGraphOf{k}{k}} f = f \otimes \allOne_X \ge 0$, and hence,
    $(S \circ \OperatorFrom{\introduceGraphOf{k}{k}}) f \ge 0$.
    Therefore, also $\stepDown f = (\OperatorFrom{\forgetGraphOf{k}{k}} \circ S \circ \OperatorFrom{\introduceGraphOf{k}{k}}) f \ge 0$.
    Hence, $\stepDown$ is a Markov operator.
    For a permutation $\pi \colon [k-1] \to [k-1]$, we define
    the permutation $\pi' \colon [k] \to [k]$
    by $\pi'(i) \coloneqq \pi(i)$ for $i \in [k-1]$ and $\pi'(k) \coloneqq k$.
    Then,
    \begin{align*}
        T_\pi \circ \stepDown = T_\pi \circ \OperatorFrom{\jForget{k}} \circ S \circ \OperatorFrom{\jIntroduce{k}}= \OperatorFrom{\jForget{k}} \circ T_{\pi'} \circ S \circ \OperatorFrom{\jIntroduce{k}}&= \OperatorFrom{\jForget{k}} \circ S \circ T_{\pi'} \circ \OperatorFrom{\jIntroduce{k}}\\
        &= \OperatorFrom{\jForget{k}} \circ S \circ \OperatorFrom{\jIntroduce{k}} \circ T_{\pi}\\
        &= \stepDown \circ T_{\pi}.
    \end{align*}
    Hence, $\stepDown$ is permutation invariant.
    Now, assume that $\OperatorFrom{\neighborGraphOf{k}{k}} \circ S = S \circ \OperatorFrom{\neighborGraphOf{k}{k}}$.
    Then,
    \begin{align*}
        \OperatorFrom{\introduceGraphOf{k}{k}} \circ \stepDown = \OperatorFrom{\introduceGraphOf{k}{k}} \circ \OperatorFrom{\forgetGraphOf{k}{k}} \circ S \circ \OperatorFrom{\introduceGraphOf{k}{k}}= \OperatorFrom{\neighborGraphOf{k}{k}} \circ S \circ \OperatorFrom{\introduceGraphOf{k}{k}}&= S \circ \OperatorFrom{\neighborGraphOf{k}{k}} \circ \OperatorFrom{\introduceGraphOf{k}{k}}\\
        &= S \circ \OperatorFrom{\introduceGraphOf{k}{k}} \circ \OperatorFrom{\forgetGraphOf{k}{k}} \circ \OperatorFrom{\introduceGraphOf{k}{k}}\\
        &= S \circ \OperatorFrom{\introduceGraphOf{k}{k}},
    \end{align*}
    where the last equality holds since $\mu$ is a probability measure.
    Then, we also obtain \ref{le:markovOperatorStepDown:commutesF}
    by considering $S^*$ and $\stepDownOf{S^*}$
    and then taking adjoints.
    Finally,
    note that the permutation invariance of $S$ yields that we also have
    $\OperatorFrom{\neighborGraphOf{k}{k-1}} \circ S = S \circ \OperatorFrom{\neighborGraphOf{k}{k-1}}$.
    Moreover, observe that
    $\neighborGraphOf{k-1}{k-1} \circ \forgetGraphOf{k}{k}
    = \forgetGraphOf{k}{k} \circ \neighborGraphOf{k}{k-1}$.
Hence,
    \begin{align*}
        \OperatorFrom{\neighborGraphOf{k-1}{k-1}} \circ \stepDown =  \OperatorFrom{\neighborGraphOf{k-1}{k-1}} \circ \OperatorFrom{\forgetGraphOf{k}{k}} \circ S \circ \OperatorFrom{\introduceGraphOf{k}{k}}=  \OperatorFrom{\forgetGraphOf{k}{k}} \circ \OperatorFrom{\neighborGraphOf{k}{k-1}} \circ S \circ \OperatorFrom{\introduceGraphOf{k}{k}}&=  \OperatorFrom{\forgetGraphOf{k}{k}} \circ S \circ \OperatorFrom{\neighborGraphOf{k}{k-1}} \circ \OperatorFrom{\introduceGraphOf{k}{k}}\\
        &=  \OperatorFrom{\forgetGraphOf{k}{k}} \circ S \circ \OperatorFrom{\introduceGraphOf{k}{k}} \circ \OperatorFrom{\neighborGraphOf{k-1}{k-1}}\\
        &= \stepDown \circ \OperatorFrom{\neighborGraphOf{k-1}{k-1}}.
    \end{align*}
\end{proof}

Given a permutation-invariant Markov operator
$S$ on $\LTwoProdK$,
repeated applications of \Cref{le:markovOperatorStepDown} yield
a sequence $S_0, \dots, S_k$
of permutation-invariant Markov operators $S_i$ on $\LTwoProdI$
by letting $S_k \coloneqq S$ and $S_{i-1} \coloneqq \stepDownOf{S_i}$ for $i \in [k]$,
which we call the \textit{operator hierarchy defined by $S$}.
The following lemma shows that the equation
$T_{\neighborGraphOf{k}{k}} \circ S = S \circ T_{\neighborGraphOf{k}{k}}$
of \Cref{th:WLForGraphons}
is just a way of formulating graphon analogues for
the first three equations in the definition of $\Lkk$.
\begin{lemma}
    \label{le:operatorHierarchy}
    If $S$ is a permutation-invariant Markov operator
    on $\LTwoProdK$
    such that $T_{\neighborGraphOf{k}{k}} \circ S = S \circ T_{\neighborGraphOf{k}{k}}$,
    then its operator hierarchy satisfies
    \begin{enumerate}
        \item $S_{i}(f \otimes \allOne_X) = S_{i-1}(f) \otimes \allOne_X$
              for every $f \in \LTwoProdIM$ and every $i \in [k]$,
            \label{le:markovOpStepDownSequence:one}
        \item $S^*_{i}(f \otimes \allOne_X) = S^*_{i-1}(f) \otimes \allOne_X$
              for every $f \in \LTwoProdIM$ and every $i \in [k]$,
            \label{le:markovOpStepDownSequence:adjointOne}
        \item $S_0$ is the identity operator, and
            \label{le:markovOpStepDownSequence:identity}
        \item $S_i \ge 0$ for every $i \in [k]$.
            \label{le:markovOpStepDownSequence:positive}
    \end{enumerate}
    Conversely,
    if $S_0, \dots, S_k$ is a sequence of permutation-invariant operators
    $S_i$ on $\LTwoProdI$ satisfying the four conditions above,
    then $S_0, \dots, S_k$ are Markov operators
    satisfying
    $\OperatorFrom{\neighborGraphOf{i}{i}} \circ S_i= S_i \circ \OperatorFrom{\neighborGraphOf{i}{i}}$.
\end{lemma}
\begin{proof}
    The forward direction follows inductively from \Cref{le:markovOperatorStepDown}.
    For backward direction,
    note that,
    by definition of $\introduceGraphOf{i}{i}$,
    the first condition just states that
    $S_i \circ \OperatorFrom{\introduceGraphOf{i}{i}} = \OperatorFrom{\introduceGraphOf{i}{i}}  \circ S_{i-1}$;
    the second condition is the analogous statement for forget graphs.
    This immediately yields the backward direction.
\end{proof}

As a final remark,
we note that in addition to \Cref{le:markovOperatorStepDown},
one can also easily prove that, if
$\OperatorFromTo{\adjacencyGraphOf{k}{12}}{U} \circ S = S \circ \OperatorFromTo{\adjacencyGraphOf{k}{12}}{W}$
holds for graphons
$U,W \colon X \times X \to [0,1]$ and $k \ge 3$, then
we also have
$\OperatorFromTo{\adjacencyGraphOf{k-1}{12}}{U} \circ \stepDown = \stepDown \circ \OperatorFromTo{\adjacencyGraphOf{k-1}{12}}{W}$.
This inductively extends to operator hierarchies,
and it is easy to see that
this requirement corresponds to the fourth equation
in the definition of $\Lkk$ (concerning partial isomorphisms);
we are just missing the injectivity that a partial isomorphism requires,
which is not important
as long as our standard Borel space is atom-free.
Together with \Cref{le:operatorHierarchy},
this shows how to restore the characterization
of oblivious $k$-WL indistinguishability by $\Lkk$
for graphs $G$ and $H$ from \Cref{th:WLForGraphons}.
 
\section{Simple Weisfeiler-Leman Indistinguishability}
\label{sec:simpleWL}

\Cref{th:kWLGraphons} shows that
oblivious $k$-WL corresponds to bounded treewidth
\textit{multi}graph homomorphism densities.
The reason for this are the atomic types used by $k$-WL, or more
accurately in our setting,
the adjacency graphs
since
subsequent applications of the same adjacency graph $\ijAdjacencyij$
to a term result in parallel edges.
This cannot be prevented by simply disallowing such subsequent applications:
for the application of the Stone-Weierstrass Theorem
in the proof of \Cref{th:kWLGraphons},
it is crucial that the set $\Tk$ of
homomorphism functions is closed under multiplications.
However, to achieve this closure under multiplications, we close the set of terms
under Schur products, which may introduce
parallel edges if we have edges between input vertices,
cf.\ \Cref{fig:parallelEdges}.
To prevent this way of introducing parallel edges,
we have to prevent edges from being added between
input vertices in the first place.

In \Cref{sec:obliviousSimpleWL}, we show
how \Cref{th:kWLGraphons} and its proof
have to be adapted for simple graph homomorphism densities.
To this end, we introduce \textit{simple (oblivious) $k$-WL}.
Not surprisingly, the definitions become more similar to
color refinement and the ones
of Greb\'ik and Rocha \cite{GrebikRocha2021}.
For the sake of brevity, we only
include proofs that significantly differ from their counterpart in \Cref{sec:WL}.
We also briefly show how
\textit{simple non-oblivious $k$-WL}
can be defined
in \Cref{sec:nonObliviousSimpleWL}.

\subsection{Simple Oblivious Weisfeiler-Leman}
\label{sec:obliviousSimpleWL}

\begin{figure}
    \centering
    \begin{tikzpicture}
		\node[vertex, label={90:$a_1$}, label={270:$b_1$}] (N11) {};
        \node[vertex, fill=white, draw=white, right = of N11] (N12g) {};
		\node[vertex, label={90:$a_2$}, above = 0.1cm of N12g] (N12a) {};
		\node[vertex, label={270:$b_2$}, below = 0.1cm of N12g] (N12b) {};
        \node[left = 0.5cm of N11, scale = 1.0] (NLabel){$\adjNeiGraphOf{2}{2,\{1\}}\colon$};
        \path[draw, thick] (N12b) edge (N11);

		\node[vertex, label={90:$a_1$}, label={270:$b_1$}, right = 5.0cm of N12g] (N1) {};
        \node[vertex, fill=white, draw=white, right = of N1] (N2g) {};
		\node[vertex, label={90:$a_2$}, above = 0.1cm of N2g] (N2a) {};
		\node[vertex, label={270:$b_2$}, below = 0.1cm of N2g] (N2b) {};
		\node[vertex, label={90:$a_3$}, label={270:$b_3$}, right = of N2g] (N3) {};
        \node[left = 0.5cm of N1, scale = 1.0] (NLabel){$\adjNeiGraphOf{3}{2, \{1,3\}}\colon$};
        \path[draw, thick] (N2b) edge (N1);
        \path[draw, thick] (N2b) edge (N3);
    \end{tikzpicture}
    \caption{The graphs $\adjNeiGraphOf{2}{2,\{1\}}$ and $\adjNeiGraphOf{3}{2, \{1,3\}}$.}
\end{figure}

To prevent edges from being added between input vertices,
we only allow certain combinations
of adjacency and neighbor graphs;
after a sequence of adjacency graphs connecting a vertex $j$ to other vertices,
we immediately follow up with a $j$-neighbor graph.
Formally,
for every $(j, V)$ in the set
$S^k \coloneqq \Set{(j, V) \mid j \in [k] \text{ and } V \subseteq [k] \setminus \{j\}}$,
define
the bi-labeled graph
\begin{equation*}
    \jAdjNei \coloneqq \jNeighborj \circ \bigcirc_{i \in V} \ijAdjacencyij
     \in \graphsOf{k,k}.
\end{equation*}
We note that this is well-defined since the composition
of adjacency graphs is associative.
Let
$\simpleAdjNeisK \coloneqq \Set{\jAdjNei \mid
(j,V) \in S^k}
\subseteq \graphsOf{k,k}$
be the set of all these bi-labeled graphs.
We have to be a bit cautious as, in general, these graphs
are not symmetric
and, hence, their graphon operators are not self-adjoint;
in general, the set $\simpleAdjNeisK$ is not even closed under transposition.
Note that, by definition, the $\jAdjNei$-graphon operator
of a graphon $W$ is given by
\begin{equation*}
    (\OperatorFromTo{\jAdjNei}{W} f) (\bar{x}) = \int\limits_{X} \notsovast(\prod_{i \in V} W(x_i, y) \notsovast) \cdot f \circ \bar{x}[y/j] \dmu(y)
\end{equation*}
for $\mu^{\otimes k}$-almost every $\bar{x} \in X^k$.
Analogously to \Cref{le:graphsOfTerms},
one can observe that the underlying graphs of $\graphOfTerm$
for terms $\term \in \simpleTreeClosureK$
are, up to isolated vertices,
precisely the simple graphs of treewidth at most $k-1$.
Basically, when constructing a term from a nice tree decomposition,
we just add all edges that are missing for a vertex whenever that vertex is forgotten.
We note that we do not miss any edges this way, i.e.,
every edge of the original graph is present in the term and
added at some point,
since
the bag at the root node of a nice tree decomposition is the empty set.

For the sake of brevity, we write
$\naturalSKFam \coloneqq \OperatorFamilyFromTo{\simpleAdjNeisK}{W}$
for a graphon $W$.
Then, define $\skSubAlg_{W, n} \in \subAlgsk$ for every $n \in \N$ by setting
$\skSubAlg_{W,0} \coloneqq \left\langle \Set{\emptyset, X^k} \right\rangle$,
$\skSubAlg_{W, n + 1} \coloneqq \naturalSKFam(\skSubAlg_{W, n}) \text{ for every } n \in \N$, and
finally,
$\skSubAlg_W \coloneqq \skSubAlg_{W, \infty} \coloneqq \langle \bigcup_{n \in \N} \skSubAlg_{W, n} \rangle$.
Then, analogously to \Cref{le:algebraSequence},
one can show that $\skSubAlg_W$ is permutation-invariant
and the
minimum $\naturalSKFam$-invariant
$\mu^{\otimes k}$-relatively complete
sub-$\sigma$-algebra of $\Borel^{\otimes k}$.
We now deviate a bit from the definition of $W$-invariance
and call a $\mu^{\otimes k}$-relatively complete sub-$\sigma$-algebra $\subAlg \in \subAlgsk$
\textit{simply $W$-invariant}
if $\subAlg$ is invariant
for every operator in the family $(\naturalSKFam)_{\skSubAlg_W}$,
i.e.,
$\subAlg$ is $(\OperatorFromTo{\bm{F}}{W})_{\skSubAlg_W}$-invariant
for every $\bm{F} \in \simpleAdjNeisK$.
The reason for this is that, since $\naturalSKFam$
is not closed under taking adjoints,
$\skSubAlg_W$ might not be invariant under these adjoints.
In contrast, $\skSubAlg_W$ is trivially both
$(\naturalSKFam)_{\skSubAlg_W}$-invariant
and $(\naturalSKFam)^*_{\skSubAlg_W}$-invariant.
In fact, it is easy to see that
$\skSubAlg_W$ is also the minimum simply $W$-invariant
$\mu^{\otimes k}$-relatively complete sub-$\sigma$-algebra of $\Borel^{\otimes k}$.

For a separable metrizable space $(X, \Topology)$,
let $\measOne(X)$ denote the set of all measures
of total mass at most $1$.
We endow $\measOne(X)$ with a topology analogously to $\probMeas(X)$,
i.e., with the topology generated by the maps
$\mu \mapsto \int f \dmu$ for $f \in C_b(X)$.
Then, for measures that all have the same total mass, the Portmanteau Theorem
is still applicable as we can scale them to have total mass of one.
Let $\iPsk_0 \coloneqq \Set{1}$ be the one-point space and inductively define
\begin{equation*}
    \Msk_n \coloneqq \prod_{i \le n} \iPsk_i \text{ and }\iPsk_{n+1} \coloneqq \left(\measOne\left(\Msk_n\right)\right)^{S^k}
\end{equation*}
for every $n \in \N$.
Let $\Msk \coloneqq \Msk_\infty \coloneqq \prod_{n \in \N} \iPsk_i$ and,
for $n \le m \le \infty$,
let
$p_{m,n} \colon \Msk_m \to \Msk_n$ be the natural projection.
Finally, define
\begin{equation*}
    \Psk \coloneqq \Set{\alpha \in \Msk \mid (\alpha_{n+1})_{(j,V)} = (p_{n+1,n})_* (\alpha_{n+2})_{(j,V)} \text{ for all }  (j,V) \in S^k, n \in \N}.
\end{equation*}

By the Kolmogorov Consistency Theorem \cite[Exercise $17.16$]{Kechris1995},
for all $\alpha \in \Pk$ and $(j,V) \in S^k$,
there is a unique measure $\mu^\alpha_{j,V} \in \probMeas(\Mk)$ such that
$(p_{\infty, n})_* \mu^\alpha_{j,V} = (\alpha_{n+1})_{(j,V)}$
for every $n \in \N$.
Analogously to \Cref{le:continuity},
the set
$\Psk$ is closed in $\Msk$ and, for every $(j,V) \in S^k$,
the mapping $\Psk \to \probMeas(\Msk), \alpha \mapsto \mu^\alpha_{j,V}$
is continuous.
To adapt the definition of a $k$-WLD, we add
a third requirement of absolute continuity
and Radon-Nikodym derivatives, cf.\ the definition
of distributions over iterated degree measures \cite{GrebikRocha2021}.
\begin{definition}
    Let $k \ge 1$.
    A measure $\nu \in \probMeas(\Msk)$ is called a \textit{simple $k$-Weisfeiler-Leman distribution (simple $k$-WLD)} if
    \begin{enumerate}
        \item $\nu(\Psk) = 1$,
        \item $\int_{\Msk} f \, d \nu = \int_{\Msk} \Big(\int_{\Msk} f \, d\mu^\alpha_{j,\emptyset}\Big) \, d \nu(\alpha)$ for all bounded measurable $f \colon \Msk \to \R$, $j \in [k]$, and
        \item $\mu^\alpha_{j,V} \acle \mu^\alpha_{j,\emptyset}$ and $0 \le \frac{d\mu^\alpha_{j,V}}{d\mu^\alpha_{j,\emptyset}} \le 1$ for $\nu$-almost every $\alpha \in \Msk$ and every $(j,V)\in S^k$.
    \end{enumerate}
\end{definition}

Let $W \colon X \times X \to [0,1]$ be a graphon.
Define $\isk_{W,0} \colon X^k \to \Msk_0$ by
$\isk_{W,0}(\bar{x}) \coloneqq 1$
for every $\bar{x} \in X^k$.
Inductively define $\isk_{W, n+1} \colon X^k \to \Msk_{n+1}$ by
\begin{equation*}
    \isk_{W,n+1}(\bar{x}) \coloneqq \left(\isk_{W,n}(\bar{x}),\notsovast(
        A \mapsto \int_{{\isk_{W,n}}^{-1}(A)_{\bar{x}[/j]}} \prod_{i \in V} W(x_i, y) \dmu(y)
    \notsovast)_{(j,V) \in S^k}\right)
\end{equation*}
for every $\bar{x} \in X^k$.
Then, let $\isk_W = \isk_{W,\infty} \colon X^k \to \Msk$ be the mapping
defined by
$(\isk_W(\bar{x}))_n \coloneqq (\isk_{W, \infty}(\bar{x}))_n \coloneqq (\isk_{W,n}(\bar{x}))_n$
for all $n \in \N$, $\bar{x} \in X^k$.
Finally, let
$\nusk_W \coloneqq {\isk_W}_{*} \mu^{\otimes k} \in \probMeas(\Msk)$
be the push-forward of $\mu^{\otimes k}$ via $\isk_W$.
Analogously to
\Cref{le:minAlgebraIKWMeasurable},
one can show that
\begin{equation*}
    \skSubAlg_{W,n} = \left\langle \Set{{\isk_{W,n}}^{\invSpace-1}(A) \mid A \in \Borel(\Msk_n)} \right\rangle
\end{equation*}
for $n \in \NInfty$.
Defining
$\RskW \coloneqq S_{\skSubAlg_W} \circ T_{\isk_W}$
yields a Markov isomorphism
from $\LTwoMskNuskW$
to $\LTwoProdKQuoOf{\skSubAlg_W}$,
cf.\ \Cref{le:ikWMarkovIso}.
Let us explicitly state the adaptation of \Cref{le:nuKWIskWLD}
since the proof requires some additional work.
\begin{lemma}
    \label{le:nuSKWIsskWLD}
    Let $k \ge 1$
    and $W \colon X \times X \to [0,1]$ be a graphon.
    Then,
    \begin{enumerate}
        \item $\isk_W(X^k) \subseteq \Psk$, and \label{le:nuSKWIsskWLD:subsetPsk}
        \item $\mu_{j, \emptyset}^{\isk_W(\bar{x})} = (\isk_{W} \circ \jsection)_*\mu$
            for all $j \in [k]$, $\bar{x} \in X^k$, \label{le:nuSKWIsskWLD:jNeighbors}
        \item $\nusk_W$ is a simple $k$-WLD. \label{le:nuSKWIsskWLD:skWLD}
    \end{enumerate}
\end{lemma}
\begin{proof}
    The proof of (\ref{le:nuSKWIsskWLD:subsetPsk}) is analogous to
    \Cref{le:nuKWIskWLD} (\ref{le:nuKWIskWLD:subsetPk}).
    For (\ref{le:nuSKWIsskWLD:jNeighbors}), observe that
    $\mu_{j, \emptyset}^{\isk_W(\bar{x})}$ is a probability measure.
    Then, the proof is analogous to \Cref{le:nuKWIskWLD} (\ref{le:nuKWIskWLD:jNeighbors}).
    For (\ref{le:nuSKWIsskWLD:skWLD}),
    we get $\nusk_W(\Psk) = 1$
    and
    $\int_{\Msk} f \,d \nusk = \int_{\Msk} \Bigl(\int_{\Msk} f \,d \mu^\alpha_{j, \emptyset} \Bigr) \, d \nusk_W(\alpha)$
    for every bounded measurable $f \colon \Msk \to \R$ and every $j \in [k]$
    as in the proof of
    \Cref{le:nuKWIskWLD} (\ref{le:nuKWIskWLD:kWLD}).

    Let $(j, V) \in S^k$.
    Let $\bar{x} \in X^k$
    and let
    \begin{equation*}
        \subAlg \coloneqq \Bigl\langle\Set{\jsection^{-1}\big({\isk_W}^{\!-1}(A)\big) \mid A \in \Borel(\Msk)}\Bigr\rangle
    \end{equation*}
    be the minimum $\mu$-relatively complete sub-$\sigma$-algebra that
    makes $\isk_W \circ \jsection$ measurable.
    Then, $\E(y \mapsto \prod_{i \in V} W(x_i, y) \mid \subAlg) \in \LTwoSub$ and hence,
    by \Cref{co:quotientSpaceUnique},
    there is a measurable function $g \colon X \to \R$
    such that $\E(y \mapsto \prod_{i \in V} W(x_i, y) \mid \subAlg) = g \circ \isk_W \circ \jsection$ $\mu$-almost everywhere.
    Note that $0 \le g \le 1$ holds $\mu$-almost everywhere.
    For every $n \in \N$ and every  $A \in \Borel(\Msk_n)$, we have
    \begin{align*}
        \mu_{j,V}^{\isk_W(\bar{x})}(p_{\infty, n}^{-1}(A))
        &= (p_{\infty, n})_* \mu_{j,V}^{\isk_W(\bar{x})}(A)\\
        &= ((\isk_W(\bar{x}))_{n+1})_{(j,V)}(A)\\
        &= ((\isk_{W,n+1}(\bar{x}))_{n+1})_{(j,V)}(A)\\
        &= \int_{{\isk_{W,n}}^{\invSpace-1}(A)_{\bar{x}[/j]}} \prod_{i \in V} W(x_i, y) \dmu(y)\\
        &= \int_{\jsection^{-1} ({\isk_{W}}^{\!-1}(p_{\infty,n}^{-1}(A)))} \prod_{i \in V} W(x_i, y) \dmu(y)\\
        &= \int_{\jsection^{-1} ({\isk_{W}}^{\!-1}(p_{\infty,n}^{-1}(A)))} \E \left(y \mapsto \prod_{i \in V} W(x_i, y) \mid \subAlg \right) \dmu \tag{\Cref{cl:conditionalExpectation}}\\
        &= \int_{\jsection^{-1} ({\isk_{W}}^{\!-1}(p_{\infty,n}^{-1}(A)))} g \circ \isk_W \circ \jsection \dmu\\
        &= \int_{p_{\infty,n}^{-1}(A)} g\,d(\isk_W \circ \jsection )_*\mu\\
        &= \int_{p_{\infty,n}^{-1}(A)} g\,d\mu_{j, \emptyset}^{\isk_W(\bar{x})}.
    \end{align*}
    Since
    $\bigcup_{n \in \N} \Set{p_{\infty,n}^{-1}(A) \mid A \in \Borel(\Msk_n)}$
    generates $\Borel(\Msk)$,
    the $\pi$-$\lambda$ Theorem \cite[Theorem $10.1$ iii)]{Kechris1995} yields that
    $\mu_{j,V}^{\isk_W(\bar{x})}(A)
    = \int_{A} g\,d\mu_{j, \emptyset}^{\isk_W(\bar{x})}$
    for every $A \in \Borel(\Msk)$.
    Therefore,
    $\mu^\alpha_{j,V} \acle \mu^\alpha_{j,\emptyset}$ and $0 \le \frac{d\mu^\alpha_{j,V}}{d\mu^\alpha_{j,\emptyset}} \le 1$ for every $\alpha \in \isk_W(X^k)$.
    By definition of $\nusk_W$, this holds $\nusk_W$-almost everywhere.
    Hence, $\nusk_W$ is a simple $k$-WLD.
\end{proof}

Let $\nu \in \probMeas(\Msk)$ be a simple $k$-WLD and
$(j,V) \in S^k$.
By definition of a $k$-WLD, we have
$0 \le \frac{d \mu^\alpha_{j,V}}{d \mu^\alpha_{j, \emptyset}} \le 1$
for $\nu$-almost every $\alpha \in \Msk$.
Hence, analogously to
\Cref{le:operatorNWellDefined},
one can show that
setting
\begin{equation*}
    (\OperatorFromTo{\jAdjNei}{\nu} f) (\alpha) \coloneqq \int_{\Msk} \frac{d \mu^\alpha_{j,V}}{d \mu^\alpha_{j, \emptyset}} \cdot f \, d \mu^\alpha_{j, \emptyset}
    = \int_{\Msk} f \, d \mu^\alpha_{j, V}
\end{equation*}
for all $f \in \LInftyMskNu$, $\alpha \in \Msk$
defines an $L^\infty$-contraction
that uniquely extends to an $L^2$-contraction
$\LTwoMskNu \to \LTwoMskNu$.

\begin{lemma}
    \label{le:SWLDOperatorS}
    Let $k \ge 1$
    and $W \colon X \times X \to [0,1]$ be a graphon.
    For every $\bm{S} \in \simpleAdjNeisK$,
    \begin{multicols}{2}
    \begin{enumerate}
        \item $\OperatorFromTo{\bm{S}}{W} \circ T_{\isk_W} = T_{\isk_W} \circ \OperatorSNuW$, \label{le:SWLDOperatorS:normalOp}
        \item $(\OperatorFromTo{\bm{S}}{W})_{\kSubAlg_W} \!\circ T_{\isk_W} \!= T_{\isk_W} \!\circ \OperatorSNuW$, and \label{le:SWLDOperatorS:expOp}
        \item $\OperatorFromTo{\bm{S}}{W} / \kSubAlg_W \circ \RskW = \RskW \circ \OperatorSNuW$. \label{le:SWLDOperatorS:iso}
    \end{enumerate}
    \end{multicols}
\end{lemma}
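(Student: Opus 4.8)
The plan is to imitate the proof of \Cref{le:DIDMOperatorN} almost verbatim, with the combined adjacency-and-neighbor graph $\bm{S} = \jAdjNei$ playing the role of a neighbor graph $\bm{N}$ and the simple $k$-WLD $\nusk_W$ playing the role of $\nu^k_W$. First I would prove the substantive statement \ref{le:SWLDOperatorS:normalOp}, and then deduce \ref{le:SWLDOperatorS:expOp} and \ref{le:SWLDOperatorS:iso} from it by exactly the formal manipulations used for parts \ref{le:DIDMOperatorN:expOp} and \ref{le:DIDMOperatorN:iso} of \Cref{le:DIDMOperatorN}. For \ref{le:SWLDOperatorS:expOp} one writes $(\OperatorFromTo{\bm{S}}{W})_{\skSubAlg_W} \circ T_{\isk_W} = \OperatorFromTo{\bm{S}}{W} \circ \ExpVal{\skSubAlg_W} \circ T_{\isk_W} = \OperatorFromTo{\bm{S}}{W} \circ T_{\isk_W} = T_{\isk_W} \circ \OperatorSNuW$, using \Cref{le:expOperator} \ref{le:expOperator:invariant} together with the $\naturalSKFam$-invariance of $\skSubAlg_W$ (the analogue of \Cref{le:algebraSequence} \ref{le:algebraSequence:TInvariant} announced for $\skSubAlg_W$), the fact that $T_{\isk_W}$ is an isometry onto $L^2(X^k, \skSubAlg_W, \mu^{\otimes k})$ (the analogue of \Cref{le:ikWMarkovIso}, via \Cref{co:quotientSpaceUnique}), and finally \ref{le:SWLDOperatorS:normalOp}. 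For \ref{le:SWLDOperatorS:iso} one expands $\OperatorFromTo{\bm{S}}{W} / \skSubAlg_W \circ \RskW$ through $S_{\skSubAlg_W}$ and $I_{\skSubAlg_W}$, uses \Cref{th:quotientSpaces} \ref{th:quotientSpaces:SExpVal}, \ref{th:quotientSpaces:ExpVal} to rewrite it as $S_{\skSubAlg_W} \circ (\OperatorFromTo{\bm{S}}{W})_{\skSubAlg_W} \circ T_{\isk_W}$, and concludes by \ref{le:SWLDOperatorS:expOp}.

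For \ref{le:SWLDOperatorS:normalOp} it suffices, exactly as in \Cref{le:DIDMOperatorN}, to check the identity on the dense subspace $\LInftyMskNuskW$ of $\LTwoMskNuskW$. Fix $\bm{S} = \jAdjNei$ and $f \in \LInftyMskNuskW$. Using $T_{\isk_W} f = f \circ \isk_W$ and the explicit form of $\OperatorFromTo{\jAdjNei}{W}$ recorded just before the definition of $\simpleAdjNeisK$, the value of the left-hand side at $\bar{x}$ is $\int_X \bigl(\prod_{i \in V} W(x_i,y)\bigr) \cdot f(\isk_W(\bar{x}[y/j]))\dmu(y)$, while the right-hand side is $\bigl(\OperatorFromTo{\jAdjNei}{\nusk_W}f\bigr)(\isk_W(\bar{x})) = \int_{\Msk} f \, d\mu^{\isk_W(\bar{x})}_{j,V}$. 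To match them I would reuse the key computation from the proof of \Cref{le:nuSKWIsskWLD} \ref{le:nuSKWIsskWLD:skWLD}: there, for the sub-$\sigma$-algebra $\subAlg$ generated by $\isk_W \circ \jsection$ and the function $g$ (depending on $\bar{x}$) with $g \circ \isk_W \circ \jsection = \E\bigl(y \mapsto \prod_{i \in V}W(x_i,y) \mid \subAlg\bigr)$ $\mu$-almost everywhere, it is shown that $\mu^{\isk_W(\bar{x})}_{j,V}(A) = \int_A g \, d\mu^{\isk_W(\bar{x})}_{j,\emptyset}$ for all $A$ and, by \Cref{le:nuSKWIsskWLD} \ref{le:nuSKWIsskWLD:jNeighbors}, $\mu^{\isk_W(\bar{x})}_{j,\emptyset} = (\isk_W \circ \jsection)_*\mu$. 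Hence $\int_{\Msk} f\,d\mu^{\isk_W(\bar{x})}_{j,V} = \int_{\Msk} f g \, d(\isk_W \circ \jsection)_*\mu = \int_X f(\isk_W(\bar{x}[y/j])) \cdot \E\bigl(y' \mapsto \prod_{i\in V}W(x_i,y') \mid \subAlg\bigr)(y)\dmu(y)$, and since $y \mapsto f(\isk_W(\bar{x}[y/j]))$ is bounded and $\subAlg$-measurable, properties 1 and 3 of conditional expectation (\Cref{cl:conditionalExpectation}) let us drop the conditional expectation, recovering $\int_X \bigl(\prod_{i\in V}W(x_i,y)\bigr) \cdot f(\isk_W(\bar{x}[y/j]))\dmu(y)$ for $\mu^{\otimes k}$-almost every $\bar{x}$. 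Linearity of all operators involved and density of $\LInftyMskNuskW$ then finish the proof, just as in \Cref{le:operatorNWellDefined} and \Cref{le:DIDMOperatorN}.

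The main obstacle is precisely this Radon-Nikodym bookkeeping in \ref{le:SWLDOperatorS:normalOp}: unlike the neighbor-graph case of \Cref{le:DIDMOperatorN}, the operator $\OperatorFromTo{\bm{S}}{W}$ carries the extra weight $\prod_{i \in V}W(x_i,y)$, which is not $\subAlg$-measurable, so one genuinely needs to identify its $\subAlg$-conditional expectation with the Radon-Nikodym derivative $\tfrac{d\mu^\alpha_{j,V}}{d\mu^\alpha_{j,\emptyset}}$ pulled back along $\isk_W$ — and this identification is exactly what the proof of \Cref{le:nuSKWIsskWLD} supplies, so the step becomes routine once that lemma is invoked. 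The only care required is to keep the various almost-everywhere qualifiers (over $\mu$, $\mu^{\otimes k}$, $\nusk_W$, and the kernels $\mu^\alpha_{j,V}$) consistent, which is guaranteed by the first two clauses in the definition of a simple $k$-WLD; everything else in the lemma is formal and parallels \Cref{sec:WL}.
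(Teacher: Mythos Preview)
Your proposal is correct and follows essentially the same approach as the paper's proof: both reduce \ref{le:SWLDOperatorS:normalOp} to the Radon--Nikodym identification $\E\bigl(y \mapsto \prod_{i\in V}W(x_i,y)\mid \subAlg_{\bar{x}}\bigr) = \tfrac{d\mu^{\isk_W(\bar{x})}_{j,V}}{d\mu^{\isk_W(\bar{x})}_{j,\emptyset}} \circ \isk_W \circ \jsection$ extracted from the proof of \Cref{le:nuSKWIsskWLD}, combine it with \Cref{le:nuSKWIsskWLD}~\ref{le:nuSKWIsskWLD:jNeighbors} and the swap property of conditional expectation in \Cref{cl:conditionalExpectation}, and then declare \ref{le:SWLDOperatorS:expOp} and \ref{le:SWLDOperatorS:iso} analogous to \Cref{le:DIDMOperatorN}. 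The only cosmetic difference is that the paper computes from the $\OperatorSNuW$-side towards the $\OperatorFromTo{\bm{S}}{W}$-side while you go the other way.
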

\begin{proof}
    Let $(j, V) \in S^k$ such that $\bm{S} = \jAdjNei$.
    For $\bar{x} \in X^k$, let
    $\subAlg_{\bar{x}}$ denote the minimum $\mu$-relatively complete
    sub-$\sigma$-algebra that makes $\isk_W \circ \jsection$ measurable.
    As seen in the proof of \Cref{le:nuSKWIsskWLD},
    we have
    \begin{equation*}
        \E(y \mapsto \prod_{i \in V} W(x_i, y) \mid \subAlg_{\bar{x}}) = \frac{d\mu_{j,V}^{\isk_W(\bar{x})}}{d\mu_{j, \emptyset}^{\isk_W(\bar{x})}} \circ \isk_W \circ \jsection
    \end{equation*}
    $\mu$-almost everywhere.
    Then, we have
    \begin{align*}
        (T_{\isk_W} \circ \OperatorSNuW f) (\bar{x})
        &= \int_{\Msk} \frac{d \mu^{\isk_W(\bar{x})}_{j,V}}{d \mu^{\isk_W(\bar{x})}_{j, \emptyset}} \cdot f\, d (\isk_W \circ \jsection)_* \mu \tag{Definition and \Cref{le:nuSKWIsskWLD} (\ref{le:nuKWIskWLD:jNeighbors})}\\
        &= \int_{X} \E(y \mapsto \prod_{i \in V} W(x_i, y) \mid \subAlg_{\bar{x}}) \cdot f \circ \isk_W \circ \jsection \dmu\\
        &= \int_{X} \prod_{i \in V} W(x_i, y)\cdot \E(f \circ \isk_W \circ \jsection \mid \subAlg_{\bar{x}})(y)  \dmu(y) \tag{\Cref{cl:conditionalExpectation}}\\
        &= \int_{X} \prod_{i \in V} W(x_i, y)\cdot f \circ \isk_W \circ \bar{x}[y/j]  \dmu(y)\\
        &= (\OperatorFromTo{\bm{S}}{W} \circ T_{\isk_W} f) (\bar{x})
    \end{align*}
    for every $f \in \LInftyMskNu$ and $\mu^{\otimes k}$-almost every
    $\bar{x} \in X^k$.
    As $\LInftyMskNuskW$ is dense in $\LTwoMskNuskW$, this implies (\ref{le:SWLDOperatorS:normalOp}).
    From there on, (\ref{le:SWLDOperatorS:expOp}) and (\ref{le:SWLDOperatorS:iso})
    are analogous to \Cref{le:DIDMOperatorN} (\ref{le:DIDMOperatorN:expOp})
    and (\ref{le:DIDMOperatorN:iso}), respectively.
\end{proof}

For $k \ge 1$ and a simple $k$-WL distribution $\nu \in \probMeas(\Msk)$, let
$\TT_\nu \coloneqq (\OperatorFromTo{\bm{S}}{\nu})_{\bm{S} \in \simpleAdjNeisK}$.
Then, for a graphon $W \colon X \times X \to [0,1]$,
we have
\begin{align*}
    &\naturalSKFam/\skSubAlg_W \circ \RskW = \RskW \circ \TTsk_{\nusk_W}&
    &\text{and}&
    &{\naturalSKFam}^*/\skSubAlg_W \circ \RskW = \RskW \circ \TTsk_{\nusk_W}^*,&
\end{align*}
where the first equation is just
\Cref{le:SWLDOperatorS}
and the second equation follows from the first
since $\Rsk$ is a Markov isomorphism.
As before, a permutation $\pi \colon [k] \to [k]$
naturally extends to a measurable bijection $\pi \colon \Msk \to \Msk$,
and the $\pi$-invariance, and more general the permutation invariance,
of a simple $k$-WLD can be defined
analogously to \Cref{sec:operatorsOnMeasures}.
The analogous result to \Cref{le:DIDMOperatorP} holds as well;
in particular, $\nusk_W$ is permutation invariant for a graphon $W$.
Let $\subAlg \in \subAlgsk$ be simply $W$-invariant;
recall that this definition is a bit artificial
as it means that
$\subAlg$ is $(\naturalSKFam)_{\skSubAlg_W}$-invariant.
\Cref{le:invariantQuotientPreservesHomomorphisms}
can then be adapted to the also somewhat convoluted statement
that
\begin{equation*}
    t(\term, \TT_{\nusk_W})
    = t(\term, ((\naturalSKFam)_{\skSubAlg_W})_\subAlg)= t(\term, (\naturalSKFam)_{\skSubAlg_W} \!/ \subAlg)= t(\term, \naturalSKFam)= t(\graphOfTerm, W)
\end{equation*}
holds for every $\term \in \simpleTreeClosureK$.
To prove this, one has to apply
\Cref{le:markovEmbeddingPreservesHomomorphisms} twice
this time:
first, to get from $\naturalSKFam$ to $(\naturalSKFam)_{\skSubAlg_W}$ and, second, to get
from there to $((\naturalSKFam)_{\skSubAlg_W})_\subAlg$
and $(\naturalSKFam)_{\skSubAlg_W} \!/ \subAlg$.

For a term $\term \in \simpleTreeClosureK$
and every $n \in \N$ with $n \ge h(\term)$,
the set $F^\term_n$ of functions $\Msk_n \to [0,1]$
is defined similarly to
\Cref{def:setOfHomFuns}.
More precisely,
while we could just use the old definition,
it can actually be simplified
as the distinct cases
for adjacency and neighbor graphs
can be subsumed by the function
\begin{equation*}
    \alpha \mapsto \int_{\Msk_{n}} f \, d(\alpha_{n+1})_{(j,V)}
    \in F_{n+1}^{\jAdjNei \circ \term}
\end{equation*}
for every $f\in F_n^{\term}$ and every $j \in [k]$.
From there, we analogously obtain the set
$F^\term$ of continuous
functions $\Msk \to [0,1]$.
\Cref{le:allHomFunsEqual}
and \Cref{co:homDensitykWLDIntegral}
adapt in a straight-forward fashion.

For every $n \in \NInfty$, define
$\Tsk_n \coloneqq \bigcup_{{\term \in \simpleTreeClosureK,\\ h(\term) \le n}} F_n^\term$
and abbreviate $\Tsk \coloneqq \Tsk_\infty$.
\Cref{le:tkSeparatesPoints} also adapts easily, i.e.,
for every $n \in \NInfty$,
the set $\Tsk_n$ is closed under multiplication, contains $\allOne_{\Msk_n}$,
and separates points of $\Msk_n$.
Here, one has to observe
that the all-one function distinguishes two
measures if their total mass is different,
which means that the Portmanteau Theorem is still applicable in this case.
From there, we obtain the following
analogue to \Cref{le:kWLDConvergence}.

\begin{lemma}
    \label{le:skWLDConvergence}
    Let $k \ge 1$.
    Let $(W_n)_{n}$ and $W \colon X \times X \to [0,1]$
    be a sequence of graphons and a graphon, respectively.
    Then, $\nusk_{W_n} \rightarrow \nusk_W$ if and only if
    $t(F, W_n) \rightarrow t(F, W)$
    for every simple graph~$F$ of treewidth
    at most $k - 1$.
\end{lemma}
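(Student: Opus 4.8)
The plan is to mirror the proof of \Cref{le:kWLDConvergence} step for step, replacing the plain $k$-WL objects by their simple-$k$-WL counterparts. All the required ingredients have been assembled in this section: the adaptation of \Cref{le:tkSeparatesPoints} gives that, for every $n \in \NInfty$, the set $\Tsk_n$ is closed under multiplication, contains $\allOne_{\Msk_n}$, and separates points of $\Msk_n$; the adaptation of \Cref{co:homDensitykWLDIntegral} gives $t(\graphOfTerm, W) = \int_{\Msk} f \, d \nusk_W$ for every $\term \in \simpleTreeClosureK$ and every $f \in F^\term$; and the adaptation of \Cref{le:graphsOfTerms} gives that the underlying graphs of the bi-labeled graphs $\graphOfTerm$ for $\term \in \simpleTreeClosureK$ are, up to isolated vertices, precisely the simple graphs of treewidth at most $k-1$.

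First I would invoke the Stone--Weierstrass theorem for $\Tsk = \Tsk_\infty$: since $\Msk$ is compact metrizable and $\Tsk$ contains the constant function, is closed under multiplication, and separates points, the linear hull of $\Tsk$ is uniformly dense in $C(\Msk)$. Because $\nusk_{W_n}$ and $\nusk_W$ are probability measures on the compact metrizable space $\Msk$, the Portmanteau theorem applies, so $\nusk_{W_n} \rightarrow \nusk_W$ is equivalent to $\int_{\Msk} f \, d\nusk_{W_n} \rightarrow \int_{\Msk} f \, d\nusk_W$ for every $f \in C(\Msk)$, hence, by the uniform density just established, for every $f$ in the linear hull of $\Tsk$, hence, by linearity of the integral, for every $f \in \Tsk$. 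By the definition of $\Tsk$, every $f \in \Tsk$ lies in some $F^\term$ with $\term \in \simpleTreeClosureK$, and conversely every such $F^\term$ is contained in $\Tsk$; so the adapted \Cref{co:homDensitykWLDIntegral} turns this into the statement that $t(\graphOfTerm, W_n) \rightarrow t(\graphOfTerm, W)$ for every $\term \in \simpleTreeClosureK$. Finally, the adapted \Cref{le:graphsOfTerms}, together with the fact that adding isolated vertices to a graph does not affect its homomorphism density in a graphon, rephrases this as $t(F, W_n) \rightarrow t(F, W)$ for every simple graph $F$ of treewidth at most $k-1$, which is the claim.

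The only genuinely delicate point, already handled in the discussion preceding the lemma, is that $\Msk$ is built from the sub-probability spaces $\measOne(\cdot)$ rather than from $\probMeas(\cdot)$, so the Portmanteau theorem may be used only for families of measures sharing a common total mass. At the outer level this causes no difficulty, since $\nusk_W$ is always a probability measure; and inside the inductive proof that $\Tsk_n$ separates points of $\Msk_n$, measures of differing total mass are separated directly by the constant function $\allOne$, while Portmanteau (after rescaling) is used only in the equal-mass case. Beyond this bookkeeping, every step is a direct transcription of the corresponding step for \Cref{le:kWLDConvergence}, so I do not anticipate any further obstacle.
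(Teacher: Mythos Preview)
Your proposal is correct and follows exactly the route the paper intends: the paper does not spell out a proof for this lemma but simply says it is the analogue of \Cref{le:kWLDConvergence}, and your transcription of that argument with the simple-$k$-WL objects $\Msk$, $\Tsk$, $\simpleTreeClosureK$, and $\nusk_W$ is precisely that analogue. Your remark about the $\measOne$ versus $\probMeas$ subtlety is also on point and matches the paper's own discussion preceding the lemma.
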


Since $\probMeas(\Msk)$
is Hausdorff, this also means that
the simple $k$-WLDs of two graphons are equal if and only if
their treewidth $k-1$ simple graph homomorphism densities are.
With the
Counting Lemma \cite[Lemma $10.23$]{Lovasz2012},
we also obtain the following additional corollary.

\begin{corollary}
    Let $k \ge 1$.
    The mapping $\mathcal{W}_0 \to \probMeas(\Msk), W \mapsto \nusk_W$
    is continuous
    when $\mathcal{W}_0$ is endowed with the cut distance.
\end{corollary}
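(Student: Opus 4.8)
The plan is to obtain this as a formal consequence of \Cref{le:skWLDConvergence} and the Counting Lemma. Recall that the space $\probMeas(\Msk)$ is compact metrizable, since $\Msk$ is compact metrizable (a countable product of compact metrizable spaces), and that the cut distance $\cutDist$ induces a pseudometric topology on $\graphons$, which is therefore first-countable. Consequently, continuity of $W \mapsto \nusk_W$ is equivalent to sequential continuity, and it suffices to fix a sequence $(W_n)_n$ of graphons and a graphon $W$ with $\cutDist(W_n, W) \to 0$ and show that $\nusk_{W_n} \to \nusk_W$ in $\probMeas(\Msk)$.

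First I would invoke the Counting Lemma \cite[Lemma~$10.23$]{Lovasz2012}, which bounds $\lvert t(F,W_n) - t(F,W) \rvert$ in terms of $\es(F) \cdot \cutDist(W_n,W)$ for every \emph{simple} graph $F$; hence $\cutDist(W_n,W) \to 0$ yields $t(F,W_n) \to t(F,W)$ for every simple graph $F$, and in particular for every simple graph $F$ of treewidth at most $k-1$. By \Cref{le:skWLDConvergence}, the latter is equivalent to $\nusk_{W_n} \to \nusk_W$, which is exactly what is needed. As a side remark, applying the same argument to constant sequences shows that $\cutDist(U,W) = 0$ implies $\nusk_U = \nusk_W$, so the map descends to the quotient of $\graphons$ by weak isomorphism; this is not needed for the continuity statement itself but clarifies that the statement is meaningful despite $\cutDist$ being only a pseudometric.

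There is essentially no obstacle here, as the substance of the argument has already been carried out in \Cref{le:skWLDConvergence}. The only point worth stressing is \emph{why} this works for simple $k$-WL but not for ordinary $k$-WL: the Counting Lemma controls homomorphism densities in the cut distance only for simple graphs and fails for multigraphs (two graphons can be weakly isomorphic yet have different multigraph homomorphism densities only in the limit, cf.\ \cite[Exercise~$10.26$]{Lovasz2012}), which is precisely the reason no analogue of this corollary holds for $\nu^k_W$ or for the multigraph characterization in \Cref{th:kWLGraphons}.
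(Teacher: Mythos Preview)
Your proposal is correct and follows exactly the approach the paper indicates: the corollary is stated without proof, with the surrounding text noting that it follows from the Counting Lemma (together with \Cref{le:skWLDConvergence}), and that it fails for $k$-WLDs since the Counting Lemma does not hold for multigraphs. Your write-up simply spells out these details, including the reduction to sequential continuity via metrizability.
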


We note that the same reasoning does not work for multigraphs
as the Counting Lemma does not hold for multigraphs.
Moreover, the above corollary does not hold for multigraphs
since convergence of simple graph homomorphism densities
does not imply convergence of multigraph homomorphism densities,
cf.\ \cite[Exercise $10.26$]{Lovasz2012} or \cite[Lemma C.$2$]{Janson2013}.

Having outlined the necessary changes for simple graphs,
we obtain the following variant of \Cref{th:kWLGraphons}
for simple graph homomorphism densities.
Note the inelegant characterization via Markov operators,
which is quite artificial in this case;
this again stems from the fact that the family $\naturalSKFam$
of operators is not closed under taking adjoints.

\begin{theorem}
    \label{th:simpleWLForGraphons}
    Let $k \ge 1$
    and $U, W \colon X \times X \to [0,1]$ be graphons.
    The following are equivalent:
    \begin{enumerate}
        \item
            $t(F, U) = t(F, W)$ for every simple graph of treewidth at most $k-1$.
            \label{th:simpleWLForGraphons:homomorphisms}
        \item $\nusk_U = \nusk_W$.
            \label{th:simpleWLForGraphons:DIDM}
        \item
            There is a
            (permutation-inv.)
            Markov iso.\
            $R \colon \!\LTwoProdKQuoOf{\skSubAlg_W} \to \LTwoProdKQuoOf{\skSubAlg_U}$
            such that
            $\naturalSKFamU/\skSubAlg_U\circ R= R \circ \naturalSKFam/\skSubAlg_W$.
            \label{th:simpleWLForGraphons:minInvSubalgebra}
        \item
            There is a
            (permutation-inv.)
            Markov operator
            $S \colon \LTwoProdK \to \LTwoProdK$
            such that
            $(\naturalSKFamU)_{\skSubAlg_U} \circ S = S \circ (\naturalSKFam)_{\skSubAlg_W}$
            and
            $S^* \circ (\naturalSKFamU)_{\skSubAlg_U} = (\naturalSKFam)_{\skSubAlg_W} \circ S^*$.
            \label{th:simpleWLForGraphons:markovOperator}
        \item
            There are
            $\mu^{\otimes k}$-rel.\ comp.\ sub-$\sigma$-algebras $\subAlgC$, $\subAlgD$
            of $\Borel^{\otimes k}$
            that are simply $U$-invariant and simply $W$-invariant, respectively,
            and a Markov iso.\
            $R \colon \LTwoProdKQuoD \to \LTwoProdKQuo$
            such that
            $(\naturalSKFamU)_{\skSubAlg_U}/\subAlg \circ R= R \circ (\naturalSKFam)_{\skSubAlg_W}/\subAlgD$.
            \label{th:simpleWLForGraphons:markovIsomorphism}
    \end{enumerate}
\end{theorem}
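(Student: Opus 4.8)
The plan is to prove \Cref{th:simpleWLForGraphons} by the cycle of implications \ref{th:simpleWLForGraphons:homomorphisms} $\implies$ \ref{th:simpleWLForGraphons:DIDM} $\implies$ \ref{th:simpleWLForGraphons:minInvSubalgebra} $\implies$ \ref{th:simpleWLForGraphons:markovOperator} $\implies$ \ref{th:simpleWLForGraphons:markovIsomorphism} $\implies$ \ref{th:simpleWLForGraphons:homomorphisms}, closely following the proof of \Cref{th:WLForGraphons} but carrying the Hilbert adjoint of every operator family along explicitly, since $\simpleAdjNeisK$ is not closed under transposition and the operators $\OperatorFromTo{\bm{S}}{W}$ for $\bm{S} \in \simpleAdjNeisK$ are not self-adjoint. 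The implication \ref{th:simpleWLForGraphons:homomorphisms} $\implies$ \ref{th:simpleWLForGraphons:DIDM} is the simple-graph analogue of \Cref{le:distinguishingTerm}: it follows from \Cref{le:skWLDConvergence} together with the fact that $\probMeas(\Msk)$ is Hausdorff. For \ref{th:simpleWLForGraphons:DIDM} $\implies$ \ref{th:simpleWLForGraphons:minInvSubalgebra} I would set $R \coloneqq \Rsk_U \circ (\RskW)^*$; by the simple-graph analogue of \Cref{le:ikWMarkovIso} this is a Markov isomorphism $\LTwoProdKQuoOf{\skSubAlg_W} \to \LTwoProdKQuoOf{\skSubAlg_U}$ (well-defined because $\nusk_U = \nusk_W$), and the identity $\naturalSKFam/\skSubAlg_W \circ \RskW = \RskW \circ \TTsk_{\nusk_W}$ from \Cref{le:SWLDOperatorS}, combined with $\nusk_U = \nusk_W$ and $(\RskW)^* = \RskW^{-1}$, yields $\naturalSKFamU/\skSubAlg_U \circ R = R \circ \naturalSKFam/\skSubAlg_W$ exactly as in the corresponding step of \Cref{th:WLForGraphons}; permutation invariance is inherited from the simple-graph analogue of \Cref{le:DIDMOperatorP}.

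For \ref{th:simpleWLForGraphons:minInvSubalgebra} $\implies$ \ref{th:simpleWLForGraphons:markovOperator} I would put $S \coloneqq I_{\skSubAlg_U} \circ R \circ S_{\skSubAlg_W}$, a Markov operator as a composition of Markov operators. Passing between $\naturalSKFam/\skSubAlg_W$ and the conditional-expectation family $(\naturalSKFam)_{\skSubAlg_W}$ through this sandwich via \Cref{le:quoOperator} \ref{le:quoOperator:expOperatorI} and \ref{le:quoOperator:expOperator} — and using that $\skSubAlg_W$ is automatically $(\naturalSKFam)_{\skSubAlg_W}$-invariant because $\ExpVal{\skSubAlg_W}$ projects onto $\LTwoProdKOf{\skSubAlg_W}$ — one obtains $(\naturalSKFamU)_{\skSubAlg_U} \circ S = S \circ (\naturalSKFam)_{\skSubAlg_W}$. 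The second required equation $S^* \circ (\naturalSKFamU)_{\skSubAlg_U} = (\naturalSKFam)_{\skSubAlg_W} \circ S^*$ follows by taking Hilbert adjoints of the intertwining relation in \ref{th:simpleWLForGraphons:minInvSubalgebra} (again $R^* = R^{-1}$) together with $(T_\subAlg)^* = (T^*)_\subAlg$ and $(T/\subAlg)^* = T^*/\subAlg$ from \Cref{le:quoOperator} \ref{le:quoOperator:adjoint}; this is precisely the point where non-self-adjointness forces two equations rather than one. Permutation invariance of $S$ propagates from that of $\skSubAlg_U$ and $\skSubAlg_W$, as in \Cref{th:WLForGraphons}.

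The implication \ref{th:simpleWLForGraphons:markovOperator} $\implies$ \ref{th:simpleWLForGraphons:markovIsomorphism} is then a direct application of \Cref{th:markovOperatorToIsomorphism}: letting $T_1$ range over the operators of $(\naturalSKFamU)_{\skSubAlg_U}$ and $T_2$ over those of $(\naturalSKFam)_{\skSubAlg_W}$, the two equations of \ref{th:simpleWLForGraphons:markovOperator} are exactly the hypotheses $T_1 \circ S = S \circ T_2$ and $S^* \circ T_1 = T_2 \circ S^*$, so the lemma produces $\subAlgC, \subAlgD \in \subAlgsk$ that are $(\naturalSKFamU)_{\skSubAlg_U}$- resp.\ $(\naturalSKFam)_{\skSubAlg_W}$-invariant — that is, simply $U$- resp.\ simply $W$-invariant — together with a Markov isomorphism $R$ satisfying $(\naturalSKFamU)_{\skSubAlg_U}/\subAlgC \circ R = R \circ (\naturalSKFam)_{\skSubAlg_W}/\subAlgD$. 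Finally, for \ref{th:simpleWLForGraphons:markovIsomorphism} $\implies$ \ref{th:simpleWLForGraphons:homomorphisms}, since $(\naturalSKFam)_{\skSubAlg_W}$ is a family of $L^\infty$-contractions and $\subAlgD$ is invariant for it, the simple-graph analogue of \Cref{le:invariantQuotientPreservesHomomorphisms} gives $t(\term, (\naturalSKFam)_{\skSubAlg_W}/\subAlgD) = t(\graphOfTerm, W)$, and symmetrically for $U$; \Cref{le:markovEmbeddingPreservesHomomorphisms} applied to the Markov embedding $R$ gives $t(\term, (\naturalSKFamU)_{\skSubAlg_U}/\subAlgC) = t(\term, (\naturalSKFam)_{\skSubAlg_W}/\subAlgD)$, so $t(\graphOfTerm, U) = t(\graphOfTerm, W)$ for every $\term \in \simpleTreeClosureK$, and the simple-graph analogue of \Cref{le:graphsOfTerms} (underlying graphs of such terms are exactly the simple graphs of treewidth at most $k-1$, up to isolated vertices) closes the cycle.

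\textbf{Main obstacle.} No single estimate is the difficulty here — all of the heavy analytic content (that $\nusk_W$ is a simple $k$-WLD via \Cref{le:nuSKWIsskWLD}, that $\RskW$ is a Markov isomorphism, the Stone--Weierstrass/Portmanteau argument behind \Cref{le:skWLDConvergence}, and \Cref{le:SWLDOperatorS}) is already available. The real care is in the bookkeeping forced by $\simpleAdjNeisK$ not being closed under transposition: one must consistently work with the conditional-expectation operators $(\naturalSKFam)_{\skSubAlg_W}$ (so that $\skSubAlg_W$-invariance, hence the applicability of \Cref{le:quoOperator} and \Cref{th:markovOperatorToIsomorphism}, is automatic), keep both the $S$- and the $S^*$-equation alive through every step, and verify that the $\subAlgC, \subAlgD$ delivered by \Cref{th:markovOperatorToIsomorphism} are indeed simply $U$- resp.\ simply $W$-invariant in the (slightly quirky) sense of the definition given in this section.
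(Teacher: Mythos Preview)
Your proposal is correct and follows essentially the same route as the paper: the same cycle of implications, the same choices $R = \Rsk_U \circ (\RskW)^*$ and $S = I_{\skSubAlg_U}\circ R\circ S_{\skSubAlg_W}$, and the same appeal to \Cref{th:markovOperatorToIsomorphism} and the simple-graph analogues of \Cref{le:invariantQuotientPreservesHomomorphisms}, \Cref{le:markovEmbeddingPreservesHomomorphisms}, and \Cref{le:graphsOfTerms}. One small remark: in the step \ref{th:simpleWLForGraphons:minInvSubalgebra} $\Rightarrow$ \ref{th:simpleWLForGraphons:markovOperator} you do not actually need any invariance at all, because the target equation already involves $(\naturalSKFam)_{\skSubAlg_W}$ and items \ref{le:quoOperator:expOperatorI} and \ref{le:quoOperator:expOperator} of \Cref{le:quoOperator} hold unconditionally; the paper makes this explicit.
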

\begin{proof}
    (\ref{th:simpleWLForGraphons:homomorphisms}) $\implies$ (\ref{th:simpleWLForGraphons:DIDM}):
    Follows from \Cref{le:skWLDConvergence}.

    (\ref{th:simpleWLForGraphons:DIDM}) $\implies$ (\ref{th:simpleWLForGraphons:minInvSubalgebra}):
    Analogous to \Cref{th:WLForGraphons}
    as we have
    both
    $\naturalSKFamU/\skSubAlg_U \circ \Rsk_U = \Rsk_U \circ \TTsk_{\nusk_U}$
    and
    $(\Rsk_W)^* \circ {\naturalSKFam}/\skSubAlg_W = {\TTsk_{\nusk_W}} \circ (\Rsk_W)^*$
    since $\Rsk_W$ is a Markov isomorphism.

    (\ref{th:simpleWLForGraphons:minInvSubalgebra}) $\implies$ (\ref{th:simpleWLForGraphons:markovOperator}):
    Set $S \coloneqq I_{\skSubAlg_U} \circ R \circ S_{\skSubAlg_W}$,
    which is a Markov operator as the composition of Markov operators.
    Then,
    \begin{align*}
        (\naturalSKFamU)_{\skSubAlg_U} \circ S= (\naturalSKFamU)_{\skSubAlg_U} \circ I_{\skSubAlg_U} \circ R \circ S_{\skSubAlg_W}&= I_{\skSubAlg_U} \circ \naturalSKFamU/{\skSubAlg_U} \circ R \circ S_{\skSubAlg_W} \tag{\Cref{le:quoOperator} (\ref{le:quoOperator:expOperatorI})}\\
        &= I_{\skSubAlg_U} \circ R \circ \naturalSKFam/{\skSubAlg_W} \circ S_{\skSubAlg_W}\\
        &= I_{\skSubAlg_U} \circ R \circ S_{\skSubAlg_W} \circ (\naturalSKFam)_{\skSubAlg_W} \tag{\Cref{le:quoOperator} (\ref{le:quoOperator:expOperator})}\\
        &= S \circ (\naturalSKFam)_{\skSubAlg_W}.
    \end{align*}
    Note that we neither used that $\skSubAlg_U$ is $\naturalSKFamU$-invariant
    nor that $\skSubAlg_W$ is $\naturalSKFam$-invariant.
    Since $R$ is a Markov isomorphism,
    we also have ${\naturalSKFamU}^*/\skSubAlg_U \circ R= R \circ {\naturalSKFam}^*/\skSubAlg_W$,
    which means that we obtain
    $({\naturalSKFamU}^*)_{\skSubAlg_U} \circ S = S \circ ({\naturalSKFam}^*)_{\skSubAlg_W}$
    in an analogous fashion.
    This implies the claim.
    Moreover, analogously to \Cref{th:WLForGraphons},
    if $R$ is permutation invariant, then so is $S$.

    (\ref{th:simpleWLForGraphons:markovOperator}) $\implies$ (\ref{th:simpleWLForGraphons:markovIsomorphism}):
    Follows immediately from \Cref{th:markovOperatorToIsomorphism}.

    (\ref{th:simpleWLForGraphons:markovIsomorphism}) $\implies$ (\ref{th:simpleWLForGraphons:homomorphisms}):
    Analogous to \Cref{th:WLForGraphons}.
\end{proof}

Also in this case, it is possible to define the space $\MsInfty$ and, for a graphon $W \colon X \times X \to [0,1]$,
the measure $\nusInfty_W \in \probMeas(\MsInfty)$.
Then, one obtains the following lemma
corresponding to
\Cref{le:inftyWLDConvergence},
where we now have a third characterization in terms of the cut distance,
denoted by
$\cutDist$, cf.\ \cite[Theorem $11.5$]{Lovasz2012}.

\begin{lemma}
    \label{le:inftysWLDConvergence}
    Let $(W_n)_{n}$ and $W \colon X \times X \to [0,1]$
    be a sequence of graphons and a graphon, respectively.
    Then, the following are equivalent:
    \begin{enumerate}
        \item
            $\nusInfty_{W_n} \rightarrow \nusInfty_W$.
            \label{le:inftysWLDConvergence:inftysWLD}
        \item
            $t(F, W_n) \rightarrow t(F, W)$
            for every simple graph~$F$.
            \label{le:inftysWLDConvergence:simplegraphs}
        \item $W_n \xrightarrow{\cutDist} W$.
            \label{le:inftysWLDConvergence:cutdist}
    \end{enumerate}
\end{lemma}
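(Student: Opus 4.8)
The plan is to reproduce the argument of \Cref{le:inftyWLDConvergence} in the simple setting and then append the cut-distance characterization using the classical theory of graph limits. The backbone is the simple analogue of \Cref{le:inftyWLDContinuity}: if $(\nu_n)_n$ is a sequence of, and $\nu$ is, a consistent family $(\nu^{\simpleS k})_{k \ge 1}$ of simple $k$-WLDs (those with $\nu^{\simpleS k} = (p^{\simpleS k+1, \simpleS k})_* \nu^{\simpleS k+1}$ for every $k$), so that the Kolmogorov Consistency Theorem produces measures $\nu^{\simpleS\infty}_n, \nu^{\simpleS\infty} \in \probMeas(\MsInfty)$, then $\nu^{\simpleS\infty}_n \to \nu^{\simpleS\infty}$ if and only if $\nu^{\simpleS k}_n \to \nu^{\simpleS k}$ for every $k \ge 1$. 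First I would prove this exactly as \Cref{le:inftyWLDContinuity}: $\MsInfty$ is a compact metrizable space and the projections $p^{\simpleS\infty,\simpleS k} \colon \MsInfty \to \Msk$ separate points, so $\bigcup_{k \ge 1} C(\Msk) \circ p^{\simpleS\infty,\simpleS k}$ is uniformly dense in $C(\MsInfty)$ by the Stone--Weierstrass theorem; the Portmanteau theorem then reduces convergence in $\probMeas(\MsInfty)$ to convergence of $\int f \, d (p^{\simpleS\infty,\simpleS k})_* \nu^{\simpleS\infty}_n = \int f \, d \nu^{\simpleS k}_n$ for all $k$ and all $f \in C(\Msk)$, that is, to $\nu^{\simpleS k}_n \to \nu^{\simpleS k}$ for every $k$.

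Given this continuity lemma, the equivalence \ref{le:inftysWLDConvergence:inftysWLD} $\iff$ \ref{le:inftysWLDConvergence:simplegraphs} follows by collecting over $k$. If $\nusInfty_{W_n} \to \nusInfty_W$, the continuity lemma yields $\nusk_{W_n} \to \nusk_W$ for every $k$, and \Cref{le:skWLDConvergence} then gives $t(F, W_n) \to t(F, W)$ for every simple graph $F$ of treewidth at most $k-1$; since every simple graph $F$ has some finite treewidth, taking $k = \operatorname{tw}(F) + 1$ proves this for all simple $F$. Conversely, convergence of $t(F, W_n) \to t(F,W)$ for all simple $F$ gives it in particular for those of treewidth at most $k-1$, hence $\nusk_{W_n} \to \nusk_W$ for every $k$ by \Cref{le:skWLDConvergence}, and then $\nusInfty_{W_n} \to \nusInfty_W$ by the continuity lemma.

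Finally, \ref{le:inftysWLDConvergence:simplegraphs} $\iff$ \ref{le:inftysWLDConvergence:cutdist} is the equivalence of left-convergence and cut-distance convergence for graphons: $t(F, W_n) \to t(F, W)$ for every simple graph $F$ if and only if $\cutDist(W_n, W) \to 0$, which is \cite[Theorem $11.5$]{Lovasz2012} (one direction via the Counting Lemma \cite[Lemma $10.23$]{Lovasz2012}, the other via compactness of graphon space under the cut metric). The hard part here is really only the Stone--Weierstrass/Portmanteau bookkeeping in the continuity lemma, but since $\MsInfty$ is compact metrizable and its defining projections separate points, this step goes through verbatim as in \Cref{le:inftyWLDContinuity} and \Cref{le:continuity}; everything else is assembling facts already established in this section together with the cited results from \cite{Lovasz2012}.
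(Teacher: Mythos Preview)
Your proposal is correct and follows exactly the approach the paper indicates: the paper does not give an explicit proof of this lemma but says it is obtained ``corresponding to \Cref{le:inftyWLDConvergence}'' together with \cite[Theorem~$11.5$]{Lovasz2012} for the cut-distance characterization, which is precisely what you do (simple analogue of \Cref{le:inftyWLDContinuity} via Stone--Weierstrass/Portmanteau, then \Cref{le:skWLDConvergence} level-by-level, then the left-convergence $\iff$ cut-distance equivalence from \cite{Lovasz2012}).
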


\subsection{Non-Oblivious Simple Weisfeiler-Leman}
\label{sec:nonObliviousSimpleWL}

As mentioned in the introduction, there are two non-equivalent
variants of $k$-WL for graphs:
oblivious $k$-WL and (non-oblivious) $k$-WL, where
$k$-WL is equivalent to oblivious $k+1$-WL
in the sense that
two graphs are distinguished
by $k$-WL if and only if they are distinguished
by oblivious $k+1$-WL \cite[Corollary V.$7$]{Grohe2021}.
(Non-oblivious)
$k$-WL is usually considered in the graph setting since it
needs less memory to achieve the same expressive power,
but the connections of oblivious $k$-WL
to other characterizations are much cleaner.
Examples of this are the system $\Lkk(G, H)$
of linear equations, where the $k$ directly corresponds to the $k$
of oblivious $k$-WL,
the logic $\mathsf{C}^{k}$,
the $k$-variable fragment of first-order logic with counting quantifiers,
and the maximum bag size in a tree decomposition,
although the latter is usually hidden by the fact that one
subtracts one from the maximum bag size in a tree decomposition to get
the width of such a decomposition.
Tree decompositions also give an explanation of the
difference between oblivious $k$-WL and non-oblivious $k$-WL:
intuitively, given a tree decomposition of width $k$,
we may dissect it into parts at bags of size $k+1$ or at bags of size $k$
yielding oblivious $k$-WL and non-oblivious $k$-WL, respectively.

Let us formally define non-oblivious $k$-WL.
Let $G$ be a graph and recall that
the atomic type $\atp_G(\bar{v})$ of a tuple $\bar{v} = (v_1, \dots, v_k) \in V(G)^k$
of vertices of $G$ is the $k \times k$-matrix $A$ with entries
$A_{ij} = 2$ if $v_i = v_j$, $A_{ij} = 1$ if $v_i v_j \in E(G)$, and
$A_{ij} = 0$ otherwise.
Then,
let $\wl^k_{G,0}(\bar{v}) \coloneqq \atp_G(\bar{v})$ and,
for every  $n \ge 0$, define
\begin{equation}
    \wl^k_{G, n+1}(\bar{v}) \coloneqq \left(\wl^k_{G, n}(\bar{v}), \MSet{(\atp_G(\bar{v}w), \big(\wl^k_{G, n}(\bar{v}[w/j])\big)_{j \in [k]}) \mid w \in V(G)}\right)
\end{equation}
for every $\bar{v} \in V(G)^k$.
We say that \textit{$k$-WL does not distinguish graphs $G$ and $H$}
if
$\MSet{\wl^k_{G,n}(\bar{v}) \mid \bar{v} \in V(G)^k}
    = \MSet{\wl^k_{H,n}(\bar{v}) \mid \bar{v} \in V(H)^k}$
for every $n \ge 0$.
Recall that the $j$-neighbor
$\bar{v}[w/j]$ denotes the $k$-tuple obtained from $\bar{v}$
by replacing the $j$th component by $w$.
The colorings computed by $1$-WL and color refinement
induce the same partition and,
in particular, $1$-WL distinguishes two graphs
if and only if color refinement does \cite[Proposition V.$4$]{Grohe2021}.

\begin{figure}
    \centering
    \begin{tikzpicture}
        \node[vertex, label={90:$a_1$}] (D1T) {};
        \node[vertex, below = of D1T, label={270:$b_1$}] (D1B) {};
        \path[draw, thick] (D1T) edge (D1B);

        \node[vertex, right = 1.5cm of D1T, label={90:$a_1$}] (I1T) {};
        \node[vertex, below = of I1T, label={270:$b_1$}] (I1B) {};
        \node[vertex, draw = white, fill = white] (I1G) at ($(I1T)!0.5!(I1B)$) {};

        \node[vertex, right = 1.5cm of I1G, label={90:$a_1$}, label={270:$b_1$}] (D2L) {};
        \node[vertex, right = of D2L] (D2R) {};
        \path[draw, thick] (D2L) edge (D2R);

        \node[vertex, right = 1.5cm of D2R, label={90:$a_1$}, label={270:$b_1$}] (I2L) {};
        \node[vertex, right = of I2L] (I2R) {};
    \end{tikzpicture}
    \caption{The (isomorphism types of) graphs in $\mathcal{F}^{\nonoSimpleS 1}$.}
    \label{fig:nonoSimpleSOne}
\end{figure}

Following the intuition that a tree decomposition of width $k$
can be dissect into parts either at bags of size $k+1$ or at bags of size $k$,
one can adapt the definitions of this section
to obtain a variant of simple $k$-WL akin to non-oblivious $k$-WL.
To this end, recall
the definition of forget, adjacency, and introduce graphs from
\Cref{def:differentgraphs}
and let
$\nonoSimplesK$ to be the set of all bi-labeled graphs
\begin{equation*}
    \bm{F}^{k+1}_{j_1} \circ \bigcirc_{i \in V} \bm{A}^{k+1}_{i j_1}
    \circ \bm{I}^{k+1}_{j_2}
    \in \graphsOf{k,k}
\end{equation*}
for $j_1, j_2 \in [k+1]$, $V \subseteq [k+1] \setminus \{j_1\}$.
Then, a term in $\treeClosure{\simpleAdjNeisKP}$
can be turned into a term in $\nonoSimpleTreeClosureK$
by essentially re-grouping the
introduce and forget graphs.
All definitions and results from this section transfer to
the set $\nonoSimplesK$ and, in particular,
one can obtain
a variant of
\Cref{th:simpleWLForGraphons}
without the mismatch of the $k$ of simple $k$-WL
and the $k$ of the treewidth.
Since it is so similar, however, we do not state it here.

For a last remark, consider the special case of $k = 1$
of fractional isomorphism.
The isomorphism types in $\nonoSimplesKOf{1}$ are shown
in \Cref{fig:nonoSimpleSOne};
they all are symmetric
in this special case.
We note that the graph on the far left is $\bm{A}$,
the edge with one input and one output vertex, cf.\ \Cref{ex:graphonOperators},
which satisfies $\OperatorFromTo{\bm{A}}{W} = T_W$.
Among the bi-labeled graphs in $\nonoSimplesKOf{1}$,
this is actually the only interesting graph since
it is necessary and already sufficient for the construction of all trees.
In other words, we can leave out the other bi-labeled graphs from $\nonoSimplesKOf{1}$.
Then, the resulting characterizations are essentially
\Cref{th:colRefGraphons}, the result of \citeauthor{GrebikRocha2021}.

\subsection*{Acknowledgements}

The author would like to thank Jan Greb\'ik for mentioning the idea
of combining all $k$-WL distributions of a graphon
into a single object to obtain a new characterization
of weak isomorphism, which led to \Cref{sec:measureHierarchies}.
Moreover, the author is grateful to the anonymous reviewers for their many
helpful suggestions and, in particular, for their advice on how to make the
introduction more accessible.

Funded by the European Union (ERC, SymSim, 101054974).
Views and opinions expressed are however those of the author only and
do not necessarily reflect those of the European Union or the European Research
Council. Neither the European Union nor the granting authority can be held
responsible for them.

\newpage
\printbibliography

\end{document}